\theoremstyle{plain}
\newtheorem{theorem}{Theorem}[section]
\newtheorem{proposition}[theorem]{Proposition}
\newtheorem{lemma}[theorem]{Lemma}
\newtheorem{claim}[theorem]{Claim}
\theoremstyle{definition}
\newtheorem{definition}[theorem]{Definition}
\newtheorem{question}[theorem]{Question}
\newcommand{\red}[1]{#1_{\textup{red}}}
\newcommand{\blue}[1]{#1_{\textup{blue}}}
\def\eps{\varepsilon}
\newcommand\ceil[1]{\left\lceil#1\right\rceil}
\newcommand\floor[1]{\left\lfloor#1\right\rfloor}
\newcommand\cT{S}
\renewcommand{\tau}{s}
\definecolor{DarkDesaturatedBlue}{HTML}{3A3556}
\definecolor{VividOrange}{HTML}{F15918}
\definecolor{PureOrange}{HTML}{FFBA00}
\definecolor{LightGrayishPink}{HTML}{EEC5D5}
\definecolor{VerySoftBlue}{HTML}{B5AFDB}
\tikzset{snake it/.style={decorate, decoration=snake}}
\newcommand{\labelinthm}[1]{%
    \label{temp#1}
    \protected@write \@auxout {}{\string \newlabel{#1}{{\emph{\ref{temp#1}}}{\thepage}{\emph{\ref{temp#1}}}{temp#1}{}} }%
  }
\newcounter{propcounter}
\title{Asymmetric Ramsey numbers of trees}
\begin{document}
\author{Jun Yan\thanks{Mathematical Institute, University of Oxford, Oxford OX2 6GG, UK. Email: \url{jun.yan@maths.ox.ac.uk}. Supported by
ERC Advanced Grant 883810.}}

\maketitle

\begin{abstract}
Let $n\geq\nu$, let $T$ be an $n$-vertex tree with bipartition class sizes $t_1\geq t_2$, and let $S$ be a $\nu$-vertex tree with bipartition class sizes $\tau_1\geq\tau_2$. Using four natural constructions, we show that the Ramsey number $R(T,S)$ is lower bounded by $\underline{R}(T,S)=\max\{n+\tau_2,\nu+\min\{t_2,\nu\},\min\{2t_1,2\nu\},2\tau_1\}-1$. 

Our main result shows that there exists a constant $c>0$, such that for all sufficiently large integers $n\geq\nu$, if (i) $\Delta(T)\leq cn/\log n$ and $\Delta(S)\leq c\nu/\log\nu$, (ii) $\tau_2\geq t_2$, and (iii) $\nu\geq t_1$, then $R(T,S)=\underline{R}(T,S)$. In particular, this determines the exact Ramsey numbers for a large family of pairs of trees. We also provide examples showing that $R(T,S)$ can exceed $\underline{R}(T,S)$ if any one of the three assumptions (i), (ii), and (iii) is removed.  
\end{abstract}

\section{Introduction}\label{sec:intro}
The Ramsey number of two graphs $G_1$ and $G_2$, denoted by $R(G_1,G_2)$, is the smallest positive integer $N$ such that every red/blue edge colouring of the $N$-vertex complete graph $K_N$ contains either a red copy of $G_1$ or a blue copy of $G_2$. For brevity, $R(G)$ is used to denote $R(G,G)$. The existence of $R(G_1,G_2)$ follows from the foundational result of Ramsey~\cite{Ra} 
in 1930, but determining the exact value of $R(G_1,G_2)$, or even just providing good bounds, has since proved extremely challenging. 

To muster any hope of determining $R(G_1,G_2)$ exactly, usually at least one of $G_1$ and $G_2$ needs to be sparse, with some natural candidates being trees and cycles.
Denote the $n$-vertex path by $P_{n-1}$. Gerencs\'er and Gy\'arf\'as~\cite{GG} 
proved in 1967 that for all $n\geq m\geq2$, $R(P_{n-1},P_{m-1})=n+\floor{m/2}-1$. For stars, it was determined by Burr and Roberts~\cite{BR} in 1973 that $R(K_{1,n-1},K_{1,m-1})=m+n-3$ if $m$ and $n$ are both even, and $R(K_{1,n-1},K_{1,m-1})=m+n-2$ otherwise. For cycles, Faudree and Schelp~\cite{FS} and Rosta~\cite{R} independently proved in the early 1970s that, aside from a few small exceptions, for all $n\geq m$, $R(C_n,C_m)=2n-1$ if $m$ is odd, $R(C_n,C_m)=n+m/2-1$ if both $n$ and $m$ are even, and $R(C_n,C_m)=\max\{n+m/2-1,2m-1\}$ if $n$ is odd and $m$ is even.

In this paper, we consider the Ramsey number $R(T,\cT)$ of two general trees $T$ and $\cT$. Even in the symmetric case when $T=\cT$, the exact value of $R(T)$ is only known for a few very specific families of trees until very recently. There is, however, a general lower bound given by Burr~\cite{B} in 1974. Let the bipartition class sizes of $T$ be $t_1\geq t_2\geq2$, and consider the following red/blue colourings of the complete graph $G$ on $N$ vertices.
\stepcounter{propcounter}
\begin{enumerate}[label = \textbf{\Alph{propcounter}\arabic{enumi}}]
    \item\label{oldlow:1} $N=t_1+2t_2-2$, $G$ consists of two disjoint red cliques $A$ and $B$ of sizes $t_1+t_2-1$ and $t_2-1$, respectively, with all edges between them blue.  
    \item\label{oldlow:2} $N=2t_1-2$, $G$ consists of two disjoint red cliques $A$ and $B$ both of size $t_1-1$, with all edges between them blue. 
\end{enumerate}

The constructions in both~\ref{oldlow:1} and~\ref{oldlow:2} contain no red copy of $T$, as the red connected components all have sizes smaller than $|T|$. The graph in~\ref{oldlow:1} contains no blue copy of $T$ as $B$ is not large enough to accommodate the smaller bipartition class of $T$, while the graph in~\ref{oldlow:2} contains no blue copy of $T$ as neither $A$ nor $B$ is large enough to accommodate the larger bipartition class of $T$. Together, these two examples give the general lower bound
\begin{equation}\label{eq:oldlower}
R(T)\geq \max\{t_1+2t_2,2t_1\}-1.\tag{$\dagger$}
\end{equation}

Burr conjectured~\cite{B} that this bound is tight for every tree $T$ with $t_1\geq t_2\geq2$. However, this was disproved in 1979 by Grossman, Harary, and Klawe~\cite{GHK} for certain trees called \emph{double stars}. For any $t_1\geq t_2\geq2$, let $D_{t_1,t_2}$ be the tree formed by joining the central vertices of the stars $K_{1,t_1-1}$ and $K_{1,t_2-1}$ with an edge, noting that $D_{t_1,t_2}$ has bipartition class sizes $t_1$ and $t_2$.
Grossman, Harary, and Klawe~\cite{GHK} showed that if $t_1\geq3t_2-2\geq10$, then $R(D_{t_1,t_2})=2t_1$, and thus the bound (\ref{eq:oldlower}) can be off by 1. Burr's conjecture was much more strongly disproved in 2016 by Norin, Sun, and Zhao~\cite{NSZ}, who showed in particular that $R(D_{2t,t})\ge (4.2-o(1))t$, and therefore the bound (\ref{eq:oldlower}) can be off by a multiplicative factor.

However, all known counterexamples to Burr's conjecture have large maximum degrees, so perhaps it could be salvaged by imposing a suitable maximum degree condition on $T$. Towards this, Haxell, \L uczak, and Tingley~\cite{HLT} showed in 2002 that the bound (\ref{eq:oldlower}) is approximately tight for trees with up to small linear maximum degrees. More precisely, they showed that for every $\mu>0$, there exists some $c>0$, such that any $n$-vertex tree $T$ with maximum degree $\Delta(T)\leq cn$ and bipartition class sizes $t_1\geq t_2$ satisfies $R(T)\le(1+\mu)\max\{t_1+2t_2,2t_1\}$. 

In a recent breakthrough, Montgomery, Pavez-Sign{\'e}, and Yan~\cite{MPY} showed that Burr's conjecture is true, given a suitable linear maximum degree condition. In particular, this result determines the exact Ramsey numbers of a large family of trees.  
\begin{theorem}[{\cite[Theorem 1.1]{MPY}}]\label{thm:oldmain}
There exists a constant $c>0$ such that the following holds.
Any $n$-vertex tree $T$ with $\Delta(T)\le cn$ and bipartition classes of sizes $t_1\geq t_2$ satisfies $R(T)=\max\{2t_1,t_1+2t_2\}-1.$
\end{theorem}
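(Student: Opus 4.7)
The plan is to prove the upper bound $R(T) \le N := \max\{2t_1, t_1+2t_2\}-1$ via a stability dichotomy, now standard for tight Ramsey results. Let $\chi$ be any red/blue colouring of $K_N$ containing no red copy of $T$; the task is to find a blue copy of $T$. Fix a small $\eta > 0$ (with $c \ll \eta \ll 1$) and call $\chi$ \emph{$\eta$-extremal} if, up to $\eta N$ errors, the vertex set partitions as $A \sqcup B$ mirroring Burr's construction~\ref{oldlow:1} or~\ref{oldlow:2}: red near-cliques on $A$ and on $B$ with an almost complete blue bipartite graph between them, and $\{|A|,|B|\}$ close to $\{t_1-1, t_1-1\}$ or $\{t_1+t_2-1, t_2-1\}$ according to which of $2t_1-1$, $t_1+2t_2-1$ equals $N$.

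In the non-extremal case, the approach of Haxell, \L uczak, and Tingley should succeed with slack to spare. I would apply Szemer\'edi's regularity lemma and form the reduced graph whose edges carry the majority colour of the corresponding $\eps$-regular pair. Non-extremality should force the blue reduced graph to admit a connected subgraph carrying a large matching whose clusters split into two sides of sizes at least $t_1$ and $t_2$ (matching the bipartition of $T$); otherwise the colour pattern would decompose into pieces resembling a Burr construction, violating non-extremality. Embedding $T$ greedily along this matching in BFS order, the condition $\Delta(T) \le cn$ together with $\eps$-regularity ensures that at each step a valid extension exists for every vertex.

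In the extremal case one works directly. After a brief cleaning step redistributing $o(n)$ misclassified vertices, $A$ and $B$ induce red near-cliques with an almost complete blue bipartite graph between them, and $|A|,|B|$ are within a constant of the Burr values. The key observation is that both Burr constructions miss $T$ by exactly one vertex: in~\ref{oldlow:2} each part is one vertex short of holding the larger class $t_1$, and in~\ref{oldlow:1} the smaller part $B$ is one vertex short of $t_2$. Thus any \emph{exceptional} edge — a blue edge inside $A$ or $B$, or a red edge across the cut — provides precisely the slack needed to squeeze in the one missing vertex, by letting us push one vertex of $T$'s larger class across the cut or borrow a vertex from the other side. To realise this slack, I would set aside a small absorber consisting of blue alternating paths across the cut that can be reconfigured to swallow any parity or size mismatch, embed the bulk of $T$ via a random bipartite matching between $A$ and $B$, and close off the embedding through the absorber together with an exceptional edge.

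The main obstacle is the extremal case. Because the Burr constructions miss $T$ by only a single vertex, there is essentially no room in the counting, and the absorber must exactly compensate for whichever one-vertex deficit arises once $T$ is laid across the cut. Designing this absorber, placing it so as not to obstruct the main random embedding, and proving that it can always be activated with whatever exceptional edges are present — all while the maximum degree bound $\Delta(T) \le cn$ limits how freely we can reroute subtrees of $T$ — forms the core of the technical work. Calibrating $\eta$ so that the two regimes meet (the non-extremal argument must still work for slightly extremal colourings, and the extremal argument must tolerate $\eta N$ errors) adds a further delicate balance.
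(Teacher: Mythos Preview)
Your overarching stability/extremal dichotomy is correct and matches the framework of \cite{MPY} (which is where this theorem is proved; the present paper only claims to simplify the stability half). The non-extremal step, however, contains a real gap. You assert that non-extremality forces the blue reduced graph to carry a connected matching of the right sizes, on the grounds that ``otherwise the colour pattern would decompose into pieces resembling a Burr construction.'' That implication \emph{is} the stability theorem and is far from immediate. The Haxell--\L uczak--Tingley result guarantees such a structure only in \emph{some} colour and only of \emph{approximate} size, which yields merely an approximate embedding; upgrading this to an exact embedding whenever the colouring is non-extremal is the substance of the proof. The paper (and \cite{MPY}) accomplishes this via a four-stage cascade alternating between colours: at each stage one attempts an embedding using one of five reduced-graph configurations (the \textbf{H\L T} structure and \textbf{EMa}--\textbf{EMd} of Section~\ref{sec:ems}), and failure produces a specific almost-complete bipartite pattern in the \emph{other} colour that seeds the next stage; extremality is concluded only after all four stages fail. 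Your one-line appeal to non-extremality does not replace this cascade, and fixing the colour at the outset (``assume no red $T$'') obscures that the argument must genuinely shuttle between red and blue.

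For the extremal case your absorber sketch is plausible in spirit, but neither \cite{MPY} nor this paper uses absorbers: they instead branch on whether $T$ has many leaves or many long bare paths (Lemma~\ref{lemma:paths-leaf}) and apply tailored near-spanning embedding lemmas (Lemmas~\ref{lemma:bare-paths}--\ref{lemma:bipartite:anyleaves} here) in each branch. The linear-degree extremal analysis in \cite{MPY} is considerably more delicate than your sketch suggests; indeed the present paper only carries out the extremal step under the stronger hypothesis $\Delta(T)\le cn/\log n$.
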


In this paper, building on ideas in~\cite{HLT} and~\cite{MPY}, we prove an asymmetric version of Theorem~\ref{thm:oldmain}, which determines the exact Ramsey numbers $R(T,\cT)$ for a large family of pairs of trees $(T,\cT)$. 

Let $T$ be an $n$-vertex tree with bipartition class sizes $t_1\geq t_2$. Let $\cT$ be a $\nu$-vertex tree with bipartition class sizes $\tau_1\geq\tau_2$. Without loss of generality, assume that $n\geq\nu$. Motivated by Burr's constructions~\ref{oldlow:1} and~\ref{oldlow:2} in~\cite{B} that lead to the general lower bound~(\ref{eq:oldlower}), we now provide a general lower bound for $R(T,\cT)$ by considering the following red/blue coloured complete graphs $G$ on $N$ vertices that contain neither a red copy of $T$ nor a blue copy of $\cT$ (see Figure~\ref{figure:low}). 
\stepcounter{propcounter}
\begin{enumerate}[label = \textbf{\Alph{propcounter}\arabic{enumi}}]
    \item\label{low:1} $N=n+\tau_2-2$, $G$ consists of two disjoint red cliques $A$ and $B$ of sizes $n-1$ and $\tau_2-1$, respectively, with all edges between them blue.  
    \item\label{low:2} $N=\nu+\min\{t_2,\nu\}-2$, $G$ consists of two disjoint blue cliques $A$ and $B$ of sizes $\nu-1$ and $\min\{t_2,\nu\}-1$, respectively, with all edges between them red. 
    \item\label{low:3} $N=2\min\{t_1,\nu\}-2$, $G$ consists of two disjoint blue cliques $A$ and $B$ both of size $\min\{t_1,\nu\}-1$, with all edges between them red.
    \item\label{low:4} $N=2\tau_1-2$, $G$ consists of two disjoint red cliques $A$ and $B$ both of size $\tau_1-1$, with all edges between them blue.  
\end{enumerate}
\begin{figure}[h]
\centering
\begin{subfigure}{0.49\textwidth}
\centering
\begin{tikzpicture}[scale=.8]
\def\spacer{3};
\def\Ahgt{1.25};
\def\Bhgt{0.75};
\coordinate (U_1) at (0,0);
\coordinate (U_2) at (\spacer,0);

\draw[blue,fill=blue!50] ($(U_1)+(0,\Ahgt)$) -- ($(U_1)-(0,\Ahgt)$) -- ($(U_2)-(0,\Bhgt)$) -- ($(U_2)+(0,\Bhgt)$) -- cycle;

\draw[white,fill=white] (U_1) circle [y radius=\Ahgt cm,x radius=1cm];
\draw[white,fill=white] (U_2) circle [y radius=\Bhgt cm,x radius=0.75cm];
\draw[red,fill=red!50] (U_1) circle [y radius=\Ahgt cm,x radius=1cm];
\draw[red,fill=red!50] (U_2) circle [y radius=\Bhgt cm,x radius=0.75cm];
\draw (U_1) node {$U_1$};
\draw (U_2) node {$U_2$};

\draw ($(U_1)+(0,-1.8)$) node {$t_1+t_2-1$};
\draw ($(U_2)+(0,-1.8)$) node {$\tau_2-1$};
\end{tikzpicture}
\caption{\ref{low:1}}
\end{subfigure}
\begin{subfigure}{0.49\textwidth}
\centering
\begin{tikzpicture}[scale=.8]
\def\spacer{3};
\def\Ahgt{1.25};
\def\Bhgt{0.75};
\coordinate (U_1) at (0,0);
\coordinate (U_2) at (\spacer,0);

\draw[red,fill=red!50] ($(U_1)+(0,\Ahgt)$) -- ($(U_1)-(0,\Ahgt)$) -- ($(U_2)-(0,\Bhgt)$) -- ($(U_2)+(0,\Bhgt)$) -- cycle;

\draw[white,fill=white] (U_1) circle [y radius=\Ahgt cm,x radius=1cm];
\draw[white,fill=white] (U_2) circle [y radius=\Bhgt cm,x radius=0.75cm];
\draw[blue,fill=blue!50] (U_1) circle [y radius=\Ahgt cm,x radius=1cm];
\draw[blue,fill=blue!50] (U_2) circle [y radius=\Bhgt cm,x radius=0.75cm];

\draw (U_1) node {$U_1$};
\draw (U_2) node {$U_2$};

\draw ($(U_1)+(0,-1.8)$) node {$\nu-1$};
\draw ($(U_2)+(0,-1.8)$) node {$\min\{t_2,\nu\}-1$};
\end{tikzpicture}
\caption{\ref{low:2}}
\end{subfigure}

\vspace{0.3cm}
\begin{subfigure}{0.49\textwidth}
\centering
\begin{tikzpicture}[scale=.8]
\def\spacer{3};
\def\Ahgt{1};
\def\Bhgt{1};
\coordinate (U_1) at (0,0);
\coordinate (U_2) at (\spacer,0);

\draw[red,fill=red!50] ($(U_1)+(0,\Ahgt)$) -- ($(U_1)-(0,\Ahgt)$) -- ($(U_2)-(0,\Bhgt)$) -- ($(U_2)+(0,\Bhgt)$) -- cycle;

\draw[white,fill=none] (U_1) circle [y radius=\Ahgt cm,x radius=0.75cm];
\draw[white,fill=none] (U_2) circle [y radius=\Bhgt cm,x radius=0.75cm];
\draw[blue,fill=blue!50] (U_1) circle [y radius=\Ahgt cm,x radius=0.75cm];
\draw[blue,fill=blue!50] (U_2) circle [y radius=\Bhgt cm,x radius=0.75cm];
\draw (U_1) node {$U_1$};
\draw (U_2) node {$U_2$};

\draw ($(U_1)+(0,-1.8)$) node {$\min\{t_1,\nu\}-1$};
\draw ($(U_2)+(0,-1.8)$) node {$\min\{t_1,\nu\}-1$};
\end{tikzpicture}
\caption{\ref{low:3}}
\end{subfigure}
\begin{subfigure}{0.49\textwidth}
\centering
\begin{tikzpicture}[scale=.8]
\def\spacer{3};
\def\Ahgt{1};
\def\Bhgt{1};
\coordinate (U_1) at (0,0);
\coordinate (U_2) at (\spacer,0);

\draw[blue,fill=blue!50] ($(U_1)+(0,\Ahgt)$) -- ($(U_1)-(0,\Ahgt)$) -- ($(U_2)-(0,\Bhgt)$) -- ($(U_2)+(0,\Bhgt)$) -- cycle;

\draw[white,fill=none] (U_1) circle [y radius=\Ahgt cm,x radius=0.75cm];
\draw[white,fill=none] (U_2) circle [y radius=\Bhgt cm,x radius=0.75cm];
\draw[red,fill=red!50] (U_1) circle [y radius=\Ahgt cm,x radius=0.75cm];
\draw[red,fill=red!50] (U_2) circle [y radius=\Bhgt cm,x radius=0.75cm];
\draw (U_1) node {$U_1$};
\draw (U_2) node {$U_2$};

\draw ($(U_1)+(0,-1.8)$) node {$\tau_1-1$};
\draw ($(U_2)+(0,-1.8)$) node {$\tau_1-1$};
\end{tikzpicture}
\caption{\ref{low:4}}
\end{subfigure}
\caption{The lower bound constructions~\ref{low:1}--\ref{low:4}.}
\label{figure:low}
\end{figure}
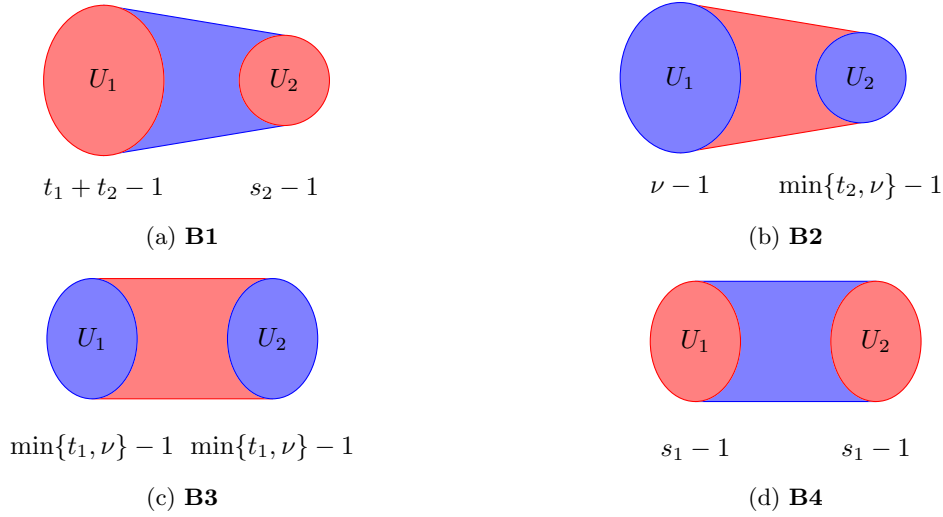

Similar to~\ref{oldlow:1} and~\ref{oldlow:2}, it is easy to see that in each of~\ref{low:1}--\ref{low:4}, $G$ contains no red copy of $T$ and no blue copy of $\cT$. Together, we get the following general lower bound on $R(T,\cT)$.
\begin{proposition}\label{prop:generallower}
Let $T$ be an $n$-vertex tree with bipartition class sizes $t_1\geq t_2$. Let $\cT$ be a $\nu$-vertex tree with bipartition class sizes $\tau_1\geq\tau_2$. If $n\geq\nu$, then
\begin{equation}\label{eq:generallower}
R(T,\cT)\geq\max\{n+\tau_2,\nu+\min\{t_2,\nu\},\min\{2t_1,2\nu\},2\tau_1\}-1.\tag{$\ddagger$}
\end{equation}
\end{proposition}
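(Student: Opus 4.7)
The plan is to verify directly that each of the four constructions \ref{low:1}--\ref{low:4} supplies a red/blue colouring of $K_N$ on $N$ vertices, where $N$ is one less than the corresponding term inside the maximum in \eqref{eq:generallower}, which contains neither a red copy of $T$ nor a blue copy of $\cT$. Taking the maximum of the four resulting lower bounds then yields \eqref{eq:generallower}.

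Two elementary observations underlie each verification. First, since $T$ and $\cT$ are connected, any embedding of $T$ (respectively $\cT$) into a subgraph requires a connected component of that subgraph with at least $n$ (resp.\ $\nu$) vertices. Second, since $T$ is bipartite with class sizes $t_1\geq t_2$, any embedding of $T$ into a complete bipartite graph $K_{p,q}$ must map the bipartition of $T$ to the two parts, so such an embedding exists if and only if $\min\{p,q\}\geq t_2$ and $\max\{p,q\}\geq t_1$; the analogous statement holds for $\cT$.

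With these observations, each case is a short computation. In \ref{low:1} and \ref{low:4} the red subgraph is a vertex-disjoint union of cliques whose sizes ($n-1$ and $\tau_2-1$ in \ref{low:1}; $\tau_1-1$ twice in \ref{low:4}, where I use $\tau_1\leq\nu\leq n$) are strictly smaller than $n$, ruling out a red $T$; and the blue subgraph is a complete bipartite graph $K_{p,q}$ with $\min\{p,q\}=\tau_2-1<\tau_2$ in \ref{low:1} and $\max\{p,q\}=\tau_1-1<\tau_1$ in \ref{low:4}, ruling out a blue $\cT$. For \ref{low:2} and \ref{low:3} the argument is the colour-swapped analogue: each blue component has fewer than $\nu$ vertices, and on the red side the bipartition has at least one part that cannot contain $t_2$ vertices (in \ref{low:2}) or cannot contain $t_1$ vertices (in \ref{low:3}). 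The minima $\min\{t_2,\nu\}$ in \ref{low:2} and $\min\{t_1,\nu\}$ in \ref{low:3} are precisely what is needed to handle uniformly the two sub-cases $t_2\leq\nu$ vs.\ $t_2>\nu$ (resp.\ $t_1\leq\nu$ vs.\ $t_1>\nu$), since when $t_2>\nu$ (resp.\ $t_1>\nu$) one cannot make the clique on the right any larger than $\nu-1$ without running into the first obstruction.

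No step presents a real obstacle: this proposition is essentially the formal unpacking of the pictures in Figure~\ref{figure:low}, and the only mild subtlety is the bookkeeping with the $\min\{\cdot,\nu\}$ terms in \ref{low:2} and \ref{low:3}, which I would spell out with the small case split described above.
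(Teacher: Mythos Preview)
Your proposal is correct and follows exactly the approach of the paper, which simply states that ``it is easy to see that in each of \ref{low:1}--\ref{low:4}, $G$ contains no red copy of $T$ and no blue copy of $\cT$'' and leaves the details to the reader. Your write-up supplies precisely those omitted details, including the small case splits for the $\min\{\cdot,\nu\}$ terms, so there is nothing to add.
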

From now on, we will use $\underline{R}(T,\cT)$ to denote the quantity on the right hand side of~(\ref{eq:generallower}).

The main result of this paper is that if $T$ and $\cT$ satisfy certain conditions, then $R(T,\cT)=\underline{R}(T,\cT)$, or equivalently the lower bound~(\ref{eq:generallower}) in Proposition~\ref{prop:generallower} is tight. 
\begin{theorem}\label{thm:main}
There exists a constant $c>0$ such that the following holds.

Let $n\geq\nu$ be sufficiently large integers. Let $T$ be an $n$-vertex tree with $\Delta(T)\leq cn/\log n$ and bipartition class sizes $t_1\geq t_2$. Let $\cT$ be a $\nu$-vertex tree with $\Delta(\cT)\leq c\nu/\log\nu$ and bipartition class sizes $\tau_1\geq\tau_2$. If $\tau_2\geq t_2$ and $\nu\geq t_1$, then \[R(T,\cT)=\underline{R}(T,\cT)=\max\{n+\tau_2,2t_1\}-1.\]
\end{theorem}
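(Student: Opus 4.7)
Set $N=\max\{n+\tau_2,2t_1\}-1$ and consider an arbitrary red/blue edge-colouring of $K_N$. The plan is to adapt the connected-matching approach to tree Ramsey numbers developed by Haxell, {\L}uczak, and Tingley~\cite{HLT} and refined in the symmetric case by Montgomery, Pavez-Sign\'e, and Yan~\cite{MPY} to the present asymmetric setting. First I would apply Szemer\'edi's regularity lemma with a small parameter $\varepsilon>0$, partitioning $V(K_N)$ into clusters of roughly equal size, and form the two reduced graphs $R_r$ and $R_b$, where an edge $ij$ belongs to $R_r$ (resp.\ $R_b$) whenever the pair $(V_i,V_j)$ is $\varepsilon$-regular with red (resp.\ blue) density at least $1/2$.

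The first substantive step is to reduce the embedding problem on $K_N$ to a combinatorial problem on the reduced graph. To produce a red copy of $T$ it suffices to find in $R_r$ a red connected matching whose total cluster mass exceeds $(1+o(1))n$ and whose two sides can be oriented so as to accommodate the bipartition $(t_1,t_2)$ of $T$; analogously, a blue copy of $\cT$ follows from a blue connected matching in $R_b$ of mass at least $(1+o(1))\nu$ compatible with $(\tau_1,\tau_2)$. Given such a matching, the tree is embedded into the regular structure by a standard bounded-degree tree-embedding lemma of HLT/MPY type, for which the hypotheses $\Delta(T)\leq cn/\log n$ and $\Delta(\cT)\leq c\nu/\log\nu$ are precisely what is needed.

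The heart of the argument is then a structural dichotomy on the two-coloured reduced graph: either at least one of the two required monochromatic connected matchings exists, in which case we embed the corresponding tree and are done; or the reduced graph, and hence the original colouring up to $o(N)$ vertices, closely resembles one of the lower-bound constructions~\ref{low:1},~\ref{low:3}, or~\ref{low:4}. Under the assumptions $\tau_2\geq t_2$ and $\nu\geq t_1$, construction~\ref{low:2} is subsumed by~\ref{low:1}, so these three cases are exhaustive. In each near-extremal configuration the vertex set of $K_N$ almost partitions into two parts $A\cup B$ mirroring the extremal monochromatic cliques, but with a small excess over the corresponding extremal size (for instance, one extra vertex in the near-\ref{low:1} case, since $N-(n+\tau_2-2)=1$). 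I would then exploit this slack, together with the ``defect'' edges of the wrong colour that must exist inside $A$, $B$, or the bipartite part between them, to build by hand either a red copy of $T$ or a blue copy of $\cT$.

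The main obstacle is this extremal-stability analysis. One must robustly detect near-\ref{low:1}, near-\ref{low:3}, and near-\ref{low:4} configurations even when they match the extremal picture only approximately, and then in each case convert defect edges into a genuine monochromatic tree. This essentially inherits the stability machinery of~\cite{MPY}, but now one must simultaneously track the two distinct trees $T$ and $\cT$; it is at this stage that the asymmetric bipartition arithmetic between $(t_1,t_2)$ and $(\tau_1,\tau_2)$, together with the hypotheses $\tau_2\geq t_2$ and $\nu\geq t_1$, is used most delicately to ensure that the defect edges can always be converted into the desired tree.
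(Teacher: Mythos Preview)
Your overall strategy---apply regularity, reduce to structure in the reduced graph, then split into a non-extremal case (embed via a tree-embedding lemma) and a stability/extremal analysis---is indeed the paper's approach. Two corrections are worth flagging.

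First, and most importantly, your claim that ``construction~\ref{low:2} is subsumed by~\ref{low:1}, so these three cases are exhaustive'' conflates two different things. It is true that under $\tau_2\geq t_2$ and $\nu\geq t_1$ the \emph{numerical lower bound} coming from~\ref{low:2} is dominated by that from~\ref{low:1} (this is why $\underline{R}(T,\cT)=\max\{n+\tau_2,2t_1\}-1$). But a near-\ref{low:2} colouring---two almost-blue cliques of sizes roughly $\nu$ and $t_2$ with almost all edges between them red---is structurally the colour-reversal of a near-\ref{low:1} colouring and is in no sense covered by it. The stability theorem in the paper (Theorem~\ref{thm:stability2}) genuinely outputs all four extremal types, and the extremal part has four separate arguments (Lemmas~\ref{lemma:type1}--\ref{lemma:type4}). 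If you drop Type~2, the proof has a real gap; the same remark applies to Type~4, which is not subsumed by Type~3 even though $2\tau_1\leq 2t_1$.

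Second, the paper does not run this machinery on Theorem~\ref{thm:main} directly: it first reduces to Theorem~\ref{thm:main2}, which carries the extra normalising hypotheses $t_2\geq(t_1-2)/3$, $\tau_2\geq(t_1-1)/2$, and $t_2+\tau_2\geq t_1-1$. These keep the bipartition ratios bounded so that the embedding lemmas \textbf{H\L T} and \textbf{EMa}--\textbf{EMd} apply. When $t_2+\tau_2<t_1-1$, one enlarges $T$ and $\cT$ by attaching leaves (using Lemma~\ref{lemma:leaves:V1}) to force $t_2'+\tau_2'\in\{t_1-1,t_1\}$, and then invokes Theorem~\ref{thm:main2} on the larger trees. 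As a smaller point, the sublinear bound $\Delta(T)\leq cn/\log n$ is \emph{not} what the regularity-based embedding in the non-extremal case needs (there the linear condition $\Delta(T)\leq cn$ suffices, cf.\ Theorem~\ref{thm:stability}); it is required only in the extremal analysis, for the Azuma-type concentration in Lemmas~\ref{lemma:leaves} and~\ref{lemma:bipartite:leaves}.
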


Note that several conditions are required for Theorem~\ref{thm:main} to hold. From the double star counterexamples~\cite{GHK,NSZ} mentioned above, some maximum degree conditions are necessary, though our sublinear maximum degree condition may not be optimal. The condition $n\geq\nu$ can always be assumed without loss of generality, and we show that the other two conditions, $\tau_2\geq t_2$ and $\nu\geq t_1$ are both necessary. 

First, the following counterexample shows that if the condition $\tau_2\geq t_2$ is removed, then $R(T,\cT)$ can exceed $\underline{R}(T,\cT)$ by an arbitrary additive factor, even when $T$ and $\cT$ are only allowed to have constant maximum degrees. 
\begin{theorem}\label{thm:counterex1} 
Let $r\gg C\geq 1$ be integers. Then, there exist trees $T$ and $\cT$ such that the following hold.
\stepcounter{propcounter}
\begin{enumerate}[label = \emph{\textbf{\Alph{propcounter}\arabic{enumi}}}]
    \item\labelinthm{construct:1} $\Delta(T),\Delta(\cT)\leq3^{C+1}$.
    \item\labelinthm{construct:2} $T$ has bipartition class sizes $(3^C+1)r$ and $(3^C-1)r+(3^C-1)/2$.
    \item\labelinthm{construct:3} $\cT$ has bipartition class sizes $(3^C+1)r$ and $2r-(3^C-1)/2$.
    \item\labelinthm{construct:4} $R(T,\cT)\geq2(3^{C}+1)r+C-1=\underline{R}(T,\cT)+C$.
\end{enumerate}
\end{theorem}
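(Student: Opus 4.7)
The plan is to exhibit trees $T,\cT$ with the stated bipartitions and degree bound, together with a red/blue colouring of $K_N$ for $N=2(3^C+1)r+C-2$ that contains no red $T$ and no blue $\cT$. Since a direct computation using the four quantities in~(\ref{eq:generallower}) gives $\underline{R}(T,\cT)=2(3^C+1)r-1$ (all four maxima coincide, using the key algebraic identity $t_2+\tau_2=t_1=\tau_1$ that the prescribed bipartitions satisfy), any such colouring witnesses $R(T,\cT)\geq 2(3^C+1)r+C-1=\underline{R}(T,\cT)+C$.

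Since $r\gg C$, there is plenty of slack in building $T$ and $\cT$ to the prescribed bipartitions with $\Delta\leq 3^{C+1}$. A natural realisation takes both trees to be caterpillars (or slight variants) whose spines have length $\Theta(r)$ and whose leaves are distributed in $C$ geometrically-scaled blocks, mirroring the base-$3$ decomposition $(3^C-1)/2=1+3+9+\dots+3^{C-1}$ that is visible in the bipartition sizes. With this ansatz the prescribed bipartition identities and the degree bound both follow by direct bookkeeping.

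For the colouring I would modify construction~\ref{low:4}. Start with two red cliques $A,B$ of size $\tau_1-1=t_1-1=(3^C+1)r-1$ with all cross-edges blue; this accounts for $2\tau_1-2$ vertices. Adjoin $C$ extra vertices $w_1,\dots,w_C$, whose edges to $A$, $B$, and to the previous $w_j$'s are coloured by a recursive rule tuned so that each $w_i$ has (i) a small red neighbourhood inside $A\cup B$, so that the red components remain incapable of hosting $T$, and (ii) a blue neighbourhood modifying the bipartite structure between $A$ and $B$ by just enough to prevent any bipartite subgraph of blue from containing $K_{\tau_1,\tau_2}$. A representative calibration uses red neighbourhoods of size tied to $3^{i-1}$, echoing the base-$3$ structure of the trees.

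The main obstacle is then to verify that the resulting colouring simultaneously avoids red $T$ and blue $\cT$. For red $T$: since $|A|,|B|<|T|$, any red copy of $T$ spans both cliques, so at least one vertex of $T$ must map to some $w_i$, which acts as a cut-vertex. Using $\Delta(T)\leq 3^{C+1}$, the bipartition $(t_1,t_2)$, and the identity $t_2+\tau_2=t_1$, a subtree-mass count shows that no distribution of $T$'s subtrees across the limited $w_i$-bridges is feasible. For blue $\cT$: the blue graph is close to complete bipartite between $A$ and $B$, but the $w_i$-edges engineer a precise deficit on exactly one side that blocks $(\tau_1,\tau_2)$ from being realised. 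The hardest part is balancing these two constraints: red and blue are in tension (shrinking red edges tends to grow blue edges and vice versa), and the identity $t_2+\tau_2=t_1=\tau_1$ is the algebraic miracle that lets the same $C$ added vertices obstruct both colours at once. This identity forces $t_2>\tau_2$ iff $\tau_2<t_1/2$, which is exactly the regime where the hypothesis $\tau_2\geq t_2$ of Theorem~\ref{thm:main} fails, explaining why such counterexamples appear once that hypothesis is dropped.
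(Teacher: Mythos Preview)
Your computation of $\underline{R}(T,\cT)$ and the identity $t_2+\tau_2=t_1=\tau_1$ are correct, but the rest of the sketch has two genuine gaps that would prevent it from going through.

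First, the tree $T$ cannot be a caterpillar. The whole point is to build $T$ so that deleting any $C$ vertices still leaves one component of size at least $(3^C+1)r$. Caterpillars fail this badly: a caterpillar on $n$ vertices with maximum degree $\Delta$ always has a single spine vertex whose removal splits it into pieces of size at most $n/2+\Delta$, so one well-placed cut vertex already suffices to balance it across $A$ and $B$. The paper instead glues a tree of size $2r$ to each of the $3^C$ leaves of a perfect ternary tree of height $C$. The key combinatorial fact (Lemma~\ref{lem:counterex1}) is that for this $T$, any embedding into two cliques joined by $C$ bridge vertices forces at least $(3^C+1)/2$ of the attached $2r$-trees into the same clique; the proof is an inductive level-by-level count on the ternary tree showing that $C$ cuts can peel off at most $\sum_{i=1}^C 3^{C-i}=(3^C-1)/2$ of the leaf subtrees. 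Your ``subtree-mass count'' gestures at this, but without the ternary structure there is no such count.

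Second, the colouring is different and your version would likely let blue $\cT$ through. In the paper's construction the $C$ extra vertices $W$ are joined to \emph{all} of $U_1\cup U_2$ in \emph{red}, and $W$ is a blue clique internally. This makes the blue graph decompose into exactly two components, $K_{|U_1|,|U_2|}$ and $K_C$; the first blocks $\cT$ because $|U_i|=\tau_1-1$, and the second is tiny. If instead each $w_i$ has only a small red neighbourhood in $A\cup B$ (as you propose), then most $w_i$-edges are blue, the blue graph becomes connected, and there is easily room for a blue $\cT$ since $|A|+|B|+|W|\geq \nu+\tau_2$. There is no ``precise deficit on one side'' to exploit here: the obstruction to blue $\cT$ is purely that the blue bipartite component has both parts of size $\tau_1-1$.
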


The second counterexample shows that if the condition $\nu\geq t_1$ is removed, then we also can find $T$ and $\cT$ with constant maximum degrees, such that $R(T,\cT)$ exceeds $\underline{R}(T,\cT)$ by an arbitrary additive factor. 
\begin{theorem}\label{thm:counterex2}
Let $r\gg C\geq 2$ and $\rho\geq 2$ be integers. Then, there exist trees $T$ and $\cT$ such that the following hold.
\stepcounter{propcounter}
\begin{enumerate}[label = \emph{\textbf{\Alph{propcounter}\arabic{enumi}}}]
    \item\labelinthm{construct:2:1} $\Delta(T)\leq\rho+3$ and $\Delta(\cT)\leq 3^{C+1}+2$.
    \item\labelinthm{construct:2:2} $T$ is an $n$-vertex tree with bipartition class sizes $t_1=3^C\rho r+O_C(1)$ and $t_2=3^Cr+O_C(1)$.
    \item\labelinthm{construct:2:3} $\cT$ is a $\nu$-vertex tree with bipartition class sizes $\tau_1=((3^C-3)\rho r+(3^C+3)r)/2+O_C(1)$ and $\tau_2=3(\rho-1)r-O_C(1)$, such that $2\nu=n+\tau_2<2t_1$.
    \item\labelinthm{construct:2:4} $R(T,\cT)\geq2\nu+C-2=\underline{R}(T,\cT)+C-1$.
\end{enumerate}
\end{theorem}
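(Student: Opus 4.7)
The plan is to exhibit concrete trees $T$ and $\cT$, together with an explicit red/blue colouring of the complete graph on $N := 2\nu + C - 3$ vertices containing neither a red $T$ nor a blue $\cT$; this then certifies $R(T,\cT) \geq N + 1 = 2\nu + C - 2 = \underline{R}(T,\cT) + C - 1$. The construction should be close in spirit to Theorem~\ref{thm:counterex1}, but modified on the $T$-side so that the larger bipartition class of $T$ is inflated by a factor of $\rho$ (replacing a ternary branching by a $\rho$-ary branching). This produces $2t_1 > 2\nu$ while keeping the identity $2\nu = n + \tau_2$, so that $\underline{R}(T,\cT)$ is driven by the $n+\tau_2$ term in~(\ref{eq:generallower}) rather than by $2t_1$.

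I would first construct $T$ as a tree whose "skeleton" is a fixed tree of size $O_C(1)$ decorated with $r$ copies of a $\rho$-ary gadget; the gadget is designed so that $T$ has the stated bipartition sizes $t_1 = 3^C\rho r + O_C(1)$ and $t_2 = 3^C r + O_C(1)$, and $\Delta(T) \leq \rho + 3$. Analogously, $\cT$ is built by attaching $r$ copies of a depth-$C$ ternary gadget (responsible for the $3^C$ growth and the max degree $3^{C+1}+2$) to a constant-size skeleton whose role is to shift the bipartition sizes into the asymmetric form $\tau_1 \approx ((3^C-3)\rho + (3^C+3))r/2$ and $\tau_2 \approx 3(\rho-1)r$. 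The $O_C(1)$ adjustments are the fine-tuning needed to guarantee the exact identity $2\nu = n + \tau_2$ and the right bipartition parities.

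For the colouring on $N$ vertices, I would take disjoint sets $A$ and $B$ of size $\nu - 1$ each and a "chain" $v_1, \ldots, v_{C-1}$ of $C-1$ extra vertices; inside each of $A$ and $B$ the edges are blue, the crossing edges are red, and the chain vertices are attached with colours chosen so as to interpolate $C - 1$ "half-levels" of blue between the two blue cliques. The absence of a red $T$ then follows from a component-size/bipartition argument: every red connected component sits on at most one of $A,B$ together with a few chain vertices, giving fewer than $n$ vertices in the component or else the wrong bipartition balance to accommodate $T$, because $n + \tau_2 < 2t_1$. The absence of a blue $\cT$ is proved by induction on $C$: any blue embedding of $\cT$ must, due to its depth-$C$ ternary structure, send certain "forced" branch vertices onto the chain, and the induction shows that each level of the ternary structure consumes one of the $C-1$ chain vertices, producing a contradiction at level $C$ because the two blue cliques have only $\nu - 1 < \nu$ vertices each.

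The main obstacle is the no-blue-$\cT$ step: $\cT$ must be rigid enough that its depth-$C$ ternary structure pins down the embedding level by level, yet flexible enough that the bipartition sizes $\tau_1,\tau_2$ land on the exact values required for $2\nu = n + \tau_2$ and $\underline{R}(T,\cT)=2\nu - 1$. This balance between rigidity (to make the inductive lower-bound argument go through) and arithmetic exactness (to hit the precise bipartition targets) is the technical heart of the proof. The inductive step must simultaneously track how many chain vertices have been used and how many blue vertices remain on each side; the $O_C(1)$ slack in the bipartition sizes of both $T$ and $\cT$ is what absorbs the small corrections needed to close this bookkeeping.
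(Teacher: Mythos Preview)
Your proposal has the roles of $T$ and $\cT$ reversed, and this is not a cosmetic issue: the argument you sketch does not go through.

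In the paper's construction, it is $T$ that carries the rigid structure: $T$ is a perfect ternary tree $T'$ with $C+1$ levels, with a copy of a fixed tree $\overline{T}$ (bipartition sizes $\rho r$ and $r$) glued to each of its $3^C$ leaves. The tree $\cT$ is completely structureless --- it is produced by Lemma~\ref{lemma:maxdegbound} as \emph{any} tree with the target bipartition sizes $\tau_1,\tau_2$, and the bound $\Delta(\cT)\le 3^{C+1}+2$ comes from $\tau_1/\tau_2\le 3^{C+1}$, not from a ternary gadget in $\cT$. Correspondingly, in the colouring (two parts $U_1,U_2$ of size $\nu-1$, extra set $W$ of size $C-1$, with $\red{G}[U_1,U_2]$ and $\red{G}[U_1\cup U_2,W]$ complete, everything else blue) the ``no blue $\cT$'' step is trivial: the blue graph has three connected components of sizes $\nu-1,\nu-1,C-1$, each smaller than $\nu=|\cT|$. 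The entire technical content is the ``no red $T$'' step (Lemma~\ref{lem:counterex2}): the red graph is the complete bipartite graph on $U_1,U_2$ together with $C-1$ universal vertices, and one shows that these $C-1$ vertices can flip the embedding orientation of at most $(3^C-3)/2$ of the $3^C$ copies of $\overline{T}$, forcing at least $(3^C+3)/2$ of them to place their $\rho r$-side into the same part --- which needs $|U_i|\ge((3^C+3)\rho r+(3^C-3)r)/2=\nu>\nu-1$.

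Your sketch inverts this: you put the ternary structure into $\cT$ and try to make ``no blue $\cT$'' the hard step, while claiming ``no red $T$'' is easy via a component argument. But your own colouring has all $A$--$B$ crossing edges red, so the red graph is connected (indeed, it contains $K_{\nu-1,\nu-1}$); the statement that ``every red connected component sits on at most one of $A,B$'' is simply false. Moreover, once the chain vertices carry any blue edges into $A\cup B$ (as your ``interpolating half-levels'' suggests), the blue side acquires components of size at least $\nu$, and then ruling out a blue $\cT$ would require $\cT$ itself to be rigid --- but a tree with the very unbalanced bipartition $\tau_1/\tau_2\approx 3^{C-1}$ cannot simultaneously be a ternary-tree-with-equal-gadgets of the Lemma~\ref{lem:counterex1} type. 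The arithmetic you would need to balance (hitting $\tau_1,\tau_2$ exactly, forcing the inductive descent, and keeping $2\nu=n+\tau_2$) does not close. The fix is to put the ternary skeleton into $T$, attach $\overline{T}$'s with bipartition $(\rho r,r)$ at the leaves, let $\cT$ be arbitrary with the right class sizes, colour $W$ all-red to $U_1\cup U_2$, and run the bipartite flipping argument of Lemma~\ref{lem:counterex2}.
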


Instead of proving Theorem~\ref{thm:main} directly, we reduce it to the following version with some additional conditions. 
\begin{theorem}\label{thm:main2}
There exists a constant $c>0$ such that the following holds.

Let $n\geq\nu$ be sufficiently large integers. Let $T$ be an $n$-vertex tree with $\Delta(T)\leq cn/\log n$ and bipartition class sizes $t_1\geq t_2\geq(t_1-2)/3$. Let $\cT$ be a $\nu$-vertex tree with $\Delta(\cT)\leq c\nu/\log\nu$ and bipartition class sizes $\tau_1\geq\tau_2\geq(t_1-1)/2$. If $\nu\geq t_1\geq\tau_1$, $\tau_2\geq t_2$, and $t_2+\tau_2\geq t_1-1$, then \[R(T,\cT)=\underline{R}(T,\cT)=\max\{n+\tau_2,2t_1\}-1.\]
\end{theorem}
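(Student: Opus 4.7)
The lower bound is Proposition~\ref{prop:generallower}, so the task is to prove the matching upper bound $R(T,\cT)\leq N:=\max\{n+\tau_2,2t_1\}$. I would proceed by contradiction: assume $K_N$ carries a red/blue colouring $\chi$ with no red copy of $T$ and no blue copy of $\cT$, and aim to locate one of the forbidden trees. The plan follows the template of Montgomery, Pavez-Sign\'e, and Yan~\cite{MPY} in the symmetric case: split into a \emph{non-extremal} regime, where one of the two colour classes has enough expansion to support a direct tree embedding, and an \emph{extremal} regime, where $\chi$ is structurally close to one of the lower-bound constructions~\ref{low:1}--\ref{low:4}.

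The central engine is a bipartite tree-embedding lemma adapted from~\cite{HLT,MPY}, roughly of the following shape. Given a graph $H$ on a vertex set partitioned as $X\cup Y$, in which almost every vertex in $X$ has many neighbours in $Y$ and vice versa, together with a mild global connectivity condition, one can embed any bipartite tree $T^\ast$ of maximum degree at most $c'|T^\ast|/\log|T^\ast|$ whose bipartition class sizes fit within $|X|$ and $|Y|$, with each class landing on its allotted side of $H$. Once a monochromatic (say red) bipartite ``frame'' with parts of sizes at least $t_1$ and $t_2$ has been identified, this lemma produces the forbidden copy of $T$; and symmetrically for $\cT$ with a blue frame of sizes at least $\tau_1,\tau_2$.

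In the non-extremal regime I would extract a large monochromatic connected subgraph from $\chi$ via a standard Erd\H{o}s--Gallai/connected-matching argument, then inspect the colours of the edges between it and the complement to locate a bipartite frame. Depending on which colour dominates, this gives either a red frame on sides of sizes $\geq t_1,\geq t_2$ (to embed $T$) or a blue frame on sides $\geq\tau_1,\geq\tau_2$ (to embed $\cT$). The added hypotheses of Theorem~\ref{thm:main2}, namely $t_2\geq(t_1-2)/3$, $\tau_2\geq(t_1-1)/2$, $\nu\geq t_1\geq\tau_1$, and $t_2+\tau_2\geq t_1-1$, are precisely the balance inequalities ensuring that one of these two dimensional requirements is always satisfied by whichever frame the colour distribution forces upon us; they are also what allows Theorem~\ref{thm:main} to reduce to Theorem~\ref{thm:main2} in the first place, with the counterexamples of Theorems~\ref{thm:counterex1} and~\ref{thm:counterex2} showing that without these balance conditions the reduction (and the conclusion) fail.

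The main obstacle will be the extremal regime, where $\chi$ lies within $o(n)$ edge modifications of one of the constructions~\ref{low:1}--\ref{low:4}: $V(K_N)$ decomposes as $A\sqcup B$ with $A$ and $B$ each inducing an almost-monochromatic clique in one colour and almost all cross-edges the opposite colour. Because $N=\underline{R}(T,\cT)+1$, there is exactly one vertex's worth of ``slack'' beyond the relevant lower-bound template, and the work is to show that this extra structure always forces a red $T$ or blue $\cT$. I would handle this by a finite case analysis on the majority colours of the exceptional vertices' edges into $A$ and into $B$, and on which of the four templates $\chi$ is closest to; in each case a greedy partial embedding is completed via a local application of the tree-embedding lemma. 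The sublinear maximum-degree hypothesis $\Delta\leq c\cdot/\log$ is used most tightly here, in order to route the bounded number of wrong-colour edges around high-degree subtrees while preserving the bipartition of the embedded tree.
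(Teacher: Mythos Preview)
Your high-level split into a stability (non-extremal) regime and an extremal regime is correct and matches the paper. However, your execution of the stability part has a genuine gap: you propose to work directly in the host graph via an ``Erd\H{o}s--Gallai/connected-matching argument'' to locate a monochromatic bipartite frame, but this is not how the paper (or~\cite{MPY}) proceeds, and it is not clear it can be made to work without the missing machinery. The paper's stability argument (Theorem~\ref{thm:stability2}) first applies Szemer\'edi's Regularity Lemma to pass to a reduced graph $H$, and then runs a careful four-stage analysis (Lemmas~\ref{lem:stage1}--\ref{lem:stage4}) on $H$ using the weight-function tools of Section~\ref{sec:weight}. At each stage one either locates one of five specific reduced-graph configurations (\textbf{H\L T}, \textbf{EMa}--\textbf{d}, Figure~\ref{fig:embeddings:1}) that permit embedding $T$ or $\cT$ via the regularity-based Lemmas~\ref{lemma:hlt}--\ref{lemma:em3}, or one falls through to a ``situation'' that feeds the next stage, eventually terminating in extremality. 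The connected-matching heuristic you describe does not by itself produce the quantitative control on cluster sizes and the almost-complete bipartite/monochromatic structure in $H$ that these embedding lemmas require; in particular, the weight-function analysis of $\vv{H}_\ast(x)$ (Lemmas~\ref{lemma:ABC}, \ref{lemma:QR}, \ref{lem:getBsituation}, \ref{lem:getHLT}) is what drives Stage~1 and is entirely absent from your sketch.

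Your treatment of the extremal regime is closer to the paper in spirit, but again undersells the work: the paper handles the four types separately (Lemmas~\ref{lemma:type1}--\ref{lemma:type4}), and the embeddings are not purely greedy. They rely on the bare-path/leaf dichotomy (Lemma~\ref{lemma:paths-leaf}) together with dedicated almost-spanning embedding lemmas (Lemmas~\ref{lemma:bare-paths}--\ref{lemma:bipartite:anyleaves}), and in Types~3 and~4 also on the tree-splitting Lemmas~\ref{lemma:splittree}--\ref{lemma:splittreewith2vertices} to handle the one or two stray vertices outside $U_1^+\cup U_2^+$. The sublinear maximum-degree condition enters precisely in Lemmas~\ref{lemma:leaves} and~\ref{lemma:bipartite:leaves} via an Azuma concentration step, not merely to ``route wrong-colour edges around high-degree subtrees''.
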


Theorem~\ref{thm:main2} implies Theorem~\ref{thm:main} as follows. 
\begin{proof}[Proof of Theorem~\ref{thm:main}]
Let $c>0$ be the constant given by Theorem~\ref{thm:main2}, and let $T,\cT,n,\nu,t_1,t_2,\tau_1,\tau_2$ satisfy the assumptions in Theorem~\ref{thm:main}. Note that $\tau_2\geq t_2$ and $n\geq\nu$ implies that $t_1\geq\tau_1$. 

If $t_2+\tau_2\geq t_1-1$, then $2\tau_2\geq t_2+\tau_2\geq t_1-1$, so $\tau_2\geq(t_1-1)/2$. Moreover, $t_1+t_2=n\geq\nu=\tau_1+\tau_2\geq2\tau_2$, so $(t_1+t_2)/2\geq\tau_2\geq t_1-t_2-1$. This implies that $t_1+t_2\geq 2t_1-2t_2-2$, so $t_2\geq(t_1-2)/3$. Therefore, $T$ and $\cT$ satisfy the assumptions of Theorem~\ref{thm:main2}, so $R(T,\cT)=\underline{R}(T,\cT)=\max\{n+\tau_2,2t_1\}-1$.

Now suppose that $t_2+\tau_2<t_1-1$. We claim that we can find $t_1\geq t_2'\geq t_2$ and $\tau_1\geq\tau_2'\geq\tau_2$, such that $t_2'\leq\tau_2'$, $t_1+t_2'\geq\tau_1+\tau_2'$, and $t_2'+\tau_2'\in\{t_1-1,t_1\}$. If $2\tau_2\geq t_1-1$, then setting $t_2'=t_1-1-\tau_2$ and $\tau_2'=\tau_2$ works. Otherwise, $2\tau_2<t_1-1$, but $2\tau_1\geq\tau_1+\tau_2=\nu\geq t_1$ from assumption. Hence, there exists $\tau_2\leq\tau_2'\leq\tau_1$, such that $2\tau_2'\in\{t_1-1,t_1\}$. Then, setting $t_2'=\tau_2'$ works, proving the claim. 
Observe that $\max\{t_1+t_2+\tau_2,2t_1\}=2t_1=\max\{t_1+t_2'+\tau_2',2t_1\}$.

Now, by Lemma~\ref{lemma:leaves:V1}, $T$ contains a set $L$ of at least $t_1-t_2\geq t_2'-t_2$ leaves on the larger side of its bipartition. Pick $t_2'-t_2$ leaves in $L$, and attach a new leaf to each of them to obtain a new tree $T'$ with bipartition class sizes $t_1$ and $t_2'$ and maximum degree $\Delta(T')=\Delta(T)\leq cn/\log n\leq c|T'|\log |T'|$. Similarly, we can obtain a tree $\cT'$ with bipartition class sizes $\tau_1$ and $\tau_2'$ and maximum degree $\Delta(\cT')\leq c|\cT'|/\log|\cT'|$. Then, like above, $T'$ and $\cT'$ satisfy all the assumptions in Theorem~\ref{thm:main2}, so $R(T',\cT')=\max\{t_1+t_2'+\tau_2',2t_1\}-1=2t_1-1$. This means that every red/blue coloured complete graph $G$ on $2t_1-1$ vertices must contain either a red copy of $T'$ or a blue copy of $\cT'$, and thus must contain either a red copy of $T$ or a blue copy of $\cT$. Combined with the lower bound~(\ref{eq:generallower}) given by Proposition~\ref{prop:generallower}, we have $R(T,\cT)=\underline{R}(T,\cT)=2t_1-1=\max\{n+\tau_2,2t_1\}-1$, as required. 
\end{proof}

Like the proof of Theorem~\ref{thm:oldmain} in~\cite{MPY}, our proof of Theorem~\ref{thm:main2}, and thus of Theorem~\ref{thm:main}, also consists of two parts, the stability part and the extremal part. In the stability part, we use the regularity method to show that if the host graph $G$ contains no red copy of $T$ and no blue copy of $\cT$, then $G$ must approximate one of the lower bound constructions~\ref{low:1}--\ref{low:4} (see Definition~\ref{def:extremal}). This part of the proof follows the same 4-stage process as used in~\cite{MPY}, and in fact also allows the trees $T$ and $\cT$ to have up to linear maximum degrees. However, instead of using a key result of Haxell, \L uczak, Tingley~{\cite[Theorem 3]{HLT}} as a black box like in~\cite{MPY}, we modify both its proof and application, which crucially allows us to bypass the most technical part in the first stage of the proof in~\cite{MPY}. In particular, after specialising to the symmetric case when $T=\cT$, we obtained a simplified proof of~{\cite[Theorem 2.2]{MPY}}, and thus of Theorem~\ref{thm:oldmain}.

In the extremal part, we show that even if $G$ approximates one of the lower bound constructions~\ref{low:1}--\ref{low:4}, a red copy of $T$ or a blue copy of $\cT$ can still be found. This part of the proof is much simpler than the corresponding one in~\cite{MPY}, as we are not aiming for linear maximum degrees. 

The rest of the paper is organised as follows. In Section~\ref{sec:prelim}, we gather all the useful preliminary results that will be needed later. We then carry out the stability part and the extremal part of the proof of Theorem~\ref{thm:main2} in Section~\ref{sec:stability} and Section~\ref{sec:extremal}, respectively. The examples proving Theorem~\ref{thm:counterex1} and Theorem~\ref{thm:counterex2} are constructed in Section~\ref{sec:counter}. Finally, we discuss some related results and open questions in Section~\ref{sec:conclude}. 

\section{Preliminaries}\label{sec:prelim}
In this section, we gather the preliminary results that will be used in the main proofs later on. We begin with some results on concentration in Section~\ref{sec:concentration}, on matchings in Section~\ref{sec:matching}, and on trees in Section~\ref{sec:tree}. We then recall some basic facts about the regularity method in Section~\ref{sec:regularity}, before recording in Section~\ref{sec:ems} the five tree embedding methods using regularity that we borrow from~\cite{MPY}. Finally, in Section~\ref{sec:weight}, we discuss weight functions on digraphs, which is the main tool we borrow from~\cite{HLT} in order to begin the stability part of our proof.

\subsection{Concentration results}\label{sec:concentration}
We record here two well-known concentration inequalities. 
\begin{lemma}[Azuma's Inequality~{\cite[Lemma 4.2]{W}}]\label{lemma:azuma}
Let $X_1,\ldots, X_n$ be a sequence of random variables, such that for each $i\in[n]$, there exist constants $a_i\in\mathbb{R}$ and $c_i>0$ with $|X_i-a_i|\leq c_i$.
\begin{itemize}
    \item If $\mathbb{E}[X_i\mid X_1,\ldots,X_{i-1}]\geq a_i$ for every $i\in[n]$, then for every $t>0$, \[\textstyle\mathbb{P}\left(\sum_{i=1}^n(X_i-a_i)\leq-t\right)\leq\exp\left(-\frac{t^2}{2\sum_{i=1}^nc_i^2}\right).\]
    \item If $\mathbb{E}[X_i\mid X_1,\ldots,X_{i-1}]\leq a_i$ for every $i\in[n]$, then for every $t>0$, \[\textstyle \mathbb{P}\left(\sum_{i=1}^n(X_i-a_i)\geq t\right)\leq\exp\left(-\frac{t^2}{2\sum_{i=1}^nc_i^2}\right).\]
\end{itemize}
\end{lemma}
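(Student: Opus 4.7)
The plan is to prove this via the standard Chernoff--Cram\'er exponential moment argument adapted to the (sub/super)martingale setting. Setting $Y_i = X_i - a_i$, the two bullets are equivalent under the substitution $Y_i \leftrightarrow -Y_i$, which simultaneously flips the sign of the conditional-expectation hypothesis and the direction of the deviation. So first I would reduce to the upper-tail case: $|Y_i| \leq c_i$, $\mathbb{E}[Y_i \mid X_1,\ldots,X_{i-1}] \leq 0$, and I want $\mathbb{P}(\sum_i Y_i \geq t) \leq \exp(-t^2/(2\sum_i c_i^2))$.

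Next I would apply Markov's inequality in exponential form: for any $\lambda > 0$,
\[
\mathbb{P}\!\left(\sum_{i=1}^n Y_i \geq t\right) \leq e^{-\lambda t}\, \mathbb{E}\!\left[\exp\!\left(\lambda \sum_{i=1}^n Y_i\right)\right].
\]
The heart of the argument is a one-step conditional MGF bound: writing $\mathcal{F}_{i-1} = \sigma(X_1,\ldots,X_{i-1})$ and $\mu_i = \mathbb{E}[Y_i \mid \mathcal{F}_{i-1}] \leq 0$, I claim that $\mathbb{E}[e^{\lambda Y_i} \mid \mathcal{F}_{i-1}] \leq e^{\lambda^2 c_i^2/2}$. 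The recentered variable $Z_i := Y_i - \mu_i$ is conditionally mean-zero and lies in an interval of length $2c_i$; Hoeffding's lemma (obtained by convexity-bounding $e^{\lambda z}$ by the secant joining the endpoints of its range, taking expectations, and optimising a one-variable expression in $\lambda$) then gives $\mathbb{E}[e^{\lambda Z_i} \mid \mathcal{F}_{i-1}] \leq e^{\lambda^2 c_i^2/2}$. Since $\mu_i \leq 0$ and $\lambda > 0$, we have $e^{\lambda \mu_i} \leq 1$, so the claim follows by writing $e^{\lambda Y_i} = e^{\lambda \mu_i} e^{\lambda Z_i}$.

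Finally, I would iterate using the tower property. Setting $S_k := \sum_{i=1}^k Y_i$,
\[
\mathbb{E}[e^{\lambda S_n}] \;=\; \mathbb{E}\!\left[e^{\lambda S_{n-1}}\, \mathbb{E}[e^{\lambda Y_n} \mid \mathcal{F}_{n-1}]\right] \;\leq\; e^{\lambda^2 c_n^2 / 2}\, \mathbb{E}[e^{\lambda S_{n-1}}],
\]
so by induction $\mathbb{E}[e^{\lambda S_n}] \leq \exp\!\bigl(\tfrac{\lambda^2}{2}\sum_i c_i^2\bigr)$. Substituting back into the Markov bound and optimising $\exp\!\bigl(-\lambda t + \tfrac{\lambda^2}{2}\sum_i c_i^2\bigr)$ at the minimiser $\lambda = t/\sum_i c_i^2$ yields the desired $\exp(-t^2/(2\sum_i c_i^2))$, and the lower-tail bullet follows by the reduction above. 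The only step requiring real care is Hoeffding's lemma; the martingale component and the induction are otherwise routine bookkeeping, which is why this is stated as a black-box concentration inequality rather than proved in the paper.
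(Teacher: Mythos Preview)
Your argument is correct and is the standard Chernoff--Cram\'er/Hoeffding route to Azuma's inequality. The paper itself does not prove this lemma: it is merely recorded as a known concentration inequality with a citation, so there is no ``paper's own proof'' to compare against. Your closing remark that it is treated as a black box is exactly right.
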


\begin{lemma}[Chernoff Bound~{\cite[Corollary 2.3, Theorem 2.10]{JLR}}]\label{lemma:chernoff}
Let $X$ be either a binomial random variable or a hypergeometric random variable. Then, for every $0<\eps\le 3/2$,
\[\mathbb{P}\left(|X-\mathbb E[X]|\ge \eps\mathbb E[X]\right)\le 2\exp(-\eps^2\mathbb{E}[X]/3).\]
\end{lemma}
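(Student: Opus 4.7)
The plan is to use the standard exponential moment (Chernoff--Cram\'er) method: first settle the binomial case, then reduce the hypergeometric case to it via Hoeffding's comparison theorem. The strategy is purely analytic; no combinatorial input is needed beyond knowing the mean.

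For the binomial case, write $X=\sum_{i=1}^n X_i$ with the $X_i$ independent Bernoulli variables of parameters $p_i$, and set $\mu=\mathbb{E}[X]=\sum_i p_i$. For any $t>0$, Markov's inequality applied to $e^{tX}$ gives
\[
\mathbb{P}(X\ge(1+\eps)\mu)\le e^{-t(1+\eps)\mu}\mathbb{E}[e^{tX}]=e^{-t(1+\eps)\mu}\prod_{i=1}^n\mathbb{E}[e^{tX_i}].
\]
Using $\mathbb{E}[e^{tX_i}]=1+p_i(e^t-1)\le\exp(p_i(e^t-1))$ and independence, the product is at most $\exp(\mu(e^t-1))$. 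Optimizing by taking $t=\ln(1+\eps)$ yields the classical Chernoff inequality $\mathbb{P}(X\ge(1+\eps)\mu)\le\bigl(e^{\eps}/(1+\eps)^{1+\eps}\bigr)^{\mu}$. The lower tail is treated identically using $t<0$ and gives the even stronger bound $\mathbb{P}(X\le(1-\eps)\mu)\le\exp(-\eps^2\mu/2)$. A union bound over the two tails produces the factor of $2$.

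To convert the upper-tail bound into the clean form $\exp(-\eps^2\mu/3)$, it suffices to verify the scalar inequality $\eps-(1+\eps)\ln(1+\eps)\le-\eps^2/3$ on $0<\eps\le 3/2$. This is a one-variable calculus check: the difference $f(\eps)=(1+\eps)\ln(1+\eps)-\eps-\eps^2/3$ has $f(0)=0$ and $f'(\eps)=\ln(1+\eps)-2\eps/3$, which is non-negative on the relevant range (by comparing the Taylor series of $\ln(1+\eps)$ termwise against $2\eps/3$ for small $\eps$ and checking the endpoint $\eps=3/2$ directly). This is the one step that pins down the cutoff $\eps\le 3/2$ in the hypothesis, and it is where essentially all of the technical work lies.

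For the hypergeometric case, I would cite Hoeffding's classical comparison theorem: if $Y$ is hypergeometric with mean $\mu$ and $Z$ is binomial with the same mean, then $\mathbb{E}[\phi(Y)]\le\mathbb{E}[\phi(Z)]$ for every convex $\phi$. Applied with $\phi(x)=e^{tx}$, this bounds the MGF of $Y$ by that of $Z$, so the binomial Chernoff calculation above carries over verbatim. The main obstacle, as noted, is the scalar inequality calibrating the constants; the probabilistic part is completely mechanical.
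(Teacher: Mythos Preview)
The paper does not prove this lemma; it is quoted verbatim from Janson--\L uczak--Ruci\'nski with a citation and used as a black box. So there is no ``paper's own proof'' to compare against, and your outline is essentially the standard textbook argument that one would find in that reference.

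One small correction to your sketch: the claim that $f'(\eps)=\ln(1+\eps)-2\eps/3$ is non-negative on all of $(0,3/2]$ is false (at $\eps=3/2$ one has $\ln(5/2)\approx 0.916<1$). What is true is that $f'$ is first positive and then negative on this interval (since $f''(\eps)=1/(1+\eps)-2/3$ changes sign at $\eps=1/2$), so $f$ increases and then decreases; combined with $f(0)=0$ and the endpoint check $f(3/2)=\tfrac{5}{2}\ln\tfrac{5}{2}-\tfrac{9}{4}>0$, this gives $f\ge 0$ on $[0,3/2]$ as required. Your parenthetical ``checking the endpoint $\eps=3/2$ directly'' is the right instinct, but the monotonicity claim preceding it should be replaced by the unimodality argument above.
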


\subsection{Matchings}\label{sec:matching}
In many of our later tree embedding arguments, we will first embed all but a small set of vertices with degrees 1 or 2 in $T$. To embed these low degree vertices at the end, we use a Hall type matching argument via the following well-known Hall's Theorem and its generalisation. The conditions in Lemma~\ref{lemma:Hall} and Lemma~\ref{lemma:hallmatching} will both be referred to as Hall's condition.
\begin{lemma}[Hall's Theorem {\cite[Theorem 1]{H}}]\label{lemma:Hall}
Let $G$ be a bipartite graph with bipartition classes $A$ and $B$. If $|N(I)|\ge |I|$ for every $I\subset A$, then $G$ contains a matching covering all vertices in $A$.
\end{lemma}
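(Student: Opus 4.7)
The plan is to prove Hall's theorem by induction on $|A|$. The base case $|A|=1$ is immediate since Hall's condition forces $|N(\{a\})| \geq 1$, so there is at least one edge available to match the unique vertex of $A$. For the inductive step, I would split into two cases based on how tight Hall's condition is on proper non-empty subsets of $A$, which is the classical and cleanest route.

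In the \emph{slack case}, every non-empty proper subset $I \subsetneq A$ satisfies $|N(I)| \geq |I|+1$. Here I would pick an arbitrary $a \in A$, pick any neighbor $b$ of $a$ (which exists since $|N(\{a\})| \geq 1$), and match $a$ to $b$. After deleting $\{a,b\}$, Hall's condition is preserved on $A\setminus\{a\}$: any $I \subseteq A\setminus\{a\}$ loses at most one neighbor, namely $b$, and the slack bound $|N_G(I)| \geq |I|+1$ absorbs this loss. Applying the induction hypothesis to the reduced bipartite graph produces a matching saturating $A \setminus \{a\}$, which together with the edge $ab$ covers all of $A$.

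In the \emph{tight case}, there exists a non-empty proper $I_0 \subsetneq A$ with $|N(I_0)|=|I_0|$. Here I would first apply the induction hypothesis to the bipartite subgraph induced on $(I_0, N(I_0))$, which inherits Hall's condition directly from $G$ since the neighborhood of any $I \subseteq I_0$ is unchanged, to obtain a matching $M_1$ saturating $I_0$. Next I would verify that the bipartite graph on $(A \setminus I_0, B \setminus N(I_0))$ also satisfies Hall's condition: for any $J \subseteq A \setminus I_0$, applying the original hypothesis to $I_0 \cup J$ gives
\[
|N_G(I_0 \cup J)| \geq |I_0|+|J|,
\]
and since neighbors of $J$ outside $N(I_0)$ all lie in $B \setminus N(I_0)$, this yields $|N_G(J) \setminus N(I_0)| \geq |J|$. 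Induction then supplies a matching $M_2$ saturating $A \setminus I_0$, and $M_1 \cup M_2$ is the desired matching covering $A$.

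I do not anticipate any genuine obstacle, as Hall's theorem is foundational and the inductive scheme above is tightly structured. The only two verifications requiring any care are the preservation of Hall's condition after deleting $\{a,b\}$ in the slack case and after deleting $I_0 \cup N(I_0)$ in the tight case, both of which reduce to a one-line application of the hypothesis to a carefully chosen superset.
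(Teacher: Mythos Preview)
Your proof is correct; this is the classical inductive proof of Hall's theorem. Note, however, that the paper does not actually prove this statement: Lemma~\ref{lemma:Hall} is simply cited from Hall's original paper and used as a black box, so there is no ``paper's own proof'' to compare against.
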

\begin{lemma}[{\cite[Corollary 11]{Bo}}]\label{lemma:hallmatching}
Let $G$ be a bipartite graph with bipartition classes $A$ and $B$, and let $(f_a)_{a\in A}$ be a tuple of non-negative integers indexed by elements of $A$. Suppose that $|N(I)|\geq\sum_{a\in I}f_a$ for every $I\subset A$. Then, there exists a collection of vertex-disjoint stars $(F_a)_{a\in A}$ in $G$, such that for each $a\in A$, $F_a$ is centred at $a$ and has exactly $f_a$ leaves.
\end{lemma}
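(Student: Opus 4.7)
The plan is to reduce this to the ordinary Hall's Theorem (Lemma~\ref{lemma:Hall}) by the classical ``vertex multiplication'' trick. First I would construct an auxiliary bipartite graph $G'$ on vertex classes $A'\cup B$, where $A'$ is obtained from $A$ by replacing each $a\in A$ with $f_a$ distinct clones $a^{(1)},\dots,a^{(f_a)}$ (vertices $a$ with $f_a=0$ disappear entirely). Every clone of $a$ is joined in $G'$ to exactly the same neighbourhood $N_G(a)\subseteq B$ that $a$ had in $G$. Note that $|A'|=\sum_{a\in A}f_a$.

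Next I would verify that $G'$ satisfies the hypothesis of Hall's Theorem, namely that $|N_{G'}(J)|\ge |J|$ for every $J\subseteq A'$. Given such a $J$, let $I=\{a\in A:\text{some clone of }a\text{ lies in }J\}$. Since clones of a common $a$ share the same neighbourhood, $N_{G'}(J)=N_G(I)$, and by the hypothesis of the lemma
\[
|N_{G'}(J)|=|N_G(I)|\ge\sum_{a\in I}f_a\ge |J|,
\]
the last inequality holding because $J$ contains at most $f_a$ clones of each $a\in I$. Thus Hall's condition holds on $G'$, and Lemma~\ref{lemma:Hall} yields a matching $M$ in $G'$ covering every vertex of $A'$.

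Finally, I would translate $M$ back into the desired family of stars. For each $a\in A$, let $F_a$ be the star centred at $a$ whose leaves are the $f_a$ vertices in $B$ matched to the $f_a$ clones of $a$ under $M$. Since $M$ is a matching, the leaves used by different stars are distinct and no leaf coincides with any centre in $A$, so the stars $(F_a)_{a\in A}$ are pairwise vertex-disjoint. Each $F_a$ has exactly $f_a$ leaves by construction, which is what the lemma demands.

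There is essentially no obstacle: the only subtlety is making sure the inequality $\sum_{a\in I}f_a\ge |J|$ in the Hall verification step is not circular, which it is not, because $I$ is defined as the preimage of $J$ under the cloning map and so $|J|$ is bounded by the total number of clones over $I$. Any proof that chose to argue directly (e.g.\ by induction on $\sum_a f_a$, mimicking a standard proof of Hall's Theorem) would work too, but the reduction above is the cleanest and makes the statement an immediate consequence of Lemma~\ref{lemma:Hall}.
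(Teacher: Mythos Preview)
Your proof is correct. Note, however, that the paper does not actually give its own proof of this lemma: it is stated with a citation to \cite[Corollary 11]{Bo} and no argument is supplied. Your vertex-cloning reduction to Hall's Theorem is the standard textbook derivation of this defect version, and there is nothing to compare it against in the paper itself.
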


We also use the following Cascading Lemma later to obtain a large complete bipartite graph in one colour whenever we are given a maximum matching in the other colour. 
\begin{lemma}[Cascading Lemma {\cite[Lemma 5.10]{MPY}}]\label{lemma:improvemaximummatching}
Let $G$ be a bipartite graph with bipartition classes $A$ and $B$, and let $M$ be a maximum matching in $G$. Let $A_M=A\cap V(M)$, $B_M=B\cap V(M)$, $A'=A\setminus A_M$, and $B'=B\setminus B_M$. Then, $A_M$ and $B_M$ can be partitioned as $A_M=A^+\cup A^-\cup \overline{A}$ and $B_M=B^+\cup B^-\cup \overline{B}$ such that the following hold.
\begin{itemize}
    \item $M$ matches vertices in $A^+$ with vertices in $B^-$, vertices in $A^-$ with vertices in $B^+$, and vertices in $\overline{A}$ with vertices in $\overline{B}$.
    \item $G[A'\cup A^-,B'\cup B^-\cup\overline{B}]$ and $G[A'\cup A^-\cup\overline{A},B'\cup B^-]$ are both empty graphs.
\end{itemize}
\end{lemma}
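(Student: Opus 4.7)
The plan is to mimic the classical K\"onig--Egerv\'ary / Hungarian analysis of alternating paths with respect to a maximum matching. Starting from the unmatched vertices $A'$ and $B'$, I would grow alternating trees and classify each matched vertex by whether it is reachable from the $A$-side, the $B$-side, both, or neither; this classification will give exactly the required partition.

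Concretely, let $A^-\subseteq A_M$ be the set of matched $A$-vertices reachable from some vertex of $A'$ by an alternating path with respect to $M$ (necessarily of even length, ending in an $M$-edge), and let $B^+\subseteq B_M$ be the set of matched $B$-vertices reachable from $A'$ (via an alternating path of odd length, ending in a non-$M$-edge). Symmetrically, let $A^+\subseteq A_M$ and $B^-\subseteq B_M$ consist of the vertices reachable from $B'$, and set $\overline{A}=A_M\setminus(A^+\cup A^-)$ and $\overline{B}=B_M\setminus(B^+\cup B^-)$.

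Two facts follow from standard matching theory. First, since $M$ is maximum, no alternating walk from $A'$ to $B'$ can begin and end with non-$M$-edges, since by the usual shortest-walk argument such a walk would yield an augmenting path. This forces $A^+\cap A^-=\emptyset$ and $B^+\cap B^-=\emptyset$, because a vertex lying in both classes could be used as a splice point between alternating paths originating at $A'$ and at $B'$ to produce exactly such a walk. Second, extending an alternating path by the unique $M$-edge at its endpoint preserves reachability and toggles parity, so $M$ restricts to a perfect matching between $A^-$ and $B^+$ and between $A^+$ and $B^-$; the leftover matched vertices $\overline{A}$ are then paired with $\overline{B}$ under $M$, as required.

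The main remaining work is verifying the two emptiness statements, and each potential forbidden edge $(a,b)$ falls into one of two templates. In the first template, $(a,b)$ extends an existing alternating path from $A'$ (or from $B'$) by one more step, showing that $b$ (respectively $a$) ought to lie in $B^+$ (respectively $A^+$); this rules out, for instance, edges from $A'\cup A^-$ into $\overline{B}$ and edges from $\overline{A}$ into $B^-$. In the second template, $(a,b)$ splices an alternating path from $A'$ to $a$ with one from $B'$ to $b$ into an alternating walk from $A'$ to $B'$ whose first and last edges lie outside $M$, contradicting the maximality of $M$; this rules out the mixed cases such as an edge between $A^-$ and $B^-$, or between $A'$ and $B^-$. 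Running through all edge types in the two claimed empty bipartite graphs, each one is killed by an instance of one of these two templates. The main subtlety to keep straight is tracking which edge of each alternating path lies in $M$ and which does not, so that the parity of edges at the splice point is correct and the concatenated walk is genuinely alternating; beyond that bookkeeping, the argument is a routine case check.
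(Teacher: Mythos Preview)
Your alternating-path classification is correct and is the standard proof of this lemma; the paper itself does not supply a proof but simply cites it from \cite{MPY}, so there is no in-paper argument to compare against. The approach you sketch---growing alternating forests from $A'$ and from $B'$, using maximality of $M$ to rule out splices that would create augmenting walks, and then checking each edge type---is exactly the intended one, and your two templates cover all the required cases.
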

\subsection{Trees}\label{sec:tree}
In this subsection, we collect several elementary but useful results about trees. The first one says that every tree either has many bare paths or many leaves. In what follows, a path contained in a tree is called a \textit{bare path} if all of its internal vertices have degree exactly 2. 
\begin{lemma}[{\cite[Lemma 2.1]{K}}]\label{lemma:paths-leaf}
Let $k,\ell, n$ be positive integers. Then, every $n$-vertex tree $T$ either contains at least $\ell$ leaves, or contains a collection of at least $\frac{n}{k+1}-(2\ell-2)$ vertex-disjoint bare paths of length $k$.
\end{lemma}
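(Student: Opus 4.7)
The plan is to suppose that $T$ has at most $\ell-1$ leaves and then exhibit at least $\frac{n}{k+1}-(2\ell-2)$ vertex-disjoint bare paths of length $k$. The first step is a standard degree-sum identity: writing $L$ for the number of leaves and $B$ for the number of branch vertices (those of degree $\geq 3$), the inequality
\[2(n-1) \;=\; \sum_{v\in V(T)} \deg_T(v) \;\geq\; L \,+\, 2(n-L-B) \,+\, 3B\]
rearranges to $B\leq L-2$. Hence, under the assumption $L\leq \ell-1$, the set $W$ of non-degree-$2$ vertices of $T$ satisfies $|W|=L+B\leq 2L-2\leq 2\ell-4$.

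Next, I would partition $E(T)$ into its maximal bare paths: each edge of $T$ lies in a unique such path, whose endpoints are in $W$ and whose internal vertices all have degree $2$ in $T$. Letting the lengths of these paths be $p_1,\dots,p_m$, we have $\sum_{i} p_i = n-1$, and $m$ equals the number of edges of the tree obtained from $T$ by suppressing all degree-$2$ vertices, so $m = |W|-1 \leq 2\ell-5$. From each maximal bare path on $p_i+1$ vertices, one can greedily partition the vertex sequence into consecutive blocks of length $k+1$ and so carve out $\lfloor (p_i+1)/(k+1)\rfloor$ vertex-disjoint subpaths of length $k$, each of which is itself a bare path of $T$ (its internal vertices inherit degree $2$ in $T$). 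Summing and using $\lfloor x\rfloor\geq x-1$ yields a total of at least
\[\sum_{i=1}^{m}\frac{p_i+1}{k+1} - m \;=\; \frac{n-1+m}{k+1} - m \;\geq\; \frac{n}{k+1} - m \;\geq\; \frac{n}{k+1} - (2\ell-2),\]
which is the desired bound.

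The only delicate point is the floor-function bookkeeping to match the constant $2\ell-2$ exactly. This is absorbed by the two units of slack in $B\leq L-2$ and by the crude estimate $m\leq |W|-1$; any one of these could be tightened, but the stated form is already comfortably achieved. In short, this lemma presents no genuine obstacle: it is a pure counting argument once the tree is decomposed into its maximal bare paths, and the main value of the statement is simply to package the dichotomy (many leaves versus many long bare paths) in a form convenient for later embedding arguments.
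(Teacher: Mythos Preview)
The paper does not prove this lemma; it is quoted directly from Krivelevich without proof. Your argument follows the standard route and is correct in outline, but there is one small gap: the subpaths you carve out of \emph{different} maximal bare paths need not be vertex-disjoint from one another. A vertex $w\in W$ of degree $d(w)$ is an endpoint of $d(w)$ distinct maximal bare paths, and since your greedy partition of each maximal bare path begins at $v_0\in W$, the first extracted subpath from each of these contains $w$. So the collection you produce is only vertex-disjoint within each maximal bare path, not globally.

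This is easily repaired using the slack you already noted. Root $T$ at a leaf, and within each maximal bare path start the greedy extraction from the second vertex, skipping the endpoint nearer the root. Every non-root vertex of $W$ is then the bottom endpoint of exactly one maximal bare path and hence lies in at most one extracted subpath, while the root is never a bottom endpoint; so the resulting subpaths are genuinely vertex-disjoint. This yields
\[
\sum_{i=1}^m \left\lfloor \frac{p_i}{k+1}\right\rfloor \;\geq\; \frac{n-1}{k+1} - m \;\geq\; \frac{n}{k+1} - 1 - (2\ell-5) \;\geq\; \frac{n}{k+1} - (2\ell-2),
\]
which is the desired bound.
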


The next result says that given $t_1\geq t_2$, there is always a tree with bipartition class sizes $t_1$ and $t_2$, whose maximum degree is not much bigger than $t_1/t_2$. 
\begin{lemma}\label{lemma:maxdegbound}
Let $t_1\geq t_2\geq1$ be integers. Then, there exists a tree $T$ with bipartition class sizes $t_1$ and $t_2$, and $\Delta(T)\leq\ceil{(t_1+t_2+1)/t_2}\leq t_1/t_2+2$.
\end{lemma}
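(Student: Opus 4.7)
The plan is to give an explicit construction: I would build a caterpillar whose spine passes through all vertices on the smaller side of the bipartition, and then distribute the remaining leaves as evenly as possible. Let $A=\{a_1,\dots,a_{t_1}\}$ and $B=\{b_1,\dots,b_{t_2}\}$ be the intended bipartition classes. The degenerate case $t_2=1$ is immediate: the star $K_{1,t_1}$ has the right bipartition and maximum degree $t_1\leq\lceil(t_1+2)/1\rceil$, so I would dispatch it separately.

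For $t_2\ge 2$, I would form a backbone path
\[b_1-a_1-b_2-a_2-\cdots-a_{t_2-1}-b_{t_2}\]
that alternates between the two sides, using all of $B$ and the first $t_2-1$ vertices of $A$. This leaves $t_1-(t_2-1)=t_1-t_2+1$ unused vertices of $A$, which I would attach as leaves to the vertices of $B$, distributed as evenly as possible so that each $b_i$ receives at most $\lceil(t_1-t_2+1)/t_2\rceil$ of them. The resulting graph $T$ is connected (each new leaf is attached to the backbone) and has $t_1+t_2-1$ edges, so it is a tree, and the bipartition $(A,B)$ is preserved because leaves on $A$-side are only adjacent to vertices on $B$-side. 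This handles existence with the correct bipartition class sizes.

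It remains to bound the maximum degree. Every vertex of $A$ has degree at most $2$ (either a backbone interior vertex of degree $2$ or a leaf of degree $1$). Each $b_i$ has backbone degree $1$ or $2$, plus at most $\lceil(t_1-t_2+1)/t_2\rceil$ extra leaves, so
\[\Delta(T)\leq 2+\left\lceil\tfrac{t_1-t_2+1}{t_2}\right\rceil.\]
A short ceiling computation gives $2+\lceil(t_1-t_2+1)/t_2\rceil=\lceil(t_1+t_2+1)/t_2\rceil$, which is the first claimed bound. The second inequality $\lceil(t_1+t_2+1)/t_2\rceil\leq t_1/t_2+2$ follows by writing $t_1=qt_2+r$ with $0\leq r<t_2$: the left-hand side equals $q+2=\lfloor t_1/t_2\rfloor+2\leq t_1/t_2+2$.

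There is essentially no obstacle here beyond bookkeeping; the only thing to be careful about is verifying the ceiling identity in the degenerate regime $t_1=t_2$ (so that only one extra leaf per $B$-vertex is available on average) and the case $t_2=1$ handled separately. The construction is tight in the sense that some vertex of $B$ must have degree at least $\lceil(t_1+t_2-1)/t_2\rceil$ by pigeonhole on the $t_1+t_2-1$ edges incident to $B$, so the upper bound cannot be improved by more than an additive constant.
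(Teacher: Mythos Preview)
Your proof is correct and essentially identical to the paper's. Both construct a caterpillar whose spine is a path on $2t_2-1$ vertices and attach the remaining $t_1-t_2+1$ vertices as leaves, distributed as evenly as possible among the $t_2$ spine vertices on the smaller side; the paper does not separate out the case $t_2=1$, but otherwise the construction and degree computation match yours line for line.
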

\begin{proof}
Let $d=\ceil{(t_1-t_2+1)/t_2}$. Let $P$ be a path with $2t_2-1$ vertices $v_1,v_2,\ldots,v_{2t_2-1}$. Then, for each $i\in[t_2]$, attach either $d-1$ or $d$ leaves to $v_{2i-1}$, so that a total of $t_1-t_2+1$ leaves are attached. Let $T$ be the resulting tree. It is easy to see that $T$ has bipartition class sizes $t_1$ and $t_2$, and $\Delta(T)\leq d+2=\ceil{(t_1+t_2+1)/t_2}\leq t_1/t_2+2$, as required. 
\end{proof}

The next lemma says that every tree must have a certain number of leaves in its larger bipartition class. 
\begin{lemma}[{\cite[Lemma 2.10]{MPY}}]\label{lemma:leaves:V1}If $T$ is an $n$-vertex tree with bipartition classes $V_1$ and $V_2$ such that $|V_1|=t_1$, $|V_2|=t_2$, and $t_1\ge t_2$, then $T$ contains at least $t_1-t_2+1$ leaves in $V_1$.
\end{lemma}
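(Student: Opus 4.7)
The plan is a one-line double-counting argument. Since $T$ is a tree, it has $n-1 = t_1+t_2-1$ edges, and because the bipartition classes $V_1$ and $V_2$ are independent sets, every edge contributes exactly $1$ to the degree sum on the $V_1$ side. Hence
\[
\sum_{v\in V_1}\deg_T(v) \;=\; e(T) \;=\; t_1+t_2-1.
\]

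Next I would split $V_1$ into its leaves $L\subseteq V_1$ and its non-leaves $V_1\setminus L$. Every non-leaf has degree at least $2$, and every leaf has degree exactly $1$, so
\[
|L| + 2(t_1-|L|) \;\le\; \sum_{v\in V_1}\deg_T(v) \;=\; t_1+t_2-1.
\]
Rearranging gives $|L|\ge t_1-t_2+1$, which is the desired bound.

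There is no real obstacle here; the only thing to be mildly careful about is the degenerate case $t_2=0$ (which cannot happen for a tree on $n\ge 1$ vertices unless $n=1$, in which case $T$ is a single isolated vertex, $t_1=1$, and the unique vertex is trivially a "leaf"). Apart from checking that this boundary case behaves, the lemma is an immediate consequence of the handshake identity applied to one side of the bipartition.
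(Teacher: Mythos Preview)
Your proof is correct; this is precisely the standard double-counting argument via the degree sum on one side of the bipartition. The paper does not actually prove this lemma---it is quoted from \cite{MPY} without proof---so there is nothing to compare against, but your argument is the natural one.
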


Two subtrees $T_1$ and $T_2$ of $T$ \textit{decompose} $T$ if $T_1$ and $T_2$ share exactly one common vertex, and $V(T_1)\cup V(T_2)=V(T)$. The next two results are about finding useful decompositions of $T$. 
\begin{lemma}[{\cite[Corollary 2.14]{MPY}}]\label{lemma:splittree} 
Let $T$ be an $n$-vertex tree. Then, there exist subtrees $T_1$ and $T_2$ decomposing $T$, such that $\ceil{n/3}\leq|T_1|\leq|T_2|\leq\ceil{2n/3}$.
\end{lemma}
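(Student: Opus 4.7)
The proof plan is a centroid-type argument carried out in two stages.

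First, I would invoke the classical centroid theorem to produce a vertex $v \in V(T)$ such that every component of $T - v$ has at most $\lfloor n/2 \rfloor$ vertices. This is obtained by starting at an arbitrary vertex $u$ and, while some component of $T - u$ has more than $n/2$ vertices, moving $u$ one step into that component; since the number of vertices strictly on the other side grows at each step, the process terminates at the desired $v$.

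Second, sort the components of $T - v$ as $C_1, \ldots, C_k$ with $|C_1| \geq \cdots \geq |C_k|$. Any partition of these components into two groups $P_1, P_2$ gives subtrees $T_1 = T[\{v\} \cup \bigcup_{C \in P_1} V(C)]$ and $T_2 = T[\{v\} \cup \bigcup_{C \in P_2} V(C)]$ that decompose $T$: each $C_i$ is attached to $v$ in $T$, so both $T_1, T_2$ are connected; they share exactly $v$; and their union is $V(T)$. So it suffices to choose the partition so that the size constraints hold.

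I would then split into two cases. If $|C_1| \geq \lceil n/3 \rceil$, place $C_1$ alone in $P_1$; because $|C_1| \leq \lfloor n/2 \rfloor$, both $|T_1| = |C_1| + 1$ and $|T_2| = n - |C_1|$ lie in $[\lceil n/3 \rceil, \lceil 2n/3 \rceil]$, and swapping labels if necessary ensures $|T_1| \leq |T_2|$. If $|C_1| < \lceil n/3 \rceil$, I would use a prefix-sum argument: setting $\sigma_j = |C_1| + \cdots + |C_j|$ with $\sigma_0 = 0$ and $\sigma_k = n - 1$, consecutive $\sigma$'s differ by at most $\lceil n/3 \rceil - 1$, which one checks to be at most $\lceil 2n/3 \rceil - \lceil n/3 \rceil$. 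Hence the $\sigma_j$ cannot skip over the interval $[\lceil n/3 \rceil - 1,\ \lceil 2n/3 \rceil - 1]$, so there is some $j$ with $\sigma_j$ in this interval; taking $P_1 = \{C_1, \ldots, C_j\}$ then yields $|T_1| \in [\lceil n/3 \rceil, \lceil 2n/3 \rceil]$ and $|T_2| = n+1-|T_1|$ in the same range, with labels swapped if needed.

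The hard part is the boundary bookkeeping with floors and ceilings when $n \equiv 0 \pmod 3$: here several of the relevant quantities coincide and one must verify carefully that the gap bound on the $\sigma_j$ really is tight enough to force a valid prefix sum. The few tiny base cases ($n \leq 3$) can be verified by hand.
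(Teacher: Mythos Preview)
The paper does not prove this lemma; it is quoted from \cite{MPY} (Corollary~2.14 there) and used as a black box. So there is no in-paper argument to compare against, and your centroid-plus-prefix-sum approach is a perfectly standard and essentially correct way to establish the result.

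That said, your Case~2 has a genuine (small) arithmetic gap exactly where you flagged it. You claim that landing $\sigma_j$ in $[\lceil n/3\rceil-1,\ \lceil 2n/3\rceil-1]$ suffices, because then $|T_1|=\sigma_j+1\in[\lceil n/3\rceil,\lceil 2n/3\rceil]$ and $|T_2|=n+1-|T_1|$ lies in the same range. The second assertion is false when $3\mid n$: for $n=3k$, taking $\sigma_j=k-1$ gives $|T_1|=k=\lceil n/3\rceil$ but $|T_2|=2k+1>\lceil 2n/3\rceil=2k$. The correct target for $\sigma_j$ is the intersection of the two constraints, namely
\[
\sigma_j\in\bigl[\max(\lceil n/3\rceil-1,\ n-\lceil 2n/3\rceil),\ \min(\lceil 2n/3\rceil-1,\ n-\lceil n/3\rceil)\bigr],
\]
which for $n=3k$ is $[k,\,2k-1]$, one unit shorter than your interval. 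Your jump bound $|C_i|\le\lceil n/3\rceil-1=k-1$ is still strictly smaller than the gap needed to skip $[k,2k-1]$ (that gap is $k+1$), so the argument goes through once you use the right interval. The cases $n\equiv1,2\pmod 3$ are genuinely easier and your interval already works there. So the plan is sound; only the stated target interval in Case~2 needs tightening.
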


\begin{lemma}[{\cite[Proposition 7.4]{MPY}}]\label{lemma:bipartitesplittree}
Let $1/n\ll\mu\ll1$, and let $T$ be an $n$-vertex tree with bipartition classes $V_1$ and $V_2$ such that $|V_1|\geq 1.1|V_2|$. Then, there exist subtrees $T_1$ and $T_2$ decomposing $T$, such that $10\mu n\leq|V(T_1)\cap V_1|-|V(T_1)\cap V_2|\leq 25\mu n$.
\end{lemma}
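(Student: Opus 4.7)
The plan is to root $T$ at an arbitrary vertex $r$ and, for each $v$, consider the rooted subtree $T^v$ together with its ``imbalance'' $f_v := |V(T^v)\cap V_1| - |V(T^v)\cap V_2|$. Writing $\epsilon_v = +1$ if $v\in V_1$ and $\epsilon_v = -1$ otherwise, we have $f_v = \epsilon_v + \sum_{u\ \text{child of }v} f_u$. The hypothesis $|V_1|\geq 1.1|V_2|$ together with $|V_1|+|V_2|=n$ forces $f_r = |V_1|-|V_2| \geq n/21$, and in particular $f_r > 25\mu n$ since $\mu\ll1$. Throughout, I will use that any decomposition of $T$ corresponds to picking a vertex $w$ together with a non-trivial partition of the branches of $T$ at $w$ between $T_1$ and $T_2$.

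If some non-root $v$ already satisfies $f_v\in[10\mu n, 25\mu n]$, then I would take $T_1=T^v$ and $T_2$ to be the subgraph induced by $(V(T)\setminus V(T^v))\cup\{v\}$: these share exactly the vertex $v$, and the $T_1$-imbalance equals $f_v$, which lies in the desired range. So the substantive case is when no non-root vertex has $f_v\in[10\mu n, 25\mu n]$.

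In that case I would choose $v^*$ to be a vertex with $f_{v^*}>25\mu n$ of maximum depth (which exists because $f_r>25\mu n$). By depth-maximality and the case assumption, every proper descendant $w$ of $v^*$ satisfies $f_w<10\mu n$; in particular, each child $u$ of $v^*$ has $f_u<10\mu n$. From $f_{v^*}=\epsilon_{v^*}+\sum_u f_u>25\mu n$, the children $u$ of $v^*$ with $f_u>0$ contribute more than $25\mu n-1$ in total. I would then add these positive-$f$ children one at a time (in any order) to an initially empty set $S$, let $p_j$ denote the running sum after $j$ additions, and stop at the smallest $j$ with $p_j\geq 10\mu n-\epsilon_{v^*}$. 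Since each addition contributes strictly less than $10\mu n$, we obtain $10\mu n-\epsilon_{v^*}\leq p_j<20\mu n-\epsilon_{v^*}$, so $\epsilon_{v^*}+p_j\in[10\mu n, 25\mu n]$. The decomposition at $v^*$ with $T_1=\{v^*\}\cup\bigcup_{u\in S}V(T^u)$ then has imbalance exactly $\epsilon_{v^*}+p_j$; one checks that $S$ is a non-empty proper subset of the children of $v^*$, so the decomposition is valid.

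The main obstacle is that, in general, the function $f$ can jump abruptly along the tree (a single branch can have arbitrarily large imbalance), so a naive walk along edges or along a path cannot guarantee landing in the narrow target window of width $15\mu n$. The key idea is to choose $v^*$ as a \emph{deepest} vertex with $f_{v^*}>25\mu n$: this forces each of its children's imbalances to be strictly less than $10\mu n$, thereby capping the step sizes in the greedy growth of $S$ and preventing the running sum from overshooting the interval $[10\mu n, 25\mu n]$.
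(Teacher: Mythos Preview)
Your proof is correct. The paper does not give its own proof of this lemma; it is quoted from \cite{MPY} (Proposition~7.4) and used as a black box, so there is no in-paper argument to compare against. Your approach---rooting the tree, tracking the imbalance $f_v$ of each rooted subtree, selecting a deepest vertex $v^*$ with $f_{v^*}>25\mu n$ to force all children to have small imbalance, and then greedily accumulating children until the running imbalance lands in the target window---is a clean and standard way to obtain such a decomposition. The verifications that $S$ is non-empty (since $p_0=0<10\mu n-\epsilon_{v^*}$ and the total positive contribution exceeds $25\mu n-1$) and proper (since otherwise $\epsilon_{v^*}+p_j>25\mu n$, contradicting $\epsilon_{v^*}+p_j<20\mu n$) are both sound.
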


The final result in this subsection finds in every tree a cut with some useful properties. 
\begin{lemma}[{\cite[Proposition 7.2]{MPY}}]\label{lemma:splittreewith2vertices} 
Let $1/n\ll \eps\ll 1$. Let $T$ be an $n$-vertex tree.
Then, there is a partition $V(T)=A\cup B$ with $|A|,|B|\leq (2/3-\eps)n$ such that $T[A]$ is a tree and $\{v\in A:d_T(v,B)>0\}$ is an independent set in $T$ with size at most 2.
\end{lemma}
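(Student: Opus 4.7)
The plan is to root $T$ at an arbitrary vertex and analyze the subtree sizes $s(v) := |V(T_v)|$, writing $I := [(1/3+\eps)n, (2/3-\eps)n]$. I would first check whether some vertex $v$ satisfies $s(v) \in I$: if so, setting $A := V(T_v)$ produces a valid partition with $T[A]$ a tree and boundary $\{v\}$ (adjacent to its parent, which lies in $B$ since $s(v) < n$).

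If no such $v$ exists, let $v^*$ be the deepest vertex with $s(v^*) > (2/3-\eps)n$ (which exists because the root qualifies). By minimality and the failure of the first step, every child $w$ of $v^*$ satisfies $s(w) < (1/3+\eps)n$. Order these children as $w_1, \ldots, w_k$ by decreasing $s$-value and grow $A$ greedily from $\{v^*\}$ by successively attaching the subtrees $V(T_{w_i})$ until $|A| \geq (1/3+\eps)n$; if the resulting $|A| \leq (2/3-\eps)n$, then $A$ has boundary $\{v^*\}$ (adjacent to its parent and to each omitted $w_i$). If instead the greedy fails — meaning some step $j$ causes $|A|$ to jump from below $(1/3+\eps)n$ to above $(2/3-\eps)n$ — then $s(w_j) > (1/3-2\eps)n$. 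Since the $w_i$'s are sorted, $s(w_1), \ldots, s(w_j)$ all exceed $(1/3-2\eps)n$; the bound $\sum_{i<j} s(w_i) < (1/3+\eps)n - 1$ combined with $s(w_i) < (1/3+\eps)n$ then forces $j = 2$, yielding $s(w_1), s(w_2) \in ((1/3-2\eps)n, (1/3+\eps)n)$ with $s(w_1)+s(w_2) > (2/3-\eps)n - 1$.

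In this final case I would produce a size-$2$ boundary cut by taking
\[A := V(T) \setminus (X_1 \cup X_2),\]
where each $X_j \subseteq V(T_{w_j})$ is a ``pendant'' subset of one of two forms: (i) a subtree $V(T_{u_j})$ for some descendant $u_j$ of $w_j$, or (ii) a union of leaves of $T$ that are children of some fixed vertex $q_j \in V(T_{w_j})$. In either variant, $T[A]$ remains connected, and the boundary of $A$ contributes at most one vertex from each of $T_{w_1}$ and $T_{w_2}$ — specifically the parent of $u_j$ in variant (i), or $q_j$ in variant (ii). Since any such pair of boundary vertices (one in $T_{w_1}$, one in $T_{w_2}$) is joined in $T$ only through $v^*$, they lie at distance at least $2$ and hence are non-adjacent, producing the required independent boundary.

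The remaining task is to tune $|X_1|, |X_2|$ so that $|X_1| + |X_2| \in I$, which yields $|A|, |B| \leq (2/3-\eps)n$. The achievable values of $|X_j|$ cover a sufficiently rich set of integers in $[1, s(w_j)-1]$ through the combination of subtree-removal at varying depths in $T_{w_j}$ and leaf-removal of varying amounts; combined with $s(w_1) + s(w_2) > (2/3-\eps)n - 1$, this flexibility guarantees some choice of $(|X_1|,|X_2|)$ with sum in $I$. The main obstacle is to handle all structural configurations of $T_{w_1}, T_{w_2}$ uniformly — in particular star-like $T_{w_j}$ (for which subtree-removal only realizes $|X_j| \in \{1, s(w_j)\}$, so leaf-removal must be used) and bare-path $T_{w_j}$ (for which no leaf of $T$ is a direct child of $w_j$, so one must rely on subtree-removal at varying depths). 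The key observation is that these two removal variants together cover every structural case for the pair $(T_{w_1}, T_{w_2})$.
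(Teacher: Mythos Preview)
The paper does not prove this lemma --- it is quoted verbatim from \cite[Proposition~7.2]{MPY} --- so there is no in-paper argument to compare against; I evaluate your proposal on its own.

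Your reduction to the final configuration is correct: either some rooted subtree already has size in $I$, or the greedy at $v^*$ succeeds with boundary $\{v^*\}$, or one isolates two children $w_1,w_2$ of $v^*$ with $s(w_1),s(w_2)\in((1/3-2\eps)n,(1/3+\eps)n)$ and $s(w_1)+s(w_2)>(2/3-\eps)n-1$.

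The genuine gap is the assertion that variants (i) and (ii) together always realize some $|X_1|+|X_2|\in I$. They need not. Let each $T_{w_j}$ be a \emph{spider with legs of length two}: $w_j$ has $(s(w_j)-1)/2$ children, each with a single leaf child. Then variant (i) yields only $|X_j|\in\{1,2,s(w_j)\}$, and variant (ii) yields only $|X_j|\in\{0,1\}$ (no vertex has more than one leaf child, and $w_j$ has none). The achievable sums are therefore contained in $\{0,\dots,4\}\cup\{s(w_1)+r,\,s(w_2)+r:0\le r\le 2\}\cup\{s(w_1)+s(w_2)\}$. Since each $s(w_j)+2<(1/3+\eps)n$ while $s(w_1)+s(w_2)$ may strictly exceed $(2/3-\eps)n$, none of these need lie in $I$. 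So the ``key observation'' in your final paragraph is false as stated.

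There is also a secondary issue you elide: if $u_j=w_j$ in variant (i), the boundary vertex on that side is $v^*$, not a vertex of $T_{w_j}$, so your non-adjacency argument (``one in $T_{w_1}$, one in $T_{w_2}$'') does not apply; the pair $(v^*,w_{3-j})$ is adjacent and must be excluded by hand.

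Both problems are resolved by replacing (i) and (ii) with a single more flexible operation: choose any $p_j\in V(T_{w_j})$ and let $X_j=\bigcup_{c\in C_j}V(T_c)$ for a subset $C_j$ of the children of $p_j$. This keeps $T[A]$ connected with boundary vertex $p_j\in V(T_{w_j})$ (so the two boundary vertices lie in disjoint subtrees below $v^*$ and are automatically non-adjacent), and a standard ``walk to a heavy child, else greedily collect light children'' argument inside $T_{w_j}$ shows that for every integer $t\in[1,s(w_j)-1]$ one can realize $|X_j|\in[t,2t-1]$. Taking $|X_2|=s(w_2)-1$ (with $p_2=w_2$, $C_2$ all children) and then choosing $|X_1|$ just above $(1/3+\eps)n-s(w_2)+1$ lands the sum in $I$.
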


\subsection{Szemer\'edi's Regularity Lemma}\label{sec:regularity}
In this subsection, we give an overview on Szemer\'edi's Regularity Lemma, in particular the coloured version that we will use. 

Let $G$ be a bipartite graph with bipartition classes $A$ and $B$. For sets $X\subset A$ and $Y\subset B$, the \textit{density} between $X$ and $Y$ is defined as
\[d(X,Y)=\frac{e(X,Y)}{|X||Y|}.\]
We say $G$ is \textit{$\varepsilon$-regular} if for every $X\subset A$ and every $Y\subset B$ with $|X|\ge \varepsilon |A|$ and $|Y|\ge \varepsilon |B|$, we have $|d(X,Y)-d(A,B)|\le\varepsilon$. Furthermore, we say $G$ is \textit{$(\varepsilon,d)$-regular} if $G$ is $\varepsilon$-regular and $d(A,B)\ge d$. 
The following well-known lemma shows that regularity is somewhat preserved after taking subsets.
\begin{lemma}\label{lemma:regularity:1}
Let $\eps\leq1/4$, and let $G$ be a bipartite graph with bipartition classes $A$ and $B$ that is $(\varepsilon,d)$-regular. Suppose $X\subset A$ and $Y\subset B$ satisfy $|X|\ge\sqrt\varepsilon |A|$ and $|Y|\ge\sqrt\varepsilon|B|$, then $G[X,Y]$ is $(\sqrt\varepsilon,d-\eps)$-regular.
\end{lemma}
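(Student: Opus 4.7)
The plan is to verify the two conditions defining $(\sqrt{\eps}, d-\eps)$-regularity for the induced subgraph $G[X,Y]$ by leveraging the $(\eps,d)$-regularity of $G[A,B]$ via simple containment and the triangle inequality, exploiting the fact that $\sqrt{\eps}\cdot\sqrt{\eps}=\eps$.

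First I would establish the density bound $d(X,Y) \ge d-\eps$. Since $|X|\ge \sqrt{\eps}|A|\ge \eps|A|$ and $|Y|\ge\sqrt{\eps}|B|\ge\eps|B|$ (using $\eps\le 1/4\le 1$), the $\eps$-regularity of $G[A,B]$ applied directly to the pair $(X,Y)$ yields $|d(X,Y)-d(A,B)|\le \eps$, so $d(X,Y)\ge d(A,B)-\eps\ge d-\eps$, as needed.

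Next I would verify $\sqrt{\eps}$-regularity. Take arbitrary $X'\subset X$ and $Y'\subset Y$ with $|X'|\ge\sqrt{\eps}|X|$ and $|Y'|\ge\sqrt{\eps}|Y|$. Then
\[
|X'|\ge\sqrt{\eps}|X|\ge\sqrt{\eps}\cdot\sqrt{\eps}|A|=\eps|A|,
\]
and similarly $|Y'|\ge\eps|B|$, so $(X',Y')$ is a valid test pair for the $\eps$-regularity of $G[A,B]$, giving $|d(X',Y')-d(A,B)|\le\eps$. Combining with the earlier bound $|d(X,Y)-d(A,B)|\le\eps$ via the triangle inequality yields
\[
|d(X',Y')-d(X,Y)|\le 2\eps\le \sqrt{\eps},
\]
where the last inequality uses $\eps\le 1/4$, which is equivalent to $2\eps\le\sqrt{\eps}$. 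This establishes $\sqrt{\eps}$-regularity of $G[X,Y]$, completing the proof.

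This proof is essentially a routine book-keeping exercise with no real obstacle; the main observation is the arithmetic identity $\sqrt{\eps}\cdot\sqrt{\eps}=\eps$ that promotes subsets of relative size $\sqrt{\eps}$ inside $X,Y$ to subsets of relative size $\eps$ inside $A,B$, together with the constraint $\eps\le 1/4$ which ensures $2\eps\le\sqrt{\eps}$ so that the triangle inequality loss is absorbed into the relaxed regularity parameter.
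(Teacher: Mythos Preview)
Your proof is correct and is precisely the standard argument; the paper itself omits the proof, citing the lemma as well-known, so there is nothing further to compare.
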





The starting point of the stability part of our proof is the following colourful variant of Szemer\'edi's Regularity Lemma.
\begin{theorem}[Coloured Regularity Lemma{~\cite[Theorem~1.18]{komlos1995szemeredi}}]\label{theorem:regularity}
Let $1/k_2\ll 1/k_1\ll \varepsilon$. Every red/blue coloured graph $G$ on $n\ge k_1$ vertices contains disjoint subsets $V_1,\ldots,V_k\subset V(G)$ with $k_1\le k\le k_2$ that satisfy the following.
\begin{itemize}
    \item\label{colreg:1} $|V(G)\setminus(V_1\cup\cdots\cup V_k)|\le \varepsilon n$.
    \item\label{colreg:2} $|V_1|=\cdots=|V_k|$.
    \item\label{colreg:3} For all but at most $\varepsilon k^2$ indices $1\le i<j\le k$, both $\red{G}[V_i,V_j]$ and $\blue{G}[V_i,V_j]$ are $\varepsilon$-regular.
\end{itemize}
\end{theorem}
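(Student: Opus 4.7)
The plan is to deduce Theorem~\ref{theorem:regularity} from two applications of the classical single-colour Szemer\'edi Regularity Lemma, combined with the fact that regularity is approximately preserved when a partition is refined. Introduce auxiliary parameters $\eps''\ll\eps'\ll\eps$. First, I apply the usual Szemer\'edi Regularity Lemma to the red graph $\red{G}$ with parameter $\eps'$ and lower bound $k_1$ on the number of parts, obtaining an equitable partition $\mathcal{P}_1=\{U_1,\ldots,U_p\}$ of $V(G)$ (up to an exceptional set of size at most $\eps'|V(G)|$) with $k_1\le p\le K_1$ for some $K_1=K_1(\eps',k_1)$, such that all but at most $\eps' p^2$ pairs $(U_i,U_j)$ are $\eps'$-regular in $\red{G}$.

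Next, I apply Szemer\'edi's Regularity Lemma to $\blue{G}$ with parameter $\eps''$, but starting from $\mathcal{P}_1$ as an initial partition rather than from scratch: the standard mean-square-density increment proof of the lemma can be carried out from any initial partition and produces a refinement of it. This yields an equitable partition $\mathcal{P}_2=\{V_1,\ldots,V_k\}$ refining $\mathcal{P}_1$, with $k_1\le k\le k_2$, in which at most $\eps''k^2$ pairs are non-$\eps''$-regular in blue. Each fine pair $(V_i,V_j)$ lies inside a unique coarse pair $(U_a,U_b)$ with $|V_i|/|U_a|\approx p/k$. When $(U_a,U_b)$ was $\eps'$-regular in red and this size ratio is at least $\sqrt{\eps'}$, Lemma~\ref{lemma:regularity:1} yields $\sqrt{\eps'}$-regularity of $(V_i,V_j)$ in red. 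The at most $\eps' p^2$ red-irregular coarse pairs contribute at most $\eps' p^2\cdot(k/p)^2=\eps' k^2$ fine pairs that may fail to be red-regular. Choosing $\eps'\le\eps^2/4$ and $\eps''\le\eps/2$ then makes the total number of pairs that fail to be $\eps$-regular in either colour at most $\eps k^2$, as required.

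The main obstacle is that Lemma~\ref{lemma:regularity:1} requires the size ratio $p/k\ge\sqrt{\eps'}$, while the $k$ coming out of the second application of Szemer\'edi's lemma is a tower in $1/\eps''$ starting from $p\le K_1$, and so may far exceed $K_1/\sqrt{\eps'}$. The cleanest way around this is to replace the direct appeal to Lemma~\ref{lemma:regularity:1} by a counting argument: inside any $\eps'$-regular coarse pair $(U_a,U_b)$, defining a fine sub-pair to be \emph{bad} if its red density differs from $d_{\red{G}}(U_a,U_b)$ by more than $\sqrt{\eps'}$, one shows by contracting bad sub-pairs into two sets $X\subset U_a$ and $Y\subset U_b$ of large size that at most an $O(\sqrt{\eps'})$ fraction of the $(k/p)^2$ fine sub-pairs can be bad, which is already enough to inherit $\sqrt{\eps'}$-regularity. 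Alternatively, one can bypass the two-step reduction entirely and copy the standard energy-increment proof of Szemer\'edi's lemma with the energy replaced by the sum of the mean-square densities across both colour classes, so that non-$\eps$-regularity in either colour yields a definite energy increment and the iteration terminates in a bounded number of steps.
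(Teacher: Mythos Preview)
The paper does not give its own proof of this statement; Theorem~\ref{theorem:regularity} is quoted from the Koml\'os--Simonovits survey and used as a black box. So there is nothing to compare against, and the question is simply whether your argument is sound.

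Your second route---rerunning the mean-square-density increment proof with the energy taken to be the sum of the red and blue energies, so that irregularity in either colour forces an increment---is the standard and correct proof, and would be entirely adequate here.

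Your first route, however, has a genuine gap which your proposed patch does not close. The counting argument you sketch establishes only that for most fine pairs $(V_i,V_j)$ sitting inside a red-$\eps'$-regular coarse pair $(U_a,U_b)$, the density $d_{\red{G}}(V_i,V_j)$ lies within $\sqrt{\eps'}$ of $d_{\red{G}}(U_a,U_b)$. That is \emph{not} the same as $(V_i,V_j)$ being $\sqrt{\eps'}$-regular in red: regularity demands control of $d_{\red{G}}(X,Y)$ for all large $X\subset V_i$, $Y\subset V_j$, and $\eps'$-regularity of the coarse pair gives no information when $|X|/|U_a|$ is far below $\eps'$, which is exactly the situation once $k\gg p/\eps'$. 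Lemma~\ref{lemma:regularity:1}-style inheritance genuinely requires each fine part to occupy at least a $\sqrt{\eps'}$ fraction of its coarse part, and there is no general statement that an arbitrary equitable refinement of a regular pair has mostly regular sub-pairs. So the two-application route cannot be completed along the lines you propose; you should rely on the combined-energy argument.
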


For technical reasons, we sometimes require the sets $V_i$ to have different sizes, but do not necessarily need them to cover all but $\eps n$ vertices in $G$. As this is a minor point, we do not introduce more notation and instead use the standard term \textit{$\eps$-regular partition} under the following more relaxed definition. 
\begin{definition}\label{def:regpartition}
Let $1/n\ll\eps\ll d\leq1$, and let $G$ be a red/blue coloured graph on $n$ vertices. An \textit{$\eps$-regular partition} in $G$ is a collection of disjoint subsets $V_1,\ldots,V_k\subset V(G)$, such that for all but at most $\varepsilon k^2$ pairs of indices $1\le i<j\le k$, both $\red{G}[V_i,V_j]$ and $\blue{G}[V_i,V_j]$ are $\varepsilon$-regular. Each set $V_i$ in an $\eps$-regular partition is called a \textit{cluster}. 

Given an $\eps$-regular partition $V_1\cup\cdots\cup V_k$ in $G$, its corresponding \textit{$(\eps,d)$-reduced graph} $H$ is a red/blue coloured graph with vertex set $[k]$, such that for each $\ast\in\{\text{red},\text{blue}\}$ and any distinct $i,j\in[k]$, there is an $ij$ edge of colour $\ast$ in $H$ if and only if $G_\ast[V_i,V_j]$ is $(\eps,d)$-regular. 
\end{definition}

Note that if $V_1,\ldots,V_k$ form an $\eps$-regular partition in a red/blue coloured complete graph and $\eps\ll d\leq 1/2$, then for all but at most $\eps k^2$ pairs of indices $1\leq i<j\leq k$, there is either a red edge $ij$ or a blue edge $ij$ (or both) in the corresponding $(\eps,d)$-reduced graph $H$.

Finally, we record the following refinement result that will be used later. In what follows, a graph $H$ is \textit{$\mu$-almost complete} if $d(v)\geq(1-\mu)|H|$ for every $v\in V(H)$, and a bipartite graph $H$ with bipartition classes $U_1$ and $U_2$ is \textit{$\mu$-almost complete} if $d(v,U_{3-i})\geq(1-\mu)|U_{3-i}|$ for every $i\in[2]$ and every $v\in U_i$. 
\begin{lemma}[{\cite[Lemma 2.24]{MPY}}]\label{lemma:regularity:refine}
Let $1/k,1/m\ll\eps\ll\eta\ll\alpha\ll d\leq1$. Suppose $G$ is a graph containing disjoint subsets $V_1,\ldots,V_k\subset V(G)$, each of size $m$. Let $H$ be a graph on $[k]$ such that for every $ij\in E(H)$,  $G[V_i,V_j]$ is $(\eps,d)$-regular. Suppose there exists a partition $[k]=I_1\cup I_2$, with $|I_1|=k_1,|I_2|=k_2$, and $k_1,k_2\geq\alpha k$, such that $H[I_1,I_2]$ is $\eta$-almost complete. Then, there exist two collections of disjoint sets $\{U_i:i\in J\}$ and $\{W_i:i\in J\}$ such that the following hold.
\begin{itemize}
    \item $|U_i|=|U_j|$ and $|W_i|=|W_j|$ for any $i,j\in J$.
    \item $\sum_{i\in J}|U_i|\geq(1-\alpha)\sum_{i\in I_1}|V_i|$, $\sum_{i\in J}|W_i|\geq(1-\alpha)\sum_{i\in I_2}|V_i|$.
    \item $G[U_i,W_i]$ is $(\sqrt\eps,d-\eps)$-regular for every $i\in J$.
\end{itemize}
\end{lemma}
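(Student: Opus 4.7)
My approach is to partition each cluster $V_a$ ($a\in I_1$) into $K$ equal pieces and each $V_b$ ($b\in I_2$) into $L$ equal pieces for appropriate positive integers $K,L$, then find a large matching in the bipartite blow-up $\widehat H$ of $H[I_1,I_2]$ defined on these pieces. WLOG assume $k_1\leq k_2$, so $k_2/k_1\in[1,1/\alpha]$; the output $\{U_i\},\{W_i\}$ will come from the matched piece-pairs.

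I would choose $K,L\leq\lfloor 1/\sqrt{\eps}\rfloor$ via Dirichlet's rational approximation theorem applied to $k_1/k_2$, so that $|K/L-k_2/k_1|\ll\alpha$ by the hierarchy $\eps\ll\alpha$. Since $k_2/k_1\geq1$, this places $K/L$ in $[(1-\alpha)k_2/k_1,(1+\alpha)k_2/k_1]$. The upper bound $K,L\leq 1/\sqrt{\eps}$ is chosen so that each piece has size at least $\sqrt{\eps}m$, which is exactly the threshold for Lemma~\ref{lemma:regularity:1} to transfer $(\eps,d)$-regularity of $G[V_a,V_b]$ (whenever $ab\in E(H)$) down to $(\sqrt{\eps},d-\eps)$-regularity of the restriction to any two such pieces.

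Let $\widehat H$ have vertex classes obtained by replacing each $a\in I_1$ by $K$ copies (labelled by its $K$ pieces of $V_a$) and each $b\in I_2$ by $L$ copies, with a copy of $a$ joined to a copy of $b$ iff $ab\in E(H[I_1,I_2])$. Then every vertex of $\widehat H$ misses at most an $\eta$-fraction of the opposite side, inherited from the $\eta$-almost-completeness of $H[I_1,I_2]$. Since $\eta<1/2$, a routine Hall-type argument (for any subset $S$ of the smaller side, either a single vertex of $S$ already has more than $|S|$ neighbours in $\widehat H$, or $|S|$ is so large that the almost-completeness forces $N(S)$ to be everything) yields a matching $M$ saturating the smaller side of $\widehat H$, with $|M|=\min(Kk_1,Lk_2)$. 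For each edge in $M$, I let $U_i$ and $W_i$ be the pieces of $V_a$ and $V_b$ corresponding to its two endpoints. Then $\{U_i\}$ and $\{W_i\}$ are disjoint by construction, have the required uniform sizes $m/K$ and $m/L$, and $G[U_i,W_i]$ is $(\sqrt{\eps},d-\eps)$-regular by Lemma~\ref{lemma:regularity:1}. The side of $\widehat H$ that $M$ saturates attains full cluster coverage, and the other side attains at least $(1-\alpha)$-coverage by the ratio bound on $K/L$.

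The main obstacle is balancing two competing demands on $K$ and $L$: they must be small enough (at most $1/\sqrt{\eps}$) so each piece is large enough for Lemma~\ref{lemma:regularity:1} to apply, yet $K/L$ must approximate $k_2/k_1$ to within multiplicative factor $1\pm\alpha$ to yield $(1-\alpha)$-coverage on both sides simultaneously. These are reconciled precisely because the hierarchy $\eps\ll\alpha$ makes the Dirichlet approximation error comfortably smaller than $\alpha$.
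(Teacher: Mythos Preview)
The paper does not prove this lemma; it is quoted from \cite[Lemma 2.24]{MPY} without proof, so there is no in-paper argument to compare against. Your approach is sound and would constitute a valid proof.

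Two minor points deserve care when you write it out. First, when applying Dirichlet to $k_1/k_2\in[\alpha,1]$ with denominator bound $N=\lfloor 1/\sqrt{\eps}\rfloor$, you obtain a rational $L/K$ with $1\leq K\leq N$; since $k_1/k_2\leq 1$ you then automatically get $L\leq K\leq N$, so both piece-counts are at most $1/\sqrt{\eps}$ as required for Lemma~\ref{lemma:regularity:1}. The additive error $|L/K-k_1/k_2|<1/(KN)\leq\sqrt{\eps}$ translates into a multiplicative error of at most $\sqrt{\eps}/\alpha$ (using $k_1/k_2\geq\alpha$), which is $\leq\alpha$ provided the hierarchy is read as $\eps\leq\alpha^4$; this is standard under the $\ll$ convention. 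Second, obtaining pieces of exactly equal size requires discarding up to $K<1/\sqrt{\eps}$ vertices from each cluster, a total loss of fraction $O(1/(m\sqrt{\eps}))\ll\alpha$, which is absorbed in the $(1-\alpha)$ factor. With these details in place, your Hall argument for the matching in $\widehat H$ and the regularity transfer via Lemma~\ref{lemma:regularity:1} complete the proof.
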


\subsection{Embedding methods with regularity}\label{sec:ems}
In this subsection, we gather five key embedding lemmas proved with regularity in~\cite{MPY}. Each of these says that given a suitable structure in the reduced graph $H$ of a red/blue coloured complete graph $G$, we can embed a tree $T$ using regularity. 

In~\cite{MPY}, each of these methods is proved under the assumption $t_2\leq t_1\leq 2t_2$, but we will use the weaker assumption $t_2\leq t_1\leq3.1t_2$ instead, as that is what we need later. It turns out that no other changes in the statements are needed, as the original size constraints are quite generous. We also omit the proofs as the corresponding proofs in~\cite{MPY} all carry through with no change needed, except for Lemma~\ref{lemma:em2a} (\textbf{EMa}) where we need to change the choice of an assignment probability from $70\alpha$ to $90\alpha$.

\begin{lemma}[\textbf{H\L T} {\cite[Lemma 4.5]{MPY}}]\label{lemma:hlt}
Let $1/n\ll c\ll 1/k\ll\varepsilon\ll\alpha\ll d\ll1$. Let $T$ be an $n$-vertex tree with $\Delta(T)\leq cn$ and bipartition classes of sizes $t_1$ and $t_2$ satisfying $t_2\leq t_1\leq3.1t_2$. Let $G$ be a graph on at most $2n$ vertices with a partition $V(G)=V_1\cup\cdots\cup V_{2k+1}$. Let $R_{\text{\emph{\bfseries H\L T}}}$ be a graph with vertex set $[2k+1]$, such that if $ij\in E(R_{\text{\emph{\bfseries H\L T}}})$ then $G[V_i,V_j]$ is $(\eps,d)$-regular. Let $i\in [2k+1]$ and suppose there is a partition $[2k+1]\setminus \{i\}=I_A\cup I_B$, with $|I_A|=|I_B|=k$, such that the following hold for some $m_A,m_B$ (see Figure~\ref{fig:HLT}). 
\stepcounter{propcounter}
\begin{enumerate}[label = \emph{\textbf{\Alph{propcounter}\arabic{enumi}}}]
    \item $|V_a|=m_A$ for each $a\in I_A$, $|V_b|=m_B$ for each $b\in I_B$, and $|V_i|\geq n/10k$.\labelinthm{prop:hlt:1}
    \item $km_A\geq t_2+\alpha n$ and $km_B\geq t_1+\alpha n$.\labelinthm{prop:hlt:2}
    \item In $R_{\text{\emph{\bfseries H\L T}}}$, $i$ is adjacent to each vertex in $I_A$, and there is a perfect matching $M$ between $I_A$ and $I_B$.\labelinthm{prop:hlt:3}
\end{enumerate}
Then, $G$ contains a copy of $T$.
\end{lemma}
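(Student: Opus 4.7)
The plan is to adapt the Haxell--\L uczak--Tingley embedding method: use the hub cluster $V_i$ to embed a small ``skeleton'' set $S \subseteq V(T)$ all of whose vertices lie in the larger bipartition class $W_A$ of $T$, then distribute the small components of $T - S$ among the $k$ matched pairs $(V_a, V_{M(a)})$, $a \in I_A$. The slack $\alpha n$ in the capacity assumptions leaves room for a random assignment argument to succeed via concentration.

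First I would apply Lemma~\ref{lemma:paths-leaf} with a large constant parameter depending on $\alpha$, obtaining either $\Omega(\alpha n)$ leaves or $\Omega(\alpha n)$ vertex-disjoint bare paths of bounded length in $T$. In either case one builds a skeleton $S \subseteq W_A$ of size at most $\alpha n / 100$ such that $T - S$ decomposes into rooted subtrees $T_1, \ldots, T_s$, each of size at most $\alpha n / (100 k)$, with each root $r_j$ adjacent in $T$ to some vertex of $S$. In the bare-paths case this is achieved by cutting each path at a suitable internal vertex of $W_A$, possibly using two cuts per path to control parity; in the many-leaves case by peeling off generations of leaves and recursing on the residual tree. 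Because $S \subseteq W_A$, the skeleton is automatically independent in $T$, so no edge inside $V_i$ is required, and every $r_j$ lies in $W_B$, which will pin down a consistent bipartite orientation on every matched pair.

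Next, assign each $T_j$ independently and uniformly at random to one of the $k$ matched pairs. Since $\sum_j |V(T_j) \cap W_B| \leq t_2$ and $\sum_j |V(T_j) \cap W_A| \leq t_1$, while the capacities satisfy $k m_A \geq t_2 + \alpha n$ and $k m_B \geq t_1 + \alpha n$, a Chernoff bound (Lemma~\ref{lemma:chernoff}) shows that with positive probability every pair receives at most $(1 - \alpha/10) m_A$ mass destined for $V_a$ and at most $(1 - \alpha/10) m_B$ mass destined for $V_{M(a)}$. Condition on such an assignment.

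The embedding itself proceeds in two phases. First embed $S$ into $V_i$ vertex-by-vertex in a BFS order rooted at some $v_0 \in S$: at each step, the $(\eps, d)$-regularity of $G[V_i, V_a]$ for the relevant $a \in I_A$, together with $|S| \cdot \Delta(T) \ll |V_i|$, gives a large set of candidate images for the current $v \in S$ that have the required neighborhood size into pre-reserved ``target'' subsets of each cluster $V_{a_j}$ corresponding to the children of $v$. Then, inside each matched pair $(V_a, V_{M(a)})$, embed the assigned subtrees by the standard regularity tree-embedding lemma, rooting each $T_j$ at its reserved image of $r_j$ in $V_a$ and alternating sides (placing $W_B$-vertices of $T_j$ in $V_a$ and $W_A$-vertices in $V_{M(a)}$). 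The main obstacle I anticipate is precisely guaranteeing a uniform bipartite orientation across subtrees that share a matched pair, which is exactly what the construction $S \subseteq W_A$ delivers. The remaining work -- tracking reserved neighborhoods, controlling the ``used'' vertices, and verifying that regularity is inherited by the relevant subsets via Lemma~\ref{lemma:regularity:1} -- is routine bookkeeping along the lines of the proof in~\cite{MPY}.
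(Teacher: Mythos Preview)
The paper does not give its own proof of this lemma; it is quoted from \cite[Lemma~4.5]{MPY}, and the authors remark that the argument there carries over unchanged under the relaxed constraint $t_1\le 3.1t_2$. Your outline follows the same H\L T template --- a small skeleton placed in the hub cluster $V_i$, random allocation of the remaining pieces among the $k$ matched pairs, and a regularity embedding inside each pair --- so at the level of strategy you are aligned with the intended proof.

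That said, two of your quantitative choices are incompatible with the hierarchy $1/k\ll\alpha$. First, a skeleton of size up to $\alpha n/100$ need not fit in $V_i$, which is only guaranteed to contain $n/(10k)$ vertices; since $\alpha k\gg 1$, the bound $\alpha n/100$ can vastly exceed $n/(10k)$, and your later claim $|S|\cdot\Delta(T)\ll |V_i|$ then fails outright. Second, components of size $L=\alpha n/(100k)$ are too coarse for the concentration step: with pieces of size at most $L$ summing to about $n$, a Hoeffding-type bound controls the load on a single pair only up to fluctuations of order $\sqrt{nL}$, and you need $\sqrt{nL}\ll \alpha n/k$, i.e.\ $L\ll \alpha^2 n/k^2$, not $L\approx\alpha n/k$. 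Both issues are repaired by the same adjustment: take $L$ of order $\alpha^2 n/k^2$, so that $|S|=O(n/L)=O(k^2/\alpha^2)$ is a constant independent of $n$ and fits comfortably in $V_i$. One smaller point: when $S$ is an independent set, the components of $T-S$ are not singly rooted in general --- a component can border several vertices of $S$ --- but as all those vertices sit in $V_i$ and all their neighbours in the component lie in $W_B$ (hence go to the $V_a$ side of the assigned pair), this does not obstruct the embedding, only the bookkeeping.
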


\begin{lemma}[\textbf{EMa} {\cite[Lemma 4.10]{MPY}}]\label{lemma:em2a}
Let $1/n\ll c\ll 1/k\ll\eps\ll \eta \ll\alpha\ll d\ll1$. Let $T$ be an $n$-vertex tree with $\Delta(T)\le cn$ and bipartition classes of sizes $t_1$ and $t_2$ with $t_2\leq t_1\leq3.1t_2$. Let $G$ be a graph with at most $2n$ vertices and a vertex partition $V_1\cup\cdots\cup V_k$ such that $|V_1|=|V_2|=\cdots=|V_k|=m$. Let $R_{\text{\emph{\bfseries EMa}}}$ be a graph with vertex set $[k]$, such that if $ij\in E(R_{\text{\emph{\bfseries EMa}}})$ then $G[V_i,V_j]$ is $(\eps,d)$-regular.
Suppose there is a partition $[k]=I_A\cup I_B\cup I_C$ such that the following properties hold (see Figure~\ref{fig:Ema}).
\stepcounter{propcounter}
\begin{enumerate}[label = \emph{\textbf{\Alph{propcounter}\arabic{enumi}}}]
\item\labelinthm{lemma:reg:em2a:1} $|\cup_{i\in I_A}V_i|\geq t_2+200\alpha n, |\cup_{i\in I_B}V_i|\geq t_2-\alpha n, |\cup_{i\in I_A\cup I_B}V_i|=n-\alpha n$, and $|\cup_{i\in I_C}V_i|=100\alpha n$.
    \item\labelinthm{lemma:reg:em2a:2} $R_{\text{\emph{\bfseries EMa}}}[I_A,I_B]$ is an $\eta$-almost complete bipartite graph.
    \item\labelinthm{lemma:reg:em2a:3} $R_{\text{\emph{\bfseries EMa}}}[I_A,I_C]$ contains a matching $M$ covering $I_C$.
    \item\labelinthm{lemma:reg:em2a:4} $E(R_{\text{\emph{\bfseries EMa}}}[I_A\setminus V(M)])\neq\varnothing$.
\end{enumerate}
Then, $G$ contains a copy of $T$.
\end{lemma}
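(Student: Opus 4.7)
The plan is to adapt the H\L T-style embedding scheme (see Lemma~\ref{lemma:hlt}) to the \textbf{EMa} setting. The strategy is to use the almost-complete bipartite reduced graph $R_{\textbf{EMa}}[I_A,I_B]$ as the main scaffolding for embedding most of $T$, the matching $M\colon I_C\to I_A$ to accommodate a small overflow of pendant subtrees, and the edge in $I_A\setminus V(M)$ to play the role of the H\L T ``anchor''.

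First, I would apply Lemma~\ref{lemma:regularity:refine} to $R_{\textbf{EMa}}[I_A,I_B]$ to extract a collection of equal-sized disjoint subsets $\{(U_j,W_j)\}_{j\in J}$, with $U_j\subseteq\bigcup_{i\in I_A}V_i$ and $W_j\subseteq\bigcup_{i\in I_B}V_i$, each pair $(U_j,W_j)$ being $(\sqrt{\eps},d-\eps)$-regular, and together covering a $(1-\alpha)$-fraction of both sides in vertex count. Combined with the $(\eps,d)$-regular pairs coming from the matching $M$ and the extra regular pair from the edge in $I_A\setminus V(M)$, this yields a clean multi-pair structure to embed into. Using Lemma~\ref{lemma:paths-leaf} to produce either many leaves or many vertex-disjoint short bare paths of $T$, I would then decompose $T$ into a main tree $T_{\text{main}}$ of size roughly $n-100\alpha n$ and a collection of pendant subtrees of total size at most $100\alpha n$, each pendant attached at a distinct ``connection vertex'' of $T_{\text{main}}$. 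These pendants are to be embedded into $I_C$ through the matched regular pairs, so each connection vertex must land in an $I_A$-cluster covered by $M$.

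Next, I would randomly assign each vertex of $T_{\text{main}}$ to a regular pair, respecting $T$'s bipartition, with the probability used for ``anchor-region'' assignment bumped from $70\alpha$ (as in~\cite{MPY}) to $90\alpha$ to accommodate the slightly wider bipartition ratio $t_1\le 3.1t_2$. Concentration (Lemma~\ref{lemma:chernoff}) ensures that each pair receives roughly the expected number of vertices with high probability. The greedy-extension embedding is seeded at the edge $(a_1,a_2)$ in $I_A\setminus V(M)$: a chosen tree edge $uv$ is placed into $(V_{a_1},V_{a_2})$, and the rest of $T_{\text{main}}$ is embedded via a BFS-style extension using the refined regular pairs and the almost-complete bipartite structure between $I_A$ and $I_B$. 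The connection vertices are routed into the $I_A$-clusters covered by $M$, and their pendant subtrees are then embedded into the corresponding $I_C$-clusters via the matched regular pairs. A final Hall-type matching argument (Lemma~\ref{lemma:hallmatching}) handles the placement of any leaves reserved to the end.

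The main obstacle is the tightness of condition~\ref{lemma:reg:em2a:1}: the total $|\bigcup_{i\in I_A}V_i|+|\bigcup_{i\in I_B}V_i|=n-\alpha n$ barely accommodates an $n$-vertex $T$ together with the slack needed for the regularity-based extension, and $|\bigcup_{i\in I_A}V_i|$ can range anywhere from $t_2+200\alpha n$ up to $t_1$, so the distribution of tree vertices across the clusters must be handled flexibly. Simultaneously, only one edge inside $I_A\setminus V(M)$ is available as an anchor, so the embedding must be initialised at that edge without exhausting either of its two endpoint clusters prematurely. Verifying the vertex-count budget and the regularity-based extension together is where the probability bump from $70\alpha$ to $90\alpha$ becomes critical: it provides just enough slack in the concentration bound to cover the full range $t_2\le t_1\le 3.1t_2$, rather than only the narrower range $t_1\le 2t_2$ handled in~\cite{MPY}.
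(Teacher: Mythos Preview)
Your proposal is correct and follows essentially the same approach as the paper: the paper simply notes that the proof of \cite[Lemma~4.10]{MPY} carries through verbatim except that the assignment probability must be increased from $70\alpha$ to $90\alpha$ to accommodate the wider range $t_2\le t_1\le 3.1t_2$, and this is precisely the modification you identify and justify.
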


\begin{figure}[h]
    \centering
    \input{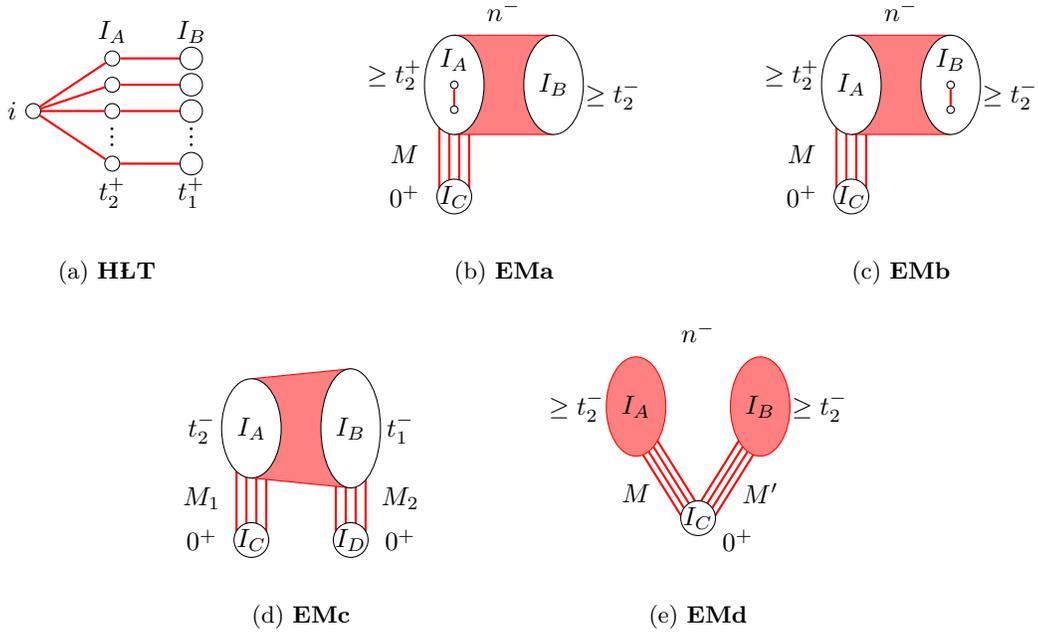}
    \caption{The reduced graph structures used in the embedding methods \textbf{H\L T} and \textbf{EMa}-\textbf{d}.}
    \label{fig:embeddings:1}
\end{figure}

\begin{lemma}[\textbf{EMb} {\cite[Lemma 4.11]{MPY}}]\label{lemma:em2b}
Let $1/n\ll 1/m\ll c\ll1/k\ll\eps\ll\eta\ll\alpha\ll d\ll1$. Let $T$ be an $n$-vertex tree with $\Delta(T)\le cn$ and bipartition classes of sizes $t_1$ and $t_2$ with $t_2\leq t_1\leq3.1t_2$. Let $G$ be a graph with at most $2n$ vertices and a vertex partition $V_1\cup\cdots\cup V_k$ with $|V_1|=\cdots=|V_k|=m$. Let $R_{\text{\emph{\bfseries EMb}}}$ be a graph with vertex set $[k]$, such that if $ij\in E(R_{\text{\emph{\bfseries EMb}}})$ then $G[V_i,V_j]$ is $(\eps,d)$-regular.
Suppose there is a partition $[k]=I_A\cup I_B\cup I_C$ such that the following properties hold (see Figure~\ref{fig:Emb}).
\stepcounter{propcounter}
\begin{enumerate}[label = \emph{\textbf{\Alph{propcounter}\arabic{enumi}}}]
    \item\labelinthm{lemma:reg:em2b:1} $|\cup_{i\in I_A}V_i|\geq t_2+200\alpha n, |\cup_{i\in I_B}V_i|\geq t_2-\alpha n, |\cup_{i\in I_A\cup I_B}V_i|=(1-\alpha)n$, and $|\cup_{i\in I_C}V_i|=100\alpha n$.
    \item\labelinthm{lemma:reg:em2b:2} $R_{\text{\emph{\bfseries EMb}}}[I_A,I_B]$ is an $\eta$-almost complete bipartite graph.
    \item\labelinthm{lemma:reg:em2b:3} $R_{\text{\emph{\bfseries EMb}}}[I_A,I_C]$ contains a matching $M$ covering $I_C$.
    \item\labelinthm{lemma:reg:em2b:4} $E(R_{\text{\emph{\bfseries EMb}}}[I_B])\neq \varnothing$.
\end{enumerate}
Then, $G$ contains a copy of $T$.
\end{lemma}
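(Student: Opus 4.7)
The plan is to adapt the strategy used in the proof of Lemma~\ref{lemma:em2a} (\textbf{EMa}), with the main novelty being in how the extra edge $b_1b_2 \in E(R_{\text{\textbf{EMb}}}[I_B])$ is exploited. First, I would apply Lemma~\ref{lemma:regularity:refine} to the $\eta$-almost complete bipartite graph $R_{\text{\textbf{EMb}}}[I_A, I_B]$ to produce matched collections $\{U_i\}_{i\in J} \subseteq I_A$ and $\{W_i\}_{i\in J} \subseteq I_B$, with each pair $(U_i, W_i)$ being $(\sqrt{\eps}, d-\eps)$-regular and the total sizes preserved up to a factor of $1-\alpha$. In parallel, using Lemma~\ref{lemma:paths-leaf}, I would extract from $T$ a collection $\mathcal{P}$ of about $100\alpha n$ pendant vertices (leaves or short bare-path endpoints) whose removal yields a reduced tree $T^\ast$ on $(1-\Theta(\alpha))n$ vertices, to be reattached at the end via the matching $M$ into $I_C$.

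The central step is embedding $T^\ast$ into the backbone $(I_A, I_B)$. The natural bipartite convention sends $V_2 \cap V(T^\ast) \to I_A$ and $V_1 \cap V(T^\ast) \to I_B$, which respects the $\eta$-almost complete structure. The constraints force $|I_A|\cdot m \geq t_2 + 200\alpha n$ and thus $|I_B|\cdot m \leq t_1 - 201\alpha n$, so not all of $V_1$ can fit on the $I_B$ side, and the capacity $|I_C|\cdot m = 100\alpha n$ reached through $M$ is on its own insufficient. This is where the edge $b_1 b_2$ enters: invoking Lemma~\ref{lemma:bipartitesplittree} (or a direct subtree-counting argument when the bipartition of $T$ is more balanced), I would identify a subtree $T^{\text{rev}} \subseteq T$ whose bipartition imbalance $|T^{\text{rev}}\cap V_1| - |T^{\text{rev}}\cap V_2|$ matches the slack $|I_A|\cdot m - t_2 - O(\alpha n)$, and embed $T^{\text{rev}}$ with its orientation reversed, placing $T^{\text{rev}} \cap V_1 \to I_A$ and $T^{\text{rev}} \cap V_2 \to I_B$. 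The single boundary edge of $T^{\text{rev}}$, which crosses the reversal, is routed through the $(\eps,d)$-regular pair $(V_{b_1}, V_{b_2})$. The remaining vertices of $T^\ast$ are then embedded in the standard convention via a randomised greedy DFS procedure analogous to that of Lemma~\ref{lemma:em2a}, with concentration controlled by Lemma~\ref{lemma:azuma}. Finally, the pendants in $\mathcal{P}$ are attached by verifying Hall's condition for an appropriate auxiliary bipartite graph and invoking Lemma~\ref{lemma:hallmatching}, using $M$ to route them into $I_C$.

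The main obstacle is the bookkeeping around $T^{\text{rev}}$: choosing a subtree with precisely the right bipartition imbalance in every regime of $(t_1, t_2)$, attaching it to the main embedding through the single edge $b_1b_2$ while keeping all cluster capacities balanced, and interleaving the reversal with the randomised DFS so that neither the main embedding nor the subsequent Hall attachment of $\mathcal{P}$ becomes infeasible. In essence, the same ingredients as in the proof of Lemma~\ref{lemma:em2a} are present; the role of the single extra reduced-graph edge is simply shifted from being inside $I_A$ to being inside $I_B$, which changes which direction of the bipartition the reversal serves and where the slack in $I_A$ is spent.
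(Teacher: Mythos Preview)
The paper does not actually give a proof of this lemma. In Section~\ref{sec:ems} the author explicitly states that all five embedding lemmas (\textbf{H\L T} and \textbf{EMa}--\textbf{d}) are borrowed from~\cite{MPY}, that the only change is relaxing the hypothesis $t_1\le 2t_2$ to $t_1\le 3.1t_2$, and that ``the corresponding proofs in~\cite{MPY} all carry through with no change needed'' (with the sole exception being a constant in \textbf{EMa}). So there is no in-paper proof of \textbf{EMb} for your sketch to be compared against; the paper's ``proof'' is the sentence that the argument of~\cite[Lemma 4.11]{MPY} works verbatim under the weaker ratio bound.

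Your high-level sketch is a reasonable outline of how such an embedding lemma is typically established: refine the almost-complete bipartite reduced graph, reserve a small fraction of $T$ (leaves or bare-path vertices) to be attached at the end via $M$ into $I_C$, embed the bulk across $(I_A,I_B)$, and use the extra edge $b_1b_2$ inside $I_B$ to absorb the bipartition imbalance by reversing the orientation of a suitably chosen subtree. This is indeed the natural analogue of the \textbf{EMa} argument with the roles of the two sides swapped. One caution: your invocation of Lemma~\ref{lemma:bipartitesplittree} requires $|V_1|\ge 1.1|V_2|$, so in the nearly balanced regime $t_1\approx t_2$ you would need a different mechanism (as you hint); but in that regime the deficit $t_1-|I_B|m$ is already $O(\alpha n)$ and can be handled directly by the matching $M$ into $I_C$ without any reversal, so the edge in $I_B$ is only genuinely needed when $t_1$ is bounded away from $t_2$. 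Since the paper defers entirely to~\cite{MPY}, a full verification of your sketch would require checking it against that source rather than the present paper.
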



\begin{lemma}[\textbf{EMc} {\cite[Lemma 4.12]{MPY}}]\label{lemma:em2c}
Let $1/n\ll 1/m\ll c\ll1/k\ll\eps\ll\eta\ll\alpha\ll d\ll1$. Let $T$ be an $n$-vertex tree with $\Delta(T)\le cn$ and bipartition classes of sizes $t_1$ and $t_2$ with $t_2\leq t_1\leq3.1t_2$. Let $G$ be a graph on at most $2n$ vertices with a partition $V(G)=V_1\cup\cdots\cup V_k$ satisfying $|V_1|=\cdots=|V_k|=m$. Let  $R_{\text{\emph{\bfseries EMc}}}$ be a graph with vertex set $[k]$, such that if $ij\in E(R_{\text{\emph{\bfseries EMc}}})$ then $G[V_i,V_j]$ is $(\eps,d)$-regular. Suppose there is a partition $[k]=I_A\cup I_B\cup I_C\cup I_D$ such that the following properties hold (see Figure~\ref{fig:Emc}).
\stepcounter{propcounter}
\begin{enumerate}[label = \emph{\textbf{\Alph{propcounter}\arabic{enumi}}}]
    \item\labelinthm{lemma:em2c:1} $|\cup_{i\in I_A}V_i|\geq(1-\alpha) t_2, |\cup_{i\in I_B}V_i|\geq(1-11\alpha)t_1$, $|\cup_{i\in I_C}V_i|=100\alpha t_2$, and $|\cup_{i\in I_D}V_i|=10\alpha t_2$.
    \item\labelinthm{lemma:em2c:2} $R_{\text{\emph{\bfseries EMc}}}[I_A,I_B]$ is an $\eta$-almost complete bipartite graph.
    \item\labelinthm{lemma:em2c:3} $R_{\text{\emph{\bfseries EMc}}}[I_A,I_C]$ contains a matching $M_1$ covering $I_C$.
    \item\labelinthm{lemma:em2c:4} $R_{\text{\emph{\bfseries EMc}}}[I_B,I_D]$ contains a matching $M_2$ covering $I_D$.
\end{enumerate}
Then, $G$ contains a copy of $T$.
\end{lemma}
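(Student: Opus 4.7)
The plan is to follow the same template as the proofs of \textbf{EMa} (Lemma~\ref{lemma:em2a}) and \textbf{EMb} (Lemma~\ref{lemma:em2b}): decompose $T$ into a ``core'' subtree $T'$ and a small reserved set $R$ of leaves or internal vertices of bare paths; embed $T'$ into the almost complete bipartite regular structure $R_{\textbf{EMc}}[I_A,I_B]$; and finally embed $R$ using the matchings $M_1$ and $M_2$ via Hall's theorem. Let $V_X,V_Y$ be the bipartition classes of $T$ with $|V_X|=t_2$ and $|V_Y|=t_1$. The key assignment is $V_X\to I_A\cup I_D$ and $V_Y\to I_B\cup I_C$, chosen so that both matchings $M_1$ (between $I_A$ and $I_C$) and $M_2$ (between $I_B$ and $I_D$) straddle the bipartition of the embedding, providing $(\eps,d)$-regular pairs along which pendant vertices can be placed. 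A size check using $t_1\leq 3.1t_2$ gives $|\cup_{i\in I_A\cup I_D}V_i|\geq(1+9\alpha)t_2$ and $|\cup_{i\in I_B\cup I_C}V_i|\geq(1-11\alpha)t_1+100\alpha t_2\geq(1+21\alpha)t_1$, so both sides have linear slack.

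For the decomposition, apply Lemma~\ref{lemma:paths-leaf} with $k=3$ and $\ell$ a suitable linear fraction of $n$, obtaining either at least $\ell$ leaves or at least $n/4-(2\ell-2)$ vertex-disjoint bare paths of length $3$. In the leaf case, select $R_X\subset V_X$ and $R_Y\subset V_Y$ consisting of leaves so that $|V_X\setminus R_X|$ fits within $|\cup_{i\in I_A}V_i|$ (forcing $|R_X|\leq\alpha t_2$), $|V_Y\setminus R_Y|$ fits within $|\cup_{i\in I_B}V_i|$ (forcing $|R_Y|\leq 11\alpha t_1$), and each non-reserved vertex has only a bounded number of reserved children so that Hall's condition holds in the final step. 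In the bare-path case, reserve one internal vertex per bare path, distributing between $V_X$ and $V_Y$ to achieve analogous bounds. These reservations fit comfortably within $|\cup_{i\in I_D}V_i|=10\alpha t_2$ and $|\cup_{i\in I_C}V_i|=100\alpha t_2$ respectively.

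Next, embed the core $T'=T-R$ into the $\eta$-almost complete bipartite regular structure $R_{\textbf{EMc}}[I_A,I_B]$ sending $V_X\setminus R_X\to I_A$ and $V_Y\setminus R_Y\to I_B$, following the standard tree embedding procedure of~\cite{HLT} used in the proofs of \textbf{EMa} and \textbf{EMb}. While doing so, reserve $\Omega(\alpha m)$ free slack in each cluster of $I_A\cup I_B$ that is touched by $M_1$ or $M_2$, so that the matching step has room to place the reserved vertices. Finally, apply Lemma~\ref{lemma:hallmatching} to embed $R$: for each $v\in R_Y$ with parent $u\in V_X$ embedded in cluster $V_{i_A}\subset I_A$, the matched cluster $V_{i_C}\subset I_C$ under $M_1$ forms an $(\eps,d)$-regular pair with $V_{i_A}$, so $u$'s image has many unused neighbours in $V_{i_C}$ by regularity; a cluster-level Hall argument packs the reserved children of distinct parents into the matched $I_C$-clusters, and symmetrically for $R_X$ via $M_2$ into $I_D$. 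Reserved bare-path internal vertices, which have two pre-embedded neighbours, are handled using the common-neighbourhood property of regular pairs.

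The main technical obstacle is the core embedding: placing $T'$ so that parents of reserved vertices land in clusters covered by $M_1$ or $M_2$, while preserving enough slack in each such cluster to guarantee that the final step succeeds. This is exactly the same bottleneck encountered in \textbf{EMa} and \textbf{EMb}, and the same techniques---a careful assignment of subtrees to clusters followed by a regularity-based greedy embedding---apply here. The only genuinely new feature is having two matchings $M_1,M_2$ rather than one, but since their endpoints sit on opposite sides of the bipartition ($I_A$ versus $I_B$), the capacity constraints they impose decouple and can be handled independently.
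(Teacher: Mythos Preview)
The paper does not actually prove this lemma; it cites \cite[Lemma~4.12]{MPY} and asserts that the original proof carries through unchanged when the hypothesis $t_1\le 2t_2$ is relaxed to $t_1\le 3.1t_2$, the only justification being that ``the original size constraints are quite generous.'' Your sketch is a plausible outline of that original argument and follows the \textbf{EMa}/\textbf{EMb} template correctly in structure; the size check $(1-11\alpha)t_1+100\alpha t_2\ge(1+21\alpha)t_1$ under $t_1\le 3.1t_2$ is exactly the kind of verification the paper is alluding to.

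Two small points. First, the inequalities on $|R_X|$ and $|R_Y|$ go the other way: to fit $V_X\setminus R_X$ into $\cup_{i\in I_A}V_i$ you need $|R_X|\ge\alpha t_2$, not $\le$, and similarly $|R_Y|\ge 11\alpha t_1$; the upper bounds then come from the capacities of $I_D$ and $I_C$. Second, in the leaf case you implicitly assume you can find enough leaves in \emph{each} bipartition class, but Lemma~\ref{lemma:leaves:V1} only guarantees leaves in the larger class. If $V_X$ has too few leaves you cannot reserve from it directly; this is typically handled either by falling through to the bare-path case on the deficient side (bare paths of length~$3$ contribute an internal vertex to each class) or by noting that $|\cup_{i\in I_A}V_i|$ is only a lower bound and may already accommodate all of $V_X$. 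You should make this explicit rather than leaving it buried in ``following the standard procedure.''
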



\begin{lemma}[\textbf{EMd} {\cite[Lemma 4.13]{MPY}}]\label{lemma:em3}
Let $1/n\ll 1/m\ll c\ll1/k\ll\eps\ll\eta\ll\alpha\ll d\ll1$. Let $T$ be an $n$-vertex tree with $\Delta(T)\le cn$ and bipartition classes of sizes $t_1$ and $t_2$ with $t_2\leq t_1\leq3.1t_2$. Let $G$ be a graph with a partition $V(G)=V_1\cup\cdots\cup V_k$ satisfying $|V_1|=\cdots=|V_k|=m$. Let  $R_{\text{\emph{\bfseries EMd}}}$ be a graph with vertex set $[k]$, such that if $ij\in E(R_{\text{\emph{\bfseries EMd}}})$ then $G[V_i,V_j]$ is $(\eps,d)$-regular. Suppose there is a partition $[k]=I_A\cup I_B\cup I_C$ such that the following properties hold (see Figure~\ref{fig:Emd}).
\stepcounter{propcounter}
\begin{enumerate}[label = \emph{\textbf{\Alph{propcounter}\arabic{enumi}}}]
    \item\labelinthm{lemma:em2d:1} $|\cup_{i\in I_A}V_i|\geq|\cup_{i\in I_B}V_i|\geq(1-\alpha) t_2, |\cup_{i\in I_A\cup I_B}V_i|\geq(1-\alpha)n$, and $|\cup_{i\in I_A\cup I_B\cup I_C}V_i|=(1+100\alpha)n$.
    \item\labelinthm{lemma:em2d:2} $R_{\text{\emph{\bfseries EMd}}}[I_A]$ and $R_{\text{\emph{\bfseries EMd}}}[I_B]$ are both $\eta$-almost complete graphs.
    \item\labelinthm{lemma:em2d:3} $R_{\text{\emph{\bfseries EMd}}}[I_A,I_C]$ contains a matching $M$ covering $I_C$, and a vertex in $I_A\cap V(M)$ that is adjacent to every vertex in $I_C\cap V(M)$.
    \item\labelinthm{lemma:em2d:4} $R_{\text{\emph{\bfseries EMd}}}[I_B,I_C]$ contains a matching $M'$ covering $I_C$.
\end{enumerate}
Then, $G$ contains a copy of $T$.
\end{lemma}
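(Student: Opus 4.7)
The strategy is to decompose $T$ into two subtrees meeting at a single vertex, embed one into the almost-complete structure on $I_A$ and the other into that on $I_B$, and use the matchings $M,M'$ together with the hub vertex to route the shared vertex through $I_C$.

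First, since $t_2\ge t_1/3.1$, the tree $T$ is not too unbalanced, so I would apply Lemma~\ref{lemma:splittree} (refined via Lemma~\ref{lemma:bipartitesplittree} if necessary) to find subtrees $T_1,T_2$ decomposing $T$ at a common vertex $v$, with the bipartition class sizes of $T_1$ and $T_2$ compatible with the (forthcoming) bipartite refinements of $I_A$ and $I_B$. By hypothesis~\ref{lemma:em2d:1}, both $|\bigcup_{i\in I_A}V_i|$ and $|\bigcup_{i\in I_B}V_i|$ are at least $(1-\alpha)t_2\gtrsim n/4$ (using $t_2\ge t_1/3.1$), and together they accommodate all but $\alpha n$ vertices of $T$; the $100\alpha n$ extra vertices of $G$ in $I_C$ provide the slack needed to absorb a small reserve. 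In parallel, Lemma~\ref{lemma:paths-leaf} guarantees that $T$ contains either many leaves or many long bare paths, which I would split between $T_1$ and $T_2$ to supply the flexible vertices used in this absorption.

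Next, because $R_{\mathrm{EMd}}[I_A]$ and $R_{\mathrm{EMd}}[I_B]$ are $\eta$-almost complete graphs (rather than bipartite), I would randomly equipartition each as $I_A=X_A\cup Y_A$ and $I_B=X_B\cup Y_B$. A Chernoff argument (Lemma~\ref{lemma:chernoff}) shows that with high probability both $R_{\mathrm{EMd}}[X_A,Y_A]$ and $R_{\mathrm{EMd}}[X_B,Y_B]$ are $(2\eta)$-almost complete \emph{bipartite} graphs. Applying Lemma~\ref{lemma:regularity:refine} to each then yields matched collections of equal-sized cluster pairs covering all but an $\alpha$-fraction of each of $I_A$ and $I_B$, with every pair being $(\sqrt{\eps},d-\eps)$-regular. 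This is the bipartite blow-up structure inside which $T_1$ and $T_2$ will be embedded.

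Third, I would embed $T_1-v$ into the refined bipartite structure on $X_A\cup Y_A$, together with a small spillover of flexible vertices into clusters of $I_C$ accessed via the matching $M$; this is essentially an \textbf{EMc}-style embedding (cf.\ Lemma~\ref{lemma:em2c}), in which the bulk of $T_1$ sits inside the bipartite blow-up while leaves and bare-path vertices are absorbed into $I_C$ via a Hall-type matching argument (Lemma~\ref{lemma:hallmatching}). Symmetrically embed $T_2-v$ into the structure on $X_B\cup Y_B$, with spillover through $M'$. Finally, embed the shared vertex $v$ itself into a cluster $c\in I_C$ on which the two partial embeddings can be joined. The main obstacle is coordinating the two embeddings at $v$: the cluster hosting its image must admit regular neighbour clusters into both the $I_A$-structure (for the $T_1$-neighbours of $v$) and the $I_B$-structure (for the $T_2$-neighbours of $v$), consistently with the independently constructed embeddings of the bulk of $T_1$ and $T_2$. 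The hub property of $M$ is exactly what resolves this: the special $a^*\in I_A\cap V(M)$, which is adjacent in $R_{\mathrm{EMd}}$ to \emph{every} cluster in $I_C\cap V(M)$, provides a canonical neighbour cluster inside $I_A$ regardless of which $c\in I_C$ is ultimately chosen to host $v$, and $M'$ symmetrically provides a canonical neighbour in $I_B$. With this pivot fixed, the remainder is the familiar combination of regularity-based tree embedding, Hall matching, and Azuma/Chernoff concentration already deployed in the proofs of Lemmas~\ref{lemma:em2a}--\ref{lemma:em2c}.
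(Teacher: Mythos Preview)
The paper does not supply its own proof of this lemma: it quotes the statement from \cite[Lemma~4.13]{MPY} with the relaxed bound $t_1\le 3.1t_2$ in place of $t_1\le 2t_2$, and explicitly remarks that the proofs in \cite{MPY} ``all carry through with no change needed''. So there is no in-paper argument to compare your sketch against, only the deferred one.

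That said, your outline has a concrete gap at the splitting step. You invoke Lemma~\ref{lemma:splittree} (with Lemma~\ref{lemma:bipartitesplittree} as a possible refinement) to obtain $T_1,T_2$ meeting at $v$, and then plan to house $T_1$ essentially inside $I_A$ and $T_2$ essentially inside $I_B$, with only $O(\alpha n)$ spillover into $I_C$. But Lemma~\ref{lemma:splittree} only guarantees $\lceil n/3\rceil\le |T_1|\le |T_2|\le\lceil 2n/3\rceil$, whereas hypothesis~\ref{lemma:em2d:1} allows $|\bigcup_{i\in I_B}V_i|$ to be as small as $(1-\alpha)t_2$, which for $t_1$ close to $3.1t_2$ is about $n/4.1$. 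In that regime \emph{neither} piece of the split fits into $I_B$, even after absorbing all of $I_C$, and Lemma~\ref{lemma:bipartitesplittree} does not help since it controls the discrepancy between bipartition classes of a subtree rather than the subtree's total size. What is actually needed is a decomposition whose piece sizes track $|I_A|m$ and $|I_B|m$; this requires either a more flexible cut (for example, working at a centroid $v$ and grouping the components of $T-v$ so that one group has size close to $|I_B|m$, which the proof of Lemma~\ref{lemma:splittree} can be adapted to give but the lemma as stated does not) or, as in \cite{MPY}, a more elaborate embedding scheme. The remainder of your plan---randomly bipartitioning $I_A$ and $I_B$, refining via Lemma~\ref{lemma:regularity:refine}, and exploiting the hub $a^*\in I_A\cap V(M)$ together with $M'$ to coordinate the junction at $v$---is in the right spirit, but the argument cannot get started until the sizes of the two pieces are matched to those of $I_A$ and $I_B$.
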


\subsection{Weight functions on digraphs}\label{sec:weight}
In this subsection, we define weight functions on digraphs, and gather several properties that they satisfy. This is the main tool used by Haxell, \L uczak, and Tingley to prove a key result~{\cite[Theorem 3]{HLT}}, which is then used by Montgomery, Pavez-Sign{\'e}, and Yan as a black box in~\cite{MPY} to prove the stability part of their main result. In the stability part of our proof, we use this weight function tool to modify both the proof and application of~{\cite[Theorem 3]{HLT}}, in order to bypass the most technical part of the proof in~\cite{MPY}. 
\begin{definition}
Let $\vv{H}=(V,E)$ be a digraph, let $v\in V$, and let $\alpha\geq1$.
\begin{itemize}
    \item $D^+(v)$ denotes the set of all arcs of $\vv{H}$ pointing out from $v$, and $D^-(v)$ denotes the set of all arcs of $\vv{H}$ pointing into $v$.
    \item A non-negative function $f:E\to\mathbb{R}$ is an $\alpha$-\emph{weight function} on $\vv{H}$ if for all $v\in V$,\[w_\alpha(v,f):=\frac1\alpha\sum_{e\in D^+(v)}f(e)+\sum_{e\in D^-(v)}f(e)\leq 1.\] 
    \item Denote the set of all $\alpha$-weight functions on $\vv{H}$ by $\mathcal{F}_\alpha(\vv{H})$, or simply $\mathcal{F}_\alpha$ when the choice of $\vv{H}$ is clear.
    \item For each $f\in\mathcal{F}_\alpha$, define $w(f)=\sum_{e\in E}f(e)$.
    \item Let $w_{\alpha,\max}=\max\{w(f):f\in\mathcal{F}_\alpha\}$, and let $\mathcal{F}_{\alpha,\max}=\{f\in\mathcal{F}_\alpha:w(f)=w_{\alpha,\max}\}$. By compactness, $w_{\alpha,\max}$ and $\mathcal{F}_{\alpha,\max}$ are both well-defined. The $\alpha$ subscript may be omitted for brevity if the choice of $\alpha$ is clear.
    \end{itemize} 
\end{definition}

\begin{definition}
Let $H=(V,E)$ be a red/blue coloured graph, and let $x\in V$. For every $\ast\in\{\text{red},\text{blue}\}$, let $\vv{H}_\ast(x)$ denote the digraph with vertex set $V\setminus\{x\}$, such that $yz$ is an arc of $\vv{H}_\ast(x)$ if and only if both $xy$ and $yz$ are edges in $H_\ast$. 
\end{definition}

Given a weight function with maximum weight on a (coloured) digraph $\vv{H}$, the following two results provide decompositions of the vertex set of $\vv{H}$ with useful properties.
\begin{lemma}[{\cite[Theorem 8]{HLT}}]\label{lemma:ABC}
Let $\vv{H}=(V,E)$ be a digraph, and let $\alpha>1$ be fixed. Let
\begin{align*}
A&=\{a\in V: w(a,f)<1 \text{ for some } f\in \mathcal{F}_{\max}\},\\
B&=\{b\in V: ba\in E \text{ for some } a\in A\},\\
C&=V\setminus(A\cup B).
\end{align*}
Then the following hold.
\stepcounter{propcounter}
\begin{enumerate}[label = \emph{\textbf{\Alph{propcounter}\arabic{enumi}}}]
    \item\labelinthm{ABC:5} $|B|\le w_{\max}/\alpha$.
    \item\labelinthm{ABC:6} $|C|\le (1+1/\alpha)w_{\max}-(1+\alpha)|B|$.
    \item\labelinthm{ABC:8} $|A|>|V|-(1+1/\alpha)w_{\max}$.
\end{enumerate}
\end{lemma}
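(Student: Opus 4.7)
The plan is to pin down a single $f^{*}\in\mathcal{F}_{\max}$ with especially clean support and then read off all three inequalities from a bookkeeping of $f^{*}$ split by arc type. Linearity of $w(\cdot)$ and of each constraint $w(v,\cdot)$ makes $\mathcal{F}_{\max}$ a convex polytope, so averaging the weight functions witnessing the slack at each $a\in A$ yields some $f^{*}\in\mathcal{F}_{\max}$ with $w(a,f^{*})<1$ for every $a\in A$ simultaneously. The very definition of $A$ then forces $w(v,f^{*})=1$ for every $v\in V\setminus A=B\cup C$.

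The structural heart of the argument will be three ``vanishing'' statements about the support of $f^{*}$, each proved by a small perturbation exploiting the slack at $A$. First, $A$ is arc-independent: for any arc $a_1 a_2 \in E$ with both endpoints in $A$, the slack at both endpoints lets me increase $f^{*}(a_1 a_2)$ by a small $\epsilon$ while staying feasible, which would strictly raise the total weight and contradict $f^{*}\in\mathcal{F}_{\max}$. Second, for each $b\in B$ fix $\phi(b)\in A$ with $b\phi(b)\in E$; then every in-arc $xb$ satisfies $f^{*}(xb)=0$, since otherwise shifting $\delta$ off $f^{*}(xb)$ and $\alpha\delta$ onto $f^{*}(b\phi(b))$ keeps $w(b,\cdot)$ unchanged, absorbs $\alpha\delta$ into the slack at $\phi(b)$, and gains $(\alpha-1)\delta>0$ in total weight. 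Third, no arc $bc$ with $b\in B$, $c\in C$ carries $f^{*}$-weight: if $f^{*}(bc)>0$, then shifting $\delta$ off $f^{*}(bc)$ and onto $f^{*}(b\phi(b))$ preserves total weight and feasibility (again using the slack at $\phi(b)$) but opens strict slack at $c$, yielding an element of $\mathcal{F}_{\max}$ that witnesses $c\in A$, contradicting $c\in C$.

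With these structural facts in hand, let $X_{UV}$ denote the total $f^{*}$-weight on arcs from $U$ to $V$, and note also $X_{CA}=0$ since any arc from $C$ to $A$ would place its tail in $B$. The three vanishings give $X_{AA}=X_{AB}=X_{BB}=X_{BC}=X_{CA}=X_{CB}=0$, so $w_{\max}=X_{AC}+X_{BA}+X_{CC}$. Summing $w(b,f^{*})=1$ over $b\in B$ collapses to out-arcs of $B$ (since its in-arcs carry zero weight) and yields $|B|=X_{BA}/\alpha\le w_{\max}/\alpha$, proving~\ref{ABC:5}. Summing $w(c,f^{*})=1$ over $c\in C$ gives $|C|=X_{AC}+(1+1/\alpha)X_{CC}$; substituting $X_{CC}=w_{\max}-X_{AC}-\alpha|B|$ produces
\[|C|=(1+1/\alpha)w_{\max}-(1+\alpha)|B|-(1/\alpha)X_{AC},\]
from which~\ref{ABC:6} follows because $X_{AC}\ge0$. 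Finally~\ref{ABC:8} drops out of $|A|=|V|-|B|-|C|$ combined with the previous bound.

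The main obstacle I anticipate is the third vanishing fact, $X_{BC}=0$: unlike the first two, its perturbation does not increase the total weight, so I cannot invoke maximality directly. Instead I must produce a new element of $\mathcal{F}_{\max}$ that opens strict slack at a $C$-vertex, thereby promoting it into $A$ and violating the trichotomy $V=A\cup B\cup C$. Making this work hinges crucially on the initial averaging step, which is precisely what guarantees slack at every $\phi(b)$ simultaneously and thus legitimizes the reroute of weight from $bc$ onto $b\phi(b)$.
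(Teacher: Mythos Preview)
The paper does not give its own proof of this lemma; it is quoted verbatim from \cite[Theorem 8]{HLT}. So there is nothing in the paper to compare against, and your proposal is in fact a reconstruction of (essentially) the original HLT argument: produce a single $f^{*}\in\mathcal{F}_{\max}$ with simultaneous slack on all of $A$ by averaging, use local perturbations to kill the weight on arcs into $B$ and from $B$ to $C$, show $A$ is arc-independent, and then do the bookkeeping. Your argument is correct for \ref{ABC:5} and \ref{ABC:6}, and the disjointness $A\cap B=\varnothing$ (which you need for the $X_{UV}$ decomposition) indeed follows from the arc-independence of $A$, though you might say so explicitly.

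One genuine gap: your final line only yields $|A|\geq |V|-(1+1/\alpha)w_{\max}$, not the strict inequality claimed in \ref{ABC:8}. From your own identities $|A|=|V|-(1+1/\alpha)w_{\max}+X_{BA}+X_{AC}/\alpha$, so strictness would require $X_{BA}+X_{AC}>0$, which can fail (e.g.\ a directed triangle with $\alpha=2$ has $A=\varnothing$ and equality). This is not a flaw in your reasoning; the statement as written in the paper appears to overclaim strictness, and in every application later in the paper the inequality is immediately weakened by error terms, so $\geq$ suffices. You should either note that your method gives $\geq$, or observe that in the applications $w_{\max}$ is bounded strictly below the relevant threshold so the distinction is immaterial.
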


\begin{lemma}[{\cite[Lemma 13]{HLT}}]\label{lemma:QR}
Let $H=(V,E)$ be a red/blue coloured graph, and let $x\in V$. For any $\ast\in\{\textup{red},\textup{blue}\}$, let $Q=N_\ast(x)$ and $R=V\setminus(Q\cup\{x\})$. Let $A,B,C$ be the partition of $V\setminus\{x\}$ obtained by applying Lemma~\ref{lemma:ABC} to $\vv{H}_\ast(x)$ with some fixed $\alpha>1$. Then, the following hold.
\stepcounter{propcounter}
\begin{enumerate}[label = \emph{\textbf{\Alph{propcounter}\arabic{enumi}}}]
    \item\labelinthm{QR:1} $B\cap R=\varnothing$.
    \item\labelinthm{QR:2} All edges within $Q\cap A$, and all edges between $Q\cap A$, $Q\cap C$, and $R\cap A$ are not colour $\ast$. 
    \item\labelinthm{QR:3} $|Q\cap B|+|R\cap C|\leq w_{\ast,\max}$.
\end{enumerate}
\end{lemma}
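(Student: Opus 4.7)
The plan is to prove the three parts in order, with part (3) requiring the most delicate argument. For part (1), every arc $yz$ of $\vv{H}_\ast(x)$ requires $xy$ to be a $\ast$-edge by definition, so every tail of an arc lies in $Q$. Each $b \in B$ has an outgoing arc to some $a \in A$, so $b \in Q$, giving $B \cap R = \varnothing$.

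Before part (2), I establish $A \cap B = \varnothing$, so that $A, B, C$ truly partition $V \setminus \{x\}$. If $v \in A \cap B$, there exist $f_1 \in \mathcal{F}_{\max}$ with $w_\alpha(v, f_1) < 1$, some $a \in A$ with $va \in E(\vv{H}_\ast(x))$, and $f_2 \in \mathcal{F}_{\max}$ with $w_\alpha(a, f_2) < 1$. By convexity of $\mathcal{F}_\alpha$, the average $f := (f_1 + f_2)/2$ lies in $\mathcal{F}_{\max}$ and has both $w_\alpha(v, f) < 1$ and $w_\alpha(a, f) < 1$; increasing $f(va)$ by a sufficiently small $\delta > 0$ preserves feasibility while raising $w(f)$, contradicting maximality. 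With this in hand, the forbidden $\ast$-edges in part (2) are ruled out by case analysis: a $\ast$-edge with one endpoint in $Q$ and the other in $A$ produces an arc into $A$, placing the $Q$-endpoint in $B$; this contradicts either $A \cap B = \varnothing$ (when the $Q$-endpoint lies in $Q \cap A$) or $B \cap C = \varnothing$ (when it lies in $Q \cap C$).

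For part (3), the strategy is to show that every $f \in \mathcal{F}_{\max}$ has all its positive-weight out-arcs from $B$ terminating in $A$, using two sub-claims based on LP duality. First, by complementary slackness, any optimal dual solution $(y_v)_{v \in V \setminus \{x\}}$ of the weight-function LP satisfies $y_v = 0$ for $v \in A$ (as witnessed by any max $g$ with slack at $v$); hence, for each $b \in B$, the dual constraint $y_b/\alpha + y_{a_b} \geq 1$ along a chosen arc $b \to a_b$ with $a_b \in A$ forces $y_b \geq \alpha$. Consequently, along any arc $b \to b'$ with $b, b' \in B$, the dual constraint reads $y_b/\alpha + y_{b'} \geq 1 + \alpha > 1$, strictly, so complementary slackness forces $f(bb') = 0$. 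Second, if $f(bc) > 0$ for some $b \in B$ and $c \in C$, averaging $f$ with a max $g$ providing slack at $a_b$ yields a new max $f'$ satisfying both $f'(bc) > 0$ and $w_\alpha(a_b, f') < 1$. Transferring a small $\delta > 0$ of weight from arc $bc$ to arc $b a_b$ yields yet another max in which $c$ becomes slack, contradicting $c \in C$.

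With the out-arcs from $B$ constrained, part (3) follows by direct computation. Every vertex in $B \cup (R \cap C)$ lies outside $A$, so its constraint at $f$ is tight, and vertices in $R \cap C$ have no outgoing arcs. Hence
\[
|Q \cap B| + |R \cap C| = |B| + |R \cap C| = \frac{1}{\alpha} \sum_{\substack{e = yz \\ y \in B,\, z \in A}} f(e) + \sum_{\substack{e = yz \\ z \in B \cup (R \cap C)}} f(e).
\]
The two sums on the right range over disjoint sets of arcs (distinguished by whether the head $z$ lies in $A$ or not), so their combined total is at most $\sum_e f(e) = w_{\ast, \max}$; combined with $1/\alpha \leq 1$, this gives the desired bound. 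The main obstacle is establishing the restriction on out-arcs from $B$, which crucially combines the LP-duality structure with a convex-combination modification of max weight functions.
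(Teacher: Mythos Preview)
The paper does not include its own proof of this lemma; it is quoted verbatim from \cite{HLT} as Lemma~13 there, so there is no in-paper argument to compare against. Your proof is correct and self-contained: parts~(1) and~(2) follow the natural route (tails of arcs lie in $Q$; an $\ast$-edge from $Q$ into $A$ creates an arc into $A$, forcing the $Q$-endpoint into $B$), and your use of LP duality and complementary slackness in part~(3) is a clean way to pin down the structure of optimal weight functions. The key step---showing that every $f\in\mathcal F_{\max}$ assigns zero weight to arcs $B\to B$ and $B\to C$---is exactly what is needed to make the final accounting work, since without it an arc from $B$ into $B\cup(R\cap C)$ would be counted with total coefficient $1+1/\alpha>1$ in your displayed sum. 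Your averaging-then-shifting argument for the $B\to C$ case is correct, as is the strict-dual-slack argument for $B\to B$; this mirrors the approach in~\cite{HLT}, where the same structural facts about $\mathcal F_{\max}$ are established (their Theorem~8 and surrounding lemmas) before Lemma~13 is deduced.
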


The next lemma says that if a red/blue coloured almost complete graph $H$ contains a vertex $v$ with high red degree, but no weight function on $\red{\vv{H}}(v)$ has large enough weight, then $H$ contains a large almost complete bipartite graph in blue.  
\begin{lemma}\label{lem:getBsituation}
Let $\eps\ll1<\beta\leq\alpha$. Then, for all sufficiently large $k$ the following holds. 

Let $H=(V,E)$ be a red/blue coloured $\eps$-almost complete graph on $(1+1/\alpha+1/\beta-2\eps)k$ vertices, and let $v\in V$. Suppose that $\red{d}(v)\geq(1/\alpha+1/\beta-5\eps)k$. Let $A,B,C$ be obtained by applying Lemma~\ref{lemma:ABC} to $\red{\vv{H}}(v)$ and $\alpha$, let $Q=\red{N}(v)$ and $R=V\setminus(Q\cup\{v\})$. 

Suppose that $w_{\textup{red},\max}(\red{\vv{H}}(v))<(1+\eps)k$ and $|Q\cap A|\geq1$. Then, there exist $z\in V\setminus\{v\}$ and disjoint $X,Y\subset V\setminus\{v,z\}$, such that $|X|,|Y|\geq(1/\alpha-10\eps)k$, $|X|+|Y|\geq(1/\alpha+1/\beta-10\eps)k$, $z$ is a blue neighbour of all vertices in $X\cup Y$, and all edges between $X$ and $Y$ in $H$ are blue.
\end{lemma}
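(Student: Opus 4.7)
The plan is to apply Lemmas~\ref{lemma:ABC} and~\ref{lemma:QR} to the digraph $\red{\vv{H}}(v)$ with parameter $\alpha$, writing $w_{\max}:=w_{\textup{red},\max}(\red{\vv{H}}(v))<(1+\eps)k$. This produces a partition $V\setminus\{v\}=A\cup B\cup C$, and I introduce the three cells $U_1:=Q\cap A$, $U_2:=Q\cap C$, and $U_3:=R\cap A$. The structural backbone is Lemma~\ref{lemma:QR}\ref{QR:2}: every edge within $U_1$ and every edge between two different cells among $U_1,U_2,U_3$ is non-red in $H$, and so any such edge that exists in $H$ is automatically blue.

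Next I would translate the hypotheses into three size bounds. From Lemma~\ref{lemma:ABC}\ref{ABC:8} and $w_{\max}<(1+\eps)k$,
\[
|A|>|V|-1-(1+1/\alpha)w_{\max}\geq (1/\beta-O(\eps))k,
\]
while Lemma~\ref{lemma:ABC}\ref{ABC:5} gives $|Q\cap B|\leq|B|\leq w_{\max}/\alpha$, so $|U_1|+|U_2|=|Q|-|Q\cap B|\geq(1/\beta-O(\eps))k$. Combining $B\subset Q$ (Lemma~\ref{lemma:QR}\ref{QR:1}) with $|Q\cap B|+|R\cap C|\leq w_{\max}$ (Lemma~\ref{lemma:QR}\ref{QR:3}) then yields
\[
|U_1|+|U_2|+|U_3|=|V|-1-(|Q\cap B|+|R\cap C|)\geq(1/\alpha+1/\beta-O(\eps))k.
\]

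The construction is to pick $z\in U_1$ (which exists by $|Q\cap A|\geq 1$), split $U_1\setminus\{z\}=U_1^a\cup U_1^b$ disjointly with $|U_1^a|=\min\{|U_1|-1,\max\{0,\lceil(1/\alpha-4\eps)k\rceil-|U_2|\}\}$, and set $X:=(U_1^a\cup U_2)\cap N(z)$ and $Y:=(U_1^b\cup U_3)\cap N(z)$. By construction, $X$ and $Y$ are disjoint subsets of $V\setminus\{v,z\}$. All edges from $z$ to $X\cup Y$ are present (by the intersection with $N(z)$) and non-red by the backbone fact, hence blue. Every edge of $H$ between $X$ and $Y$ has one endpoint in $U_1\cup U_2$ and the other in $U_1\cup U_3$, so it is non-red, hence blue. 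Since $H$ is $\eps$-almost complete, $z$ has at most $\eps|V|\leq 3\eps k$ non-neighbours, and so intersecting with $N(z)$ loses at most $3\eps k$ vertices from each of $U_1^a\cup U_2$ and $U_1^b\cup U_3$; the bound $|X|+|Y|\geq(1/\alpha+1/\beta-10\eps)k$ then follows immediately from the third size estimate.

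The heart of the argument is verifying $|X|,|Y|\geq(1/\alpha-10\eps)k$ via a short case analysis on the definition of $|U_1^a|$. In the generic case, the choice of $|U_1^a|$ directly gives $|U_1^a|+|U_2|\geq(1/\alpha-4\eps)k$, and then $|U_1^b|+|U_3|=|A|-1-|U_1^a|\geq(1/\alpha-7\eps)k$ follows from the first and third size estimates together with $1/\beta\geq 1/\alpha$ (from $\beta\leq\alpha$). The hardest corner case is $|U_1|=1$ (which the hypothesis allows): then $U_1^a=U_1^b=\varnothing$, and one must read $|U_2|,|U_3|\geq(1/\alpha-O(\eps))k$ directly from $|U_1|+|U_2|\geq(1/\beta-O(\eps))k$ and $|U_1|+|U_3|=|A|\geq(1/\beta-O(\eps))k$, where once again the inequality $1/\beta\geq 1/\alpha$ is essential. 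The main obstacle is thus purely the constant-tracking needed to cover this corner case and the intermediate ones, rather than any combinatorial difficulty beyond the structure supplied by Lemmas~\ref{lemma:ABC} and~\ref{lemma:QR}.
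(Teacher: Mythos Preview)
Your proposal is correct and follows essentially the same approach as the paper's own proof: both pick $z\in Q\cap A$, use Lemma~\ref{lemma:QR}\ref{QR:2} to ensure all relevant edges are blue, and then split $(Q\cap A)\cup(Q\cap C)\cup(R\cap A)$ into two pieces of the required sizes. The only cosmetic difference is that the paper handles the split via a two-case argument on whether $|Q\cap C|\geq(1/\alpha-6\eps)k$, whereas you package the same case analysis into the single formula for $|U_1^a|$; the underlying arithmetic (in particular the use of $1/\beta\geq 1/\alpha$) is identical.
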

\begin{proof}
By~\ref{QR:1} and~\ref{QR:3}, $|Q\cap A|+|Q\cap C|+|R\cap A|\geq(1+1/\alpha+1/\beta-3\eps)k-(1+\eps)k>(1/\alpha+1/\beta-4\eps)k$. By~\ref{QR:2}, all edge within $Q\cap A$, and all edges between $Q\cap A$, $Q\cap C$, and $R\cap A$ are blue. By~\ref{ABC:5}, $|Q\cap B|\leq|B|\leq(1+\eps)k/\alpha<(1/\alpha+\eps)k$, so $|Q\cap A|+|Q\cap C|\geq(1/\beta-6\eps)k$. By~\ref{ABC:8}, $|Q\cap A|+|R\cap A|=|A|>|V|-1-(1+1/\alpha)(1+\eps)k>(1/\beta-5\eps)k$. 

Let $z$ be any vertex in $Q\cap A$, and note that from assumptions, $z$ is a blue neighbour of all but at most $3\eps k$ vertices in $(Q\cap A)\cup(Q\cap C)\cup(R\cap A)$. If $|Q\cap C|\geq(1/\alpha-6\eps)k$, then setting $X=\blue{N}(z,Q\cap C)$ and $Y=\blue{N}(z,(Q\cap A)\cup(R\cap A))$ works. Otherwise, there exists a subset $Q\cap C\subset X\subset(Q\cap C)\cup(Q\cap A)$ with $|X|=(1/\alpha-10\eps)k$ and $X\subset\blue{N}(z)$. Then, setting $Y=\blue{N}(z,(Q\cap A)\cup(R\cap A))\setminus X$ works.
\end{proof}


Finally, the reason we are interested in weight functions is the following lemma, which shows that given an $\alpha$-weight function with large weight in the reduced graph, we can find the structure required to apply Lemma~\ref{lemma:hlt} (\textbf{H\L T}) to embed a tree whose bipartition class sizes $t_1$ and $t_2$ have ratio $t_1/t_2=\alpha$. It follows from the derivation in~{\cite[Section 8]{HLT}} of~{\cite[Theorem 3]{HLT}} from~{\cite[Theorem 10]{HLT}}, after modifying some constants.
\begin{lemma}\label{lem:getHLT}
Let $1/t_1\ll1/k_2\ll1/k_1\ll\eps_1\ll\eps_2\ll\zeta\ll d\leq1\leq\alpha$. Let $G$ be a graph with at least $(1+1/\alpha+10\zeta)t_1$ vertices. Let $V_1\cup\cdots\cup V_{k_1}$ be an $\eps_1$-regular partition in $V(G)$ containing at least $(1+1/\alpha+5\zeta)t_1$ vertices in total, with $|V_1|=\cdots=|V_{k_1}|=n'$. Let $H=(V,E)$ be the corresponding $(\eps_1,2d)$-reduced graph with vertex set $V=[k]$.

If for some $x\in V$, there exists an $\alpha$-weight function $f$ on $\vv{H}(x)$ with $w(f)\geq(1+2\zeta)t_1/n'$, then there exists an $\eps_2$-regular partition $U_0\cup\cdots\cup U_{k_2}$ in $G$ with corresponding $(\eps_2,d)$-reduced graph $H'$, and a partition $[k_2]=I_A\cup I_B\cup I_C$, such that the following hold.
\stepcounter{propcounter}
\begin{enumerate}[label = \emph{\textbf{\Alph{propcounter}\arabic{enumi}}}]
    \item $|U_i|=m$ for every $i\in\{0\}\cup I_A$ and $|U_i|=\alpha m$ for every $i\in I_B$.
    \item $|I_A|=|I_B|$, with $|I_B|\alpha m\geq(1+\zeta)t_1$.
    \item In $H'$, 0 is adjacent to every $a\in I_A$. 
    \item $H'[I_A,I_B]$ contains a perfect matching.
\end{enumerate}
\end{lemma}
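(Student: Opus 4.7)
The plan is to construct the refined partition $U_0 \cup U_1 \cup \cdots \cup U_{k_2}$ together with the matching $M$ directly from the weight function, using linear programming extremality to bound the rounding error. Since $w$ is linear and $\mathcal{F}_\alpha$ is a polytope, the maximum $w_{\alpha,\max}$ is attained at some extreme point $f^* \in \mathcal{F}_{\alpha,\max}$; by the standard LP argument $f^*$ has at most $|V(\vv{H}(x))| = k_1 - 1$ arcs of positive weight, and $w(f^*) \geq w(f) \geq (1+2\zeta) t_1/n'$. For each $y \neq x$, set $a_y := \tfrac{1}{\alpha}\sum_{e \in D^+(y)} f^*(e)$ and $b_y := \sum_{e \in D^-(y)} f^*(e)$, so $a_y + b_y \leq 1$. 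The intended meaning is that $V_y$ will contribute an ``out-part'' $V_y^{\text{out}}$ of size close to $a_y n'$, to be subdivided into pieces of size $m$ (each a future $I_A$-cluster), and an ``in-part'' $V_y^{\text{in}}$ of size close to $b_y n'$, to be subdivided into pieces of size $\alpha m$ (each a future $I_B$-cluster). Each arc $e = (y,z)$ with $f^*(e) > 0$ will produce $\lfloor f^*(e) n'/(\alpha m)\rfloor$ edges of $M$, pairing $m$-pieces from $V_y^{\text{out}}$ with $\alpha m$-pieces from $V_z^{\text{in}}$.

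Concretely, choose $m$ in the range $\sqrt{\eps_1}\,n' \leq m \leq \eps_2 n'$, possible because $\eps_1 \ll \eps_2$. For each $y$, partition $V_y$ arbitrarily into disjoint parts $V_y^{\text{out}}$ of size $\lfloor a_y n'/m \rfloor \cdot m$ and $V_y^{\text{in}}$ of size $\lfloor b_y n'/(\alpha m) \rfloor \cdot \alpha m$, discarding at most $(1+\alpha) m$ vertices, and subdivide these into $m$-pieces and $\alpha m$-pieces respectively. For each arc $e = (y, z)$ with $f^*(e) > 0$, take $\lfloor f^*(e) n'/(\alpha m)\rfloor$ of the $m$-pieces from $V_y^{\text{out}}$ and the same number of $\alpha m$-pieces from $V_z^{\text{in}}$, and pair them up arbitrarily to form that many matching edges of $M$; feasibility follows from $\sum_z \lfloor f^*(e) n'/(\alpha m)\rfloor \leq \lfloor a_y n'/m\rfloor$ for each $y$. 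The matched pieces are declared to be $I_A$ and $I_B$, unmatched pieces go into $I_C$, and $U_0$ is any $m$-subset of $V_x$.

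For the count on $I_B$, we have $|I_B| = \sum_e \lfloor f^*(e) n'/(\alpha m)\rfloor \geq w(f^*) n'/(\alpha m) - (k_1-1) \geq (1+2\zeta) t_1/(\alpha m) - k_1$, and since $k_1 \cdot \alpha m \leq \alpha k_1 \eps_2 n' \leq \alpha \eps_2 (1+1/\alpha+10\zeta) t_1 \ll \zeta t_1$ (because $\eps_2 \ll \zeta$ and $\alpha$ is bounded by the intended application), this yields $|I_B| \alpha m \geq (1+\zeta) t_1$. For regularity and adjacency, every positive-weight arc $(y,z)$ of $\vv{H}(x)$ satisfies both $xy \in E(H)$ and $yz \in E(H)$ by the definition of $\vv{H}(x)$, so $G[V_x, V_y]$ and $G[V_y, V_z]$ are $(\eps_1, 2d)$-regular; since all subclusters we use have size at least $m \geq \sqrt{\eps_1}\,n'$, Lemma~\ref{lemma:regularity:1} gives $(\sqrt{\eps_1}, 2d-\eps_1)$-regularity, which implies $(\eps_2, d)$-regularity. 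Consequently every matching edge of $M$ lies in $H'[I_A, I_B]$, and $0$ is adjacent in $H'$ to every vertex of $I_A$. The whole refined partition is $\eps_2$-regular since the $\eps_2 (k_2+1)^2$ allowance absorbs any pairs not of the verified forms.

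The main obstacle is squeezing the rounding losses into the $\Theta(\zeta t_1)$ slack inherent in the size condition $|G| \geq (1+1/\alpha+10\zeta) t_1$. Splitting each $V_y$ wastes $O((1+\alpha) m)$ vertices, and each positive-weight arc of $f^*$ wastes a further $O((1+\alpha) m)$; with at most $k_1$ such arcs and $k_1$ clusters, the total loss is $O((1+\alpha) k_1 m) = O((1+\alpha) \eps_2 t_1)$, dominated by $\zeta t_1$. The extreme-point reduction, which bounds the number of positive-weight arcs by $k_1$, is indispensable here: a generic weight function could in principle have all $\Theta(k_1^2)$ arcs positive, which would push the rounding loss beyond the available slack.
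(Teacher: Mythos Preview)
Your approach is essentially the same as the one the paper defers to (the derivation in \cite[Section~8]{HLT}): convert the weight function into a refined partition by splitting each cluster according to its in-weight and out-weight, and pair off the pieces along the positive-weight arcs. The extreme-point reduction to bound the number of positive arcs by $k_1-1$ is exactly the right device for controlling the rounding error, and your regularity check via Lemma~\ref{lemma:regularity:1} is correct.

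One technical point deserves attention. In the line
\[
k_1 \cdot \alpha m \leq \alpha k_1 \eps_2 n' \leq \alpha \eps_2 (1+1/\alpha+10\zeta) t_1,
\]
you use $k_1 n' \leq (1+1/\alpha+10\zeta)t_1$, but the lemma only gives a \emph{lower} bound on the number of vertices covered by the partition, not an upper bound on $k_1 n'$ or on $|G|$. In the paper's applications $|G| \leq 2n = O(t_1)$ always holds, so this is harmless there, and you flag the analogous issue for $\alpha$ yourself. To make the argument self-contained for the lemma as stated, it is cleaner to choose $m$ directly as (an integer near) $\zeta t_1/(10\alpha k_1)$ and then verify $m \geq \sqrt{\eps_1}\,n'$ using whatever upper bound on $n'$ the application supplies; alternatively, note that one may always first pass to a subpartition covering exactly $(1+1/\alpha+5\zeta)t_1$ vertices. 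Either way this is a cosmetic fix, not a gap in the method.
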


\section{Stability part}\label{sec:stability}
In this section, we prove Theorem~\ref{thm:stability}, which says that either we can find a red copy of $T$ or a blue copy of $\cT$ using regularity methods, or the graph $G$ must be extremal, in the sense that it approximates one of the lower bound constructions~\ref{low:1}--\ref{low:4}. After specialising to the case when $T=\cT$, this simplifies the proof of the corresponding stability result in{~\cite[Theorem 2.2]{MPY}}, and leads to an easier proof of Theorem~\ref{thm:oldmain}. Before stating Theorem~\ref{thm:stability}, we first need to formalise what we mean by being extremal, as follows. 
\begin{definition}\label{def:extremal}
Let $0<\mu<1$. Let $T$ be an $n$-vertex tree with bipartition class sizes $t_1\geq t_2$, let $\cT$ be a $\nu$-vertex tree with bipartition class sizes $\tau_1\geq\tau_2$, and suppose that $\nu\geq t_1$ and $\tau_2\geq t_2$. Let $G$ be a red/blue coloured complete graph. 
\begin{itemize}
    \item $G$ is \textit{Type 1 $(\mu,T,\cT)$-extremal} if there are disjoint subsets $U_1,U_2\subset V(G)$ such that (see~\ref{low:1}) 
    \begin{itemize}
        \item $|U_1|\geq(1-\mu)n$ and $|U_2|\geq(1-\mu)\tau_2$,
        \item for every $u\in U_1$, $\blue{d}(u,U_1)\leq \mu n$, and
        \item for every $i\in[2]$ and every $u\in U_i$, $\red{d}(u,U_{3-i})\leq \mu n$.
    \end{itemize}

\item $G$ is \textit{Type 2 $(\mu,T,\cT)$-extremal} if there are disjoint subsets $U_1,U_2\subset V(G)$ such that (see~\ref{low:2}) 
    \begin{itemize}
        \item $|U_1|\geq(1-\mu)\nu$ and $|U_2|\geq(1-\mu)t_2$,
        \item for every $u\in U_1$, $\red{d}(u,U_1)\leq \mu n$, and
        \item for every $i\in[2]$ and every $u\in U_i$, $\blue{d}(u,U_{3-i})\leq \mu n$.
    \end{itemize}

\item $G$ is \textit{Type 3 $(\mu,T,\cT)$-extremal} if there are disjoint subsets $U_1,U_2\subset V(G)$ such that (see~\ref{low:3})
    \begin{itemize}
        \item $|U_1|,|U_2|\geq (1-\mu)t_1$, and
        \item for each $i\in [2]$ and $u\in U_i$, $\red{d}(u,U_i)\leq\mu n$ and $\blue{d}(u,U_{3-i})\leq\mu n$.
    \end{itemize}

\item $G$ is \textit{Type 4 $(\mu,T,\cT)$-extremal} if there are disjoint subsets $U_1,U_2\subset V(G)$ such that (see~\ref{low:4})
    \begin{itemize}
        \item $|U_1|,|U_2|\geq (1-\mu)\tau_1$, and
        \item for each $i\in [2]$ and $u\in U_i$, $\blue{d}(u,U_i)\leq\mu n$ and $\red{d}(u,U_{3-i})\leq\mu n$.
    \end{itemize}

\item $G$ is \emph{$(\mu,T,\cT)$-extremal} if it is Type 1, Type 2, Type 3, or Type 4 $(\mu,T,\cT)$-extremal.
\end{itemize}
\end{definition}

Now we can state the main result of this section. 
\begin{theorem}\label{thm:stability}
Let $1/n\leq1/\nu\ll c\ll\mu\ll1$. Let $T$ be an $n$-vertex tree with $\Delta(T)\leq cn$ and bipartition class sizes $t_1\geq t_2$, and let $\cT$ be a $\nu$-vertex tree with $\Delta(\cT)\leq c\nu$ and bipartition class sizes $\tau_1\geq\tau_2$. Suppose that $\tau_2\geq t_2$ and $\nu\geq t_1$. Let $G$ be a red/blue coloured complete graph with $\max\{n+\tau_2,2t_1\}-1$ vertices.
Then, at least one of the following is true. 
\begin{itemize}
    \item $G$ contains a red copy of $T$ or a blue copy of $\cT$.
    \item $G$ is $(\mu,T,\cT)$-extremal.
\end{itemize}
\end{theorem}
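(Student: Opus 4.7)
The plan is to adapt the four-stage stability analysis of~\cite{MPY} to the asymmetric setting, where we must distinguish four extremal configurations (Types~1--4 of Definition~\ref{def:extremal}) corresponding to the four constructions~\ref{low:1}--\ref{low:4}. First I would apply the coloured regularity lemma (Theorem~\ref{theorem:regularity}) to $G$ to obtain a balanced $\varepsilon$-regular partition $V_1,\dots,V_k$ of size $m$ each, with corresponding $(\varepsilon,d)$-reduced graph $H$ on $[k]$. Since $G$ is a complete graph, almost every pair in $H$ receives a red or blue edge, and the strategy is to show that either $H$ admits a configuration that triggers one of the embedding lemmas (Lemmas~\ref{lemma:hlt} and~\ref{lemma:em2a}--\ref{lemma:em3}), or else $H$ (and hence $G$) approximates one of the four extremal templates.

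The Stage~1 dichotomy is where the argument deviates from~\cite{MPY}. For each vertex $x\in V(H)$, set $\alpha_T=t_1/t_2$ and examine the digraph $\red{\vv{H}}(x)$: either there is an $\alpha_T$-weight function $f\in\mathcal{F}_{\alpha_T}(\red{\vv{H}}(x))$ of weight at least $(1+2\zeta)t_1/m$, in which case Lemma~\ref{lem:getHLT} produces the refined partition required by Lemma~\ref{lemma:hlt} (\textbf{H\L T}) and embeds a red copy of $T$, or no such weight function exists and Lemma~\ref{lem:getBsituation} (applied with $\alpha=\alpha_T$ and $\beta$ tuned to the red degree of $x$) yields a vertex $z$ together with disjoint $X,Y\subset V(H)$ of sizes at least $(1/\alpha_T-O(\varepsilon))k$, such that $z$ is blue-adjacent to every vertex of $X\cup Y$ and every $X$--$Y$ edge is blue. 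Applying the symmetric dichotomy with $\alpha_{\mathcal T}=\tau_1/\tau_2$ to $\blue{\vv{H}}(x)$ either embeds a blue copy of $\mathcal{T}$ or yields an analogous almost-complete red bipartite structure. Feeding the weight-function dichotomy directly into Lemmas~\ref{lem:getHLT} and~\ref{lem:getBsituation} is exactly the modification that bypasses the black-box use of~\cite[Theorem~3]{HLT} in~\cite{MPY}.

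In Stages~2--4 I would exploit the almost-complete monochromatic bipartite subgraphs delivered by Stage~1. Using Lemma~\ref{lemma:regularity:refine} to rebalance clusters and the Cascading Lemma (Lemma~\ref{lemma:improvemaximummatching}) to upgrade a maximum matching in one colour to a near-complete bipartite graph in the other, I would iteratively extract a candidate partition $V(H)=I_1\cup I_2$ in which $H$ is almost complete across $I_1,I_2$ in a single colour, up to a small exceptional set. Any residual edge or matching inconsistent with the intended extremal template then supplies the reduced configuration of one of the embedding lemmas~\ref{lemma:em2a}--\ref{lemma:em3} (\textbf{EMa}--\textbf{EMd}): \textbf{EMa}/\textbf{EMb} exploit a single extra edge on the ``heavy'' side, \textbf{EMc} exploits two parallel matchings, and \textbf{EMd} exploits two almost-complete cliques joined by matchings. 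Splitting into cases according to whether the dominant bipartite colour is red or blue and whether the two side-sizes are closest to $\{n-1,\tau_2-1\}$, $\{\nu-1,\min\{t_2,\nu\}-1\}$, $\{t_1-1,t_1-1\}$, or $\{\tau_1-1,\tau_1-1\}$ pins down the extremal Type 1--4 whenever no embedding is triggered.

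The main obstacle I foresee is the bookkeeping forced by the asymmetric, four-way case split: we run two parametrised weight-function dichotomies (with distinct ratios $\alpha_T\neq\alpha_{\mathcal T}$), and must ensure that the bipartite structures output by Lemma~\ref{lem:getBsituation} in one colour fit the size assumptions of the correct embedding lemma for the \emph{other} tree, while simultaneously matching one of the four extremal templates. The constants must be chosen hierarchically, roughly $1/n\ll c\ll\varepsilon\ll\eta\ll\zeta\ll\alpha\ll d\ll\mu\ll 1$, so that every failed embedding attempt genuinely tightens the extremal approximation rather than destabilising it. A secondary delicate point is handling vertices of intermediate red and blue degree, where both Stage~1 dichotomies apply only weakly; these require iterating the weight-function argument on a slightly smaller subgraph, which is precisely why Lemma~\ref{lem:getBsituation} is stated with the auxiliary parameter $\beta$ distinct from $\alpha$.
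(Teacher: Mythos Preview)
Your four-stage outline broadly matches how the paper proves Theorem~\ref{thm:stability2}, but the paper does \emph{not} prove Theorem~\ref{thm:stability} by running the stages directly. It first reduces Theorem~\ref{thm:stability} to Theorem~\ref{thm:stability2}, which carries the extra hypotheses $t_2\ge(t_1-2)/3$, $\tau_2\ge(t_1-1)/2$, and $t_2+\tau_2\ge t_1-1$. The reduction is by tree augmentation: when $t_2+\tau_2<t_1-1$ one uses Lemma~\ref{lemma:leaves:V1} to attach extra leaves to $T$ and $\cT$, producing trees $T',\cT'$ with larger small-side parameters $t_2',\tau_2'$ satisfying $t_2'+\tau_2'\in\{t_1-1,t_1\}$, and then observes that $\underline{R}(T',\cT')=\underline{R}(T,\cT)=2t_1-1$ and that $(\mu,T',\cT')$-extremality implies $(2\mu,T,\cT)$-extremality.

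This reduction is not cosmetic, and skipping it is a genuine gap in your plan. Every embedding lemma you invoke (Lemmas~\ref{lemma:hlt} and \ref{lemma:em2a}--\ref{lemma:em3}) is stated only under the balance assumption $t_2\le t_1\le 3.1t_2$, so they simply do not apply to an arbitrary $T$ satisfying only $\Delta(T)\le cn$. More fundamentally, the Stage~1 weight-function argument needs $1/\alpha+1/\beta\ge 1-\varepsilon$ (equivalently $t_2+\tau_2\gtrsim t_1$) so that the $(1+1/\alpha+1/\beta)t_1$ vertices promised by the partition actually fill up $|G|$; when $t_2+\tau_2\ll t_1$ and $|G|=2t_1-1$, the hypotheses of Lemma~\ref{lem:getBsituation} and the size estimates in Lemma~\ref{lem:stage1} fail outright, and your dichotomy produces nothing usable.

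A smaller point on Stage~1 itself: in the paper the blue-side parameter is $\beta=(1+\varepsilon/2)t_1/\tau_2$, not $\tau_1/\tau_2$. Both ratios are taken against $t_1$ because the partition is normalised with $k'=t_1/m$, and the single threshold $(1+\zeta)k'$ then serves both the red and blue dichotomies; this is also why the condition $1/\alpha+1/\beta\ge 1-\varepsilon$ is precisely the balance condition $t_2+\tau_2\gtrsim t_1$ that the reduction to Theorem~\ref{thm:stability2} is designed to enforce.
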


Similar to how Theorem~\ref{thm:main} is reduced to Theorem~\ref{thm:main2} in Section~\ref{sec:intro}, we can also reduce Theorem~\ref{thm:stability} to a version with more assumptions, Theorem~\ref{thm:stability2}, as follows.
\begin{theorem}\label{thm:stability2}
Let $1/n\leq1/\nu\ll c\ll\mu\ll1$. Let $T$ be an $n$-vertex tree with $\Delta(T)\leq cn$ and bipartition class sizes $t_1\geq t_2\geq(t_1-2)/3$. Let $\cT$ be a $\nu$-vertex tree with $\Delta(\cT)\leq c\nu$ and bipartition class sizes $\tau_1\geq\tau_2\geq(t_1-1)/2$. Suppose $\nu\geq t_1\geq\tau_1$, $\tau_2\geq t_2$, and $t_2+\tau_2\geq t_1-1$. Let $G$ be a red/blue coloured complete graph with $\max\{n+\tau_2,2t_1\}-1$ vertices.
Then, at least one of the following is true. 
\stepcounter{propcounter}
\begin{enumerate}[label = \emph{\textbf{\Alph{propcounter}\arabic{enumi}}}]
    \item\labelinthm{main:1} $G$ contains a red copy of $T$ or a blue copy of $\cT$.
    \item\labelinthm{main:2} $G$ is Type 1 $(\mu,T,\cT)$-extremal.
    \item\labelinthm{main:3} $\nu\geq(1-\mu)(t_1+\tau_2)$ and $G$ is Type 2 $(\mu,T,\cT)$-extremal.
    \item\labelinthm{main:4} $t_1\geq(1-\mu)(t_2+\tau_2)$ and $G$ is Type 3 $(\mu,T,\cT)$-extremal.
    \item\labelinthm{main:5} $\tau_1\geq(1-\mu)(n+\tau_2)/2$ and $G$ is Type 4 $(\mu,T,\cT)$-extremal.
\end{enumerate}
\end{theorem}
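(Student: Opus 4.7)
The plan is to follow the 4-stage stability approach from \cite{MPY}, using the weight function machinery developed in Section~\ref{sec:weight} to streamline the first stage. Assume for contradiction that $G$ contains neither a red copy of $T$ nor a blue copy of $\cT$, and that none of the four extremal-configuration conclusions \ref{main:2}--\ref{main:5} holds. First I would apply the Coloured Regularity Lemma (Theorem~\ref{theorem:regularity}) with suitable constants $1/n\ll 1/k_2\ll 1/k_1\ll\eps\ll\eta\ll\mu$ to obtain an $\eps$-regular partition $V_1\cup\cdots\cup V_k$ of $G$, and let $H$ be the corresponding $(\eps,d)$-reduced graph on $[k]$, which is red/blue edge-coloured and almost-complete.

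Set $\alpha:=t_1/t_2$ and $\beta:=\tau_1/\tau_2$; by the hypotheses $t_2\geq(t_1-2)/3$ and $\tau_2\geq(t_1-1)/2\geq\tau_1/2-O(1)$, both ratios lie essentially inside $[1,3.1]$. For each $v\in V(H)$, I would test via Lemma~\ref{lem:getHLT} whether $\red{\vv{H}}(v)$ admits an $\alpha$-weight function of weight at least $(1+2\zeta)t_1/|V_1|$; if so, the lemma produces the \textbf{H\L T} template, and Lemma~\ref{lemma:hlt} then embeds a red copy of $T$, contradicting our assumption. An analogous test on $\blue{\vv{H}}(v)$ with parameter $\beta$ and tree $\cT$ would produce a blue copy of $\cT$. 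Hence at every $v\in V(H)$ both maximum weights are small. At a vertex $v$ with high red-degree in $H$, Lemma~\ref{lem:getBsituation} then yields a vertex $z$ together with disjoint sets $X,Y\subset V(H)\setminus\{v,z\}$ of total size essentially $(1/\alpha+1/\beta)k$ such that $z$ is blue-adjacent to every vertex of $X\cup Y$ and $H[X,Y]$ is blue-almost-complete; symmetrically, red-bipartite seeds are produced at vertices of high blue-degree.

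From these seed structures I would carry out a case analysis indexed by which colours and which configurations of seeds appear. In each case, either (i) grow the seed, using Lemma~\ref{lemma:regularity:refine} together with the bulk of the reduced graph, until it meets the hypotheses of one of \textbf{EMa}--\textbf{EMd} (Lemmas~\ref{lemma:em2a}--\ref{lemma:em3}) and thereby embeds the forbidden red $T$ or blue $\cT$; or (ii) conclude that the seed can only be extended to approximate exactly one of the four lower-bound constructions \ref{low:1}--\ref{low:4}. Pulling the resulting structure in $H$ back to $G$ via the regular partition, the almost-complete monochromatic bipartite and clique pieces in $H$ give subsets $U_1,U_2\subset V(G)$ verifying Definition~\ref{def:extremal}, and the quantitative size conditions (e.g.\ $\nu\geq(1-\mu)(t_1+\tau_2)$ in \ref{main:3}) follow by totalling $|U_1|+|U_2|$ against $N=\max\{n+\tau_2,2t_1\}-1$ and using $\tau_2\geq t_2$ and $\nu\geq t_1\geq\tau_1$.

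The hard part will be the final case analysis: multiple red and blue seeds may coexist at different vertices with varying sizes, and one must check that each configuration either fits one of the four embedding templates shown in Figure~\ref{fig:embeddings:1} or is forced into exactly one extremal type with the prescribed side condition. The bipartition-ratio hypotheses $t_2\geq(t_1-2)/3$ and $\tau_2\geq(t_1-1)/2$ are what keep $\alpha,\beta\leq 3.1$, as required by Lemmas~\ref{lemma:hlt}--\ref{lemma:em3}, while $t_2+\tau_2\geq t_1-1$ and $\nu\geq t_1\geq\tau_1$ ensure that the seeds produced by Lemma~\ref{lem:getBsituation} are large enough to fill the bulk parts of the embedding templates. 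The key novelty compared with \cite{MPY} is that using Lemma~\ref{lem:getBsituation} directly, rather than invoking \cite[Theorem~3]{HLT} as a black box, replaces the most technical first-stage argument of \cite{MPY} with a short weight-function dichotomy.
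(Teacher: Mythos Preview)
Your overall strategy matches the paper's: apply regularity, run a weight-function dichotomy at a vertex of high monochromatic degree to either get the \textbf{H\L T} template or a blue/red bipartite seed, and then push the seed through further stages until it either triggers one of \textbf{EMa}--\textbf{EMd} or collapses to one of the four extremal pictures. That is exactly the 4-stage architecture of Section~\ref{sec:stability}, and you correctly identify Lemma~\ref{lem:getBsituation} as the shortcut replacing the first-stage machinery of \cite{MPY}.

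There are, however, two concrete gaps. First, your parameter $\beta$ is wrong: the paper takes $\beta=(1+\eps/2)t_1/\tau_2$, not $\tau_1/\tau_2$. This is not cosmetic. With the paper's choice one has $(1+1/\alpha+1/\beta)t_1\approx t_1+t_2+\tau_2=n+\tau_2=|G|$, so the reduced graph has exactly the $(1+1/\alpha+1/\beta-2\eps)k$ vertices that Lemma~\ref{lem:getBsituation} requires, and the output sets satisfy $|X|+|Y|\gtrsim(t_2+\tau_2)/m$, which is precisely the size needed to seed the \textbf{B-situation} of Stage~2 (Lemma~\ref{lem:stage2}). With your $\beta=\tau_1/\tau_2$ these size identities break, and the seeds produced are too small to fill the templates. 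Second, Lemma~\ref{lem:getBsituation} carries the hypothesis $|Q\cap A|\geq 1$, which you do not verify. The paper's Stage~1 (Lemma~\ref{lem:stage1}) splits on this: when $Q\cap A=\varnothing$ one cannot invoke Lemma~\ref{lem:getBsituation} at all, and instead a direct size count using~\ref{ABC:6} and~\ref{ABC:8} yields a \textbf{C-situation} that feeds straight into Stage~3. Your plan jumps over this case entirely.

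Beyond these, the proposal stops where most of the work begins. The ``final case analysis'' you defer is the content of Lemmas~\ref{lem:stage4}--\ref{lem:stage2}: each takes a specific bipartite seed (the \textbf{B}, \textbf{C}, or \textbf{D}-situation) and, via matchings in the reduced graph, the Cascading Lemma~\ref{lemma:improvemaximummatching}, and careful size bookkeeping, either locates one of the embedding templates of Figure~\ref{fig:embeddings:1} or produces an extremal regular partition in the sense of Definition~\ref{def:redextremal} (whence Lemma~\ref{lemma:regtoext} transfers extremality to $G$). The additional numerical side-conditions in~\ref{main:3}--\ref{main:5} emerge not from a global count of $|U_1|+|U_2|$ against $N$, as you suggest, but from the specific size comparisons inside Stage~4 (e.g.\ $|A|<t_1+10\zeta n$ forces $t_1\geq(1-\mu)(t_2+\tau_2)$). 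Until these stages are actually carried out, the argument is a plan rather than a proof.
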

\begin{proof}[Proof of Theorem~\ref{thm:stability}]
Let $G,T,\cT,n,\nu,t_1,t_2,\tau_1,\tau_2,c,\mu$ satisfy the assumptions in Theorem~\ref{thm:stability}. Note that $\tau_2\geq t_2$ and $t_1+t_2=n\geq\nu=\tau_1+\tau_2$ implies that $t_1\geq\tau_1$. 

If $t_2+\tau_2\geq t_1-1$, then as in the proof of Theorem~\ref{thm:main} in Section~\ref{sec:intro}, $T$ and $\cT$ satisfy all the assumptions of Theorem~\ref{thm:stability2}, so we are done.

Now suppose that $t_2+\tau_2<t_1-1$. Again as in the proof of Theorem~\ref{thm:main} in Section~\ref{sec:intro}, there exist $t_1\geq t_2'\geq t_2$ and $\tau_1\geq\tau_2'\geq\tau_2$, such that $t_2'\leq\tau_2'$, $t_1+t_2'\geq\tau_1+\tau_2'$, and $t_2'+\tau_2'\in\{t_1-1,t_1\}$. Observe that $\max\{t_1+t_2+\tau_2,2t_1\}=2t_1=\max\{t_1+t_2'+\tau_2',2t_1\}$.

By Lemma~\ref{lemma:leaves:V1}, $T$ contains a set $L$ of at least $t_1-t_2\geq t_2'-t_2$ leaves on the larger side of its bipartition. Pick $t_2'-t_2$ leaves in $L$, and attach a new leaf to each of them to obtain a new tree $T'$ with bipartition class sizes $t_1$ and $t_2'$ and maximum degree $\Delta(T')=\Delta(T)\leq cn\leq c|T'|$. Similarly, we can obtain a tree $\cT'$ with bipartition class sizes $\tau_1$ and $\tau_2'$ and maximum degree $\Delta(\cT')\leq c|\cT'|$. Then, like above, $T'$ and $\cT'$ satisfy all the assumptions in Theorem~\ref{thm:stability2}, so $G$ contains a red copy of $T'$ and thus $T$, or a blue copy of $\cT'$ and thus $\cT$, or is $(\mu,T',\cT')$-extremal. Note that if $G$ is $(\mu,T',\cT')$-extremal, then it is also $(2\mu,T,\cT)$-extremal, so we are done.
\end{proof}

\begin{figure}[h]
\input{figures/stages/stages}
\end{figure}

Throughout this section, let $T$ and $\cT$ be fixed, and let all parameters satisfy the assumptions in Theorem~\ref{thm:stability2}. As mentioned in Section~\ref{sec:intro}, Theorem~\ref{thm:stability2} will be proved using a 4-stage process (see Figure~\ref{fig:stages}) similar to the one used in~\cite{MPY}. In each stage, we attempt to find a nice structure in the reduced graph in either red or blue, which will allow us to embed the corresponding tree using regularity. If this attempt fails, then we show that the reduced graph must contain a structure in the other colour that represents the start of the next stage. At the end of the four stages, if we managed to embed neither a red copy of $T$ nor a blue copy of $\cT$, then we can conclude that the reduced graph must be extremal (see Definition~\ref{def:redextremal}), and thus so is the original host graph $G$ (see Lemma~\ref{lemma:regtoext}). To avoid repetition, we will carry out the 4 stages in reverse order from Section~\ref{sec:stage4} to Section~\ref{sec:stage1}. Theorem~\ref{thm:stability2}, and thus Theorem~\ref{thm:stability}, will follow immediately from Lemma~\ref{lem:stage1} (\textbf{Stage 1}) in Section~\ref{sec:stage1}.

The main difference between our proof and the one in~\cite{MPY} is in \textbf{Stage 1}. Recall from Lemma~\ref{lemma:hlt} that if the reduced graph contains the \textbf{H\L T} structure (see Figure~\ref{fig:HLT}), then we can find a copy of the tree. In~\cite{MPY}, the authors use~{\cite[Theorem 3]{HLT}} as a black box and start \textbf{Stage 1} with a slightly smaller version of the \textbf{H\L T} structure, \textbf{H\L T}$^-$, which is guaranteed to exist by this key result. Instead, we start with the original red/blue coloured reduced graph, and modify the proof of~{\cite[Theorem 3]{HLT}} to show that we can either find a full-size \textbf{H\L T} structure and embed with Lemma~\ref{lemma:hlt} immediately, or find a \textbf{B-situation} or a \textbf{C-situation}, which allows us to skip ahead to \textbf{Stage 2} or \textbf{Stage 3}, respectively. This enables us to bypass the technical arguments used in \textbf{Stage 1} of~\cite{MPY}, which involves the use of three regularity embedding lemmas~{\cite[Lemma 4.6, Lemma 4.8, Lemma 4.9]{MPY}} whose proofs are very tedious.


\subsection{Extremal coloured graphs and extremal regular partitions}\label{sec:extpart}
Recall from Definition~\ref{def:extremal} what it means for a red/blue coloured complete graph $G$ to be $(\mu,T,\cT)$-extremal. In this subsection, we define a similar notion for the reduced graph of a regular partition, and show that the reduced graph being extremal implies that the host graph is extremal as well. 

\begin{definition}\label{def:redextremal}
Let $1/n\ll1/k\ll\eps\ll\mu,d\leq1$. Let $T$ be an $n$-vertex tree with bipartition class sizes $t_1\geq t_2$, let $\cT$ be a $\nu$-vertex tree with bipartition class sizes $\tau_1\geq\tau_2$, and suppose that $\nu\geq t_1$ and $\tau_2\geq t_2$. Let $G$ be a red/blue coloured complete graph. 
\begin{itemize}
\item $G$ has a \textit{Type 1 $(\mu,T,\cT)$-extremal $(\eps,d)$-regular partition} if there exists an $\eps$-regular partition $V_1\cup\cdots\cup V_k$ in $V(G)$ with corresponding $(\eps,d)$-reduced graph $R$, and a partition $[k]=I_A\cup I_B$ such that the following hold.
    \begin{itemize}
        \item $|V_i|=m$ for every $i\in[k]$.
        \item $|I_A|\geq(1-\mu)n/m$ and $|I_B|\geq(1-\mu)\tau_2/m$.
        \item Both $\blue{R}[I_A]$ and $\red{R}[I_A,I_B]$ are $\mu$-almost empty.
    \end{itemize}

\item $G$ has a \textit{Type 2 $(\mu,T,\cT)$-extremal $(\eps,d)$-regular partition} if there exists an $\eps$-regular partition $V_1\cup\cdots\cup V_k$ in $V(G)$ with corresponding $(\eps,d)$-reduced graph $R$, and a partition $[k]=I_A\cup I_B$ such that the following hold.
    \begin{itemize}
        \item $|V_i|=m$ for every $i\in[k]$.
        \item $|I_A|\geq(1-\mu)\nu/m$ and $|I_B|\geq(1-\mu)t_2/m$.
        \item Both $\red{R}[I_A]$ and $\blue{R}[I_A,I_B]$ are $\mu$-almost empty.
    \end{itemize}

\item $G$ has a \textit{Type 3 $(\mu,T,\cT)$-extremal $(\eps,d)$-regular partition} if there exists an $\eps$-regular partition $V_1\cup\cdots\cup V_k$ in $V(G)$ with corresponding $(\eps,d)$-reduced graph $R$, and a partition $[k]=I_A\cup I_B$ such that the following hold.
    \begin{itemize}
        \item $|V_i|=m$ for every $i\in[k]$.
        \item $|I_A|,|I_B|\geq(1-\mu)t_1/m$.
        \item All of $\red{R}[I_A]$, $\red{R}[I_B]$, and $\blue{R}[I_A,I_B]$ are $\mu$-almost empty.
    \end{itemize}

\item $G$ has a \textit{Type 4 $(\mu,T,\cT)$-extremal $(\eps,d)$-regular partition} if there exists an $\eps$-regular partition $V_1\cup\cdots\cup V_k$ in $V(G)$ with corresponding $(\eps,d)$-reduced graph $R$, and a partition $[k]=I_A\cup I_B$ such that the following hold.
    \begin{itemize}
        \item $|V_i|=m$ for every $i\in[k]$.
        \item $|I_A|,|I_B|\geq(1-\mu)\tau_1/m$.
        \item All of $\blue{R}[I_A]$, $\blue{R}[I_B]$, and $\red{R}[I_A,I_B]$ are $\mu$-almost empty.
    \end{itemize}
\end{itemize}
\end{definition}

With suitable choices of constants, if a red/blue coloured complete graph $G$ has a Type $i$ $(\mu',T,\cT)$-extremal $(\eps,d)$-regular partition for some $i\in[4]$, then it is Type $i$ $(\mu,T,\cT)$-extremal, as follows.
\begin{lemma}\label{lemma:regtoext}
Let $1/n\ll1/k\ll\eps\ll\mu'\ll d\ll\mu\ll1$. Let $T$ be an $n$-vertex tree with bipartition class sizes $t_1\geq t_2$, let $\cT$ be a $\nu$-vertex tree with bipartition class sizes $\tau_1\geq\tau_2$, and suppose that $\nu\geq t_1$ and $\tau_2\geq t_2$. If $G$ is a red/blue coloured complete graph on at most $2n$ vertices that contains a Type $i$ $(\mu',T,\cT)$-extremal $(\eps,d)$-regular partition for some $i\in[4]$, then $G$ is Type $i$ $(\mu,T,\cT)$-extremal.
\end{lemma}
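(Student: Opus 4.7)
The plan is to handle all four cases by the same argument, so I focus on Type 1; the other three cases follow by swapping colours and/or the roles of $I_A, I_B$. Let $V_1 \cup \cdots \cup V_k$ be the given Type 1 $(\mu',T,\cT)$-extremal $(\eps,d)$-regular partition with reduced graph $R$ and partition $[k] = I_A \cup I_B$. Set $U_1' = \bigcup_{j \in I_A} V_j$ and $U_2' = \bigcup_{j \in I_B} V_j$, so that $|U_1'| \geq (1-\mu')n$ and $|U_2'| \geq (1-\mu')\tau_2$. I will obtain $U_1, U_2$ by removing a small set of ``bad'' vertices from each side, so that the remaining vertices satisfy the degree conditions of Definition~\ref{def:extremal}.

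The key step is to identify three sources of defect and show each affects only $o(\mu)n$ vertices. \emph{Defect 1}: in an $\eps$-regular partition at most $\eps k^2$ pairs fail to be $\eps$-regular in both colours, so all but at most $\sqrt{\eps}k$ indices $j$ (the \emph{good} ones) have at most $\sqrt{\eps}k$ non-regular partners; discard all non-good clusters, losing at most $\sqrt{\eps}n$ vertices. \emph{Defect 2}: by the $\mu'$-almost empty conditions, each $j \in I_A$ has at most $\mu'|I_A|$ blue $R$-neighbours in $I_A$ and at most $\mu'|I_B|$ red $R$-neighbours in $I_B$, so these ``wrong-colour dense'' partners of $V_j$ together span at most $2\mu' n$ vertices and contribute at most $2\mu' n$ to the blue degree into $U_1'$ or red degree into $U_2'$ of any $v \in V_j$. \emph{Defect 3}: since $G$ is complete, $\red{d}(V_i,V_j)+\blue{d}(V_i,V_j)=1$, so whenever $ij$ is $\eps$-regular but not in the relevant reduced-graph colour, the wrong-colour density is below $d$; standard regularity then gives at most $\eps m$ vertices of $V_j$ with more than $(d+\eps)m$ wrong-colour neighbours in $V_i$. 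Summing over partners and averaging, all but at most $\sqrt{\eps}m$ vertices of $V_j$ exceed this threshold in more than $\sqrt{\eps}|I_A|$ partners, so I discard these as well.

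After removing the three defect sources, $|U_1| \geq (1-\mu')n - 3\sqrt{\eps}n \geq (1-\mu)n$ and similarly $|U_2| \geq (1-\mu)\tau_2$. For any surviving $v \in U_1$ sitting in a good cluster $V_j$, the blue degree into $U_1$ is at most
\[
2\mu' n + \sqrt{\eps}km + \sqrt{\eps}|I_A|m + (d+\eps)|I_A|m \leq \mu n,
\]
bounding separately the contributions from wrong-colour dense partners, non-regular partners, partners where $v$ is atypical, and partners where $v$ is typical; the hierarchy $\mu' \ll d \ll \mu$ closes the bound. The inequalities $\red{d}(v, U_2) \leq \mu n$ for $v \in U_1$ and $\red{d}(v, U_1) \leq \mu n$ for $v \in U_2$ follow by the identical calculation applied to red pairs between $I_A$ and $I_B$, using the $\mu'$-almost emptiness of $\red{R}[I_A,I_B]$ symmetrically from both sides. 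The main (mild) obstacle is the bookkeeping of the three independent defect sources and keeping their total weight below $\mu n$; this is absorbed cleanly by the hierarchy $1/n \ll 1/k \ll \eps \ll \mu' \ll d \ll \mu$, and the other three types proceed by relabelling colours.
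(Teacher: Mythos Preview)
Your argument is correct, but it is considerably more elaborate than the paper's. The paper takes a global edge-counting route: it observes that the total number of blue edges inside $V_A=\bigcup_{i\in I_A}V_i$ is at most $\mu'|I_A|^2m^2+d|I_A|^2m^2+|I_A|m^2\leq 5dn^2$ (the three terms coming from blue-dense pairs in $R$, low-density pairs, and intra-cluster edges respectively), and then applies the standard averaging step ``if a set supports at most $cn^2$ edges, delete at most $\sqrt{c}\,n$ vertices to reduce the maximum degree to $\sqrt{c}\,n$''; the same is then done for the red edges between the two sides. Since $\mu\gg\sqrt{d}$, this already gives $U_1,U_2$.

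What you do instead is a cluster-by-cluster analysis: you first prune clusters with many irregular partners, then inside each surviving cluster prune vertices that are atypical in too many regular partners, and finally bound the degree of a surviving vertex as a sum over four kinds of partner clusters. This is perfectly valid and in fact more informative structurally (it tells you exactly which vertices are removed and why), but it is heavier machinery than the lemma needs. The paper's approach buys brevity; yours would be the natural one if you later needed to know that the removed set is a union of whole clusters plus a thin layer of atypical vertices, which is not used here. One small omission in your final displayed bound is the contribution from edges within the home cluster $V_j$ itself, but this is at most $m\leq 2n/k$ and is absorbed by the hierarchy $1/k\ll\eps$.
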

\begin{proof}
Suppose $G$ contains a Type 1 $(\mu',T,\cT)$-extremal $(\eps,d)$-regular partition $V_1\cup\cdots\cup V_k$ satisfying the conditions in Definition~\ref{def:redextremal}. Let $V_A=\cup_{i\in I_A}V_i$ and $V_B=\cup_{i\in I_B}V_i$. From assumptions and using $km\leq|G|\leq 2n$, the number of blue edges in $V_A$ is at most $\mu'|I_A|^2m^2+d|I_A|^2m^2+|I_A|m^2\leq\mu'k^2m^2+dk^2m^2+km^2\leq4(\mu'+d+1/k)n^2\leq5dn^2$. Therefore, by removing at most $\sqrt{5d} n$ vertices from $V_A$, we can obtain a subset $V_A'$ such that $\blue{d}(u,V_A')\leq\sqrt{5d}n$ for every $u\in V_A'$.

Similarly, the number of red edges between $V_A'$ and $V_B$ is at most $\mu'|I_A||I_B|m^2+d|I_A||I_B|m^2\leq4(\mu'+d)n^2\leq 5dn^2$. Thus, we can remove at most $\sqrt{5d} n$ vertices from each of $V_A'$ and $V_B$ to obtain subsets $U_1$ and $U_2$, respectively, such that for every $i\in[2]$ and every $u\in U_i$, $\red{d}(u,U_{3-i})\leq\sqrt{5d} n$. Since $\mu\gg\sqrt{d}$, $U_1$ and $U_2$ show that $G$ is Type 1 $(\mu,T,\cT)$-extremal.

The cases when $G$ contains a Type $i$ $(\mu',T,\cT)$-extremal $(\eps,d)$-regular partition for some $2\leq i\leq 4$ are similar and thus omitted. 
\end{proof}

\subsection{Stage 4}\label{sec:stage4}

\begin{lemma}[\textbf{Stage 4}]\label{lem:stage4}
Let $1/n,1/\nu\ll 1/m\ll c\ll1/k\ll\eps\ll\zeta\ll d\ll\mu\ll1$. Let $T$ be an $n$-vertex tree with $\Delta(T)\leq cn$ and bipartition class sizes $t_1\geq t_2$. Let $\cT$ be a $\nu$-vertex tree with $\Delta(\cT)\leq c\nu$ and bipartition class sizes $\tau_1\geq\tau_2$. Suppose $\nu\geq t_1\geq \tau_1$, $\tau_2\geq t_2$, and $t_2+\tau_2\geq t_1-1$. 

Let $G$ be a red/blue coloured complete graph that contains a coloured $\eps$-regular partition $V_1\cup\cdots\cup V_k$ with $|V_1|=\cdots=|V_k|=m$ and corresponding red/blue coloured $(\eps,d)$-reduced graph $H$. Suppose there is a partition $[k]=I_A\cup I_B$ such that either~\emph{\ref{lemma:stage4:1}} and~\emph{\ref{lemma:stage4:2}} hold (see \textbf{\emph{D-situation}} in Figure~\ref{fig:stages}) or~\emph{\ref{lemma:stage4:3}} and~\emph{\ref{lemma:stage4:4}} hold.
\stepcounter{propcounter}
\begin{enumerate}[label = \emph{\textbf{\Alph{propcounter}\arabic{enumi}}}]
    \item\labelinthm{lemma:stage4:1} $|I_A|m,|I_B|m\geq(1-\zeta)t_2$ and $|I_A|m+|I_B|m\geq(1-\zeta)(n+\tau_2)$.
    \item\labelinthm{lemma:stage4:2} $\red{H}[I_A,I_B]$ is $\zeta$-almost complete.
\end{enumerate}
\stepcounter{propcounter}
\begin{enumerate}[label = \emph{\textbf{\Alph{propcounter}\arabic{enumi}}}]
    \item\labelinthm{lemma:stage4:3} $|I_A|m,|I_B|m\geq(1-\zeta)\tau_2$ and $|I_A|m+|I_B|m\geq(1-\zeta)(n+\tau_2)$.
    \item\labelinthm{lemma:stage4:4} $\blue{H}[I_A,I_B]$ is $\zeta$-almost complete.
\end{enumerate}
Then, at least one of~\emph{\ref{main:1}}--\emph{\ref{main:5}} holds. 
\end{lemma}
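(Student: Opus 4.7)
I will treat the D-situation \ref{lemma:stage4:1}--\ref{lemma:stage4:2} in detail; the other case \ref{lemma:stage4:3}--\ref{lemma:stage4:4} is handled by the parallel argument with the colors and the roles of $T$ and $\cT$ interchanged, and accordingly delivers the extremal conclusions \ref{main:2} or \ref{main:5} in place of \ref{main:3} or \ref{main:4}. Let $V_A = \bigcup_{i \in I_A} V_i$, $V_B = \bigcup_{i \in I_B} V_i$, and $V_C = V(G) \setminus (V_A \cup V_B)$. A quick calculation using \ref{lemma:stage4:1}, the hypothesis $|V(G)| = \max\{n+\tau_2, 2t_1\} - 1$, and $t_2 + \tau_2 \geq t_1 - 1$ shows $|V_C| = O(\zeta n)$. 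Assuming without loss of generality that $|V_A| \leq |V_B|$, I obtain $|V_B| \geq (1-\zeta)(n+\tau_2)/2 \geq (1 - O(\zeta)) t_1$, while \ref{lemma:stage4:1} directly gives $|V_A| \geq (1-\zeta) t_2$, so the red $\zeta$-almost-complete bipartite structure already looks very close to the Type 3 construction.

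The embedding step is to try to embed $T$ in red via Lemma~\ref{lemma:em2c} (\textbf{EMc}), placing the bipartition classes of $T$ of sizes $t_1$ and $t_2$ into $V_B$ and $V_A$ respectively, after applying Lemma~\ref{lemma:regularity:refine} to equalize cluster sizes and obtain a $(\sqrt\eps, d-\eps)$-regular almost complete bipartite reduced structure. The remaining ingredients for \textbf{EMc} are the red matchings $M_1 \subset \red{H}[I_A, I_C]$ and $M_2 \subset \red{H}[I_B, I_D]$ into small external sets $I_C, I_D$ totalling $\sim 110\alpha t_2/m$ clusters. These matching clusters can come from (i) $V_C$, if it contains enough red neighbours of $I_A$ and $I_B$, (ii) the slack within $|V_B| - t_1$ or $|V_A| - t_2$ after trimming down the bipartite sides, or (iii) additional red edges found in $\red{H}$ inside $I_A$ or $I_B$. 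If any such matching can be assembled, the embedding succeeds and we land in \ref{main:1}. In parallel, I attempt to embed $\cT$ in blue via Lemma~\ref{lemma:em3} (\textbf{EMd}) applied to the blue subgraph sitting inside $V_A$ and $V_B$ (the complement of the red bipartite), possibly using Lemma~\ref{lemma:hallmatching} for the final low-degree embeddings.

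The main obstacle is to show that the simultaneous failure of all such embedding attempts forces either \ref{main:3} or \ref{main:4}. The failure of the red $T$-embedding must imply that $\red{H}$ has essentially no edges outside $[I_A, I_B]$, so in the reduced graph the blue is $\mu$-almost complete inside each of $I_A$ and $I_B$ and between $V_C$ and $I_A \cup I_B$. The failure of the blue $\cT$-embedding then constrains the sizes $|V_A|, |V_B|$: if both lie within $O(\mu) n$ of $t_1$, I obtain a Type 3 extremal regular partition in the sense of Definition~\ref{def:redextremal} together with $t_1 \geq (1-\mu)(t_2+\tau_2)$ (the latter forced by the inability to fit $\cT$ in the larger blue side), giving \ref{main:4}; if instead $|V_B|$ is close to $\nu$ and $|V_A|$ close to $t_2$, I obtain a Type 2 extremal partition together with $\nu \geq (1-\mu)(t_1+\tau_2)$, giving \ref{main:3}. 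In either branch Lemma~\ref{lemma:regtoext} converts the extremal reduced partition into the required $(\mu, T, \cT)$-extremality of $G$ itself, completing the proof.
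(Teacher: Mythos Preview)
Your overall strategy---try the red and blue embeddings, and if both fail extract an extremal regular partition and invoke Lemma~\ref{lemma:regtoext}---is exactly right and matches the paper. But the execution has real gaps, and your specific choice of embedding lemmas diverges from the paper in ways that matter.

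The paper's proof hinges on a case split you do not make: whether the smaller side (say $|V_A|$ in your convention) exceeds $t_2+100\zeta n$ or not. In Case~I (both sides at least $t_2^+$) the paper does \emph{not} use \textbf{EMc} for the red $T$; a single red edge inside $I_A$ already triggers \textbf{EMa}, and a single red edge inside $I_B$ triggers \textbf{EMb}, with the required small matching taken from the other side via the $\zeta$-almost-complete red bipartite structure itself. If neither exists, blue is almost complete inside both parts, and a single blue edge across gives \textbf{EMd} for $\cT$; if that too is absent, one checks whether $|V_B|\geq t_1+10\zeta n$ (then \textbf{H\L T} in red) or not, and the latter forces $t_1\geq(1-\mu)(t_2+\tau_2)$ together with both sides $\geq(1-\mu)t_1$, yielding Type~3. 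Your \textbf{EMc} route for red requires two separate matchings into external sets, and ``can come from (i)/(ii)/(iii)'' is not a proof that both are available simultaneously.

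The more serious gap is Case~II (smaller side $|V_A|<t_2+100\zeta n$). Here your blanket use of \textbf{EMd} for blue $\cT$ may fail outright: \textbf{EMd} requires both blue almost-complete parts to have size at least $(1-\alpha)\tau_2$, but you only have $|V_A|\geq(1-\zeta)t_2$, and since $\tau_2\geq t_2$ can hold with a linear-in-$n$ gap this is not enough. The paper handles Case~II differently: it finds a maximal red matching inside the large side $I_B$ (if large, \textbf{EMa} embeds $T$), and otherwise blue is almost complete on a set $I_B'\subset I_B$ of size $\geq(1-400\zeta)(t_1+\tau_2)$. If $|I_B'|m\geq(1+5\zeta)\nu$ one embeds $\cT$ in blue via \textbf{EMc} entirely inside this one side; otherwise $\nu\geq(1-\mu)(t_1+\tau_2)$ is forced, and a further density argument produces the Type~2 extremal partition. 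None of this is captured by your sketch.

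Finally, two minor points: the lemma statement does not fix $|V(G)|$, so your complement $V_C$ need not be small (and the paper never uses it); and your dichotomy ``both close to $t_1$'' versus ``$|V_B|$ close to $\nu$, $|V_A|$ close to $t_2$'' is the correct endgame, but it must be \emph{derived} from the failure of specific embeddings via the case split above, not assumed.
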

\begin{proof}
We will mainly prove the case when~\ref{lemma:stage4:1} and~\ref{lemma:stage4:2} hold, and only comment briefly on the case when~\ref{lemma:stage4:3} and~\ref{lemma:stage4:4} hold, as it can be proved in a similar way after exchanging the role of $\tau_2$ and $t_2$, and red and blue. Without loss of generality, assume that $|I_A|\geq|I_B|$. Let $A=\cup _{i\in I_A}V_i$ and $B=\cup_{i\in I_B}V_i$. Note that from \ref{lemma:stage4:1}, $|A|+|B|\geq(1+200\zeta)n$ and $|A|=|I_A|m\geq(1-\zeta)(n+\tau_2)/2\geq t_2+200\zeta n$.

\medskip

\noindent\textbf{Case I.} $|B|\geq t_2+100\zeta n$. If there exists an edge $ij$ in $\red{H}[I_A]$, then we can use~\ref{lemma:stage4:2} to greedily find a perfect matching between some $I_A'\subset I_A\setminus\{i,j\}$ of size $100\zeta n/m$ and some $I_B'\subset I_B$. This allows us to apply Lemma~\ref{lemma:em2a} (\textbf{EMa}) to $I_A, I_B\setminus I_B'$, and $I_B'$ to embed $T$ in red. Similarly, we can use Lemma~\ref{lemma:em2b} (\textbf{EMb}) to embed $T$ in red if there is an edge in $\red{H}[I_B]$. Thus, we may assume that both $\red{H}[I_A]$ and $\red{H}[I_B]$ are empty. This implies that both $\blue{H}[I_A]$ and $\blue{H}[I_B]$ contain at most $\eps k^2$ non-edges, so we can find $J_A\subset I_A$ and $J_B\subset I_B$, such that $|J_A|\geq(1-10\sqrt\eps)|I_A|$, $|J_B|\geq(1-10\sqrt\eps)|I_B|$, and both $\blue{H}[J_A], \blue{H}[J_B]$ are $10\sqrt\eps$-almost complete. 

If there is any edge $ib$ in $\blue{H}[J_A,J_B]$ with $i\in J_A$, then by moving $i$ out of $J_A$ and finding an arbitrary neighbour $a$ of $i$ in $J_A$, we get the structure required to apply Lemma~\ref{lemma:em3} (\textbf{EMd}) to find a blue copy of $\cT$ in $G$. Therefore, we can assume that $\blue{H}[J_A,J_B]$ is empty.

If $|A|\geq t_1+10\zeta n$, then after using Lemma~\ref{lemma:regularity:refine} to refine the clusters indexed by $J_A$ and $J_B$, we can embed $T$ in red using Lemma~\ref{lemma:hlt} (\textbf{H\L T}). If instead $|A|<t_1+10\zeta n$, then it follows from $|A|\geq(1-\zeta)(n+\tau_2)/2$ that $t_1\geq(1-50\zeta)(t_2+\tau_2)$. Also, we have $|B|\geq(1-\zeta)(n+\tau_2)-|A|\geq (1-25\zeta)(t_2+\tau_2)\geq(1-30\zeta)t_1$. Therefore, $J_A$ and $J_B$ show that $G$ contains a Type 3 $(100\zeta,T,\cT)$-extremal $(\eps,d)$-regular partition, so $G$ is Type 3 $(\mu,T,\cT)$-extremal by Lemma~\ref{lemma:regtoext}, and~\ref{main:4} holds. When~\ref{lemma:stage4:3} and~\ref{lemma:stage4:4} hold instead, a similar argument shows that $\tau_1\geq|A|-10\zeta n\geq(1-\zeta)(n+\tau_2)/2-10\zeta n\geq(1-20\zeta)(n+\tau_2)/2$, and $G$ is Type 4 $(\mu,T,\cT)$-extremal, so~\ref{main:5} holds.
\medskip

\noindent\textbf{Case II.} $|B|<t_2+100\zeta n$. Then $|A|\geq(1-\zeta)(n+\tau_2)-|B|>t_2+500\zeta n$.

If the maximum matching $M$ in $\red{H}[I_A]$ has size at least $101\zeta n/m$, then we can move one side of a submatching of $M$ with size $100\zeta n/m$ out of $I_A$ to obtain the structure needed to apply Lemma~\ref{lemma:em2a} (\textbf{EMa}) to find a red copy of $T$. Otherwise, let $I_A'=I_A\setminus V(M)$, let $A'=\cup_{i\in I_A'}V_i$, and observe that $|I_A'|\geq |I_A|-202\zeta n/m$. Then, $\red{H}[I_A']$ is empty and so $\blue{H}[I_A']$ contains at most $\eps k^2$ non-edges. 

Note that $|A'|=|I_A'|m\geq(1-400\zeta)(t_1+\tau_2)$. If $|A'|=|I_A'|m\geq(1+5\zeta)\nu$, then we can easily find the structure to apply Lemma~\ref{lemma:em2c} (\textbf{EMc}) to embed $\cT$ in $\blue{G}[A']$. Thus, we may assume that $|A'|\leq(1+5\zeta)\nu$, so $\nu\geq(1-400\zeta)(t_1+\tau_2)/(1+5\zeta)\geq(1-1000\zeta)(t_1+\tau_2)$, and $|B|\geq(1-\zeta)(n+\tau_2)-|A'|-202\zeta n\geq(1-1500\zeta)t_2$. Let $\zeta\ll\zeta'\ll d$. If at least $2\zeta' n/m$ vertices in $I_A'$ have at least $2\zeta' n/m$ blue neighbours in $I_B$, then we can greedily find the structure to apply Lemma~\ref{lemma:em2c} (\textbf{EMc}) to embed $\cT$ in $\blue{G}[A'\cup B]$. Otherwise, we can find $J_A\subset I_A'$ and $J_B\subset I_B$ such that $|J_A|\geq(1-3\zeta')(t_1+\tau_2)/m\geq(1-3\zeta')\nu/m$, $|J_B|\geq(1-50\sqrt{\zeta'})t_2/m$, and $\blue{H}[J_A,J_B]$ is $10\sqrt{\zeta'}$-almost empty. This shows that $G$ contains a Type 2 $(50\sqrt{\zeta'},T,\cT)$-extremal $(\eps,d)$-regular partition, so $G$ is Type 2 $(\mu,T,\cT)$-extremal by Lemma~\ref{lemma:regtoext}, and~\ref{main:3} holds. When~\ref{lemma:stage4:3} and~\ref{lemma:stage4:4} hold instead, a similar argument shows that $G$ is Type 1 $(\mu,T,\cT)$-extremal, so~\ref{main:2} holds.
\end{proof}

\subsection{Stage 3}
\begin{lemma}[\textbf{Stage 3}]\label{lem:stage3}
Let $1/n,1/\nu\ll 1/m\ll c\ll1/k\ll\eps\ll\zeta\ll d\ll\mu\ll1$. Let $T$ be an $n$-vertex tree with $\Delta(T)\leq cn$ and bipartition class sizes $t_1\geq t_2\geq(t_1-2)/3$. Let $\cT$ be a $\nu$-vertex tree with $\Delta(\cT)\leq c\nu$ and bipartition class sizes $\tau_1\geq\tau_2\geq(t_1-1)/2$. Suppose $\nu\geq t_1\geq\tau_1$, $\tau_2\geq t_2$, and $t_2+\tau_2\geq t_1-1$.

Let $G$ be a red/blue coloured complete graph that contains a coloured $\eps$-regular partition $V_1\cup\cdots\cup V_k$ with $|V_1|=\cdots=|V_k|=m$ and corresponding blue/red coloured $(\eps,d)$-reduced graph $H$. Suppose there is a partition $[k]=I_A\cup I_B\cup I_C$ such that either~\emph{\ref{lemma:stage3:1}} and~\emph{\ref{lemma:stage3:2}} hold or~\emph{\ref{lemma:stage3:3}} and~\emph{\ref{lemma:stage3:4}} hold  (see \textbf{\emph{C-situation}} in Figure~\ref{fig:stages}).
\stepcounter{propcounter}
\begin{enumerate}[label = \emph{\textbf{\Alph{propcounter}\arabic{enumi}}}]
    \item\labelinthm{lemma:stage3:1} $|I_A|m,|I_B|m\geq(1-\zeta)t_2$ and $|I_A|m+|I_B|m=(1-\zeta)(t_1+t_2)$.
    \item\labelinthm{lemma:stage3:2} $\red{H}[I_A,I_B]$ is $\zeta$-almost complete.
\end{enumerate}
\stepcounter{propcounter}
\begin{enumerate}[label = \emph{\textbf{\Alph{propcounter}\arabic{enumi}}}]
    \item\labelinthm{lemma:stage3:3} $|I_A|m,|I_B|m\geq(1-\zeta)\tau_2$ and $|I_A|m+|I_B|m=(1-\zeta)(t_1+\tau_2)$.
    \item\labelinthm{lemma:stage3:4} $\blue{H}[I_A,I_B]$ is $\zeta$-almost complete.
\end{enumerate}
Then, at least one of~\emph{\ref{main:1}}--\emph{\ref{main:5}} holds.
\end{lemma}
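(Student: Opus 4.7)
Plan: I treat the case when conditions~\ref{lemma:stage3:1} and~\ref{lemma:stage3:2} hold; the case of~\ref{lemma:stage3:3} and~\ref{lemma:stage3:4} is dual under swapping red and blue and the roles of $t_2$ and $\tau_2$, and reduces to the second scenario of Lemma~\ref{lem:stage4}. Set $A=\bigcup_{i\in I_A}V_i$, $B=\bigcup_{i\in I_B}V_i$, and $W=V(G)\setminus(A\cup B)$. From~\ref{lemma:stage3:1} and $|V(G)|=\max\{n+\tau_2,2t_1\}-1$, we have $|A|+|B|=(1-\zeta)n$ and $|W|\geq \max\{\tau_2,\,t_1-t_2\}+\zeta n-1$. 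The overall idea is to enlarge the almost-complete red bipartite between $I_A$ and $I_B$ by absorbing enough vertices of $W$ to satisfy the D-situation conditions~\ref{lemma:stage4:1} and~\ref{lemma:stage4:2} of Lemma~\ref{lem:stage4}, which then delivers the conclusion. If absorption is insufficient, the excess blue structure on $W$ either embeds $\cT$ in blue directly or forces $G$ to be $(\mu,T,\cT)$-extremal of Type 2 or Type 3.

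First, I apply Theorem~\ref{theorem:regularity} to $G[W]$ to produce further clusters $W_1,\ldots,W_\ell$ of a common size, and subdivide the $V_i$ in $I_A\cup I_B$ to match this size, preserving regularity via Lemma~\ref{lemma:regularity:1}. This gives a single refined coloured $\eps'$-regular partition of $V(G)$ with reduced graph $H'$, in which $\red{H'}[I_A',I_B']$ remains $2\zeta$-almost complete, where $I_A',I_B'$ are the refinements of $I_A,I_B$. For each new cluster $W_j$, I call it \emph{absorbable} if either $W_j$ is red-adjacent in $H'$ to at least $(1-\zeta')|I_B'|$ clusters of $I_B'$ (so it can be added to the $I_A'$-side), or red-adjacent to at least $(1-\zeta')|I_A'|$ clusters of $I_A'$ (so it can be added to the $I_B'$-side). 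Otherwise $W_j$ is \emph{blue-biased}, carrying at least $\zeta'|I_A'|$ blue edges into $I_A'$ and at least $\zeta'|I_B'|$ blue edges into $I_B'$ in $H'$.

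Case A occurs when the absorbable clusters contribute at least $(1-\zeta')(n+\tau_2)-(1-\zeta)n$ vertices. In this case, each absorbable cluster is assigned to $I_A^{\mathrm{new}}$ or $I_B^{\mathrm{new}}$ according to its red-compatibility. The enlarged partition has an almost-complete red bipartite of total size $\geq(1-\zeta')(n+\tau_2)$ with each side of size $\geq(1-\zeta')t_2$, matching conditions~\ref{lemma:stage4:1} and~\ref{lemma:stage4:2}, so Lemma~\ref{lem:stage4} yields one of~\ref{main:1}--\ref{main:5}. Before moving on to Case B, I also observe that if $\red{H'}[I_A']$ or $\red{H'}[I_B']$ contains any edge, then pairing it with a short matching built out of absorbable clusters produces the hypotheses of Lemma~\ref{lemma:em2a}~(\textbf{EMa}) or Lemma~\ref{lemma:em2b}~(\textbf{EMb}) respectively, embedding $T$ in red and concluding~\ref{main:1}.

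Thus in Case B I may further assume that both $\red{H'}[I_A']$ and $\red{H'}[I_B']$ are edge-free, so that $\blue{H'}[I_A']$ and $\blue{H'}[I_B']$ are each $10\sqrt{\eps'}$-almost complete, while the blue-biased clusters supply at least $\Omega(\zeta'(n+\tau_2))$ vertices carrying substantial blue degree to both $A$ and $B$. If the blue-biased population has size at least $\tau_2$, I use $A\cup B$ (of size $(1-\zeta)n\geq(1-\zeta)\tau_1$) as the large side of a blue bipartite graph together with $\tau_2$ blue-biased vertices on the small side, clean up using Lemma~\ref{lemma:chernoff}, and embed $\cT$ in blue via the bipartite machinery of Section~\ref{sec:ems}, yielding~\ref{main:1}. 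Otherwise, the blue-biased population is smaller than $\tau_2$; combined with the already almost-complete blue structure inside $A$ and $B$ and the red bipartite between them, this pins $G$ down as Type 2 extremal (when $|A|+|B|\geq(1-\mu)(t_1+\tau_2)$, so $\nu\geq(1-\mu)(t_1+\tau_2)$) or as Type 3 extremal (when $t_1$ is close to $t_2+\tau_2$), and Lemma~\ref{lemma:regtoext} then produces~\ref{main:3} or~\ref{main:4}. The main obstacle is this final dichotomy inside Case B: given that the blue-biased population is too small to embed $\cT$ directly, the accounting must combine the bipartite sizes $|A|,|B|\geq(1-\zeta)t_2$ with the assumptions $\tau_2\geq(t_1-1)/2$ and $t_2+\tau_2\geq t_1-1$ finely enough to guarantee that exactly one of these two extremal patterns is realised, rather than an inconsistent intermediate configuration.
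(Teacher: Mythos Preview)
Your absorption dichotomy is too coarse to close the argument. A cluster $W_j$ that fails to be ``absorbable'' only has fewer than $(1-\zeta')|I_B'|$ red neighbours in $I_B'$ and fewer than $(1-\zeta')|I_A'|$ in $I_A'$; in the reduced graph this merely says it has at least roughly $\zeta'$-fraction \emph{blue} neighbours on each side. Since $\zeta'\ll d$, such a ``blue-biased'' cluster can still be, say, $99\%$ red to both $I_A'$ and $I_B'$. Consequently, in your Case~B the blue-biased population need not support any dense blue bipartite structure against $A\cup B$, so the claimed embedding of $\cT$ ``via the bipartite machinery'' has no lemma to stand on. Moreover, you yourself flag the final Type~2/Type~3 dichotomy as an unresolved ``obstacle''; indeed nothing in your setup forces either extremal pattern, because a mostly-red $I_C$ with a sprinkling of blue to both sides is perfectly consistent with all your hypotheses while matching neither Definition~\ref{def:redextremal} template.

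The paper sidesteps this by working with a much weaker red object: not near-complete red adjacency from individual clusters, but a \emph{small} red matching (of size $\sim\eta\tau_2/m$) between $I_A\cup I_B$ and $I_C$. If no such matching exists, then $\blue{H}[I_A\cup I_B,I_C]$ is genuinely almost complete and one gets a blue D-situation for Lemma~\ref{lem:stage4} directly. If it does exist, two structural claims (one when the red matching lands on the bigger side, one when it lands on the smaller side) drive a secondary red/blue alternation: either a further small red matching appears and Lemma~\ref{lemma:em2c}~(\textbf{EMc}) embeds $T$, or an almost-complete blue bipartite piece emerges between part of $I_A\cup I_B$ and $I_C$, and then a third alternation either embeds $\cT$ via \textbf{EMc}/\textbf{EMd} or produces the red D-situation. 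The point is that a single small matching is exactly what \textbf{EMa}--\textbf{EMd} need, whereas your absorption threshold demands far more and leaves a wide unmanageable middle ground. (A minor side note: re-applying Theorem~\ref{theorem:regularity} to $G[W]$ is unnecessary, since $I_C$ already indexes clusters covering those vertices.)
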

\begin{proof}
We will only prove the case when~\ref{lemma:stage3:1} and~\ref{lemma:stage3:2} hold. The case when~\ref{lemma:stage3:3} and~\ref{lemma:stage3:4} hold has essentially the same proof after exchanging the role of $t_2$ and $\tau_2$, and red and blue, so we will only comment on the one minor difference during the proof. 

Let $\zeta\ll\zeta'\ll\eta\ll\eta'\ll d$. To further avoid repetition, we first prove the following two claims dealing with two commonly occurring structures.

\begin{claim}\label{stage3claimA}
Suppose there exist disjoint $J_A,J_B,J_C\subset[k]$, such that the following hold. 
\stepcounter{propcounter}
\begin{enumerate}[label = \emph{\textbf{\Alph{propcounter}\arabic{enumi}}}]
    \item\labelinthm{lemma:stage3A:1} $|J_A|=(1-\zeta')t_1/m$, $|J_B|=(1-\zeta')t_2/m$, and $|J_C|=(1-\zeta')\tau_2/m$.
    \item\labelinthm{lemma:stage3A:2} $\red{H}[J_A,J_B]$ is $\zeta$-almost complete.
    \item\labelinthm{lemma:stage3A:3} $\red{H}[J_B,J_C]$ contains a matching with size $100\zeta' t_2/m$.
\end{enumerate}
Then, at least one of~\emph{\ref{main:1}}--\emph{\ref{main:5}} holds.
\end{claim}
\begin{proof}[Proof of Claim~\ref{stage3claimA}]
If at least $100\zeta' t_2/m$ vertices in $J_A$ have at least $200\zeta't_2/m$ red neighbours in $J_C$, then we can greedily find a matching of size $100\zeta' t_2/m$ in $\red{H}[J_A,J_C]$ disjoint from the matching given by~\ref{lemma:stage3A:3}. This allows us to apply Lemma~\ref{lemma:em2c} (\textbf{EMc}) to find a red copy of $T$ in $G$. Otherwise, at most $100\zeta' t_2/m$ vertices in $J_A$ have at least $200\zeta' t_2/m$ red neighbours in $J_C$, so we can find $J_A'\subset J_A$ and $J_C'\subset J_C$ with $|J_A'|\geq(1-200\zeta')t_1/m$ and $|J_C'|\geq(\tau_2-20\sqrt{\zeta'}t_2)/m\geq(1-20\sqrt{\zeta'})\tau_2/m$, such that $\blue{H}[J_A',J_C']$ is $20\sqrt{\zeta'}$-almost complete.

If at most $200\eta t_2/m$ vertices in $J_B$ have at least $200\eta t_2/m$ blue neighbours in $J_C'$, then we can find $J_B'\subset J_B$ and $J_C''\subset J_C'$, such that $|J_B'|\geq(1-300\eta)t_2/m$, $|J_C''|\geq(1-20\sqrt\eta)\tau_2/m$, and $\red{H}[J_A\cup J_C'',J_B']$ is $20\sqrt\eta$-almost complete. This gives the required structure (\textbf{D-situation}) in red to apply Lemma~\ref{lem:stage4} (\textbf{Stage 4}) to finish the proof.

Thus, we may assume that at least $200\eta t_2/m$ vertices in $J_B$ have at least $200\eta t_2/m$ blue neighbours in $J_C'$, so we can find a blue matching of size $100\eta t_2/m$ in $\blue{H}[J_B,J_C']$ disjoint from the red matching given by~\ref{lemma:stage3A:3}. Arbitrarily pick a matching of size $10(\zeta'+\eta)t_2/m$ in $H[J_A]$. By pigeonhole, it either contains a red matching $\red{M}$ of size $10\zeta' t_2/m$ or a blue matching $\blue{M}$ of size $10\eta t_2/m$. In the former case, moving vertices on one side of $\red{M}$ out of $J_A$, and using~\ref{lemma:stage3A:2},~\ref{lemma:stage3A:3}, we have the structure to apply Lemma~\ref{lemma:em2c} (\textbf{EMc}) to find a red copy of $T$ in $G$. In the latter case, moving vertices on one side of $\blue{M}$ out of $J_A$, and using that $\blue{H}[J_A,J_C']$ is $20\sqrt{\zeta'}$-almost complete, we can again apply Lemma~\ref{lemma:em2c} (\textbf{EMc}) to find a blue copy of $\cT$ in $G$. 
\renewcommand{\qedsymbol}{$\boxdot$}
\end{proof}
\renewcommand{\qedsymbol}{$\square$}

\begin{claim}\label{stage3claimB}
Suppose there exist disjoint $J_A,J_B,J_C\subset[k]$, such that the following hold. 
\stepcounter{propcounter}
\begin{enumerate}[label = \emph{\textbf{\Alph{propcounter}\arabic{enumi}}}]
    \item\labelinthm{lemma:stage3B:1} $|J_A|\geq(t_2+200\zeta'n)/m$, $|J_B|\geq(t_2-\zeta'n)/m$, $|J_A|+|J_B|=(1-\zeta')n/m$, and $|J_C|=(1-\zeta')\tau_2/m$.
    \item\labelinthm{lemma:stage3B:2} $\red{H}[J_A,J_B]$ is $\zeta$-almost complete.
    \item\labelinthm{lemma:stage3B:3} $\red{H}[J_A,J_C]$ contains a matching with size $100\zeta'n/m$.
\end{enumerate}
Then, at least one of~\emph{\ref{main:1}}--\emph{\ref{main:5}} holds.
\end{claim}
\begin{proof}[Proof of Claim~\ref{stage3claimB}]
If there is an edge in either $\red{H}[J_A]$ or $\red{H}[J_B]$, then we can apply Lemma~\ref{lemma:em2a} (\textbf{EMa}) or Lemma~\ref{lemma:em2b} (\textbf{EMb}), respectively, to find a red copy of $T$ in $G$. Thus, by removing at most $10\sqrt\eps$-fraction of vertices from $J_A$ and $J_B$, we may assume that $\blue{H}[J_A]$ and $\blue{H}[J_B]$ are both $10\sqrt\eps$-almost complete.

Suppose there exists a set of $10\eta t_2/m$ vertices in $J_C$, each of which has at least $10\eta t_2/m$ blue neighbours in both $J_A$ and $J_B$. Then, we can find a blue matching $M_1$ of size $\eta^2 t_2/m$ between $J_A$ and $J_C$ with a vertex in $V(M_1)\cap J_A$ adjacent to every vertex in $J_C':=V(M_1)\cap J_C$. Greedily, we can also find another blue matching between $J_C'$ and $J_B$ covering $J_C'$. This allows us to apply Lemma~\ref{lemma:em3} (\textbf{EMd}) to find a blue copy of $\cT$ in $G$. Here, we used $n\geq\nu$, so $|J_A|+|J_B|\geq(1-\zeta')\nu/m$. In the other case when~\ref{lemma:stage3:3} and~\ref{lemma:stage3:4} hold, we instead use $\tau_2\geq t_2$, so from the corresponding assumptions that $|J_A|+|J_B|\geq(1-\zeta')(t_1+\tau_2)/m$, we have $|J_A|+|J_B|\geq(1-\zeta')n/m$, and thus enough space to embed $T$ in red using Lemma~\ref{lemma:em3} (\textbf{EMd}).

Thus, we may now assume that there exist two disjoint subsets $J_A^+,J_B^+\subset J_C$ containing all but at most $10\eta t_2/m$ vertices in $J_C$, such that every vertex in $J_A^+$ has at most $10\eta t_2/m$ blue neighbours in $J_B$, and every vertex in $J_B^+$ has at most $10\eta t_2/m$ blue neighbours in $J_A$. Using~\ref{lemma:stage3B:2}, and by removing at most $5\sqrt\eta t_2/m$ vertices from each of $J_A$ and $J_B$ to obtain $J_A^-$ and $J_B^-$, respectively, we can ensure that both $\red{H}[J_A^-,J_B^-\cup J_B^+]$ and $\red{H}[J_B^-,J_A^-\cup J_A^+]$ are $5\sqrt\eta$-almost complete. Note that if $|J_A^+|\leq\eta't_2/m$, then $|J_A^-\cup J_B^-\cup J_B^+|\geq(1-2\eta')(n+\tau_2)/m$, so $J_A^-$ and $J_B^-\cup J_B^+$ form the required red structure (\textbf{D-situation}) to apply Lemma~\ref{lem:stage4} (\textbf{Stage 4}) to finish the proof. Thus, we can assume that $|J_A^+|\geq\eta't_2/m$, and similarly $|J_B^+|\geq\eta't_2/m$.

As in the beginning of this proof, we may further assume that $\blue{H}[J_A^-\cup J_A^+]$ and $\blue{H}[J_B^-\cup J_B^+]$ are both $10\sqrt\eps$-almost complete, as the presence of any red edge within either of them allows us to apply Lemma~\ref{lemma:em2a} (\textbf{EMa}) or Lemma~\ref{lemma:em2b} (\textbf{EMb}) to find a red copy of $T$ in $G$. If there is any blue edge between $J_A^-\cup J_A^+$ and $J_B^-\cup J_B^+$, then we can move one end of this blue edge out and then find a blue copy of $\cT$ using Lemma~\ref{lemma:em3} (\textbf{EMd}). Therefore, $\red{H}[J_A^-\cup J_A^+,J_B^-\cup J_B^+]$ contains the red structure (\textbf{D-situation}) needed to apply Lemma~\ref{lem:stage4} (\textbf{Stage 4}) to finish the proof.
\renewcommand{\qedsymbol}{$\boxdot$}
\end{proof}
\renewcommand{\qedsymbol}{$\square$}

Now we can carry out \textbf{Stage 3}. If at most $2\eta n/m$ vertices in $I_A\cup I_B$ have at least $2\eta \tau_2/m$ red neighbours in $I_C$, then we can find $I_{AB}\subset I_A\cup I_B$ and $I_C'\subset I_C$ with $|I_{AB}|\geq(1-3\eta)n/m$ and $|I_C'|\geq(1-5\sqrt\eta)\tau_2/m$, such that $\blue{G}[I_{AB}, I_C']$ is $5\sqrt\eta$-almost complete. Therefore, clusters indexed by $I_{AB}$ and $I_C'$ form the blue structure (\textbf{D-situation}) required to apply Lemma~\ref{lem:stage4} (\textbf{Stage 4}) to finish the proof. Thus, we may assume that at least $2\eta n/m$ vertices in $I_A\cup I_B$ have at least $2\eta\tau_2/m$ red neighbours in $I_C$, from which it follows that there is a red matching of size $\eta\tau_2/m\geq\eta t_2/m$ between $I_A$ and $I_C$ or between $I_B$ and $I_C$.

\medskip

\noindent\textbf{Case I.} $|I_A|,|I_B|\geq(t_2+200\zeta n)/m$. Without loss of generality, assume that there is a red matching of size $\eta t_2/m$ between $I_A$ and $I_C$. Then, we can apply Claim~\ref{stage3claimB} to finish the proof.

\medskip

\noindent\textbf{Case II.} Without loss of generality, assume that $|I_B|<(t_2+200\zeta n)/m$, so $|I_A|\geq(t_1-201\zeta n)/m\geq(1-500\zeta)t_1/m$ from~\ref{lemma:stage3:1}. If there is a red matching of size $\eta t_2/m$ between $I_B$ and $I_C$, then we can finish the proof using Claim~\ref{stage3claimA}. 

Thus, we may assume that there is a red matching of size $\eta t_2/m$ between $I_A$ and $I_C$. If $t_1\leq(1+2000\zeta)t_2$, then $|I_B|\geq(1-\zeta)t_2/m\geq(1-2001\zeta)t_1/m$, so we can apply Claim~\ref{stage3claimA} to finish the proof. If instead $t_1>(1+2000\zeta)t_2$, then $|I_A|\geq(1-500\zeta)t_1/m\geq(t_2+200\zeta n)/m$, so we can apply Claim~\ref{stage3claimB} to finish the proof.
\end{proof}

\subsection{Stage 2}
\begin{lemma}\label{lem:stage2}
Let $1/n,1/\nu\ll 1/m\ll c\ll1/k\ll\eps\ll\zeta\ll d\ll\mu\ll1$. Let $T$ be an $n$-vertex tree with $\Delta(T)\leq cn$ and bipartition class sizes $t_1\geq t_2\geq(t_1-2)/3$. Let $\cT$ be a $\nu$-vertex tree with $\Delta(\cT)\leq c\nu$ and bipartition class sizes $\tau_1\geq\tau_2\geq(t_1-1)/2$. Suppose $\nu\geq t_1\geq\tau_1$, $\tau_2\geq t_2$, and $t_2+\tau_2\geq t_1-1$. 

Let $G$ be a red/blue coloured complete graph that contains a coloured $\eps$-regular partition $V_0\cup V_1\cup\cdots\cup V_{k}$ with corresponding red/blue coloured $(\eps,d)$-reduced graph $H$, such that $|V_0|=\cdots=|V_{k}|=m$, and $km\geq t_1+t_2+\tau_2-2\zeta t_1$. Suppose there is a partition $[k]=I_A\cup I_B\cup I_C$ such that the following hold (see \textbf{\emph{B-situation}} in Figure~\ref{fig:stages}).
\stepcounter{propcounter}
\begin{enumerate}[label = \emph{\textbf{\Alph{propcounter}\arabic{enumi}}}]
    \item\labelinthm{lemma:stage2:1} $|I_A|m,|I_B|m\geq(1-\zeta)t_2$ and $|I_A|m+|I_B|m=t_2+\tau_2-\zeta t_1$.
    \item\labelinthm{lemma:stage2:2} In $\red{H}$, 0 is adjacent to every $i\in I_A\cup I_B$.
    \item\labelinthm{lemma:stage2:3} $\red{H}[I_A,I_B]$ is $\zeta$-almost complete.
\end{enumerate}
Then, at least one of~\emph{\ref{main:1}}--\emph{\ref{main:5}} holds.
\end{lemma}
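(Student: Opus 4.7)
The plan is to convert the B-situation of~\ref{lemma:stage2:1}--\ref{lemma:stage2:3} into either a direct embedding of $T$ or $\cT$ via the regularity embedding lemmas of Section~\ref{sec:ems}, or else into a larger almost-complete bipartite structure that can be handed off to Lemma~\ref{lem:stage3} (\textbf{Stage 3}) or Lemma~\ref{lem:stage4} (\textbf{Stage 4}). Note first that the ``leftover'' region $I_C$ satisfies $|I_C|m \geq (1-\zeta)t_1$, so it is comparable in size to a whole bipartition class of $T$. I would begin by assuming, without loss of generality, that neither $\red{H}[I_A]$ nor $\red{H}[I_B]$ contains an edge: otherwise, after first enlarging $I_A \cup I_B$ slightly using $V_0$-anchored red matchings to $I_C$, the almost-complete $\red{H}[I_A,I_B]$ together with this single red edge yields the hypotheses of Lemma~\ref{lemma:em2a} (\textbf{EMa}) or Lemma~\ref{lemma:em2b} (\textbf{EMb}), which embed $T$ in red. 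Consequently, after removing an $O(\sqrt{\eps})$-fraction of the clusters, I may assume $\blue{H}[I_A]$ and $\blue{H}[I_B]$ are both almost complete.

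The next step is to classify each $v \in I_C$ by its red degree in the reduced graph: let $J_A^+ = \{v \in I_C : \red{d}(v, I_B) \geq (1-\eta)|I_B|\}$ (vertices mergeable into $I_A$), $J_B^+ = \{v \in I_C : \red{d}(v, I_A) \geq (1-\eta)|I_A|\}$ (vertices mergeable into $I_B$), and $J_O = I_C \setminus (J_A^+ \cup J_B^+)$. If $|J_O|$ is a small fraction of $|I_C|$, then the enlarged pair $I_A' = I_A \cup J_A^+$, $I_B' = I_B \cup J_B^+$ yields an almost-complete $\red{H}[I_A', I_B']$ after discarding the comparatively few bad pairs in $J_A^+ \times J_B^+$. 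The new total is approximately $t_2 + \tau_2 - \zeta t_1 + (1-\eta)|I_C|m$, which, using $|I_C|m \geq (1-\zeta)t_1$, reaches $(1-\zeta')(t_1 + t_2)$; trimming down to this exact total produces a red C-situation and \textbf{Stage 3} applies. If the total reaches all the way to $(1-\zeta')(n + \tau_2)$, I instead obtain a red D-situation and \textbf{Stage 4} applies directly.

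Otherwise, $|J_O|$ is a non-negligible fraction of $|I_C|$, and each $v \in J_O$ has blue degree at least $\eta|I_A|$ in $I_A$ and $\eta|I_B|$ in $I_B$. A pigeonhole/refinement argument, combined with the almost-complete $\blue{H}[I_A]$ and $\blue{H}[I_B]$, produces large subsets $J_A \subseteq I_A$, $J_B \subseteq I_B$, $J_O' \subseteq J_O$ such that $\blue{H}[J_A \cup J_B, J_O']$ is an almost complete bipartite graph. Since $|J_A \cup J_B|m$ is essentially $t_2 + \tau_2$ and $|J_O'|m$ can be taken close to $|I_C|m \geq (1-\zeta)t_1$, the total reaches approximately $n + \tau_2$, yielding a blue D-situation; \textbf{Stage 4} then finishes.

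The main obstacle will be the intermediate regime where $|J_A^+| + |J_B^+|$ is substantial yet falls short of $(1-\eta)|I_C|$, or where the emerging blue bipartite structure is lopsided (one of $J_A$, $J_B$ much smaller than required). In such borderline sub-cases, one must exploit the cluster $V_0$ --- which serves as a red common neighbour of all of $I_A \cup I_B$ --- together with partial red matchings from $I_A$ or $I_B$ into $I_C$ to directly apply Lemma~\ref{lemma:em2c} (\textbf{EMc}) or Lemma~\ref{lemma:em3} (\textbf{EMd}) to embed $T$. Meeting the rigid \emph{equality} $|I_A|m + |I_B|m = (1-\zeta)(t_1+t_2)$ in the C-situation input, while using only the assumptions $t_2 + \tau_2 \geq t_1 - 1$, $\tau_2 \geq t_2$, and $\nu \geq t_1$ to provide the necessary slack, will be the most delicate bookkeeping point.
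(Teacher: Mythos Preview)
Your degree-classification approach has genuine gaps. In the ``$|J_O|$ small'' case, you claim $\red{H}[I_A \cup J_A^+, I_B \cup J_B^+]$ is almost complete after discarding ``comparatively few bad pairs in $J_A^+ \times J_B^+$'', but you have no control whatsoever on red edges between $J_A^+$ and $J_B^+$: both are subsets of $I_C$, and the hypotheses say nothing about $\red{H}[I_C]$. Since each of $|J_A^+|,|J_B^+|$ can be $\Theta(t_1/m)$, the pairs in $J_A^+ \times J_B^+$ can form a constant fraction of $I_A' \times I_B'$ and could all be blue, so no red C-situation follows. In the ``$|J_O|$ large'' case, each $v \in J_O$ is only guaranteed blue degree at least $\eta|I_A|$ into $I_A$ --- it may simultaneously have red degree $(1-\eta)|I_A|-1$ --- so you cannot extract an almost-complete blue bipartite graph covering essentially all of $I_A \cup I_B$; the blue density between $J_O$ and $I_A\cup I_B$ could be as low as $\eta$. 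Your opening reduction also fails: \textbf{EMa}/\textbf{EMb} require $|I_A \cup I_B|m \approx n$, whereas here it equals $t_2 + \tau_2 - \zeta t_1$, possibly as small as roughly $t_1$; the shortfall $t_1-\tau_2$ can be as large as $t_1/2$, and there is no ``$V_0$-anchored'' mechanism to bridge it (the hypothesis~\ref{lemma:stage2:2} says nothing about $V_0$ and $I_C$).

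The paper's proof is structurally different. It first refines clusters asymmetrically --- those in $I_A \cup I_B$ to size $\gamma t_2 m/t_1$, those in $I_C$ to size $\gamma m$ --- and then takes a \emph{maximum} red matching $M$ between the refined $J_A \cup J_B$ and $J_C$, splitting on $|M|$. If $|M|$ is small, the Cascading Lemma (Lemma~\ref{lemma:improvemaximummatching}) applied to the matching extracts a bipartite pair that is \emph{empty} in red, hence a blue C-situation handed to Stage~3. If $|M|$ is large, a submatching of exactly the right size is combined with the almost-complete $\red{H}[I_A,I_B]$ and the hub $V_0$ to directly assemble the \textbf{H\L T} structure of Lemma~\ref{lemma:hlt} and embed $T$ in red; the asymmetric refinement is precisely what makes the two cluster sizes in \textbf{H\L T} have ratio $t_1/t_2$. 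The maximum-matching-plus-Cascading-Lemma device is the key structural tool your sketch lacks; a single degree threshold into $J_A^+,J_B^+,J_O$ is not strong enough to produce the required bipartite structures.
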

\begin{proof}
Let $\eps\ll\gamma\ll\zeta$. Refine every cluster $V_i$ with $i\in I_A$ or $i\in I_B$ down to clusters of size $\gamma t_2m/t_1$, and label these new clusters as $\{V_i':i\in J_A\}$ and $\{V_i':i\in J_B\}$, respectively. Refine every cluster $V_i$ with $i\in I_C$ down to clusters of size $\gamma m$ and label the new clusters as $\{V_i':i\in J_C\}$. In this refinement process, at most $O(\gamma n)$ covered vertices are lost. Let $H'$ be the new reduced graph on $J_A\cup J_B\cup J_C$, where for each $\ast\in\{\text{red},\text{blue}\}$, $ij\in E(H_\ast')$ if and only if $G_\ast[V_i',V_j']$ is $(\sqrt\eps,d-\eps)$-regular. By Lemma~\ref{lemma:regularity:1}, $H'[J_A,J_B]$ is $\zeta$-almost complete. Let $M$ be a maximum red matching in $H'$ between $J_A\cup J_B$ and $J_C$.

\medskip

\noindent\textbf{Case I.} $|V(M)\cap J_C|\leq(t_1-\tau_2+10\zeta t_1)/\gamma m$. Let $X=(J_A\cup J_B)\cap V(M), X'=(J_A\cup J_B)\setminus V(M)$, $Y=J_C\cap V(M)$, and $Y'=J_C\setminus V(M)$. By Lemma~\ref{lemma:improvemaximummatching}, there are partitions $X=X^+\cup X^-\cup \overline{X}$ and $Y=Y^+\cup Y^-\cup \overline{Y}$ such that $M$ matches $X^+$ with $Y^-$, $X^-$ with $Y^+$, and $\overline{X}$ with $\overline{Y}$, and $\red{H}[X'\cup X^-,Y'\cup Y^-\cup\overline{Y}]$ is empty. From assumption, $|X|=|Y|\leq(t_1-\tau_2+10\zeta t_1)/\gamma m$, and the following hold.
\[|X'\cup X^-|\geq|X'|\geq(t_2+\tau_2-\zeta t_1)/\gamma m-(t_1-\tau_2+10\zeta t_1)/\gamma m\geq(\tau_2-12\zeta t_1)/\gamma m.\]
\[|Y'\cup Y^-\cup\overline{Y}|=|I_C|-|Y^+|\geq|I_C|-|Y|\geq(1-\zeta)t_1/\gamma m-(t_1-\tau_2+10\zeta t_1)/\gamma m\geq(\tau_2-12\zeta t_1)/\gamma m.\]
\begin{align*}
|X'\cup X^-\cup Y'\cup Y^-\cup\overline{Y}|&=|X'\cup Y^+\cup Y'\cup Y^-\cup\overline{Y}|=|(J_A\cup J_B)\setminus V(M)|+|J_C|\\
&\geq(t_1+t_2+\tau_2-2\zeta t_1)/\gamma m-(t_1-\tau_2+10\zeta t_1)/\gamma m\geq(t_1+\tau_2-13\zeta t_1)/\gamma m.
\end{align*}
Hence, the clusters indexed by $X'\cup X^-$ and $Y'\cup Y^-\cup\overline{Y}$ contain a blue \textbf{C-situation}, so we can use Lemma~\ref{lem:stage3} (\textbf{Stage 3}) to complete the proof.

\medskip

\noindent\textbf{Case II.} $|V(M)\cap J_C|\geq(t_1-\tau_2+10\zeta t_1)/\gamma m$. For brevity, let $m'=\gamma t_2m/t_1$ be the common size of clusters in $J_A\cup J_B$. Let $M'$ be a red matching between $J_A\cup J_B$ and $J_C$, such that $|V(M')\cap J_C|=|V(M')\cap J_A|+|V(M')\cap J_B|=(t_1-\tau_2+10\zeta t_1)/\gamma m$. It follows that 
\[|V(M')\cap J_A|m'+|V(M')\cap J_B|m'=(t_1-\tau_2+10\zeta t_1)t_2/t_1=t_2-\tau_2t_2/t_1+10\zeta t_2,\]
\begin{align*}
|J_A\setminus V(M)'|m'+|J_B\setminus V(M')|m'&=(t_2+\tau_2-\zeta t_1)-(t_2-\tau_2t_2/t_1+10\zeta t_2)\\
&=\tau_2+\tau_2t_2/t_1-\zeta t_1-10\zeta t_2\geq(1+t_1/t_2)(\tau_2t_2/t_1-6\zeta t_2).
\end{align*}

\textbf{Case II.1.} $|J_A\setminus V(M')|m',|J_B\setminus V(M')|m'\geq\tau_2t_2/t_1-6\zeta t_2$. 
Then, we can find disjoint subsets $J_{A,1},J_{A,2}\subset J_A\setminus V(M')$ and $J_{B,1},J_{B,2}\subset J_B\setminus V(M')$, such that $|J_{A,1}|/|J_{B,2}|=|J_{B,1}|/|J_{A,2}|\geq t_1/t_2$, each of $|J_{A,1}|,|J_{A,2}|,|J_{B,1}|,|J_{B,2}|$ is at least $\eps n/m'$, and their total size is at least $(1+t_1/t_2)(\tau_2t_2/t_1-8\zeta t_2)/m'$. Moreover, since $\blue{H'}[J_A,J_B]$ is $\zeta$-almost complete, by choosing the subsets above randomly and using Lemma~\ref{lemma:chernoff}, we can ensure that both $\blue{H'}[J_{A,1},J_{B,2}]$ and $\blue{H'}[J_{A,2},J_{B,1}]$ are $10\zeta$-almost complete.
Then, for some $\eps\ll\gamma'\ll\gamma$, use Lemma~\ref{lemma:regularity:refine} to further appropriately refine these clusters along with those in $M'$ into two sets of smaller clusters of sizes $\gamma't_2m/t_1$ and $\gamma'm$, respectively, which can be paired up into $(\sqrt[4]\eps,d-2\sqrt\eps)$-regular pairs. Together, these new clusters contain at least \[(1-\gamma')(1+t_2/t_1)((\tau_2-8\zeta t_1)+(t_1-\tau_2+10\zeta t_1))\geq(1+\zeta)n\]
vertices. Finally, note that by~\ref{lemma:stage2:2}, each new cluster of size $\gamma't_2m/t_1$ forms an $(\sqrt\eps,d-\eps)$-regular pair with $V_0$, so we have the structure to apply Lemma~\ref{lemma:hlt} (\textbf{H\L T}) to find a red copy of $T$.

\textbf{Case II.2.} One of $|J_A\setminus V(M')|m'$ and $|J_B\setminus V(M')|m'$ is at most $\tau_2t_2/t_1-6\zeta t_2$. Without loss of generality, say $|J_A\setminus V(M')|m'\leq\tau_2t_2/t_1-6\zeta t_2$. 
Moreover, pick $M'$ so that it maximises $|J_A\setminus V(M')|$ subject to $|J_A\setminus V(M')|m'\leq\tau_2t_2/t_1-6\zeta t_2$. Then, $|V(M')\cap J_A|m'\geq(1-\zeta)t_2-\tau_2t_2/t_1+6\zeta t_2\geq5\zeta t_2$, so in particular $|V(M')\cap J_A|\geq1$. Also, $|J_B\setminus V(M')|m'\geq\tau_2-6\zeta t_1$. Let $J_D$ be the vertices in $J_C$ matched by $M'$ to the vertices in $J_B$. 

Suppose there is a red edge in $H'$ between $J_B\setminus V(M')$ and $J_C\setminus J_D$. Let $M''$ be the matching obtained by replacing an edge in $M'$ adjacent to $J_A$ with this edge. Then, $|J_A\setminus V(M'')|=1+|J_A\setminus V(M')|$, so in order to avoid contradicting the maximality of $M'$, we must have $|J_A\setminus V(M'')|m'\geq\tau_2t_2/t_1-6\zeta t_2$. Note that $|J_B\setminus V(M'')|m'=|J_B\setminus V(M')|m'-m'\geq\tau_2-6\zeta t_1-m'\geq\tau_2t_2/t_1-6.1\zeta t_2$. Therefore, we can use the same proof as in \textbf{Case II.1.} with 6.1 in place of 6 to find a red copy of $T$.

Otherwise, there is no red edge in $H'$ between $J_B\setminus V(M')$ and $J_C\setminus J_D$. Thus, $\blue{H'}[J_B\setminus V(M'),J_C\setminus J_D]$ contains at most $\eps|H'|^2$ non-edges. Moreover, $|J_D|=|V(M')\cap J_B|\leq6\zeta t_1/m'=6\zeta t_1^2/\gamma t_2m\leq24\zeta t_1/\gamma m$, so $|J_C\setminus J_D|\geq(1-25\zeta)t_1/\gamma m$. Therefore, the clusters indexed by $J_B\setminus V(M')$ and $J_C\setminus J_D$ cover at least $\tau_2-6\zeta t_1$ and $(1-25\zeta)t_1$ vertices, respectively. After removing clusters with low degrees in $\blue{H'}[J_B\setminus V(M'),J_C\setminus J_D]$ and refining all remaining clusters down to a smaller common size, we get a \textbf{C-situation} in blue, and can finish the proof with Lemma~\ref{lem:stage3} (\textbf{Stage 3}).
\end{proof}

\subsection{Stage 1}\label{sec:stage1}
\begin{lemma}[\textbf{Stage 1}]\label{lem:stage1}
Let $1/n,1/\nu\ll 1/m\ll c\ll1/k\ll\eps\ll \zeta\ll d\ll\mu\ll1$. Let $T$ be an $n$-vertex tree with $\Delta(T)\leq cn$ and bipartition class sizes $t_1\geq t_2\geq(t_1-2)/3$. Let $\cT$ be a $\nu$-vertex tree with $\Delta(\cT)\leq c\nu$ and bipartition class sizes $\tau_1\geq\tau_2\geq(t_1-1)/2$. Suppose $\nu\geq t_1\geq\tau_1$, $\tau_2\geq t_2$, and $t_2+\tau_2\geq t_1-1$. Let $G$ be a red/blue coloured complete graph with at least $n+\tau_2$ vertices.
Then, at least one of~\emph{\ref{main:1}}--\emph{\ref{main:5}} holds.
\end{lemma}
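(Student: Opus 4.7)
The plan is to apply the Coloured Regularity Lemma to $G$ and analyse the resulting reduced graph $H$ via the weight-function machinery of Section~\ref{sec:weight}. The idea, adapted from the proof of~\cite[Theorem~3]{HLT} but modified along the lines sketched just before Section~\ref{sec:extpart}, is to either directly embed one of the trees by realising the \textbf{H\L T} structure (via Lemma~\ref{lem:getHLT} and Lemma~\ref{lemma:hlt}) or else extract a B-situation or a C-situation that can be fed into Lemma~\ref{lem:stage2} (\textbf{Stage~2}) or Lemma~\ref{lem:stage3} (\textbf{Stage~3}) respectively. Concretely, apply Theorem~\ref{theorem:regularity} to obtain an $\eps_1$-regular partition of $G$ with common cluster size $n'$ and corresponding $(\eps_1,2d)$-reduced graph $H$ on $[k]$, where $\eps\ll\eps_1\ll\zeta$. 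Set $\alpha:=t_1/t_2$ and $\beta:=t_1/\tau_2$; the hypotheses $t_2\geq(t_1-2)/3$, $\tau_2\geq(t_1-1)/2$ and $\tau_2\geq t_2$ give $1\leq\beta\leq\alpha\leq 3.1$, while $|G|\geq n+\tau_2=(1+1/\alpha+1/\beta)t_1$ supplies the vertex count demanded by Lemma~\ref{lem:getHLT} and Lemma~\ref{lem:getBsituation}.

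The central dichotomy then runs as follows. For the red side (aimed at embedding $T$), check for each $x\in[k]$ whether $\red{\vv{H}}(x)$ admits an $\alpha$-weight function of weight at least $(1+2\zeta)t_1/n'$. If so, Lemma~\ref{lem:getHLT} produces an $\eps$-regular refinement with a cluster partition $\{0\}\cup I_A\cup I_B$ satisfying the hypotheses of Lemma~\ref{lemma:hlt}, and hence $G$ contains a red copy of $T$, giving~\ref{main:1}. Otherwise, pick any $x$ with red degree at least $(1/\alpha+1/\beta-5\eps_1)k$ in $H$; Lemma~\ref{lem:getBsituation} then yields a vertex $z$ and disjoint $X,Y\subseteq[k]\setminus\{x,z\}$ with $z$ blue-adjacent to every vertex of $X\cup Y$, $\blue{H}[X,Y]$ almost complete, $|X|m,|Y|m\geq(1-O(\eps_1))t_2$, and $|X|m+|Y|m\geq(1-O(\eps_1))(t_2+\tau_2)$. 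Running the symmetric analysis in blue, with $\alpha':=\tau_1/\tau_2$ and $\beta':=\alpha'$ (and using $|G|\geq\nu+t_2\geq(1+2/\alpha')\tau_1$ from $\tau_2\geq t_2$), either embeds $\cT$ in blue or, applied to a vertex of high blue degree, outputs a vertex $z'$ and disjoint $X',Y'$ with $z'$ red-adjacent to all of $X'\cup Y'$, $\red{H}[X',Y']$ almost complete, and each of $|X'|m,|Y'|m$ at least $(1-O(\eps_1))\tau_2\geq(1-O(\eps_1))t_2$, with total cluster-size at least $(1-O(\eps_1))(t_2+\tau_2)$. This is exactly a red B-situation as hypothesised in Lemma~\ref{lem:stage2}, so invoking \textbf{Stage~2} closes the proof.

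The one residual case is that neither colour contains a vertex of sufficiently high degree to trigger Lemma~\ref{lem:getBsituation}, or that the auxiliary condition $|Q\cap A|\geq 1$ of Lemma~\ref{lem:getBsituation} fails at every eligible vertex. In this case a further application of Lemma~\ref{lemma:ABC} and Lemma~\ref{lemma:QR} forces the entire red (or blue) neighbourhood of $x$ to lie in $B\cup C$, which by the size bounds of~\ref{ABC:5} and~\ref{ABC:6} means that almost all edges of $H$ of that colour sit inside a single large almost-complete bipartite subgraph. This is a C-situation for \textbf{Stage~3}, and Lemma~\ref{lem:stage3} finishes the proof.

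The main obstacle is the parameter bookkeeping. With $\alpha=t_1/t_2$, $\beta=t_1/\tau_2$ in red and $\alpha'=\tau_1/\tau_2$ in blue, one must verify that the output sizes of Lemma~\ref{lem:getBsituation} translate, after multiplying by the cluster size $m$ and using $km\approx n+\tau_2$, into the exact cluster-count constraints~\ref{lemma:stage2:1} (namely $|I_A|m,|I_B|m\geq(1-\zeta)t_2$ and $|I_A|m+|I_B|m=t_2+\tau_2-\zeta t_1$), up to $O(\zeta)$ slack; here it is the blue-to-red direction that really produces the correct B-situation input to Lemma~\ref{lem:stage2}, since the sizes in the red-to-blue direction are only guaranteed to be at least $t_2$ rather than $\tau_2$. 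Once these choices are aligned, Lemma~\ref{lem:getHLT} and Lemma~\ref{lem:getBsituation} combine into a single conditional ``embed or extract a B-situation'' statement that bypasses the three specialised embedding lemmas used in the first stage of~\cite{MPY}.
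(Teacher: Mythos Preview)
Your high-level plan (regularity, weight functions, and a dichotomy between embedding via \textbf{H\L T} and passing to a B- or C-situation) is that of the paper. But the execution has a genuine gap in how the two colours are linked.

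The paper works with a \emph{single} pair $\alpha=(1+\eps/2)t_1/t_2$, $\beta=(1+\eps/2)t_1/\tau_2$ (so $\alpha\geq\beta>1$) and a \emph{nested} iteration, not two parallel analyses. Starting from any $v$ (say with large red degree), it applies Lemma~\ref{lemma:ABC} to $\red{\vv{H}}(v)$ with parameter $\alpha$: either the red $\alpha$-weight is large (embed $T$), or $Q\cap A=\varnothing$ (a direct computation from~\ref{ABC:5}--\ref{ABC:8} and~\ref{QR:1}--\ref{QR:3}, using $\alpha\geq\beta$, yields a blue C-situation for Stage~3), or one picks $z\in Q\cap A$. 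By~\ref{QR:2} this $z$ automatically has blue degree at least $(1/\alpha+1/\beta-5\zeta)k'$, and one analyses $\blue{\vv{H}}(z)$ with parameter $\beta$: either the blue $\beta$-weight is large (embed $\cT$), or $Q'\cap A'=\varnothing$ (a red C-situation), or $|Q'\cap A'|\geq1$, whereupon Lemma~\ref{lem:getBsituation} (colour-swapped, crucially using $\alpha\geq\beta$) delivers the red B-situation for Stage~2. If the initial $v$ instead has large blue degree, the analogue of this last step does not terminate directly; one must extract $y\in Q'\cap A'$ of high blue degree and restart the inner analysis at $y$ --- a \emph{third} level of nesting.

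Your parallel scheme with $\alpha'=\beta'=\tau_1/\tau_2$ on the blue side does not work. First, the inequality $\nu+t_2\geq(1+2/\alpha')\tau_1$ is the wrong way round: $(1+2/\alpha')\tau_1=\tau_1+2\tau_2=\nu+\tau_2\geq\nu+t_2$. Second, and more seriously, the blue-degree hypothesis of Lemma~\ref{lem:getBsituation} with these parameters asks for blue degree at least a $2\tau_2/(\tau_1+2\tau_2)$ fraction of $|H|$, which exceeds $|H|/2$ whenever $\tau_1<2\tau_2$ and is not guaranteed for any vertex you have produced --- not even the $z$ output by your red analysis. The nested structure, in which the high-degree vertex in one colour is a by-product of the weight-function analysis in the other, is exactly what sidesteps this. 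Finally, your one-line treatment of the ``residual case'' elides the explicit inequalities (several of which hinge on $\alpha\geq\beta$) needed in the paper's Cases~I and~II.1 to certify that the C-situation has the correct sizes for Lemma~\ref{lem:stage3}.
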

\begin{proof}
Let $\alpha=(1+\eps/2)t_1/t_2$ and $\beta=(1+\eps/2)t_1/\tau_2$, so $\alpha\geq\beta>1$, and $1/\alpha+1/\beta\geq 1-\eps$. By Theorem~\ref{theorem:regularity}, $G$ contains a coloured $\eps$-regular partition $V_1\cup\cdots\cup V_{k}$ with $|V_1|=\cdots=|V_{k}|=m$ and $km=(1+1/\alpha+1/\beta-10\eps)t_1$. Let $H$ be the corresponding red/blue coloured $(\eps,d)$-reduced graph, and note that $H$ contains at most $\eps k^2$ non-edges. By removing at most $\sqrt{\eps}k\ll\zeta k$ clusters if necessary, we may assume that $|H|\geq(1+1/\alpha+1/\beta-2\zeta)t_1/m$, and every vertex in $H$ has at most $\zeta t_1/m$ non-neighbours in $H$. Let $V=V(H)$ and $k'=t_1/m$. 

Let $v\in V$ be arbitrary. Then, either $\red{N}(v,V)\geq(1/2+1/2\alpha+1/2\beta-\zeta)k'$, or $\blue{N}(v,V)\geq(1/2+1/2\alpha+1/2\beta-\zeta)k'$. First assume that $\red{N}(v,V)\geq(1/2+1/2\alpha+1/2\beta-\zeta)k'$. Let $Q=\red{N}(v,V)$, $R=V\setminus(Q\cup\{v\})$, and let $A,B,C$ be given by applying Lemma~\ref{lemma:ABC} to $\red{\vv{H}}(v)$. If $w_{\text{red},\max}(\red{\vv{H}}(v))\geq(1+\zeta)k'$, then $G$ contains a red copy of $T$ by Lemma~\ref{lem:getHLT} and Lemma~\ref{lemma:hlt} (\textbf{H\L T}), so assume otherwise. Then, by~\ref{QR:1} and~\ref{QR:3}, $|Q\cap A|+|Q\cap C|+|R\cap A|>(1/\alpha+1/\beta-4\zeta)k'$.

\medskip

\noindent\textbf{Case I.} $Q\cap A=\varnothing$. Then, $|Q\cap B|+|Q\cap C|\geq(1/2+1/2\alpha+1/2\beta-\zeta)k'$. Since $w_{\text{red},\max}(\red{\vv{H}}(v))<(1+\zeta)k'$, $(1+\alpha)|Q\cap B|+|Q\cap C|\leq(1+1/\alpha)(1+\zeta)k'<(1+1/\alpha+2\zeta)k'$ by~\ref{ABC:6}. Thus, $|Q\cap B|\leq(1/2\alpha+1/2\alpha^2-1/2\alpha\beta+3\zeta)k'$, and so $|Q\cap C|\geq(1/2+1/2\beta-1/2\alpha^2+1/2\alpha\beta-4\zeta)k'$. Since $\alpha\geq\beta$, 
\[|Q\cap C|\geq(1/2\beta+1/2\beta+(1/\beta-1/\alpha)/2\alpha-4\zeta)k'\geq(1/\beta-4\zeta)k'.\] 
Next, by~\ref{ABC:8}, $|R\cap A|=|A|>|V|-1-(1+1/\alpha)(1+\zeta)k'\geq(1/\beta-5\zeta)k'$. Moreover, by~\ref{ABC:6}, $|R\cap C|+|B|=|C|-|Q\cap C|+|B|<(1+1/\alpha)(1+\zeta)k'-\alpha|B|-|Q\cap C|\leq(1+1/\alpha+2\zeta)k'-|Q|$, and so 
\[|Q\cap C|+|R\cap A|=|V|-|R\cap C|+|B|\geq(1/\beta-4\zeta)k'+(1/2+1/2\alpha+1/2\beta-\zeta)k'\geq(1+1/\beta-6\zeta)k'.\] Therefore, using~\ref{QR:2}, the clusters in $Q\cap C$ and $R\cap A$ together contain a \textbf{C-situation} in blue, so we can apply Lemma~\ref{lem:stage3} (\textbf{Stage 3}) to finish the proof.

\medskip

\noindent\textbf{Case II.} $|Q\cap A|\geq1$. Then, by~\ref{QR:2} and using that any $z\in Q\cap A$ is a blue neighbour of all but at most $\zeta k'$ vertices in $(Q\cap A)\cup(Q\cap C)\cup(R\cap A)$, we get that $|\blue{N}(z,V)|\geq(1/\alpha+1/\beta-5\zeta)k'$. 

Let $Q'=\blue{N}(z,V)$, $R'=V\setminus(Q'\cup\{z\})$, and let $A',B',C'$ be given by applying Lemma~\ref{lemma:ABC} to $\blue{\vv{H}}(z)$. If $w_{\text{blue},\max}(\blue{\vv{H}}(z))\geq (1+\zeta)k'$, then we can find a blue copy of $\cT$ by Lemma~\ref{lem:getHLT} and Lemma~\ref{lemma:hlt} (\textbf{H\L T}), so assume otherwise. Then, by~\ref{QR:1} and~\ref{QR:3}, $|Q'\cap A'|+|Q'\cap C'|+|R'\cap A'|>(1/\alpha+1/\beta-4\zeta)k'$.

\textbf{Case II.1.} $Q'\cap A'=\varnothing$. Then, $|Q'\cap B'|+|Q'\cap C'|\geq(1/\alpha+1/\beta-5\zeta)k'$. We proceed similarly to \textbf{Case I} above. Since $w_{\text{blue},\max}(\blue{\vv{H}}(z))<(1+\zeta)k'$, by~\ref{ABC:6}, $(1+\beta)|Q'\cap B'|+|Q'\cap C'|\leq(1+1/\beta)(1+\zeta)k'<(1+1/\beta+2\zeta)k'$. Thus, $|Q'\cap B'|\leq(1/\beta-1/\alpha\beta+7\zeta)k'$, and so $|Q'\cap C'|\geq(1/\alpha+1/\alpha\beta-12\zeta)k'\geq(1/\alpha-5\zeta)k'$. By~\ref{ABC:8}, $|R'\cap A'|=|A'|>|V|-1-(1+1/\beta)(1+\zeta)k'\geq(1/\alpha-5\zeta)k'$. Then, by~\ref{ABC:6}, $|R'\cap C'|+|B'|=|C'|-|Q'\cap C'|+|B'|<(1+1/\beta)(1+\zeta)k'-\beta|B'|-|Q'\cap C'|\leq(1+1/\beta+2\zeta)k'-|Q'|$. Thus, $|Q'\cap C'|+|R'\cap A'|\geq(1/\alpha-4\zeta)k'+|Q'|\geq(2/\alpha+1/\beta-9\zeta)k'\geq(1+1/\alpha-10\zeta)k'$. Therefore, the clusters in $Q'\cap C'$ and $R'\cap A'$ contain a \textbf{C-situation} in red, so we can use Lemma~\ref{lem:stage3} (\textbf{Stage 3}) to finish the proof.

\textbf{Case II.2.} $|Q'\cap A'|\geq1$. Since $\alpha\geq\beta$, by Lemma~\ref{lem:getBsituation}, we get a \textbf{B-situtaion} in red, and can finish the proof using Lemma~\ref{lem:stage2} (\textbf{Stage 2}). 

\medskip

The case when $\blue{N}(v,V)\geq(1/2+1/2\alpha+1/2\beta-\zeta)k'$ is similar, and the same proof mostly works after exchanging the roles of $\alpha$ and $\beta$, and red and blue, so we only comment on two instances where $\alpha\geq\beta$ is used in the proof above. 

Let $Q=\blue{N}(v,V)$, $R=V\setminus(Q\cup\{v\})$, and let $A,B,C$ be given by applying Lemma~\ref{lemma:ABC} to $\blue{\vv{H}}(v)$. If $Q\cap A=\varnothing$, then as in \textbf{Case I} above, we can similarly obtain $|Q\cap C|\geq(1/2+1/2\alpha-1/2\beta^2+1/2\alpha\beta-4\zeta)k'$. Then, we prove $|Q\cap C|\geq(1/\alpha-4\zeta)k'$ in a different way as follows.
\begin{align*}
|Q\cap C|\geq(1/2+1/2\alpha-1/2\beta^2+1/2\alpha\beta-4\zeta)k'&\geq(1/2\beta+1/2\alpha-1/2\beta^2+1/2\alpha\beta-4\zeta)k'\\
&=(1/\alpha-4\zeta+(1/2\beta-1/2\alpha)(1-1/\beta))k'\\
&\geq(1/\alpha-4\zeta)k',
\end{align*}
where the last inequality follows from $\alpha\geq\beta$. The rest of the proof in \textbf{Case I} does not use $\alpha\geq\beta$, so carries through directly. 

If $|Q\cap A|\geq1$, then we proceed as in \textbf{Case II}. The only place where $\alpha\geq\beta$ is used is \textbf{Case II.2}. In this setting, $Q'=\red{N}(z,V)$, $R'=V\setminus(Q'\cup\{z\})$, $A',B',C'$ are given by applying Lemma~\ref{lemma:ABC} to $\red{\vv{H}}(z)$, and $|Q'\cap A'|\geq1$. Then, any $y\in Q'\cap A'$ is a blue neighbour of all but at most $\zeta k'$ vertices in $(Q'\cap A')\cup(Q'\cap C')\cup(R'\cap A')$. In particular, $|\blue{N}(y,V)|\geq(1/\alpha+1/\beta-5\zeta)k'$. Let $Q''=\blue{N}(y,V)$, $R''=V\setminus(Q''\cup\{y\})$, and let $A'',B'',C''$ be given by applying Lemma~\ref{lemma:ABC} to $\blue{\vv{H}}(y)$. Then, we are in the same situation as \textbf{Case II} above with $y$ in place of $z$, and can finish the proof in the same way. 
\end{proof}

\section{Extremal part}\label{sec:extremal}
In this section, we show that even if $G$ is $(\mu,T,\cT)$-extremal, we can still find either a red copy of $T$ or a blue copy of $\cT$ in $G$. The cases when $G$ is Type 1, Type 2, Type 3, or Type 4 $(\mu,T,\cT)$-extremal will be proved in order from Section~\ref{sec:type1} to Section~\ref{sec:type4}. Combining these with Theorem~\ref{thm:stability2}, we complete the proof of Theorem~\ref{thm:main2}, and thus of Theorem~\ref{thm:main}. Before that, we collect and prove several preparatory tree embedding results, which allow us to embed trees with many bare paths or many leaves into almost complete graphs or almost complete bipartite graphs. Many proofs of this section are adapted from the extremal part of the proof of Theorem~\ref{thm:oldmain} in~\cite{MPY}. 

The first such embedding result is adapted from {\cite[Lemma 6.1]{MPY}}, and shows that a spanning tree with many bare paths can be embedded into an almost complete graph. 
\begin{lemma}\label{lemma:bare-paths}
Let $1/n\ll\mu\ll\xi\leq 1/100$. Let $H$ be a graph with at least $n$ vertices satisfying $\delta(H)\ge\xi n$ and $d(v)\geq|H|-\mu n$ for all but a set $W$ of at most $\mu n$ vertices $v\in V(H)$. Let $T$ be an $n$-vertex tree with $\Delta(T)\leq\mu n$ that contains $10\mu n$ vertex-disjoint bare paths of length 4. Then, for any $t\in V(T)$ not on any of these bare paths and any $u\in V(H)$, there is a copy of $T$ in $H$ with $t$ copied to $u$.
\end{lemma}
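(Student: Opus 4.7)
The plan is to use the $10\mu n$ bare paths of $T$ as absorbers for the low-degree vertices $W$: each $w \in W$ will be placed at the middle vertex of a dedicated bare path, while the rest of $T$ is embedded via a greedy BFS into the almost-complete part $V(H) \setminus W$.

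First, I will root $T$ at $t$ and orient each bare path $P_j = a_0^j a_1^j a_2^j a_3^j a_4^j$ so that $a_0^j$ is the ancestor endpoint. Since $|W| \leq \mu n \leq 10\mu n$, I pair each $w \in W$ with a distinct bare path $P_{j_w}$ and plan $\phi(a_2^{j_w}) = w$. For each such $w$, I greedily reserve two distinct vertices $y_w, z_w \in N_H(w) \setminus W$, all pairwise distinct across $w$; this is possible because $|N_H(w) \setminus W| \geq \xi n - \mu n$ while only $2|W| \leq 2\mu n$ reservations are needed. I plan $\phi(a_1^{j_w}) = y_w$ and $\phi(a_3^{j_w}) = z_w$, so the adjacencies $y_w \sim w \sim z_w$ in $H$ hold automatically.

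Next, I embed the remaining vertices of $T$ via greedy BFS from $t \mapsto u$. At each non-pre-assigned vertex $v$, I pick $\phi(v) \in N_H(\phi(p(v))) \cap (V(H) \setminus W)$ that is unused and unreserved; if additionally $v = a_0^{j_w}$ (resp.\ $v = a_4^{j_w}$), I require $\phi(v) \in N_H(y_w)$ (resp.\ $N_H(z_w)$) so that the bare path closes up. Since $\phi(p(v))$, $y_w$, and $z_w$ all lie in $V(H) \setminus W$, each has at most $\mu n$ non-neighbors in $H$, so the adjacency constraints eliminate at most $3\mu n$ candidates per step.

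The feasibility count is tight: there are $n - 3|W|$ non-pre-assigned vertices to embed into $V(H) \setminus W \setminus \{y_w,z_w\}_{w \in W}$, a pool of size $\geq |V(H)| - 3|W|$. The main obstacle will be verifying that the greedy never gets stuck in the tight regime $|V(H)| = n$, where the pool size matches the embedding count exactly and only $O(\mu n)$ slack is available against the per-step adjacency constraints. I expect to resolve this by exploiting that $V(H) \setminus W$ is almost complete (each vertex has at most $2\mu n$ non-neighbors within it), that $\Delta(T) \leq \mu n$ limits the simultaneous branches off any vertex, and that $\mu \ll \xi \leq 1/100$ ensures the $\leq 3\mu n$ non-neighbor losses are always dwarfed by the remaining pool. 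A final check then confirms that the pre-assigned triples $(y_w, w, z_w)$ correctly glue to their BFS neighbors $\phi(a_0^{j_w})$ and $\phi(a_4^{j_w})$, completing the embedding.
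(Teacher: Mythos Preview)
Your plan has a genuine gap in the tight regime $|V(H)|=n$. After reserving $3|W|$ vertices for the pre-assigned triples, you need to place $n-3|W|$ non-pre-assigned tree vertices into a pool of exactly $n-3|W|$ host vertices. A greedy BFS cannot do this: near the end of the embedding, when fewer than $\mu n$ pool vertices remain, the parent-adjacency constraint alone (which may exclude up to $\mu n$ vertices, since the parent lies in $V(H)\setminus W$) can wipe out the entire remaining pool. Your claim that ``the $\leq 3\mu n$ non-neighbor losses are always dwarfed by the remaining pool'' is simply false once the remaining pool has size below $3\mu n$; neither $\Delta(T)\leq\mu n$ nor $\mu\ll\xi$ helps here, because the bottleneck is the single adjacency requirement against a shrinking pool, not the branching factor or the minimum degree. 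You only used $|W|\leq\mu n$ of the $10\mu n$ bare paths as absorbers for $W$, and you do nothing with the remaining $\geq 9\mu n$ bare paths, so there is no mechanism left to absorb the last few vertices.

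The paper's proof fixes exactly this issue by exploiting \emph{all} $10\mu n$ bare paths as slack. It first deletes the three internal vertices of every bare path from $T$, yielding a forest $T'$ with $|T'|=(1-30\mu)n$; the greedy embedding of $T'$ into $H\setminus W^+$ then always has at least $\approx 26\mu n$ room and succeeds. The bare-path interiors are completed afterwards: the $|W|$ low-degree vertices are pre-wrapped in length-2 paths $x_iw_iy_i$ (as you do), the remaining $\ell-|W|$ such paths are carved out of a Hamilton cycle in the leftover $\approx 27\mu n$ vertices via Dirac's theorem, and a Hall-type matching connects each pair of embedded bare-path endpoints $(u_i,v_i)$ to one of the length-2 paths. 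The key structural difference is that the bare paths are removed before the greedy step to create slack, and then filled in globally via a matching, rather than trying to embed them inline.
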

\begin{proof}
Let $\mathcal{P}$ be a set of $\ell=10\mu n$ vertex-disjoint bare paths of length 4 in $T$, and let $T'$ be the forest obtained by deleting the 3 internal vertices of every path in $\mathcal{P}$ from $T$, so in particular $|T'|=(1-30\mu)n$. 

Let $m=|W|$ if $u\not\in W$, and let $m=|W|-1$ otherwise, noting that $m\leq\mu n$. Let the vertices in $W$ that are not $u$ be $w_1,\ldots,w_m$. Since $\delta(H)\geq\xi n\gg m$, we can greedily find distinct vertices $x_1,y_1,\ldots,x_m,y_m\in V(H)\setminus W$, such that $x_iw_iy_i$ is a path of length 2 for every $i\in[m]$. Let $W^+=\cup_{i=1}^m\{x_i,y_i,w_i\}$, and let $H'=H-W^+$. 

Note that $d_{H'}(v)\geq|H'|-\mu n\geq|T'|$ for every $v\in V(H')$ except possibly $u$, and $d_{H'}(u)\geq\xi n-3m\geq\Delta(T)$. Thus, we can greedily find a copy $\psi(T')$ of $T'$ in $H'$, with $t$ copied to $u$. Let $u_1,v_1,\ldots,u_\ell,v_\ell$ be the copies of the endpoints of the $\ell$ paths in $\mathcal{P}$. To complete a copy of $T$, it suffices to find $u_iv_i$-paths of length 4 using new distinct internal vertices for each $i\in[\ell]$. 

Let $H''=H'-V(\psi(T'))=H-W^+-V(\psi(T'))$, and note that $\delta(H'')\geq|H''|-\mu n\geq|H''|/2$ because $|H''|\geq n-3m-(1-30\mu)n\geq27\mu n$. By Dirac's Theorem, there is a Hamilton cycle in $H''$. In particular, we can find distinct $x_{m+1},y_{m+1},w_{m+1},\ldots,x_{\ell},y_{\ell},w_{\ell}\in V(H'')$, such that $x_iw_iy_i$ is a path of length 2 for every $m+1\leq i\leq\ell$. 

Let $K$ be the auxiliary bipartite graph with bipartition classes $A=\{a_1,\ldots,a_\ell\}$ and $B=\{b_1,\ldots,b_\ell\}$, such that for any $i,j\in[\ell]$, $a_ib_j\in E(K)$ if and only if both $u_ix_j$ and $v_iy_j$ are edges in $H$. Since $\cup_{i=1}^\ell\{u_i,v_i,x_i,y_i\}\subset V(H)\setminus W$, for every $i\in[\ell]$, $d_K(a_i),d_K(b_i)\geq\ell-2\mu n$. Then, for any $I\subset A$ with $0<|I|\leq\ell-2\mu n$, we have $|N_K(I,B)|\geq\ell-2\mu n\geq |I|$, while for any $I\subset A$ with $|I|>\ell-2\mu n$, we have $|N_K(I,B)|=|B|\geq|I|$, as any $b\in B\setminus N_K(I,B)$ would satisfy $d_K(b)<2\mu n<\ell-2\mu n$, a contradiction. Thus, by Lemma~\ref{lemma:Hall}, there is a perfect matching in $K$, say matching $a_i$ with $b_{\sigma(i)}$ for every $i\in[\ell]$. This implies that $\psi(T')$ along with the paths $u_ix_{\sigma(i)}w_{\sigma(i)}y_{\sigma(i)}v_i$ for all $i\in[\ell]$ form a copy of $T$ in $H$, as required.
\end{proof}

The next result is a bipartite version of Lemma~\ref{lemma:bare-paths}. 
\begin{lemma}[{\cite[Lemma 6.2]{MPY}}]\label{lemma:bp:bare-paths}
Let $1/n\ll \mu\ll\xi\leq1/100$. Let $H$ be a bipartite graph with bipartition classes $U_1$ and $U_2$ such that $d(v,U_2)\ge|U_2|-\mu n$ for every $v\in U_1$, $d(v,U_1)\geq\xi n$ for every $v\in U_2$, and $d(v,U_1)\geq|U_1|-\mu n$ for all but a set $W$ of at most $\mu n$ vertices in $U_2$.

Let $T$ be an $n$-vertex forest with bipartition classes $V_1$ and $V_2$ such that $|V_j|\le |U_j|$ for each $j\in [2]$. Moreover, suppose that $T$ contains a set $\mathcal{P}$ of $10\mu n$ vertex-disjoint bare paths of length 4 whose endpoints are all in $V_2$. Then, for any $j\in [2]$, any $t\in V_j$ and $u\in U_j$, $H$ contains a copy of $T$ with $t$ copied to $u$, $V_i$ copied to $U_i$ for each $i\in [2]$, and $W$ covered by the central vertices of the bare paths in $\mathcal{P}$.
\end{lemma}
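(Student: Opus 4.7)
The plan is to mirror the proof of Lemma~\ref{lemma:bare-paths} while respecting the bipartition. Enumerate $\mathcal{P}$ as $p_0^ip_1^ip_2^ip_3^ip_4^i$ for $i\in[\ell]$ where $\ell=10\mu n$; since the endpoints lie in $V_2$, we have $p_0^i,p_2^i,p_4^i\in V_2$ and $p_1^i,p_3^i\in V_1$, so the only bare-path vertices that can lie in $W\subset U_2$ are the central vertices $p_2^i$. Let $T'$ be obtained from $T$ by deleting $\{p_1^i,p_2^i,p_3^i\}_{i\in[\ell]}$, so that $|V_1(T')|=|V_1|-2\ell$ and $|V_2(T')|=|V_2|-\ell$. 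I treat the main case $t\in V(T')$ and $u\notin W$ first, and indicate how to absorb the remaining corner cases at the end.

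Step 1 is to reserve auxiliary vertices: for each $w\in W$, greedily pick distinct $x_w,y_w\in U_1$ with $x_ww,wy_w\in E(H)$, which succeeds because $d(w,U_1)\ge\xi n\gg 2|W|$. Write $W^+=W\cup\bigcup_{w\in W}\{x_w,y_w\}$. Step 2 is to greedily embed $T'$ into $H-W^+$, respecting the bipartition $V_i(T')\to U_i$ and sending $t\to u$. This works by counting: when extending to a new vertex $v'\in V_1(T')$, its embedded parent $\phi(v)\in U_2\setminus W$ has at least $|U_1|-\mu n$ neighbours in $U_1$, while we only need to avoid the at most $|V_1(T')|+2|W|=|V_1|-2\ell+2|W|$ already-used or reserved vertices in $U_1$, leaving positive slack since $|V_1|\le|U_1|$ and $\ell=10\mu n\gg|W|$. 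The symmetric case $v'\in V_2(T')$ is easier: $\phi(v)\in U_1$ has at least $|U_2|-\mu n$ neighbours in $U_2$, against at most $|V_2(T')|+|W|$ forbidden vertices.

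Step 3 is to complete each bare path $i\in[\ell]$ by inserting a 2-path $x^iw^iy^i$ between $\phi(p_0^i)$ and $\phi(p_4^i)$ with $x^i,y^i\in U_1$ and $w^i\in U_2$, using fresh vertices and with the multiset $\{w^i\}$ covering $W$. Take the $|W|$ reserved triples $(x_w,w,y_w)$ as one pool, and greedily pick $\ell-|W|$ further fresh triples $(x_j,w_j,y_j)$ with $w_j\in U_2\setminus W$ from the leftover bipartite graph, which is still $\mu$-almost complete on both sides. Then apply Hall's Theorem (Lemma~\ref{lemma:Hall}) to the auxiliary bipartite graph in which bare path $i$ is joined to triple $j$ iff $\phi(p_0^i)x_j$ and $\phi(p_4^i)y_j$ are both edges of $H$. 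Since every $\phi(p_0^i),\phi(p_4^i)\in U_2\setminus W$ has at most $\mu n$ non-neighbours in $U_1$, and symmetrically each $x_j,y_j\in U_1$ has at most $\mu n$ non-neighbours in $U_2$, the auxiliary graph has minimum degree $\ge\ell-2\mu n$, so the Hall condition holds vacuously for $|I|\le\ell-2\mu n$ and via a complement argument for $|I|>\ell-2\mu n$. The resulting perfect matching yields the desired copy of $T$ with $W$ covered by central vertices.

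The main obstacle is bookkeeping rather than a conceptual jump: one must juggle the bipartition, the asymmetric degree hypothesis (only $U_2$ has bad vertices), and the requirement that $W$ be filled \emph{specifically} by bare-path centres. The corner cases are handled by small pre-modifications before Step 1: if $t$ is an internal vertex of some bare path $P_{i_0}$, drop $P_{i_0}$ entirely from the main process and insert it by hand in the routing step with $t$ playing its designated role and $u$ its image; if $u\in W$, commit from the outset to $p_2^{i_0}\mapsto u$ for some fixed $i_0\in[\ell]$, remove the corresponding reserved pair from the pool, and force the Hall matching to pair the bare path $P_{i_0}$ with the triple $(x_u,u,y_u)$. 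In each such modification the greedy counting bounds and Hall's condition are preserved with only mild constant-factor adjustments.
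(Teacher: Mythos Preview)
The paper does not prove this lemma itself; it is imported from \cite[Lemma~6.2]{MPY} without proof, so there is no in-paper argument to compare against. Your proof is the natural bipartite adaptation of the paper's proof of Lemma~\ref{lemma:bare-paths} (reserve length-$2$ paths through each $w\in W$, greedily embed the trimmed forest $T'$ into $H-W^+$, then route the bare paths via a Hall matching on an auxiliary graph of minimum degree $\ge\ell-2\mu n$), and the main case is correct.

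Two small remarks. In Step~3, ``greedily pick $\ell-|W|$ further fresh triples'' is not quite literal: near the end only $O(1)$ leftover $U_1$-vertices remain while each candidate $w_j$ may still miss up to $\mu n$ of them, so naive greed can stall; invoke Lemma~\ref{lemma:hallmatching} with $f_{w_j}=2$ instead (the same two-range Hall check you already use for the final matching works here). And your handling of the corner case $u\in W$ presupposes that $t$ is a bare-path centre; if $t\in V_2$ is \emph{not} a centre then $\phi(t)=u\in W$ is already incompatible with ``$W$ covered by the central vertices'', so this sub-case is vacuous (or reflects an ambiguity in the lemma statement) rather than a gap in your argument.
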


The next result shows that if a tree has many leaves and a sublinear maximum degree, then it can be embedded into an almost complete graph of the same size. This is similar to~{\cite[Lemma 3.2]{Y}} and is a less complicated version of several related results in~\cite{MPY}.

\begin{lemma}\label{lemma:leaves}
For every $\xi\leq1/100$, there exists $c=c_\xi>0$ such that the following holds. Let $1/n\ll\mu\ll\xi$. Let $H$ be a graph with at least $n$ vertices, one of which is $u$, such that $d(v)\geq|H|-\mu n$ for all $v\in V(H)\setminus\{u\}$, and $d(u)\geq\xi n$. Let $T$ be an $n$-vertex tree with $\Delta(T)\leq cn/\log n$, and suppose that $T$ contains at least $\xi n/10^3$ leaves. Then, for any $t\in V(T)$, there is a copy of $T$ in $H$ with $t$ copied to $u$. 
\end{lemma}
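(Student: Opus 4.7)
The plan is to remove a set $L$ of leaves of $T$, greedily embed $T^* = T - L$ into $H$ with $t$ mapped to $u$, and then extend the embedding to the leaves of $L$ via a Hall-type matching argument. I would pick $L$ as a set of $\xi n/(2 \cdot 10^3)$ leaves of $T$ with $t \notin L$; this is possible since $T$ has at least $\xi n/10^3$ leaves. Let $T^* = T - L$, so $|T^*| = n - |L|$.

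The greedy embedding of $T^*$ proceeds rooted at $t$ with $\phi(t) = u$, processing vertices in BFS order. For each non-root $y$ with already-embedded parent $x$, pick $\phi(y)$ to be an unused neighbor of $\phi(x)$ in $H$, avoiding $u$ unless $y = t$. When $\phi(x) = u$, we have $d(u) \geq \xi n$ neighbors and at most $\Delta(T) \leq cn/\log n$ children of $t$ to embed, so plenty of room. When $\phi(x) \neq u$, $d(\phi(x)) \geq |H| - \mu n$ exceeds the number of used vertices plus $u$, since $|T^*| + 1 \leq n - |L| + 1$ and $|L| \gg \mu n$ (using $\mu \ll \xi$).

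To extend $\phi$ to the leaves of $L$, let $A \subset V(T^*) \setminus \{t\}$ be the distinct parents in $T$ of leaves in $L$, $f_a$ the count of $L$-leaves with parent $a$, and $U = V(H) \setminus \phi(V(T^*))$ (so $|U| \geq |L|$). I would apply Lemma~\ref{lemma:hallmatching} to the bipartite graph $K$ between $\phi(A)$ and $U$ with demands $(f_a)$. Hall's condition $|N_K(I) \cap U| \geq \sum_{a \in I} f_a$ holds automatically for $|I| > \mu n$: every $v \in U$ has at most $\mu n$ non-neighbors in $V(H)$ (as $v \neq u$), so $v$ has a neighbor in $\phi(I)$, giving $|N_K(I) \cap U| = |U| \geq |L| \geq \sum_{a \in I} f_a$.

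The main technical obstacle is verifying Hall's condition for $|I| \leq \mu n$, which can become tight. I would address this via a swap step: the bad set $B = \{v \in V(H) : \phi(A) \cap N(v) = \varnothing\}$ has $|B| \leq \mu n$ (contained in the non-neighbors of any single $\phi(a)$), and for each $v \in B \cap U$, swap $v$ with the $\phi$-image of a reserve leaf $y$ of $T$ (a leaf not in $L$; at least $\xi n/10^3 - |L| \gg \mu n$ of these exist) chosen so that $v \sim \phi(\mathrm{parent}(y))$ and $\phi(y) \notin B$. Such a $y$ exists because each constraint excludes at most $\mu n$ reserve leaves. After all swaps, $U$ contains no bad vertex; since each $v \in U$ then has at least $|A| - \mu n$ neighbors in $\phi(A)$, Hall's condition for $|I| \leq \mu n$ reduces to a counting argument that succeeds when $|A| \geq 2\mu n$. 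The sub-case $|A| < 2\mu n$ (few pre-leaf parents in $T$, forcing a concentrated leaf structure) is handled by choosing $L$ to maximize $|A|$, e.g., including one leaf from each pre-leaf parent, and leveraging the resulting tight bound on the total number of leaves to be matched.
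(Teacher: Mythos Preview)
Your overall structure (remove leaves, embed $T^*$ greedily, then match in the leaves via Hall) matches the paper's, but your deterministic greedy embedding leaves a genuine gap in the Hall verification that the swap step does not fix.

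The problematic case is a set $I\subset A$ with $|I|\le\mu n$ but $\sum_{a\in I}f_a>|L|-\mu n$. Such $I$ can certainly exist: since $\Delta(T)\le cn/\log n$, a set of $\mu n$ parents can carry up to $\mu c\,n^2/\log n$ leaves of $L$, and for large $n$ this far exceeds $|L|\approx\xi n/10^3$. For such $I$ you only know $|N_K(\phi(I))\cap U|\ge |U|-\mu n$ (from the non-neighbour bound of a single $\phi(a)$), which falls short of $\sum_{a\in I}f_a$ in the tight case $|U|=|L|$. Your swap step only guarantees that each $v\in U$ has \emph{some} neighbour in $\phi(A)$; it says nothing about whether $v$'s neighbours in $\phi(A)$ carry a significant share of the total demand, which is what Hall needs here. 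Your fallback for $|A|<2\mu n$ is also not viable: the number of distinct leaf-parents in $T$ can be as small as $\Theta(\log n)$ (take $\Theta(\log n)$ vertices each with $\Theta(n/\log n)$ leaf children), so no choice of $L$ can force $|A|$ to be linear, and taking one leaf per parent then makes $|L|$ far too small for the greedy step itself to succeed.

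The paper resolves this by embedding $T-L$ \emph{randomly}: at each step $\psi(t_i)$ is chosen uniformly from the available neighbours of its embedded parent. Azuma's inequality (Lemma~\ref{lemma:azuma}), with increments bounded by $\Delta(T)\le cn/\log n$, then gives that with probability $1-o(1/n)$ each remaining vertex $w$ satisfies the \emph{weighted} spreading property $\sum_{i:\,w\in N(\psi(p_i))}d_i\ge|L|/4$. A union bound over $w$ fixes one good embedding, and this weighted property is exactly what makes Hall go through when $\sum_{a\in I}f_a>|L|-\mu n$: any $w$ outside $N(\psi(\{p_i:i\in I\}))$ would have its weighted sum bounded by $\sum_{a\notin I}f_a<\mu n<|L|/4$, a contradiction. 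This randomised step is precisely where the $cn/\log n$ hypothesis is used, and it is the idea your argument is missing.
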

\begin{proof}
Let $L$ be a set of $\xi n/10^4$ leaves in $T$ not including $t$ or any neighbours of $t$. Let $P=\{p_1,\ldots,p_\ell\}$ be the set of parents of leaves in $L$, and let $d_i=d(p_i,L)$ for every $i\in[\ell]$. Order the vertices of $T-L$ using a breadth first search as $t=t_1,t_2,\ldots,t_m$, so that for every $2\leq i\leq m$, $t_i$ has a unique neighbour to its left in this ordering, and all neighbours of $t_1$ appear immediately after $t_1$. By relabelling if necessary, we may assume that $p_1,\ldots,p_\ell$ appear in this order.

Consider the following random embedding $\psi$ of $T-L$ into $H$. Let $\psi(t)=\psi(t_1)=u$. Then, for each $2\leq i\leq m$, let $j_i\in[i-1]$ be the unique index such that $t_{j_i}$ is a neighbour of $t_i$, and pick $\psi(t_i)$ uniformly at random from the set $Y_i=N(\psi(t_{j_i}))\setminus\{\psi(t_1),\ldots,\psi(t_{i-1})\}$. We claim that this procedure always succeeds in producing a random embedding of $T-L$. Indeed, if $j_i\geq2$, then $\psi(t_{j_i})\not=u$, so $|Y_i|\geq|L|-\mu n\geq\xi n/10^4-\mu n\geq2\mu n>0$. If $j_i=1$, then $|Y_i|\geq\xi n-\Delta(T)\geq2\mu n>0$

Now, note that for every $v\in V(H)\setminus\{u\}$ and every $i\in[\ell]$, if $p_i=t_{i'}$ and $Y_{i'}$ is defined as above, then
\[\mathbb{P}(v\not\in N(\psi(p_i))\mid\psi(t_1),\ldots,\psi(t_{i'-1}))\leq\frac{|Y_{i'}\setminus N(v)|}{|Y_{i'}|}\leq\frac{\mu n}{2\mu n}<\frac12.\]
Therefore, by Lemma~\ref{lemma:azuma},
\begin{align*}
\mathbb{P}\left(\sum_{i:v\in N(\psi(p_i))}d_i<|L|/4\right)&\leq\exp\left(-\frac{(|L|/4)^2}{2\frac{|L|}{cn/\log n}(cn/\log n)^2}\right)\\
&=\exp\left(-\frac{|L|\log n}{32cn}\right)\leq\exp\left(-\frac{\xi\log n}{10^6c}\right)=o(n^{-1}),
\end{align*}
provided that $c<\xi/10^6$. Therefore, by a union bound, with high probability, every $v\in V(H)\setminus\{u\}$ satisfies $\sum_{i:v\in N(\psi(p_i))}d_i\geq|L|/4$. Fix an embedding $\psi$ with this property.

Let $W\subset V(H)\setminus\{u\}$ be the set of remaining vertices, it suffices now to embed $L$ appropriately into $W$, noting that $|W|\geq|L|$. To do this, we verify Hall's condition. For any $\varnothing\not=I\subset[\ell]$, if $\sum_{i\in I}d_i\leq|L|-\mu n$, then $|N(\psi(\{p_i:i\in I\}),W)|\geq|W|-\mu n\geq\sum_{i\in I}d_i$ as $d(v)\geq|H|-\mu n$ for every $v\in V(H)\setminus\{u\}$. If $I\subset[\ell]$ satisfies $\sum_{i\in I}d_i>|L|-\mu n$, then $N(\psi(\{p_i:i\in I\}),W)=W$ and $|N(\psi(\{p_i:i\in I\}),W)|=|W|\geq|L|\geq\sum_{i\in I}d_i$, because any $w\in W\setminus N(\psi(\{p_i:i\in I\}),W)$ would satisfy $\sum_{i:w\in N(\psi(p_i))}d_i\leq \mu n<|L|/4$, a contradiction. Hence, Hall's condition holds and we can use Lemma~\ref{lemma:hallmatching} to finish the embedding. 
\end{proof}

We also prove a bipartite version of Lemma~\ref{lemma:leaves}, which is again similar to some results in~\cite{MPY}. 
\begin{lemma}\label{lemma:bipartite:leaves}
For every $\xi\leq1/100$, there exists $c=c_\xi>0$ such that the following holds. 

Let $1/n\ll\mu\ll\xi$. Let $H$ be a bipartite graph with bipartition classes $U_1$ and $U_2$ such that $d(v,U_2)\ge|U_2|-\mu n$ for every $v\in U_1$, $d(v,U_1)\geq\xi|U_1|$ for every $v\in U_2$, and $d(v,U_1)\geq|U_1|-\mu n$ for all but a set $W_2$ of at most $\mu n$ vertices in $U_2$. 

Let $T$ be an $n$-vertex tree with $\Delta(T)\leq cn/\log n$ and bipartition classes $V_1$ and $V_2$, such that $|V_1|+n/10\leq|U_1|$ and $|V_2|\leq|U_2|$. If $T$ contains a set $L$ of $\xi n/10^5$ leaves in $V_2$, then there is an embedding of $T$ in $H$ with $V_i$ embedded into $U_i$ for each $i\in[2]$, and only vertices in $L$ embedded into $W_2$.
\end{lemma}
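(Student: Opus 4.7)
The proof parallels that of Lemma~\ref{lemma:leaves}, with extra care for the low-degree set $W_2 \subseteq U_2$. Fix a subset $L' \subseteq L$ of size $\xi n/(2 \cdot 10^5)$, let $P = \{p_1, \ldots, p_\ell\} \subseteq V_1$ be the parents of $L'$ with multiplicities $d_i = |N_T(p_i) \cap L'|$ (so $\sum d_i = |L'|$ and $d_i \leq \Delta(T) \leq cn/\log n$), and set $T^* = T - L'$. The plan is to first randomly embed $T^*$ into $H$ keeping $\psi(V(T^*)) \cap W_2 = \emptyset$ (which in particular forces $\psi(V_2 \setminus L) \cap W_2 = \emptyset$), and then embed $L'$ into the remaining vertices of $U_2$ via a Hall-matching argument analogous to the one in Lemma~\ref{lemma:leaves}.

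For the random embedding, BFS-order $V(T^*)$ from an arbitrary root as $t_1, \ldots, t_m$; for each $i \geq 2$, pick $\psi(t_i)$ uniformly at random from $Y_i := N_H(\psi(t_{j_i})) \cap U' \setminus \psi(\{t_1,\ldots,t_{i-1}\})$, where $U' = U_1$ if $t_i \in V_1$ and $U' = U_2 \setminus W_2$ if $t_i \in V_2 \setminus L'$. The slack $|U_1| - |V_1| \geq n/10$ and $(|U_2|-|W_2|) - |V_2 \setminus L'| \geq |L'| - 2\mu n$ guarantee $|Y_i| \geq n/20$ or $|Y_i| \geq |L'|/2$ at every step, so the embedding succeeds. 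For each $v \in U_2 \setminus W_2$, the condition $d(v, U_1) \geq |U_1| - \mu n$ gives $\mathbb{P}(v \notin N_H(\psi(p_i)) \mid \text{history}) \leq |Y_{i'} \setminus N_H(v)|/|Y_{i'}| \leq \mu n / (n/20) = 20\mu$, where $i'$ denotes the step embedding $p_i$. Applying Azuma's inequality (Lemma~\ref{lemma:azuma}) to $\sum_i d_i \mathbb{1}\{v \in N_H(\psi(p_i))\}$ with range bound $d_i \leq cn/\log n$ yields a tail bound of $\exp(-\Omega(|L'| \log n / n)) = o(n^{-1})$ for $c = c_\xi$ sufficiently small. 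A union bound over $v \in U_2 \setminus W_2$ fixes an embedding $\psi$ in which every such $v$ satisfies $\sum_{i : v \in N_H(\psi(p_i))} d_i \geq |L'|/4$.

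To embed $L'$ into $W' := U_2 \setminus \psi(V(T^*))$ via Lemma~\ref{lemma:hallmatching}, we must verify Hall's condition $|\bigcup_{i \in I} N_H(\psi(p_i)) \cap W'| \geq \sum_{i \in I} d_i$ for every $I \subseteq [\ell]$. If $\sum_{i \in I} d_i \leq |L'| - \mu n$, then any single $|N_H(\psi(p_i)) \cap W'| \geq |W'| - \mu n \geq \sum_{i \in I} d_i$ already suffices. Otherwise, the concentration above forces $W' \setminus \bigcup_{i \in I} N_H(\psi(p_i)) \subseteq W_2$, so the Hall condition reduces to the single statement $W_2 \subseteq \bigcup_i N_H(\psi(p_i))$.

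The main obstacle is ensuring this $W_2$-coverage: since each $w \in W_2$ has only $\xi|U_1|$ neighbors in $U_1$ rather than $|U_1| - \mu n$, the random embedding alone does not force $\psi(P) \cap N_H(w) \neq \emptyset$. I would resolve this by inserting a pre-reservation step before the random BFS. A greedy / fractional-set-cover argument yields a hitting set $H^* \subseteq U_1$ of size at most $\log |W_2|/\xi$ such that every $w \in W_2$ has a neighbor in $H^*$; since $\ell \geq |L'|/\Delta(T) \geq \xi \log n / (2 \cdot 10^5 c)$, taking $c = c_\xi$ small enough (e.g.\ $c \leq \xi^2/10^6$) guarantees $\ell \geq |H^*|$, so one can designate $|H^*|$ distinct parents in $P$ and constrain the random BFS embedding to map them bijectively onto $H^*$ (placing them early in the BFS order). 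This is consistent with the remainder of the random embedding because both the $n/10$ slack in $U_1$ and the $|L'| - 2\mu n$ slack in $U_2 \setminus W_2$ vastly exceed $|H^*|$, so the residual $Y_i$'s remain of size $\Omega(n)$ after accounting for the reserved vertices and for requiring each designated parent's BFS-parent in $V_2$ to land in the common neighborhood of its target in $H^*$ (a restriction of at most $\mu n$ per reservation). With $W_2 \subseteq \bigcup_i N_H(\psi(p_i))$ secured by the reservation, Hall's condition holds for all $I$, and Lemma~\ref{lemma:hallmatching} produces the required matching, completing the embedding of $T$.
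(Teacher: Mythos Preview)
Your Hall verification for $W_2$ has a genuine gap. You claim the condition ``reduces to the single statement $W_2 \subseteq \bigcup_i N_H(\psi(p_i))$'', but Hall's condition must hold for every subset $I\subseteq[\ell]$, not just $I=[\ell]$: for each $I$ with $\sum_{i\in I}d_i>|L'|-\mu n$ you need $W_2\subseteq\bigcup_{i\in I}N_H(\psi(p_i))$. Your hitting-set reservation only guarantees that each $w\in W_2$ is adjacent to at least one $\psi(p_{i(w)})$, and since $d_{i(w)}$ may be as small as $1$, an adversarial $I$ can drop all such indices while keeping $\sum_{i\notin I}d_i<\mu n$. For instance, if $w_1,w_2\in W_2$ are both covered only by $\psi(p_1)$ with $d_1=1$, then $I=[\ell]\setminus\{1\}$ gives $|N_H(\psi(I))\cap W'|\le|W'|-2$ while $\sum_{i\in I}d_i=|L'|-1$; when $|U_2|=|V_2|$ (so $|W'|=|L'|$), Hall fails. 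To repair this you would need $\sum_{i:\,w\in N_H(\psi(p_i))}d_i\ge\mu n$ for every $w\in W_2$, which a single hitting-set vertex per $w$ cannot supply.

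The paper resolves the $W_2$ issue by a different device: it takes a uniformly random reservoir $W_1\subseteq U_1$ of size $n/20$ and forces \emph{all} parents $p_i$ to be embedded inside $W_1$ (non-parents in $V_1$ go to $U_1\setminus W_1$). Since every $v\in U_2$ has $d(v,U_1)\ge\xi|U_1|$, Chernoff gives $d(v,W_1)\ge\xi n/50$ for every $v\in U_2$ simultaneously, so the conditional bound $\mathbb{P}(v\in N_H(\psi(p_i))\mid\text{history})\ge\xi/5$ holds uniformly for $v\in W_2$ just as for $v\in U_2\setminus W_2$. The same Azuma argument then yields $\sum_{i:\,w\in N_H(\psi(p_i))}d_i\ge\xi|L|/10$ for every $w$ in the leftover set, and the Hall verification goes through with no separate treatment of $W_2$. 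Your randomised step places parents in generic positions in $U_1$, which is why you obtain concentration only against the high-degree vertices of $U_2$; routing all parents through a random reservoir is precisely what extends the concentration to $W_2$ and replaces the hitting-set patch.
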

\begin{proof}
If $|U_2|\geq|V_2|+2\mu n$, then we can simply embed $T$ greedily, so assume otherwise. Let $P=\{p_1,\ldots,p_\ell\}$ be the set of parents of leaves in $L$, and let $d_i=d(p_i,L)$ for every $i\in[\ell]$. Note that $|P|\leq|L|=\xi n/10^5$. Order the vertices of $T-L$ as $t_1,t_2,\ldots,t_m$, such that $t_1\not\in P\cup L$, and for every $2\leq i\leq m$, $t_i$ has a unique neighbour to its left in this ordering. By relabelling if necessary, we may assume that $p_1,\ldots,p_\ell$ appear in this order.

Let $W_1\subset U_1$ be a uniformly random set of size $n/20$. Then, by Lemma~\ref{lemma:chernoff}, with high probability every $v\in U_2$ satisfies $d(v,W_1)\geq\xi n/50$.

Consider the following random embedding $\psi$ of $T-L$ into $H[U_1,U_2\setminus W_2]$. Pick $\psi(t_1)\in U_1\setminus W_1$ uniformly at random if $t_1\in V_1$, and pick $\psi(t_1)\in U_2\setminus W_2$ uniformly at random if $t_1\in V_2$. Then, for each $2\leq i\leq m$, let $j_i\in[i-1]$ be the unique index such that $t_{j_i}$ is a neighbour of $t_i$. If $t_i\not\in P$, pick $\psi(t_i)$ uniformly at random from the set $N(\psi(t_{j_i}))\setminus(W_1\cup W_2\cup\{\psi(t_1),\ldots,\psi(t_{i-1})\})$, which has size at least $|U_1|-|V_1|-|W_1|-\mu n>0$ if $t_i\in V_1$, and at least $|L|-|W_2|-\mu n>0$ if $t_i\in V_2$. If $t_i\in P$, then $t_i\in V_1$, and pick $\psi(t_i)$ uniformly at random from the set $Y_i=N(\psi(t_{j_i}),W_1)\setminus\{\psi(t_1),\ldots,\psi(t_{i-1})\}$, which has size at least $|W_1|-|P|-\mu n>0$. In particular, this procedure always succeeds in producing a random embedding of $T-L$.

Now, note that for every $v\in U_2$ and every $i\in[\ell]$, if $p_i=t_{i'}$ and $Y_{i'}$ is defined as above, then, using $d(v,W_1)\geq\xi n/50$ and that at most $|P|\leq\xi n/10^3$ vertices have been embedded into $W_1$, we have
\[\mathbb{P}(v\in N(\psi(p_i))\mid\psi(t_1),\ldots,\psi(t_{i'-1}))=\frac{|Y_{i'}\cap N(w)|}{|Y_{i'}|}\geq\frac{\xi n/50-\xi n/10^5-\mu n}{n/20}\geq\frac\xi{5}.\]
Therefore, by Lemma~\ref{lemma:azuma},
\begin{align*}
\mathbb{P}\left(\sum_{i:w\in N(\psi(p_i))}d_i<\xi|L|/10\right)&\leq\exp\left(-\frac{(\xi|L|/10)^2}{2\frac{|L|}{cn/\log n}(cn/\log n)^2}\right)\\
&=\exp\left(-\frac{\xi^2|L|\log n}{200cn}\right)\leq\exp\left(-\frac{\xi^3\log n}{10^8c}\right)=o(n^{-1}),
\end{align*}
provided that $c<\xi^3/10^8$. By a union bound, with high probability, $\sum_{i:w\in N(\psi(p_i))}d_i\geq\xi|L|/10$ for every $w\in V(H)$. Fix an embedding $\psi$ with this property.

Let $W\supset W_2$ be the set of remaining vertices in $U_2$, it suffices now to embed $L$ appropriately into $W$, noting that $|W|\geq|L|$. To do this, we verify Hall's condition. For any $\varnothing\not=I\subset[\ell]$, if $\sum_{i\in I}d_i\leq|L|-\mu n$, then $|N(\psi(\{p_i:i\in I\}),W)|\geq|W|-\mu n\geq\sum_{i\in I}d_i$ as $d(v,U_2)\geq|U_2|-\mu n$ for every $v\in U_1$. If $I\subset[\ell]$ satisfies $\sum_{i\in I}d_i>|L|-\mu n$, then $N(\psi(\{p_i:i\in I\}),W)=W$ and $|N(\psi(\{p_i:i\in I\}),W)|=|W|\geq|L|\geq\sum_{i\in I}d_i$, because any $w\in W\setminus N(\psi(\{p_i:i\in I\}),W)$ would satisfy $\sum_{i:w\in N(\psi(p_i))}d_i\leq \mu n<\xi|L|/10$, a contradiction. Hence, Hall's condition holds and we can use Lemma~\ref{lemma:hallmatching} to finish the embedding. Note that only vertices in $L$ are embedded in to $W_2$ in this embedding. 
\end{proof}

Finally, by combining Lemma~\ref{lemma:paths-leaf}, Lemma~\ref{lemma:bp:bare-paths} and Lemma~\ref{lemma:bipartite:leaves}, the following lemma embeds trees into almost complete bipartite graphs. 
\begin{lemma}\label{lemma:bipartite:anyleaves}
For every $\xi\leq1/100$, there exists $c=c_\xi>0$ such that the following holds. 

Let $1/n\ll\mu\ll\xi$. Let $H$ be a bipartite graph with bipartition classes $U_1$ and $U_2$ such that $d(v,U_2)\ge|U_2|-\mu n$ for every $v\in U_1$, $d(v,U_1)\geq\xi|U_1|$ for every $v\in U_2$, and $d(v,U_1)\geq|U_1|-\mu n$ for all but a set $W_2$ of at most $\mu n$ vertices in $U_2$. 

Let $T$ be an $n$-vertex tree with $\Delta(T)\leq cn/\log n$ and bipartition classes $V_1$ and $V_2$, such that $n/4\leq|V_1|+n/5\leq|U_1|$ and $n/100\leq|V_2|\leq|U_2|$. Then, there is an embedding of $T$ in $H$ with $V_i$ embedded into $U_i$ for each $i\in[2]$.
\end{lemma}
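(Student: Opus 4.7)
The strategy is to apply Lemma~\ref{lemma:paths-leaf} to $T$ and then invoke either Lemma~\ref{lemma:bipartite:leaves} or Lemma~\ref{lemma:bp:bare-paths} on the structure it delivers. Concretely, I would set $\ell=\xi n/10^{4}$ and apply Lemma~\ref{lemma:paths-leaf} with $k=4$, so that $T$ either has at least $\ell$ leaves or a collection of at least $n/5-2\ell\ge n/6$ vertex-disjoint bare paths of length $4$. In each of the two regimes, pigeonhole on the bipartition $V_1\cup V_2$ of $T$ gives that at least half of the structural pieces (leaves, or pairs of bare-path endpoints) lie in a single class.

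If at least $\ell/2$ leaves sit in $V_2$, the hypotheses of Lemma~\ref{lemma:bipartite:leaves} are satisfied (note that the slack $|V_1|+n/5\le|U_1|$ here is even more generous than the $n/10$ slack required there), so picking a subset of $\xi n/10^{5}$ such leaves yields the embedding. Similarly, if at least $n/12$ of the bare paths of length $4$ have both endpoints in $V_2$, then since $n/12\gg 10\mu n$, Lemma~\ref{lemma:bp:bare-paths} applies and produces the embedding with $W_2$ automatically covered by the central $V_2$-vertices of those paths. These are the easy subcases.

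The remaining subcases are when most leaves lie in $V_1$ or when most bare paths of length $4$ have both endpoints in $V_1$. The key observation in either case is that $V_2$ then contains a linear-size set $S$ of vertices that are flexible in the following sense: either (i) each $s\in S$ is the $V_2$-parent of a $V_1$-leaf which can be left to the very end of the embedding, or (ii) each $s\in S$ is one of the two internal $V_2$-vertices of a bare path of length $4$ with endpoints in $V_1$, and so has degree exactly $2$ in $T$. In both situations the slack $|V_1|+n/5\le|U_1|$ allows us to reserve a small buffer $W_1\subset U_1$, and then repeat the random breadth-first embedding strategy of Lemma~\ref{lemma:bipartite:leaves}: embed $T$ minus the flexible gadgets (the $V_1$-leaves, or the three internal vertices of each bare path) greedily in random BFS order using the high minimum degree in $H$ and the buffer $W_1$; then apply Lemma~\ref{lemma:azuma} to show that, with high probability, for every $w\in W_2$ the set of candidate gadgets whose anchors all fall in $N(w,U_1)$ has size at least a constant times $|S|$; and finally apply Lemma~\ref{lemma:hallmatching} to complete each gadget so that its flexible $V_2$-vertex lies on the assigned $w\in W_2$, thereby covering $W_2$.

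The main obstacle is this last step in the two $V_1$-heavy subcases: verifying Hall's condition for the simultaneous assignment of gadget completions that must cover $W_2$. This mirrors the Azuma-plus-Hall argument in the proof of Lemma~\ref{lemma:bipartite:leaves}, with the covering role previously played by leaves in $V_2$ now taken over by the low-degree interior $V_2$-vertices of the reserved gadgets; the hypothesis $\Delta(T)\le cn/\log n$ ensures that for a small enough $c=c_\xi$ the failure probability is $o(n^{-1})$, so a union bound over $w\in W_2$ finishes the argument.
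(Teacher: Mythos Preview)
Your easy subcases are correct and match the paper. The gap is in subcase~(i). You claim that the $V_2$-parent $s$ of a $V_1$-leaf ``can be left to the very end of the embedding'', but $s$ may have degree up to $\Delta(T)$, and removing it disconnects $T$ into many components; so $s$ cannot simply be deferred. If instead you remove only the $V_1$-leaf and embed $s$ during the BFS, then $s$'s image in $U_2$ is fixed and cannot be reassigned to a prescribed $w\in W_2$. Either way, there is no ``flexible $V_2$-vertex'' in this gadget to feed into the Hall step, and the argument for covering $W_2$ breaks down. (Your subcase~(ii) is more plausible but also underspecified: after assigning gadgets to $W_2$ you still need a matching to complete all the remaining bare paths simultaneously, which you do not address.)

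The paper sidesteps both hard subcases with one reduction: set $T'=T-L_1$, where $L_1$ is the set of all leaves of $T$ lying in $V_1$. Every $V_1$-vertex that survives in $T'$ keeps all its $V_2$-neighbours and hence has degree at least~$2$, so every leaf of $T'$ lies in $V_2$. Now apply Lemma~\ref{lemma:paths-leaf} to $T'$ with $k=5$: either $T'$ has many leaves (all in $V_2$, so Lemma~\ref{lemma:bipartite:leaves} applies directly), or $T'$ has many bare paths of length~$5$, each containing a length-$4$ bare subpath with both endpoints in $V_2$ (so Lemma~\ref{lemma:bp:bare-paths} applies directly). After embedding $T'$ into $H[U_1\setminus Z,\,U_2]$ for a random buffer $Z\subset U_1$ of size $n/10$, the removed $V_1$-leaves are placed greedily: those whose parent landed in $W_2$ go into $Z$ (using $d(w,Z)\ge\xi n/20$), and the rest go into $U_1\setminus Z$. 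An averaging argument over the choice of bare paths (or of the leaf set~$X$) ensures that the number of $V_1$-leaves whose parent is forced into $W_2$ is small enough to fit in~$Z$. This is both shorter and avoids reproving variants of Lemmas~\ref{lemma:bipartite:leaves} and~\ref{lemma:bp:bare-paths}.
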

\begin{proof}
Let $L_1$ be the set of leaves of $T$ in $V_1$, and let $T'=T-L_1$. Note that $|T'|\geq|V_2|\geq n/100$.

Let $Z\subset U_1$ be a uniformly random subset of size $n/10$, and note that $|U_1|-|Z|\geq|V_1|+n/10$. Then, by Lemma~\ref{lemma:chernoff}, with high probability every $v\in U_2$ satisfies $d(v,Z)\geq\xi n/20$ and $d(v,U_1\setminus Z)\geq\xi|U_1\setminus Z|/2\geq\xi n/20$. Fix a realisation of $Z$ with this property.

If $T'$ contains at least $n/10^3$ vertex-disjoint bare paths of length 5, then it also contains at least $n/10^3$ vertex-disjoint bare paths of length 4 with all of their endpoints in $V_2$. By averaging, there is a collection of $100\mu n$ such bare paths, such that the set $X$ of their central vertices satisfies $N(X,L_1)\leq10^5\mu n^2/n\leq\xi n/20$.
Then, we can find a copy of $T'$ in $H[U_1\setminus Z,U_2]$ by Lemma~\ref{lemma:bp:bare-paths} with only vertices in $X$ embedded into $W_2$. Formally, this is done by letting $T''$ be the tree obtained by attaching $|L_1|$ new leaves to the end points of the $100\mu n$ chosen bare paths in $T'$, then apply Lemma~\ref{lemma:bp:bare-paths} to $T''$. To finish, note that $d(v,Z)\geq\xi n/20$ for every $v\in U_2$, and $d(v,U_1\setminus Z)\geq|U_1\setminus Z|-\mu n\geq|V_1|$ for every $v\in U_2\setminus W_2$, so we can greedily embed the leaves in $L_1$ adjacent to vertices in $X$ into $Z$, and the leaves in $L_1$ not adjacent to $X$ into $U_1\setminus Z$.

Otherwise, $T'$ contains at least $n/100$ leaves by Lemma~\ref{lemma:paths-leaf}, which must all belong to $V_2$. Again by averaging, we can find a set $X$ of $\xi n/10^4$ such leaves, such that $|N(X,L_1)|\leq\xi n^2/10^2n\leq\xi n/20$. 
By Lemma~\ref{lemma:bipartite:leaves}, formally applied to the tree $T''$ obtained by attaching $|L_1|$ leaves to vertices in $X$ while preserving $\Delta(T'')\leq cn/\log n$, we can find a copy of $T'$ in $H[U_1\setminus Z,U_2]$, with only vertices in $X$ embedded into $W_2$. Then, like above, we can finish the embedding of $T$ by greedily embedding leaves in $L_1$ into $U_1\setminus Z$ or $Z$, depending on whether they are adjacent to $X$.
\end{proof}

\subsection{Type 1}\label{sec:type1}
\begin{lemma}[\textbf{Type 1}]\label{lemma:type1}
There exists a constant $c>0$ such that the following holds. Let $1/n\leq1/\nu\ll\mu\leq1$. Let $T$ be an $n$-vertex tree with $\Delta(T)\leq cn/\log n$ and bipartition class sizes $t_1\geq t_2\geq(t_1-2)/3$. Let $\cT$ be a $\nu$-vertex tree with $\Delta(\cT)\leq c\nu/\log\nu$ and bipartition class sizes $\tau_1\geq\tau_2\geq(t_1-2)/2$. Suppose also that $\nu\geq t_1\geq\tau_1$, $\tau_2\geq t_2$ and $\tau_2+t_2\geq t_1-1$.

Let $G$ be a red/blue coloured complete graph with $\max\{n+\tau_2,2t_1\}-1$ vertices that is Type 1 $(\mu,T,\cT)$-extremal. That is, $G$ contains disjoint $U_1,U_2\subset V(G)$ such that $|U_1|\geq(1-\mu)n$, $|U_2|\geq(1-\mu)\tau_2$, $\blue{d}(v,U_1)\leq\mu n$ for every $v\in U_1$, and $\red{d}(v,U_{3-i})\leq\mu n$ for every $i\in[2]$ and $v\in U_i$. Then, $G$ contains either a red copy of $T$ or a blue copy of $\cT$.
\end{lemma}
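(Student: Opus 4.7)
The plan is a dichotomy between embedding a red $T$ and embedding a blue $\cT$, based on a single cleanup threshold $\eta$ chosen as a fixed small absolute constant with $\mu\ll\eta\ll 1$. Define
\[R^*:=\{v\in V(G):\blue{d}(v,U_1)\le\eta|U_1|\}\qquad\text{and}\qquad \tilde B:=V(G)\setminus R^*,\]
so that $U_1\subseteq R^*$ (using $\blue{d}(v,U_1)\le\mu n\le\eta|U_1|$ for $v\in U_1$) and $U_2\subseteq\tilde B$ (using $\blue{d}(v,U_1)\ge|U_1|-\mu n>\eta|U_1|$ for $v\in U_2$); moreover every $v\in\tilde B$ has $\blue{d}(v,U_1)>\eta|U_1|$. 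A crucial preliminary observation is that the hypothesis $t_2+\tau_2\ge t_1-1$ forces $n+\tau_2\ge 2t_1-1$, hence $N=|V(G)|\le n+\tau_2$, whence $|V(G)\setminus(U_1\cup U_2)|\le\mu(n+\tau_2)\le 2\mu n$.

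In the first case $|R^*|\ge n$, I pick $\hat R\subseteq R^*$ with $|\hat R|=n$, taking $\hat R\supseteq U_1$ if $|U_1|\le n$ and $\hat R\subseteq U_1$ otherwise. In either sub-case every $v\in\hat R$ satisfies $\red{d}(v,\hat R)\ge\red{d}(v,U_1\cap\hat R)\ge|\hat R|-2\eta n$, so $G_{\textup{red}}[\hat R]$ is almost complete. Then, applying Lemma~\ref{lemma:paths-leaf} with $\ell=n/1000$, either $T$ has at least $\ell$ leaves, in which case Lemma~\ref{lemma:leaves} (with a fixed $\xi=1/100$ and any pivot $u\in\hat R$) yields a red copy of $T$ in $G[\hat R]$, or $T$ has at least $n/6$ vertex-disjoint bare paths of length $4$, in which case Lemma~\ref{lemma:bare-paths} (with $\mu_{\textup{lem}}=2\eta$) does.

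In the second case $|R^*|\le n-1$, we have $|\tilde B|=N-|R^*|\ge N-n+1\ge\tau_2$, and I embed $\cT$ in the blue bipartite graph between $U_1$ and $B^*:=U_2\cup S$, where $S\subseteq\tilde B\setminus U_2$ is chosen so that $|B^*|\ge\tau_2$ and $|S|\le\mu\tau_2\le\mu n$ (with $S=\varnothing$ if $|U_2|\ge\tau_2$). I then apply Lemma~\ref{lemma:bipartite:anyleaves} with its parameter $\xi$ equal to $\eta$: every $v\in U_1$ satisfies $\blue{d}(v,B^*)\ge|B^*|-\red{d}(v,U_2)-|S|\ge|B^*|-2\mu n$; every $v\in B^*\subseteq\tilde B$ satisfies $\blue{d}(v,U_1)>\eta|U_1|$, giving the min-degree condition; and the set of "bad" vertices of the small side (those with $\blue{d}(v,U_1)<|U_1|-\mu n$) is contained in $S$, of size at most $\mu n$. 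The size conditions $\tau_1+\nu/5\le|U_1|$ and $\nu/100\le\tau_2$ follow from $|U_1|\ge(1-\mu)n$, the derived inequality $t_1\le(3n+2)/4$ (which comes from $t_2\ge(t_1-2)/3$), and $\tau_2\ge(t_1-2)/2$.

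The principal obstacle is calibrating $\eta$, which plays a dual role: it must be small enough that the error $2\eta n$ in the red almost-complete graph fits within the tolerance of Lemma~\ref{lemma:leaves} and Lemma~\ref{lemma:bare-paths}, while simultaneously serving as the fixed min-degree fraction in Lemma~\ref{lemma:bipartite:anyleaves} (whose constant $c_\eta$ must not deteriorate as $\mu\to 0$). Taking $\eta$ as a single absolute constant reconciles these demands and yields a fixed $c>0$ for the Type 1 lemma. The structural bound $N\le n+\tau_2$, a consequence of $t_2+\tau_2\ge t_1-1$, is what keeps $|V(G)\setminus(U_1\cup U_2)|\le 2\mu n$ and prevents a troublesome third regime of "middle" vertices (those with intermediate blue density to $U_1$) from arising.
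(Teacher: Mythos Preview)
Your proof is correct and follows essentially the same approach as the paper. The paper defines $U_1^+=\{v:\red{d}(v,U_1)\ge|U_1|-\xi n\}$ with $\xi=1/100$ (equivalent up to threshold choice to your $R^*$), splits on whether $|U_1^+|\ge n$, and in the two cases applies exactly the same embedding lemmas you do: Lemmas~\ref{lemma:bare-paths}/\ref{lemma:leaves} for a red $T$ in the almost-red-complete side, and Lemma~\ref{lemma:bipartite:anyleaves} for a blue $\cT$ between $U_1$ and the complementary set. Your explicit observation $N\le n+\tau_2$ and your care in separating the threshold $\eta$ from the lemma parameters are cosmetic refinements of the same argument.
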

\begin{proof}
Let $\xi=1/100$, and let $c$ be smaller than the constants given by applying Lemma~\ref{lemma:leaves} and Lemma~\ref{lemma:bipartite:anyleaves} with $\xi$. Let $U_1^+=\{v\in V(G):\red{d}(v,U_1)\geq|U_1|-\xi n\}$ and $U_2^+=V(G)\setminus U_1^+$. Then, $\blue{d}(v,U_1)\geq\xi n$ for every $v\in U_2^+$. Note also that $U_1\subset U_1^+$ and $U_2\subset U_2^+$. Let $k=|U_1^+|-n$, so $|k|\leq2\mu n$ and $|U_2^+|\geq\tau_2-k-1$.

First suppose that $k\geq0$, then $|U_1^+|\geq n$ and $\delta(\red{G}[U_1^+])\geq|U_1^+|-2\xi n$. If $T$ contains at least $n/100$ vertex-disjoint bare paths of length 4, then we can find a red copy of $T$ in $G[U_1^+]$ by Lemma~\ref{lemma:bare-paths}. Otherwise, $T$ contains at least $n/20$ leaves by Lemma~\ref{lemma:paths-leaf}. Then, we can find a red copy of $T$ in $G[U_1^+]$ by Lemma~\ref{lemma:leaves}.

Now suppose $k<0$, so $|U_2^+|\geq\tau_2$. Note that $\nu\geq t_1\geq n/2$ and $|U_1|\geq(1-\mu)(\tau_1+t_2)\geq\tau_1+n/5$ from assumption. Thus, we can find a blue copy of $\cT$ in $G[U_1,U_2^+]$ by Lemma~\ref{lemma:bipartite:anyleaves}.
\end{proof}

\subsection{Type 2}\label{sec:type2}
\begin{lemma}[\textbf{Type 2}]\label{lemma:type2}
There exists a constant $c>0$ such that the following holds. Let $1/n\leq1/\nu\ll\mu\leq1$. Let $T$ be an $n$-vertex tree with $\Delta(T)\leq cn/\log n$ and bipartition class sizes $t_1\geq t_2\geq(t_1-2)/3$. Let $\cT$ be a $\nu$-vertex tree with $\Delta(\cT)\leq c\nu/\log\nu$ and bipartition class sizes $\tau_1\geq\tau_2\geq(t_1-2)/2$. Suppose also that $\nu\geq(1-\mu)(t_1+\tau_2)$, $\tau_2\geq t_2$, and $\tau_2+t_2\geq t_1-1$.

Let $G$ be a red/blue coloured complete graph with $\max\{n+\tau_2,2t_1\}-1$ vertices which is Type 2 $(\mu,T,\cT)$-extremal. That is, $G$ contains disjoint $U_1,U_2\subset V(G)$ such that $|U_1|\geq(1-\mu)\nu$, $|U_2|\geq(1-\mu)t_2$, $\red{d}(v,U_1)\leq\mu n$ for every $v\in U_1$, and $\blue{d}(v,U_{3-i})\leq\mu n$ for every $i\in[2]$ and $v\in U_i$. Then, $G$ contains either a red copy of $T$ or a blue copy of $\cT$.
\end{lemma}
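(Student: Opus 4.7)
The approach closely mirrors the proof of Lemma~\ref{lemma:type1}, but with the roles of red/blue and $T/\cT$ swapped to reflect the reversed extremal structure. In Type 2, the blue edges concentrate within $U_1$ and the red edges concentrate between $U_1$ and $U_2$, so the two routes to a monochromatic tree are: embed a blue copy of $\cT$ inside a slight enlargement of $U_1$, or embed a red copy of $T$ bipartitely between $U_1$ and an enlargement of $U_2$.

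I would fix $\xi=1/100$, choose $c$ small enough that Lemma~\ref{lemma:leaves}, Lemma~\ref{lemma:bare-paths}, and Lemma~\ref{lemma:bipartite:anyleaves} all apply with parameter $\xi$, and define $U_1^+=\{v\in V(G):\blue{d}(v,U_1)\geq|U_1|-\xi n\}$ and $U_2^+=V(G)\setminus U_1^+$. A short check gives $U_1\subseteq U_1^+$, $U_2\subseteq U_2^+$, and $\red{d}(v,U_1)>\xi n$ for every $v\in U_2^+$. The central bookkeeping step is to show $|k|\leq O(\mu n)$, where $k=|U_1^+|-\nu$: the lower bound $k\geq-\mu n$ is immediate from $|U_1|\geq(1-\mu)\nu$, while for the upper bound the hypothesis $t_2+\tau_2\geq t_1-1$ forces $2t_1\leq n+\tau_2+1$, so $N\leq n+\tau_2$, and thus $|U_1^+|\leq N-(1-\mu)t_2\leq t_1+\tau_2+\mu n\leq\nu+O(\mu n)$ using $\nu\geq(1-\mu)(t_1+\tau_2)$.

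In the case $k\geq 0$, I have $|U_1^+|\geq\nu$ and $|U_1^+\setminus U_1|\leq k+(\nu-|U_1|)=O(\mu n)$, so the blue graph on $U_1^+$ satisfies $\delta\geq|U_1^+|-2\xi\nu$ because every $v\in U_1^+$ has $\blue{d}(v,U_1^+)\geq\blue{d}(v,U_1)\geq|U_1|-\xi n$. Dichotomising $\cT$ via Lemma~\ref{lemma:paths-leaf}: if $\cT$ has many vertex-disjoint bare paths of length 4 I apply Lemma~\ref{lemma:bare-paths}, and otherwise $\cT$ has many leaves and I apply Lemma~\ref{lemma:leaves}, obtaining a blue copy of $\cT$ in either case.

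In the case $k<0$, I have $|U_2^+|>N-\nu\geq n+\tau_2-1-(t_1+\tau_2)=t_2-1$, so $|U_2^+|\geq t_2$. The red bipartite graph $\red{G}[U_1,U_2^+]$ is almost complete: for $v\in U_1$, $\red{d}(v,U_2^+)\geq|U_2|-\mu n\geq|U_2^+|-O(\mu n)$; every $v\in U_2^+$ has $\red{d}(v,U_1)\geq\xi n\geq\xi|U_1|$; and only the $O(\mu n)$ vertices in $U_2^+\setminus U_2$ can fall below the stronger threshold $|U_1|-\mu n$. Since $t_2\geq(n-2)/4$ yields $|U_1|\geq(1-\mu)n\geq t_1+n/5$ and $t_2\geq n/100$, applying Lemma~\ref{lemma:bipartite:anyleaves} to $T$ with bipartition $(t_1,t_2)$ mapped to $(U_1,U_2^+)$ yields a red copy of $T$. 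The main obstacle, exactly as in the Type 1 argument, is securing the bookkeeping $|U_1^+|\in[\nu-O(\mu n),\nu+O(\mu n)]$; this hinges on simultaneously using $t_2+\tau_2\geq t_1-1$ (to bound $N$ by $n+\tau_2$) and $\nu\geq(1-\mu)(t_1+\tau_2)$ (to compare $t_1+\tau_2$ with $\nu$), and without either, $|U_1^+|$ could overshoot $\nu$ by a macroscopic amount, breaking both sub-cases.
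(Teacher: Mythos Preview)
Your proposal is correct and follows essentially the same route as the paper: define $U_1^+$ via a blue-degree threshold into $U_1$, set $k=|U_1^+|-\nu$, and split into the cases $k\geq 0$ (blue copy of $\cT$ in $U_1^+$ via Lemma~\ref{lemma:paths-leaf} and then Lemma~\ref{lemma:bare-paths} or Lemma~\ref{lemma:leaves}) and $k<0$ (red copy of $T$ in $G[U_1,U_2^+]$ via Lemma~\ref{lemma:bipartite:anyleaves}). One tiny slip: you write $|U_1|\geq(1-\mu)n$, but the hypothesis only gives $|U_1|\geq(1-\mu)\nu\geq(1-\mu)^2(t_1+\tau_2)\geq(1-\mu)^2 n$; this weaker bound still yields $|U_1|\geq t_1+n/5$ since $t_1\leq(3n+2)/4$, so the argument is unaffected.
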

\begin{proof}
Let $\xi=1/100$, and let $c$ be smaller than the constants given by applying Lemma~\ref{lemma:leaves} and Lemma~\ref{lemma:bipartite:anyleaves} with $\xi$. Let $U_1^+=\{v\in V(G):\blue{d}(v,U_1)\geq|U_1|-\xi n\}$ and $U_2^+=V(G)\setminus U_1^+$. Then, $\red{d}(v,U_1)\geq\xi n$ for every $v\in U_2^+$. Note also that $U_1\subset U_1^+$ and $U_2\subset U_2^+$. Let $k=|U_1^+|-\nu$, so $|k|\leq10\mu n$ and $|U_2^+|\geq t_2-k-1$.

First suppose that $k\geq0$, then $|U_1^+|\geq\nu$ and $\delta(\blue{G}[U_1^+])\geq|U_1^+|-2\xi n$. If $\cT$ contains at least $\nu/100$ vertex-disjoint bare paths of length 4, then we can find a blue copy of $\cT$ in $G[U_1^+]$ by Lemma~\ref{lemma:bare-paths}. Otherwise, $\cT$ contains at least $\nu/20$ leaves by Lemma~\ref{lemma:paths-leaf}. Then, we can find a blue copy of $\cT$ in $G[U_1^+]$ by Lemma~\ref{lemma:leaves}.

Now suppose $k<0$, so $|U_2^+|\geq t_2$. Note that $|U_1|\geq(1-\mu)^2(t_1+\tau_2)\geq t_1+n/5$ from assumption. Thus, we can find a red copy of $T$ in $G[U_1,U_2^+]$ by Lemma~\ref{lemma:bipartite:anyleaves}.
\end{proof}

\subsection{Type 3}\label{sec:type3}
\begin{lemma}[\textbf{Type 3}]\label{lemma:type3}
There exists a constant $c>0$ such that the following holds. Let $1/n\leq1/\nu\ll\mu\leq1$. Let $T$ be an $n$-vertex tree with $\Delta(T)\leq cn/\log n$ and bipartition class sizes $t_1\geq t_2\geq(t_1-2)/3$. Let $\cT$ be a $\nu$-vertex tree with $\Delta(\cT)\leq c\nu/\log\nu$ and bipartition class sizes $\tau_1\geq\tau_2\geq(t_1-2)/2$. Suppose also that $\nu\geq t_1\geq(1-\mu)(t_2+\tau_2)\geq(1-\mu)(t_1-1)$ and $\tau_2\geq t_2$.

Let $G$ be a red/blue coloured complete graph with $\max\{n+\tau_2,2t_1\}-1$ vertices which is Type 3 $(\mu,T,\cT)$-extremal. That is, $G$ contains disjoint $U_1,U_2\subset V(G)$ such that $|U_1|,|U_2|\geq(1-\mu)t_1$, $\red{d}(v,U_i)\leq\mu n$ for every $i\in[2]$ and every $v\in U_i$, and $\blue{d}(v,U_{3-i})\leq\mu n$ for every $i\in[2]$ and $v\in U_i$. Then, $G$ contains either a red copy of $T$ or a blue copy of $\cT$.
\end{lemma}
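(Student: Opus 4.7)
I plan to follow the same high-level strategy as in Lemmas~\ref{lemma:type1} and~\ref{lemma:type2}: blue-extend each of the blue-like cliques $U_1$ and $U_2$, try to fit a blue copy of $\mathcal{T}$ inside one such extension, and otherwise use the bipartite red structure between their red-extensions to embed a red copy of $T$. Fix $\xi=1/100$ and choose $c$ below the thresholds required by Lemma~\ref{lemma:leaves}, Lemma~\ref{lemma:bare-paths}, Lemma~\ref{lemma:bipartite:leaves}, and Lemma~\ref{lemma:bp:bare-paths}. Define $U_i^+=\{v\in V(G):\blue{d}(v,U_i)\ge|U_i|-\xi n\}$ and $V_i^{\textup{red}}=\{v\in V(G):\red{d}(v,U_{3-i})\ge|U_{3-i}|-\xi n\}$. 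Standard checks yield $U_i\subset U_i^+\cap V_i^{\textup{red}}$, $U_i^+\cap U_{3-i}=V_i^{\textup{red}}\cap U_{3-i}=\varnothing$, and, combining $t_1\ge(1-\mu)(t_2+\tau_2)$ with $\tau_2\ge t_2$, $|V(G)|\le 2t_1+O(\mu n)$, $t_2\le t_1/2+O(\mu n)$, $\nu\le n\le 3t_1/2+O(\mu n)$, and $|W|:=|V(G)\setminus(U_1\cup U_2)|\le O(\mu n)$. By Lemma~\ref{lemma:leaves:V1}, $T$ contains at least $t_1-t_2+1$ leaves in its larger bipartition class $V_1^T$.

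If $|U_i^+|\ge\nu$ for some $i\in[2]$, then $\blue{G}[U_i^+]$ has at least $\nu$ vertices and minimum degree $\ge|U_i^+|-O(\mu n)$ (since $|U_i^+\setminus U_i|\le|W|$). Applying Lemma~\ref{lemma:paths-leaf} to $\mathcal{T}$ with $k=4$ produces either at least $\nu/100$ vertex-disjoint bare paths of length $4$ or at least $\nu/20$ leaves, and then Lemma~\ref{lemma:bare-paths} or Lemma~\ref{lemma:leaves} embeds a blue copy of $\mathcal{T}$ inside $G[U_i^+]$.

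Otherwise $|U_i^+|<\nu$ for both $i$. Assigning vertices of $V_1^{\textup{red}}\cap V_2^{\textup{red}}$ to a single side produces disjoint $A'\supset U_1$ and $B'\supset U_2$ with $A'\cup B'=V_1^{\textup{red}}\cup V_2^{\textup{red}}$, and $G[A',B']$ is red-almost-complete bipartite: each vertex has red-degree to the opposite side at least its size minus $O(\mu n)$. Take WLOG $|A'|\ge|B'|$. If the assignment can be chosen so that $|A'|\ge t_1$, I apply Lemma~\ref{lemma:bipartite:leaves} with $V_1^T\to A'$ and $V_2^T\to B'$ in the orientation where $A'$ plays the role of the tight side $U_2$ and $B'$ the role of the slack side $U_1$; the required leaves in Lemma's $V_2$ become leaves in $V_1^T$, supplied by Lemma~\ref{lemma:leaves:V1}, while the slack condition $|B'|\ge t_2+n/10$ holds from $|B'|\ge(1-\mu)t_1$ and $t_2\le t_1/2+O(\mu n)$. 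This produces a red copy of $T$.

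The main obstacle is the subcase where $|V_1^{\textup{red}}|,|V_2^{\textup{red}}|<t_1$ so that no assignment gives $\max(|A'|,|B'|)\ge t_1$. A counting argument combined with $|V(G)|\ge 2t_1-1$ forces $V(G)\setminus(V_1^{\textup{red}}\cup V_2^{\textup{red}})$ to be nonempty; its vertices are substantially blue to both $U_1$ and $U_2$, and the shortfall $2t_1-|V_1^{\textup{red}}|-|V_2^{\textup{red}}|$ controls how many such vertices must exist. Using $\nu\le 3t_1/2+O(\mu n)$, I would decompose $\mathcal{T}$ via Lemma~\ref{lemma:splittree} (iterated with Lemma~\ref{lemma:bipartitesplittree} or Lemma~\ref{lemma:splittreewith2vertices} when a single cut is not balanced enough) into subtrees $T_1,T_2$ sharing one or two cut vertices, embed those cuts into the blue-to-both pool in $W$, and then embed each remaining piece into $G[U_i]$ in blue using Lemma~\ref{lemma:leaves} or Lemma~\ref{lemma:bare-paths}. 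Achieving the size balance $|T_i|-\text{cuts}\le|U_i|$ while simultaneously ensuring that the cut-vertex images have enough blue neighbourhood into both $U_1$ and $U_2$ is the most delicate step of the whole argument.
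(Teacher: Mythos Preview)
Your overall plan is in the right spirit, but the paper organises the case analysis differently, and one idea you are missing turns what you call ``the most delicate step'' into something routine.

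First, a structural difference. The paper defines its extensions by \emph{red} degree to the \emph{other} side, with a loose threshold: $U_i^+$ is maximal with $U_i\subset U_i^+$ and $\red{d}(v,U_{3-i})\geq|U_{3-i}|/3$ for all $v\in U_i^+$. It then splits on $|V(G)\setminus(U_1^+\cup U_2^+)|\in\{0,1,\geq2\}$. If there are two leftover vertices, both are blue $\geq 2|U_j|/3$ to each side by maximality, and Lemma~\ref{lemma:splittreewith2vertices} applied to $\cT$ gives a blue copy. If there is exactly one leftover vertex $w$, Lemma~\ref{lemma:splittree} splits $\cT$ at $w$, and each piece is embedded in blue into $U_j^+\cup\{w\}$ via Lemma~\ref{lemma:bare-paths} or Lemma~\ref{lemma:leaves}. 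If there is no leftover vertex, then $|U_1^+|\geq t_1$ and Lemma~\ref{lemma:bipartite:anyleaves} gives a red copy of $T$ in $G[U_1^+,U_2]$. Each case is a single clean application; there is no iteration of splitting lemmas.

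The idea you are missing, and which makes the one-vertex and two-vertex cases go through, is a separate check: if \emph{any} vertex $v$ has $\geq 20\mu n$ red neighbours in both $U_1^+$ and $U_2^+$, then Lemma~\ref{lemma:bipartitesplittree} applied to $T$ (in \emph{red}, with cut $v$) produces a red copy of $T$. Once this is ruled out, every vertex of $U_i^+$ has at most $20\mu n$ red neighbours in $U_i^+$ itself (otherwise it would be red to both sides), so $\delta(\blue{G}[U_i^+])\geq|U_i^+|-20\mu n$. This is exactly the hypothesis needed for Lemma~\ref{lemma:leaves} and Lemma~\ref{lemma:bare-paths}. In your setup, the vertices of $U_i^+\setminus U_i$ only have blue degree $\geq|U_i|-\xi n$ to $U_i$, which is not strong enough for Lemma~\ref{lemma:leaves}; your claim that $\blue{G}[U_i^+]$ has minimum degree $\geq|U_i^+|-O(\mu n)$ is false for those vertices without this additional check.

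Your bipartite step also has a small orientation issue: for Lemma~\ref{lemma:bipartite:leaves} (or Lemma~\ref{lemma:bipartite:anyleaves}), the side playing the role of $U_1$ must have \emph{every} vertex of high degree, but your $B'$ may contain up to $O(\mu n)$ vertices from $V_2^{\text{red}}\setminus U_2$ with red degree only $|U_1|-\xi n$ to $A'$. The paper sidesteps this by embedding between $U_1^+$ and the \emph{original} $U_2$ (not $U_2^+$), so the slack side is pure and the exceptional vertices sit only on the tight side.
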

\begin{proof}
Let $c$ be smaller than the constants given by applying Lemma~\ref{lemma:leaves} and Lemma~\ref{lemma:bipartite:anyleaves} with $\xi=1/100$. Let $U_1^+,U_2^+\subset V(G)$ be maximal disjoint sets with $U_1\subset U_1^+$, $U_2\subset U_2^+$, and $\red{d}(v,U_{3-i})\geq|U_{3-i}|/3$ for every $i\in [2]$ and $v\in U_i^+$. Note that $|U_i^+\setminus U_i|\leq10\mu n$ for each $i\in[2]$. By relabelling if necessary, we can assume that $|U_1^+|\geq |U_2^+|$.  

First, suppose that there are two distinct vertices $v_1,v_2\in V(G)\setminus (U_1^+\cup U_2^+)$. By the maximality of $U_1^+$ and $U_2^+$, we must have $\blue{d}(v_i,U_j)\geq2|U_j|/3$ for each $i,j\in [2]$. As $t_1\geq(1-\mu)(t_2+\tau_2)\geq(2-2\mu)t_2$, we have $|U_i|\geq t_1-\mu n\geq (2/3-2\mu)n\geq(2/3-2\mu)\nu$ for each $i\in [2]$. By Lemma~\ref{lemma:splittreewith2vertices}, there is a partition $V(\cT)=A\cup B$ with $|A|,|B|\leq (2/3-10\mu)\nu$, such that $\cT[A]$ is a tree and $A':=\{v\in A:d_{\cT}(v,B)>0\}$ is an independent set in $\cT$ with $|A'|\leq 2$. View $\cT[A]$ as a tree rooted at a vertex in $A'$, and embed this vertex to $v_1$. Then, greedily extend this to an embedding of the tree $\cT[A]$ in $\blue{G}[U_1\cup\{v_1,v_2\}]$, such that if there is another vertex in $A'$, then it is embedded to $v_2$. This is possible as $|U_1|-|A|\gg\mu n$ and $|\blue{N}(v_1,U_1)\cap\blue{N}(v_2,U_1)|\geq|U_1|/3\gg\mu n$. We can then extend this to a copy of $\cT$ by greedily embedding $\cT[A'\cup B]$ in $\blue{G}[U_2\cup\{v_1,v_2\}]$. 

Therefore, we may assume from now on that $|V(G)\setminus (U_1^+\cup U_2^+)|\leq1$. Note that if $n+\tau_2\geq2t_1$, then $3\tau_2\geq(t_2+\tau_2)+(2\tau_2-t_2)\geq t_1+t_2=n$. It follows that $|U_1^+|\geq\ceil{(|G|-1)/2}\geq\max\{\ceil{2n/3}-1,t_1-1\}$.

If there exists $v\in V(G)$ with at least $20\mu n$ red neighbours in both $U_1^+$ and $U_2^+$, then $v$ has at least $20\mu n-10\mu n\geq\Delta(T)$ red neighbours in both $U_1$ and $U_2$. By Lemma~\ref{lemma:bipartitesplittree}, we can find subtrees $T_1$ and $T_2$ of $T$ with a unique common vertex $t$, such that $V(T_1)\cup V(T_2)=V(T)$, and $10\mu n\leq|V(T_1)\cap V_1|-|V(T_1)\cap V_2|\leq25\mu n$. Embed $t$ to $v$, then we can greedily embed both $T_1$ and $T_2$ so that vertices in $V(T_2)\cap V_1$ and $V(T_1)\cap V_2$ go into $U_1$ and vertices in $V(T_1)\cap V_1$ and $V(T_2)\cap V_2$ go into $U_2$. This is possible because \[|V(T_1)\cap V_2|+|V(T_2)\cap V_1|\leq|V(T_1)\cap V_1|+|V(T_2)\cap V_1|-10\mu n\leq t_1+1-10\mu n\leq|U_1|-\mu n,\]
\[|V(T_1)\cap V_1|+|V(T_2)\cap V_2|\leq|V(T_1)\cap V_2|+|V(T_2)\cap V_2|+25\mu n\leq t_2+1+25\mu n\leq |U_2|-\mu n,\]
and $v$ has at least $\Delta(T)$ red neighbours in both $U_1$ and $U_2$. Thus, we can assume that no such $v$ exists, so in particular $\delta(\blue{G}[U_i^+])\geq|U_i^+|-20\mu n$ for each $i\in [2]$.

Now suppose that $V(G)\setminus (U_1^+\cup U_2^+)$ contains a unique vertex $w$. By maximality, $\blue{d}(w,U_j^+)\geq\blue{d}(w,U_j)\geq2|U_j|/3$ for each $j\in [2]$. 
By Lemma~\ref{lemma:splittree}, there exist subtrees $\cT_1$ and $\cT_2$ of $\cT$ with a unique common vertex $t$, such that $V(\cT_1)\cup V(\cT_2)=V(\cT)$, and $\ceil{\nu/3}\leq|\cT_1|\leq|\cT_2|\leq\ceil{2\nu/3}$. For each $j\in[2]$, $\cT_j$ either has $\nu/100$ vertex-disjoint bare paths of length 4 or $\nu/100$ leaves by Lemma~\ref{lemma:paths-leaf}, so we can use Lemma~\ref{lemma:bare-paths} or Lemma~\ref{lemma:leaves}, respectively, to embed $\cT_j$ into $\blue{G}[U_j^+\cup\{w\}]$, with $t$ embedded to $w$. Together, this gives a blue copy of $\cT$ in $G$.

Finally, suppose that $V(G)\setminus (U_1^+\cup U_2^+)=\varnothing$. Then, $|U_1^+|\geq t_1$, and $|U_2|\geq(1-\mu)t_1\geq t_2+n/5$, so we can find a red copy of $T$ in $G[U_1^+,U_2]$ by Lemma~\ref{lemma:bipartite:anyleaves}. 
\end{proof}

\subsection{Type 4}\label{sec:type4}
\begin{lemma}[\textbf{Type 4}]\label{lemma:type4}
There exists a constant $c>0$ such that the following holds. Let $1/n\leq1/\nu\ll\mu\leq1$. Let $T$ be an $n$-vertex tree with $\Delta(T)\leq cn/\log n$ and bipartition class sizes $t_1\geq t_2\geq(t_1-2)/3$. Let $\cT$ be a $\nu$-vertex tree with $\Delta(\cT)\leq c\nu/\log\nu$ and bipartition class sizes $\tau_1\geq\tau_2\geq(t_1-2)/2$. Suppose also that $\tau_1\geq(1-\mu)(n+\tau_2)/2$, $\nu\geq t_1\geq s_1$, $t_2+\tau_2\geq t_1-1$, and $\tau_2\geq t_2$.

Let $G$ be a red/blue coloured complete graph with $\max\{n+\tau_2,2t_1\}-1$ vertices which is Type 4 $(\mu,T,\cT)$-extremal. That is, $G$ contains disjoint $U_1,U_2\subset V(G)$ such that $|U_1|,|U_2|\geq(1-\mu)\tau_1$, $\blue{d}(v,U_i)\leq\mu n$ for every $i\in[2]$ and every $v\in U_i$, and $\red{d}(v,U_{3-i})\leq\mu n$ for every $i\in[2]$ and $v\in U_i$. Then, $G$ contains either a red copy of $T$ or a blue copy of $\cT$.
\end{lemma}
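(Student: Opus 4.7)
The proof of Lemma~\ref{lemma:type4} will mirror that of Lemma~\ref{lemma:type3}, with the roles of $T$ and $\cT$, and of red and blue, exchanged throughout. A useful preliminary arithmetic step is that the Type~4 hypothesis $\tau_1 \geq (1-\mu)(n+\tau_2)/2$ combined with $\nu \leq n$ forces $\tau_2 \leq n/3 + O(\mu n)$ and hence $t_1 \geq 2n/3 - O(\mu n)$; together with the standing hypothesis $\tau_2 \geq (t_1-2)/2$ this pins $\tau_2 = n/3 \pm O(\mu n)$, giving $\tau_1 \geq 2n/3 - O(\mu n)$ and in particular $|U_i| \geq (1-\mu)\tau_1 \geq 2n/3 - O(\mu n)$ for each $i \in [2]$. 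As a byproduct, $\tau_1 \geq 2\tau_2 \geq 1.1\tau_2$, which will let me invoke Lemma~\ref{lemma:bipartitesplittree} on $\cT$.

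I would first extend $U_1, U_2$ to maximal disjoint supersets $U_1^+, U_2^+$ such that every $v \in U_i^+$ satisfies $\blue{d}(v, U_{3-i}) \geq |U_{3-i}|/3$, so that every vertex outside $U_1^+ \cup U_2^+$ has $\red{d}(v, U_j) \geq 2|U_j|/3$ for each $j \in [2]$ and is a candidate red bridge between the two red almost-cliques $U_1, U_2$. The argument then branches into the same four subcases as in Lemma~\ref{lemma:type3}.

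If $|V(G) \setminus (U_1^+ \cup U_2^+)| \geq 2$, pick two outside vertices $v_1, v_2$, apply Lemma~\ref{lemma:splittreewith2vertices} to $T$ to obtain $V(T) = A \cup B$ with $|A|, |B| \leq (2/3-\epsilon)n$, $T[A]$ a tree and $A' = \{v \in A : d_T(v, B) > 0\}$ independent of size at most two; embed $A'$ at $\{v_1, v_2\}$ and greedily extend $T[A]$ inside the red almost-clique $\red{G}[U_1 \cup \{v_1, v_2\}]$ and $T[A' \cup B]$ inside $\red{G}[U_2 \cup \{v_1, v_2\}]$. The verified $|U_i| \geq 2n/3 - O(\mu n)$ accommodates the split. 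If instead some $v$ has at least $20\mu n$ blue neighbours in both $U_1^+$ and $U_2^+$, hence in both $U_1$ and $U_2$, I would apply Lemma~\ref{lemma:bipartitesplittree} to $\cT$ to find subtrees $\cT_1, \cT_2$ sharing a vertex $t$ with a prescribed bipartition imbalance, embed $t$ at $v$ and extend each $\cT_i$ into $\blue{G}[U_i \cup \{v\}]$. If neither case applies but exactly one outside vertex $w$ remains, I would split $T$ at a single vertex via Lemma~\ref{lemma:splittree} and embed the two subtrees (each of size at most $\lceil 2n/3 \rceil \leq |U_i^+|+1$) into $\red{G}[U_i^+ \cup \{w\}]$ using Lemma~\ref{lemma:bare-paths} or Lemma~\ref{lemma:leaves}, depending on whether each subtree contains many bare paths or many leaves (by Lemma~\ref{lemma:paths-leaf}). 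Otherwise $V(G) = U_1^+ \cup U_2^+$, and pigeonhole on $|U_1^+| + |U_2^+| = |G| \geq \max\{n+\tau_2, 2t_1\}-1 \geq 2\tau_1 - 1$ yields $\max(|U_1^+|, |U_2^+|) \geq \tau_1$, after which I would embed $\cT$ into the blue almost-complete bipartite graph $G[U_1, U_2]$ (possibly enlarged by a bounded number of high-blue-degree vertices of $U_i^+ \setminus U_i$) via Lemma~\ref{lemma:bipartite:anyleaves}.

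The main obstacle I expect is the fourth case: vertices in $U_i^+ \setminus U_i$ have only $|U_{3-i}|/3$ guaranteed blue degree to the opposite side, so cannot be freely appended to an almost-complete blue bipartite structure. The resolution should use the hypothesis that the second case fails (so no vertex has substantial blue degree to both $U_j^+$) to argue that either $|U_i| \geq \tau_1$ holds directly for some $i$, or a suitable subset of $U_i^+ \setminus U_i$ of size $\tau_1 - |U_i|$ can be extracted whose members have near-complete blue degree to $U_{3-i}$, by a counting argument exploiting $|U_i^+ \setminus U_i| = O(\mu n)$ in this regime and the structural constraint $|G| \geq 2\tau_1 - 1$.
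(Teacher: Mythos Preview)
Your proposal follows the paper's proof almost exactly: the same extension to $U_i^+$, the same four subcases, and the same lemmas invoked in each. Two points deserve comment.

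\textbf{The final case is simpler than you fear.} You do not need to extract high-blue-degree vertices from $U_i^+\setminus U_i$. The paper pairs $U_1^+$ with $U_2$ (not $U_2^+$). On the one hand $|U_1^+|\geq \lceil|G|/2\rceil\geq t_1\geq\tau_1$; on the other, the arithmetic gives $\tau_1\geq(2-O(\mu))\tau_2$, so $|U_2|\geq(1-\mu)\tau_1\geq\tau_2+n/5$ already. Now observe that the hypotheses of Lemma~\ref{lemma:bipartite:anyleaves} are asymmetric: one side is allowed an exceptional set $W_2$ of at most $\mu n$ vertices with only $\xi$-proportion degree to the other side. Take the lemma's $U_1$ to be your $U_2$ and the lemma's $U_2$ to be your $U_1^+$; then the $\leq 10\mu n$ vertices of $U_1^+\setminus U_1$ (which only have blue degree $\geq|U_2|/3$ by the definition of $U_i^+$) are precisely $W_2$, and every vertex of your $U_2$ has blue degree $\geq|U_1^+|-O(\mu n)$ into $U_1^+$ since $|U_1^+\setminus U_1|\leq 10\mu n$. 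So Lemma~\ref{lemma:bipartite:anyleaves} applies directly with no extra work.

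\textbf{A slip in the second case.} When some $v$ has $\geq 20\mu n$ blue neighbours in both $U_i^+$, you cannot ``extend each $\cT_i$ into $\blue{G}[U_i\cup\{v\}]$'': in Type~4 the graphs $\blue{G}[U_i]$ are almost empty. As in the proof of Lemma~\ref{lemma:type3}, you embed $\cT$ into the blue almost-complete \emph{bipartite} graph $\blue{G}[U_1,U_2]$: with $t$ at $v$, the bipartition classes of $\cT_1$ and $\cT_2$ are distributed across $U_1$ and $U_2$ so that $U_1$ receives $(V(\cT_2)\cap V_1)\cup(V(\cT_1)\cap V_2)$ and $U_2$ receives $(V(\cT_1)\cap V_1)\cup(V(\cT_2)\cap V_2)$, and the controlled bipartition imbalance from Lemma~\ref{lemma:bipartitesplittree} ensures both sides fit.
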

\begin{proof}
Let $c$ be smaller than the constants given by applying Lemma~\ref{lemma:leaves} and Lemma~\ref{lemma:bipartite:anyleaves} with $\xi=1/100$. From assumptions, $2\tau_1\geq(1-\mu)(\tau_1+t_2+\tau_2)$, so $\tau_1\geq(1-2\mu)(t_2+\tau_2)\geq(1-3\mu)t_1$. Also, $2\tau_1\geq(1-\mu)(\nu+\tau_2)$, so $t_1\geq\tau_1\geq(2-4\mu)\tau_2\geq(2-4\mu)t_2$. Let $U_1^+,U_2^+\subset V(G)$ be maximal disjoint sets with $U_1\subset U_1^+$, $U_2\subset U_2^+$, and $\blue{d}(v,U_{3-i})\geq|U_{3-i}|/3$ for every $i\in [2]$ and $v\in U_i^+$. Note that $|U_i^+\setminus U_i|\leq10\mu n$ for each $i\in[2]$. By relabelling if necessary, we can assume that $|U_1^+|\geq |U_2^+|$.  

First, suppose that there are two distinct vertices $v_1,v_2\in V(G)\setminus (U_1^+\cup U_2^+)$. By the maximality of $U_1^+$ and $U_2^+$, we must have $\red{d}(v_i,U_j)\geq2|U_j|/3$ for each $i,j\in [2]$. As $\tau_1\geq(1-3\mu)t_1$, we have $|U_i|\geq(1-\mu)\tau_1\geq (2/3-10\mu)n$ for each $i\in [2]$. By Lemma~\ref{lemma:splittreewith2vertices}, there is a partition $V(T)=A\cup B$ with $|A|,|B|\leq (2/3-20\mu)n$, such that $T[A]$ is a tree and $A':=\{v\in A:d_{T}(v,B)>0\}$ is an independent set in $T$ with $|A'|\leq 2$. Then, as in the proof of Lemma~\ref{lemma:type3}, we can use this partition to find a red copy of $T$ in $G$, with any vertex in $A'$ embedded to $\{v_1,v_2\}$.

Therefore, we may assume from now on that $|V(G)\setminus (U_1^+\cup U_2^+)|\leq1$. Note that $|U_1^+|\geq\ceil{(|G|-1)/2}\geq\max\{\ceil{2n/3}-1,t_1-1\}$.

If there exists $v\in V(G)$ with at least $20\mu n$ blue neighbours in both $U_1^+$ and $U_2^+$, then like in the proof of Lemma~\ref{lemma:type3}, we can use Lemma~\ref{lemma:bipartitesplittree} to find a blue copy of $\cT$ in $G$. Thus, we can assume that no such $v$ exists, so in particular $\delta(\blue{G}[U_i^+])\geq|U_i^+|-20\mu n$ for each $i\in [2]$.

Now suppose that $V(G)\setminus (U_1^+\cup U_2^+)$ contains a unique vertex $w$. By maximality, $\red{d}(w,U_j^+)\geq\red{d}(w,U_j)\geq2|U_j|/3$ for each $j\in [2]$. 
By Lemma~\ref{lemma:splittree}, there exist subtrees $T_1$ and $T_2$ decomposing $T$, such that $\ceil{n/3}\leq|T_1|\leq|T_2|\leq\ceil{2n/3}$. Then, for each $j\in[2]$, we can use either Lemma~\ref{lemma:bare-paths} or Lemma~\ref{lemma:leaves} to embed $T_j$ into $\red{G}[U_j^+\cup\{w\}]$, with the unique common vertex of $T_1$ and $T_2$ embedded to $w$. Together, this gives a red copy of $T$ in $G$.

Finally, suppose that $V(G)\setminus (U_1^+\cup U_2^+)=\varnothing$. Then, $|U_1^+|\geq t_1\geq\tau_1$, and $|U_2|\geq(1-\mu)\tau_1\geq\tau_2+n/5$, so we can find a blue copy of $\cT$ in $G[U_1^+,U_2]$ by Lemma~\ref{lemma:bipartite:anyleaves}. 
\end{proof}

\section{Counterexamples}\label{sec:counter}
In this section, we construct examples proving Theorem~\ref{thm:counterex1} and Theorem~\ref{thm:counterex2}, thus showing that neither $\tau_2\geq t_2$ nor $\nu\geq t_1$ can be removed from the list of assumptions in Theorem~\ref{thm:main}.

The two constructions share a similar idea, and can be viewed as a non-bipartite version and a bipartite version. We now give a sketch for the non-bipartite version, used for Theorem~\ref{thm:counterex1}. As a warm-up, let's suppose we want to find a tree $T$ with bipartition class sizes $t_1\geq t_2$ and a tree $\cT$ with bipartition class sizes $t_1\geq\tau_2$, with $t_1=t_2+\tau_2$, such that $R(T,\cT)\geq2t_1=\underline{R}(T,\cT)+1$. Suppose the host graph $G$ is quite close to the extremal construction~\ref{low:3}, and has a vertex partition $V(G)=U_1\cup U_2\cup\{w\}$, such that $|U_1|=|U_2|=t_1-1$, all of $\red{G}[U_1],\red{G}[U_2]$ and $\red{G}[U_1\cup U_2,\{w\}]$ are complete, and all other edges are blue. Then, $G$ contains no blue copy of $\cT$ as the larger bipartition class of $\cT$ has size $t_1>|U_1|,|U_2|$. To ensure that $G$ contains no red copy of $T$, let $T$ be a tree containing a vertex $v$ such that $T-v$ consists of exactly 3 components, each of size $(t_1+t_2-1)/3$. If $t_1\leq2t_2-2$, then $(t_1+t_2-1)/3\geq t_1/2$, so each of $U_1$ and $U_2$ is not big enough to accommodate 2 of these 3 components. However, a simple case analysis on where $v$ is embedded to shows that we cannot avoid embedding 2 of these 3 components to the same side, so no red copy of $T$ exists in $G$. 

This idea can be extended to find arbitrarily large trees $T$ and $\cT$ satisfying $R(T,\cT)\geq\underline{R}(T,\cT)+C$ for any $C\geq1$. Let $G$ be the same graph as above, except now $\{w\}$ is replaced by a set $W$ of size $C$. Let $T$ be the tree obtained by attaching a tree of size $r\gg C$ to each of the $3^C$ leaves of a perfect ternary tree $T'$ with $C+1$ levels. Intuitively, if a red embedding attempt of $T$ starts in $U_1$, then the $C$ vertices in $W$ can be used to move at most $3^{C-1}+3^{C-2}+\cdots+1=(3^C-1)/2$ trees attached to leaves of $T'$ to $U_2$, so at least $(3^C+1)/2$ trees attached to leaves of $T'$ must be embedded into $U_1$. This is proved more rigorously in Lemma~\ref{lem:counterex1} below. Therefore, if we choose the parameters carefully so that $|U_1|=|U_2|=t_1-1<(3^C+1)r/2$, then no red copy of $T$ exists. The full proof will be carried out in Section~\ref{sec:counter1}.

For the bipartite version of the construction, used for Theorem~\ref{thm:counterex2}, let $G$ be a red/blue coloured complete graph with a vertex partition $V(G)=U_1\cup U_2\cup W$, such that $|U_1|=|U_2|=|\cT|-1$, $|W|=C-1$, $\red{G}[U_1,U_2]$ and $\red{G}[U_1\cup U_2,W]$ are complete, and all other edges are blue. Then, it is clear that $G$ contains no blue copy of $\cT$. Let $T$ be constructed by gluing the same tree $\overline{T}$ to each leaf of a perfect ternary tree $T'$ with $C+1$ levels. Similar to above, the rough idea (see Lemma~\ref{lem:counterex2}) is that the $C-1$ vertices in $W$ can only be used to flip the embedding direction of slightly less than half of the copies of $\overline{T}$ attached to the leaves of $T'$. Thus, there is a lower bound on $|U_1|=|\cT|-1$ if a red copy of $T$ exists in $G$, and we can rule this out with careful choices of parameters. This will be carried out in Section~\ref{sec:counter2}

\subsection{Proof of Theorem~\ref{thm:counterex1}}\label{sec:counter1}
\begin{lemma}\label{lem:counterex1}
Let $r\gg C\geq 1$ be integers. Let $T'$ be the perfect ternary tree with $C+1$ levels, $\sum_{i=0}^{C-1}3^i=(3^C-1)/2$ internal vertices, and $3^C$ leaves. Let $T$ be a tree obtained by gluing to each leaf of $T'$ a (possibly different) tree of size $r$.

Let $G$ be a graph with a vertex partition $V(G)=U_1\cup U_2\cup W$, such that the following hold.
\begin{itemize}
    \item $|U_1|=|U_2|$ and $|W|=C$.
    \item $G[U_1]$, $G[U_2]$, and $G[U_1\cup U_2,W]$ are all complete, and there is no other edge in $G$.
\end{itemize}
If there is a copy of $T$ in $G$, then $|U_1|\geq(3^C+1)r/2$.
\end{lemma}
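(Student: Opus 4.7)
The plan is to consider an arbitrary embedding $\phi:V(T)\to V(G)$, set $W^*=\phi^{-1}(W)$ (so $|W^*|\leq C$), and use the structural observation that removing $W^*$ from $T$ produces a forest where every connected component lies entirely in $U_1$ or entirely in $U_2$, since the only $U_1$--$U_2$ edges of $G$ pass through $W$. Writing $c_i=|\phi^{-1}(U_i)|$ for $i\in\{1,2\}$ and using $|U_1|=|U_2|\geq\max(c_1,c_2)$, it suffices to show $\max(c_1,c_2)\geq (3^C+1)r/2$.

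The core of the argument is a combinatorial claim about the backbone $T'$. Let $S=W^*\cap V(T')$ and set $k_1=|S|\leq C$; each component of $T'-S$ inherits a color in $\{U_1,U_2\}$ from $\phi$. I propose the following \emph{Combinatorial Lemma}: for any $S\subseteq V(T')$ with $|S|\leq k\leq C$ and any 2-coloring of the components of $T'-S$, the majority color covers at least $(3^C+3^{C-k})/2$ of the leaves of $T'$ that are not in $S$. The bound is tight: take a downward chain $S=\{v_0,\ldots,v_{k-1}\}$ with $v_0$ the root and each $v_{i+1}$ a child of $v_i$. Removing such a chain produces $2(k-1)$ ``sibling'' subtrees of sizes $3^{C-1},\ldots,3^{C-k+1}$ (two copies of each) together with three subtrees of $3^{C-k}$ leaves from the children of $v_{k-1}$, and the best balanced 2-coloring achieves majority exactly $(3^C+3^{C-k})/2$. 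The Combinatorial Lemma itself would be proved by induction on $C$, splitting on whether the root lies in $S$; in the root-in-$S$ case the three subtrees become independent instances of a perfect ternary tree with $C$ levels whose $S$-budgets sum to $k-1$, and the inductive bound on the minority in each subtree (at most $(3^{C-1}-3^{C-1-k_j})/2$ for the subtree using $k_j$ vertices of $S$) combines with a signed accounting across the three subtrees to re-derive the claimed bound.

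Granted the Combinatorial Lemma, applied with $k=k_1$, we may assume (after swapping $U_1$ with $U_2$ if necessary) that the set $P$ of leaves $\ell$ of $T'$ with $\phi(\ell)\in U_1$ satisfies $|P|\geq(3^C+3^{C-k_1})/2$. To convert this into a vertex count, let $k_2=|W^*|-k_1$ count the $W$-vertices embedded in the interiors of leaf-trees, so $k_1+k_2\leq C$. For each $\ell\in P$, the attached leaf-tree (of size $r$, containing $\ell$) lies entirely in $U_1$ if it contains no $W$-vertex, contributing $r$ to $c_1$; otherwise it still contributes at least $1$ to $c_1$ via $\ell$. Since the leaf-trees are vertex-disjoint, at most $k_2$ of the $P$-leaf-trees contain an interior $W$-vertex, so
\[
c_1\;\geq\;(|P|-k_2)r+k_2\;\geq\;\frac{3^C+3^{C-k_1}}{2}\,r-k_2(r-1).
\]
Combining this with $k_2\leq C-k_1$ and the elementary inequality $3^m\geq 2m+1$ for all integers $m\geq 0$ (applied with $m=C-k_1$) gives $c_1\geq(3^C+1)r/2$, finishing the proof.

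The main obstacle is the Combinatorial Lemma. A naive induction that keeps only the bound $(3^C+1)/2$ cannot close: when the root lies in $S$ and two of the three subtrees have their majority on one side while the third is on the opposite side, combining the weaker subtree bounds yields only $3^{C-1}+1$, which is strictly smaller than $(3^C+1)/2$ for $C\geq 2$. The stronger exponential form $(3^C+3^{C-k})/2$ is needed because it records the precise ``flipping power'' of a downward chain of $W$-vertices, a power which decays by a factor of three for each additional level of depth. Establishing it will require a careful case analysis that uses both the minority bound in each subtree and the signed cancellation of majority-minus-minority contributions across the three subtrees of the root.
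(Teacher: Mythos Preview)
Your overall architecture is sound and genuinely different from the paper's. Splitting $W^*$ into $k_1$ backbone vertices and $k_2$ leaf-tree vertices, invoking a Combinatorial Lemma on $T'$ to get $|P|\geq(3^C+3^{C-k_1})/2$, and closing with the inequality $(3^{C-k_1}-1)r/2\geq(C-k_1)(r-1)$ via $3^m\geq 2m+1$ is all correct. The Combinatorial Lemma itself is true. The paper, by contrast, never separates $k_1$ from $k_2$: it works directly with the level profile $\ell_i=|\psi^{-1}(W)\cap(\bigcup_{j\leq i}L_j)|$, case-splits on whether $\ell_i\leq i$ for all $i$, and when it fails uses pigeonhole at level $i^*+1$. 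That argument is shorter because it bounds in one stroke the number of leaf-trees that can possibly be detached from the root's side.

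The genuine gap is your proposed \emph{proof} of the Combinatorial Lemma. Even with the strengthened hypothesis $(3^C+3^{C-k})/2$, the induction does not close in the root-in-$S$ case. Write $d_j=a_j-b_j$ for the signed leaf imbalance in subtree $j$; the inductive hypothesis gives only $|d_j|\geq 3^{C-1-k_j}+s_j$, not the set of achievable values of $d_j$. When two subtree majorities are on one side and one on the other (say $d_1,d_2>0$, $d_3<0$) with $k_1,k_2\geq 1$, the target inequality $3^{-k_1}+3^{-k_2}\geq 1+3^{1-k}$ is simply false, so $A\geq M_1+M_2$ does not reach $(3^C+3^{C-k})/2$; and the lower bound on $B$ coming from $M_3$ alone is also too small. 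Nothing in the hypothesis prevents the adversary from tuning $|d_3|$ upward to nearly cancel $d_1+d_2$. You also do not address the root-not-in-$S$ case, where the three subtrees are \emph{not} independent (the root's component may span several of them), so a straight inductive call is unavailable.

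The fix is to prove your Combinatorial Lemma by the paper's level argument rather than by induction: with $\ell_i\leq i$ for all $i$ and $|S|=k$, an Abel-summation bound shows at most $(3^C-3^{C-k})/2$ leaves of $T'$ have an $S$-vertex on their root path, hence at least $(3^C+3^{C-k})/2$ lie in the root's component; when some $\ell_{i^*}\geq i^*+1$, pigeonhole at level $i^*+1$ together with the same weight bound on the remaining $k-i^*-1$ vertices gives the identical conclusion. Plugging this proof in, your modular route then goes through cleanly.
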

\begin{proof}
Let $\psi:V(T)\to V(G)$ be an embedding of $T$ in $G$, and suppose for a contradiction that $|U_1|=|U_2|<(3^C+1)r/2$. Since there is no edge between $U_1$ and $U_2$, every component of $T-\psi^{-1}(W)$ is embedded entirely within one of $U_1$ and $U_2$. In particular, $\psi^{-1}(W)\not=\varnothing$. 

For each $0\leq i\leq C-1$, let $L_i$ be the set of vertices in the $i$-th level of $T'$, so that $|L_i|=3^i$. Let $L_C=V(T)\setminus(\cup_{i=0}^{C-1}L_i)$. Let $\ell_i=|\psi^{-1}(W)\cap\left(\cup_{j=0}^{i}L_j\right)|$ for every $0\leq i\leq C$, and note that $\ell_C=C$.

Suppose first that $\ell_i\leq i$ for every $0\leq i\leq C$. In particular, $\ell_0=0$, so we may assume without loss of generality that the root of $T'$ is embedded into $U_1$. Then, without the help of vertices in $W$, each of the $3^C$ trees of size $r$ glued to the leaves of $T'$ will be embedded into $U_1$. Since $\ell_i\leq i$ for every $i\in[C]$, the $C$ vertices in $W$ can be used to transfer at most $\sum_{i=1}^C3^{C-i}=(3^C-1)/2$ trees glued to the leaves of $T'$ to $U_2$, so at least $(3^C+1)/2$ of them still need to be embedded inside $U_1$, contradicting $|U_1|<(3^C+1)r/2$. 


Therefore, there exists $0\leq i\leq C$ satisfying $\ell_i\geq i+1$. Let $0\leq i^*\leq C$ be maximal subject to $\ell_{i^*}\geq i^*+1$, and note that $i^*\leq C-1$. 
Since $i^*$ is maximal, $\ell_{i^*+1}\leq i^*+1$, so no vertex in $L_{i^*+1}$ is embedded into $W$. Thus, by pigeonhole, and without loss of generality, we can assume that a set $L_{i^*+1}'$ of at least $(3^{i^*+1}+1)/2$ vertices in $L_{i^*+1}$ are embedded into $U_1$. Let $\mathcal{T}$ be the set of at least $(3^{i^*+1}+1)3^{C-i^*-1}/2$ trees of size $r$ glued to the leaves of $T'$ that are descendants of vertices in $L_{i^*+1}'$. Without the help of vertices in $W$, every tree in $\mathcal{T}$ will be embedded into $U_1$. By the maximality of $i^*$ again, $\ell_i\leq i$ for every $i\geq i^*+1$, so there are at most $i-i^*-1$ vertices in $\cup_{j=i^*+1}^iL_j$ that are embedded into $W$. Together, they can be used to transfer at most $\sum_{j=i^*+2}^C3^{C-j}=(3^{C-i^*-1}-1)/2$ trees in $\mathcal{T}$ to $U_2$. Thus, at least \[\frac12(3^{i^*+1}+1)3^{C-i^*-1}-\frac12(3^{C-i^*-1}-1)=\frac12(3^{C}+1)\]
trees in $\mathcal{T}$ still need to be embedded into $U_1$, contradicting $|U_1|<(3^C+1)r/2$.
\end{proof}



\begin{proof}[Proof of Theorem~\ref{thm:counterex1}]
Since $r\gg C$, $(3^C+1)r/(2r-(3^C-1)/2)\leq 3^C+1$, so by Lemma~\ref{lemma:maxdegbound}, we can find a tree $\cT$ satisfying both~\ref{construct:1} and~\ref{construct:3}.

Let $T'$ be the perfect ternary tree with $C+1$ levels, $\sum_{i=0}^{C-1}3^i=(3^C-1)/2$ internal vertices, and $3^C$ leaves. Let $T$ be a tree obtained by gluing to each leaf of $T'$ a (possibly different) tree of size $2r$. Then, $|T|=3^C\cdot 2r+(3^C-1)/2$.
Moreover, by picking the trees glued to the leaves of $T'$ carefully, we can ensure that both~\ref{construct:1} and~\ref{construct:2} hold, as~\ref{construct:2} is only asking for mildly unbalanced bipartition classes.

Let $G$ be the red/blue coloured complete graph on $2(3^C+1)r+C-2$ vertices with a vertex partition $V(G)=U_1\cup U_2\cup W$, such that the following conditions hold.
\begin{itemize}
    \item $|U_1|=|U_2|=(3^C+1)r-1$ and $|W|=C$.
    \item $\red{G}[U_1]$, $\red{G}[U_2]$, and $\red{G}[U_1\cup U_2,W]$ are all complete. 
    \item $\blue{G}[U_1,U_2]$ and $\blue{G}[W]$ are both complete.
\end{itemize}

Note that $\blue{G}$ contains exactly two connected components with vertex sets $U_1\cup U_2$ and $W$, respectively. Neither can contain a copy of $\cT$, as $\blue{G}[W]$ is too small, while neither side of the bipartite graph $\blue{G}[U_1,U_2]$ is large enough to accommodate the larger bipartition class of $\cT$. Meanwhile, $\red{G}$ contains no copy of $T$ by Lemma~\ref{lem:counterex1}, as 
\[\frac12(3^C+1)\cdot2r=(3^C+1)r>|U_1|.\]
Therefore,~\ref{construct:4} holds, which completes the proof. 
\end{proof}

\subsection{Proof of Theorem~\ref{thm:counterex2}}\label{sec:counter2}
\begin{lemma}\label{lem:counterex2}
Let $r\gg C\geq 2$ and $\rho\geq2$ be integers. Let $T'$ be the perfect ternary tree with $C+1$ levels, $\sum_{i=0}^{C-1}3^i=(3^C-1)/2$ internal vertices, and $3^C$ leaves. Let $T$ be a tree obtained by gluing to each leaf of $T'$ the same tree $\overline{T}$ with bipartition class sizes $\rho r$ and $r$.

Let $G$ be a graph with a vertex partition $V(G)=U_1\cup U_2\cup W$, such that the following hold.
\begin{itemize}
    \item $|U_1|=|U_2|$, and $|W|=C-1$.
    \item $G[U_1,U_2]$ and $G[U_1\cup U_2,W]$ are both complete, and there is no other edge in $G$.
\end{itemize}
If there is a copy of $T$ in $G$, then $|U_1|\geq((3^C+3)\rho r+(3^C-3)r)/2$.
\end{lemma}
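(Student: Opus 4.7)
The plan is to mimic the proof of Lemma~\ref{lem:counterex1} almost verbatim, replacing its notion of ``transferring a subtree to $U_2$'' (available there because $U_1,U_2$ are red cliques) with the notion of ``flipping the orientation of a component'' (available here because $G-W$ is the complete bipartite graph between $U_1$ and $U_2$). Assume $\psi\colon V(T)\to V(G)$ is an embedding and set $X=\psi^{-1}(W)$, so $|X|\le C-1$. Each connected component $K$ of $T-X$ is a bipartite graph that must be embedded into $K_{|U_1|,|U_2|}$ and thus independently chooses one of two orientations. Define the levels $L_0,\dots,L_C$ and the counts $\ell_i$ exactly as in the proof of Lemma~\ref{lem:counterex1}, and split according to whether $\ell_i\le i$ for every $0\le i\le C$ (Case 1) or $\ell_{i^*}\ge i^*+1$ for some maximal $i^*$ (Case 2).

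The key quantity is $N$, the number of level-$C$ vertices $\ell$ of $T'$ with $\psi(\ell)\in U_1$. Since all such vertices lie in a single bipartition class of $T$, the ones inside any single component of $T-X$ share the same label; writing $L_K$ for the set of level-$C$ vertices of $T'$ inside the component $K$, we have $N=\sum_{K\in\mathcal{S}}|L_K|$ for some subset $\mathcal{S}$ of components. Counting the vertices of the attached copies according to where the identified vertex $\ell$ lands (assume first that $\ell$ lies in the $\rho r$-class of $\overline{T}$), one obtains $|\psi^{-1}(U_1)|\ge N\rho r+(3^C-N)r=N(\rho-1)r+3^C r$, which is at least $\tfrac12\big((3^C+3)\rho r+(3^C-3)r\big)$ whenever $N\ge(3^C+3)/2$. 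Since $|U_1|=|U_2|$, the symmetric calculation handles $N\le(3^C-3)/2$, and the case when $\ell$ lies in the $r$-class of $\overline{T}$ reduces to the previous one after swapping $U_1\leftrightarrow U_2$. Thus it suffices to prove the combinatorial claim that no subset of the multiset $\{|L_K|\}_K$ sums to $(3^C-1)/2$ or $(3^C+1)/2$; equivalently, $N\le(3^C-3)/2$ or $N\ge(3^C+3)/2$.

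The same Abel-style summation as in the proof of Lemma~\ref{lem:counterex1} does the work in both cases. In Case 1 the root of $T'$ lies outside $X$, and the component $K_0$ of $T-X$ containing the root contains all but at most $\sum_{i=1}^C(\ell_i-\ell_{i-1})3^{C-i}$ level-$C$ vertices of $T'$. Under $\ell_i\le i$ for all $0\le i<C$ together with the bound $\ell_C\le C-1$ (rather than $\le C$ as in Lemma~\ref{lem:counterex1}), this sum is maximised by taking one cut per level $1,\dots,C-1$ and none at level $C$, giving $\sum_{i=1}^{C-1}3^{C-i}=(3^C-3)/2$. Hence $|L_{K_0}|\ge(3^C+3)/2$, and every subset of $\{|L_K|\}_K$ either contains $K_0$ (summing to $\ge(3^C+3)/2$) or omits it (summing to $\le 3^C-|L_{K_0}|\le(3^C-3)/2$); in particular $N\notin\{(3^C-1)/2,(3^C+1)/2\}$.

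Case 2 is handled exactly as in Lemma~\ref{lem:counterex1}: maximality of $i^*$ forces $\ell_{i^*+1}=i^*+1$ and no cut in $L_{i^*+1}$, so pigeonhole yields $L'_{i^*+1}\subseteq L_{i^*+1}$ of size at least $(3^{i^*+1}+1)/2$ in a single side, WLOG $U_1$ after swapping $U_1\leftrightarrow U_2$ to align the parity of $C-i^*-1$ so that the ``default'' orientation of the sub-forest below $L'_{i^*+1}$ places its deepest leaves in $U_1$. The attached trees descending from $L'_{i^*+1}$ form a set $\mathcal{T}$ with $|\mathcal{T}|\ge(3^C+3^{C-i^*-1})/2$. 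Only $\ell_C-\ell_{i^*+1}\le C-i^*-2$ cuts lie below $L_{i^*+1}$, so applying the same Abel-style bound inside this sub-forest caps the number of ``flipped'' trees in $\mathcal{T}$ by $(3^{C-i^*-1}-3)/2$; hence at least $|\mathcal{T}|-(3^{C-i^*-1}-3)/2\ge(3^C+3)/2$ trees of $\mathcal{T}$ keep $\psi(\ell)\in U_1$, so $N\ge(3^C+3)/2$ as required. The main technical point I expect to require extra care is the bookkeeping when $X$ contains a level-$C$ vertex of $T'$ or an interior vertex of some $\overline{T}_\ell$, which splits the attached tree into sub-components with independently chosen orientations; as in Lemma~\ref{lem:counterex1}, lumping all such vertices into $L_C$ should let the same Abel bound absorb them, since each cut at or below level $C$ contributes at most $1$ to the transfer sum.
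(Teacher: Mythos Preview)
Your two-case split on the counts $\ell_i$ is exactly the paper's structure, but the way you pass from the combinatorial count to the vertex bound has a real gap. The inequality $|\psi^{-1}(U_1)|\ge N\rho r+(3^C-N)r$ is simply false once $X$ meets the interior of some $\overline{T}_\ell$: if $\psi(\ell)\in U_1$ but a vertex $v$ of $\overline{T}_\ell$ lies in $W$, the sub-components of $\overline{T}_\ell-v$ beyond $v$ may flip orientation, so this copy can contribute far fewer than $\rho r$ vertices to $U_1$ (for a suitably chosen $\overline{T}$, as few as $1$). Your proposed patch, that such a cut ``contributes at most $1$ to the transfer sum'', conflates two different transfers: the Abel bound you prove controls how many \emph{gluing points} are separated from $K_0$, whereas what you need for the displayed inequality is a bound on how many \emph{vertices} of the attached copies are flipped. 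A cut strictly inside $\overline{T}_\ell$ separates zero gluing points but can flip up to $\rho r$ vertices, so the subset-sum argument on $N$ gives you nothing about $|\psi^{-1}(U_1)|$ in that situation.

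The paper does not go through $N$ at all; it bounds the vertex savings of each cut directly. The crucial observation, which is where the hypothesis $\rho\ge 2$ is used, is that a cut anywhere in $L_C$ can move at most $\rho r$ vertices out of $U_1$, whereas a cut at a vertex of $L_{C-1}$ flips three whole copies of $\overline{T}$ and hence moves $3(\rho-1)r>\rho r$ vertices. So the adversary's optimal placement of the $C-1$ cuts never uses level~$C$, and one may assume all cuts lie in $\cup_{i\le C-1}L_i$; then the Abel-type maximisation under $\ell_i\le i$ (now with $\ell_{C-1}\le C-1$ rather than $\ell_C\le C-1$) gives at most $(3^C-3)/2$ flipped copies and the bound follows. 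Your argument is easily repaired by inserting exactly this comparison of per-cut vertex savings before the Abel step, in place of the $N$-based inequality.
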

\begin{proof}
Let $\psi:V(T)\to V(G)$ be an embedding of $T$ in $G$, and suppose for a contradiction that $|U_1|<((3^C+3)\rho r+(3^C-3)r)/2$. Since both $G[U_1]$ and $G[U_2]$ are empty, every component of $T-\psi^{-1}(W)$ has one of its bipartition classes embedded into $U_1$, and the other into $U_2$. In particular, $\psi^{-1}(W)\not=\varnothing$ as $|U_1|<3^C\rho r$. 

For each $0\leq i\leq C-1$, let $L_i$ be the set of vertices in the $i$-th level of $T'$, so that $|L_i|=3^i$. Let $L_C=V(T)\setminus(\cup_{i=0}^{C-1}L_i)$. Let $\ell_i=|\psi^{-1}(W)\cap\left(\cup_{j=0}^{i}L_j\right)|$ for every $0\leq i\leq C$, and note that $\ell_C\leq C-1$.

Suppose first that $\ell_i\leq i$ for every $0\leq i\leq C$. In particular, $\ell_0=0$, so we may assume without loss of generality that the root of $T'$ is embedded into $U_1$ if it is in the larger bipartition class of $T$, and into $U_2$ otherwise. Then, without the help of vertices in $W$, each of the $3^C$ copies of $\overline{T}$ glued to the leaves of $T'$ will have its larger bipartition class embedded into $U_1$, and its smaller class embedded into $U_2$. Note that if a vertex in $L_C$ is embedded into $W$, then it can be used to potentially embed all vertices in its corresponding copy of $\overline{T}$ except itself into $U_2$. This can move at most $\rho r$ vertices that would be embedded into $U_1$ to $U_2$. However, if a vertex in $L_{C-1}$ is embedded into $W$, then it can flip the embedding direction of all 3 copies of $\overline{T}$ below it in $T$, reducing the number of vertices embedded into $U_1$ by $3(\rho-1)r>\rho r$. Thus, we may assume all vertices embedded into $W$ are from $\cup_{i=0}^{C-1}L_i$. Then, since $\ell_i\leq i$ for every $0\leq i\leq C$, the $C-1$ vertices in $W$ can be used to flip the embedding direction of at most $\sum_{i=1}^{C-1}3^{C-i}=(3^C-3)/2$ copies of $\overline{T}$ attached to the leaves of $T'$. Thus, the number of vertices embedded into $U_1$ is still at least $((3^C+3)\rho r+(3^C-3)r)/2>|U_1|$, a contradiction.


Therefore, we may assume that $\ell_i\geq i+1$ for some $0\leq i\leq C$. Let $0\leq i^*\leq C$ be maximal subject to $\ell_{i^*}\geq i^*+1$, and note that $i^*\leq C-2$. Since $i^*$ is maximal, $\ell_{i^*+1}\leq i^*+1$, so every vertex in $L_{i^*+1}$ is not embedded into $W$. Thus, by pigeonhole, and without loss of generality, there is a set $L_{i^*+1}'$ of at least $(3^{i^*+1}+1)/2$ vertices in $L_{i^*+1}$ which are embedded into $U_1$ if vertices in $L_{i^*+1}$ are in the larger bipartition class of $T$, and into $U_2$ otherwise. Let $\mathcal{T}$ be the set of at least $(3^{i^*+1}+1)3^{C-i^*-1}/2$ copies of $\overline{T}$ which are glued to the descendants of $L_{i^*+1}'$. Without the help of vertices in $W$, every tree in $\mathcal{T}$ will have its larger bipartition class embedded into $U_1$, and its smaller class embedded into $U_2$. By the maximality of $i^*$ again, $\ell_i\leq i$ for every $i^*+1\leq i\leq C-1$, so there are at most $i-i^*-1$ vertices in $\cup_{j=i^*+1}^iL_j$ that are embedded into $W$. Together, they can be used to flip the embedding direction of at most $\sum_{j=i^*+2}^{C-1}3^{C-j}=(3^{C-i^*-1}-3)/2$ trees in $\mathcal{T}$. Thus, at least 
\[\frac12((3^{i^*+1}+1)3^{C-i^*-1}\rho r+(3^{i^*+1}-1)3^{C-i^*-1}r)-\frac12(3^{C-i^*-1}-3)(\rho r-r)=\frac12(3^C+3)\rho r+(3^C-3)r\]
vertices are still embedded into $U_1$, a contradiction.
\end{proof}

\begin{proof}[Proof of Theorem~\ref{thm:counterex2}]
Let $T'$ be the perfect ternary tree with $C+1$ levels, $\sum_{i=0}^{C-1}3^i=(3^C-1)/2$ internal vertices, and $3^C$ leaves. Let $T$ be a tree obtained by gluing to each leaf of $T'$ the same tree $\overline{T}$ with bipartition class sizes $\rho r$ and $r$. Then, it is easy to see that~\ref{construct:2:2} holds. Moreover, by Lemma~\ref{lemma:maxdegbound}, we can pick $\overline{T}$ so that $\Delta(\overline{T})\leq\rho+2$, and thus $\Delta(T)\leq\rho+3$ and~\ref{construct:2:1} holds for $T$. 

Let $\tau_1=t_1+t_2-((3^C+3)\rho r+(3^C-3)r)/2=((3^C-3)\rho r+(3^C+3)r)/2+O_C(1)$, and let $\tau_2=t_1+t_2-2\tau_1$. Note that 
\begin{align*}
\tau_2&=(3^C+3)\rho r+(3^C-3)r-t_1-t_2\\
&=(3^C+3)\rho r+(3^C-3)r-3^C\rho r-3^Cr-O_C(1)=3(\rho-1)r-O_C(1).
\end{align*}
Since $\tau_1/\tau_2\leq t_1/\tau_2\leq(3^C\rho r+O_C(1))/(3(\rho-1)r-O_C(1))\leq 3^{C+1}$, we can use Lemma~\ref{lemma:maxdegbound} to find a tree $\cT$ with bipartition class sizes $\tau_1$ and $\tau_2$, such that both~\ref{construct:2:1} and~\ref{construct:2:3} hold. Also, note that $2\nu=2\tau_1+\tau_2+\tau_2=t_1+t_2+\tau_2$ from the definition of $\tau_2$, and $\nu=\tau_1+\tau_2=t_1+t_2-\tau_1=((3^C+3)\rho r+(3^C-3)r)/2<t_1$, so $\underline{R}(T,\cT)=2\nu-1$.

Let $G$ be the red/blue coloured complete graph on $2\nu+C-3$ vertices with a vertex partition $V(G)=U_1\cup U_2\cup W$, such that the following conditions hold.
\begin{itemize}
    \item $|U_1|=|U_2|=\nu-1$ and $|W|=C-1$.
    \item $\red{G}[U_1,U_2]$ and $\red{G}[U_1\cup U_2,W]$ are both complete. 
    \item $\blue{G}[U_1]$, $\blue{G}[U_2]$, and $\blue{G}[W]$ are all complete.
\end{itemize}

Note that $\blue{G}$ contains exactly three connected components with vertex sets $U_1$, $U_2$, and $W$, respectively. None of them is large enough to contain a blue copy of $\cT$. Meanwhile, $\red{G}$ contains no copy of $T$ by Lemma~\ref{lem:counterex2}, as
\[\frac12((3^C+3)\rho r+(3^C-3)r)=\nu>|U_1|.\]
Therefore,~\ref{construct:2:4} holds, which completes the proof. 
\end{proof}

\section{Concluding remarks}\label{sec:conclude}
In this section, we discuss some related results and pose some open questions. 

The most natural open question is whether the sublinear maximum degree condition in Theorem~\ref{thm:main} can be improved to a linear one like in Theorem~\ref{thm:oldmain}. The extremal part of the proof of Theorem~\ref{thm:oldmain} crucially uses $T=\cT$, as the failure to embed $T$ in one colour implies that within the host graph $G$, there exists a certain structure in the other colour whose size depends on $T$, which can be used to embed $T$ in the other colour. The same proof no longer works in general when $T\not=\cT$. 
\begin{question}\label{question:1}
Can the conditions $\Delta(T)\leq cn/\log n$ and $\Delta(\cT)\leq c\nu/\log\nu$ in Theorem~\ref{thm:main} be replaced with $\Delta(T)\leq cn$ and $\Delta(\cT)\leq c\nu$?
\end{question}

Recall that Theorem~\ref{thm:stability}, which is the stability part of our proof of Theorem~\ref{thm:main}, is stated with the conditions $\Delta(T)\leq cn$ and $\Delta(\cT)\leq c\nu$. If we are only aiming for an approximate version of Theorem~\ref{thm:main}, then the answer to Question~\ref{question:1} is yes. This can be shown by starting with a slightly larger red/blue coloured complete graph $G$, then following the proof of Theorem~\ref{thm:stability} until eventually in \textbf{Stage 4} noting that being $(\mu,T,\cT)$-extremal is no longer a possible outcome. More precisely, we have the following. 
\begin{theorem}\label{thm:main:approx}
For every $\mu>0$, there exists $c=c_\mu>0$ such that the following hold. 

Let $n\geq\nu$ be sufficiently large. Let $T$ be an $n$-vertex tree with $\Delta(T)\leq cn$ and bipartition class sizes $t_1\geq t_2$. Let $\cT$ be a $\nu$-vertex tree with $\Delta(\cT)\leq c\nu$ and bipartition class sizes $\tau_1\geq\tau_2$. Suppose $\tau_2\geq t_2$ and $\nu\geq t_1$, then \[R(T,\cT)\leq(1+\mu)\underline{R}(T,\cT)\leq(1+\mu)\max\{n+\tau_2,2t_1\}.\]
\end{theorem}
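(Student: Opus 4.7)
To prove Theorem~\ref{thm:main:approx}, I would adapt the four-stage stability framework of Section~\ref{sec:stability} to a host graph with $(1+\mu)$-multiplicative slack in its size, and exploit this slack to rule out the extremal outcomes in Stage 4. Given $\mu > 0$, fix auxiliary constants $c \ll \zeta \ll d \ll \mu$, and let $G$ be a red/blue coloured complete graph on $N = \lceil (1+\mu)\max\{n+\tau_2, 2t_1\} \rceil$ vertices. First, I would apply the leaf-addition reduction from the proof of Theorem~\ref{thm:stability} to reduce to the case where $T$ and $\cT$ additionally satisfy $t_2 \geq (t_1-2)/3$, $\tau_2 \geq (t_1-1)/2$, and $t_2 + \tau_2 \geq t_1 - 1$, at the cost of an absorbable loss in $\mu$. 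Then I would apply the coloured Regularity Lemma (Theorem~\ref{theorem:regularity}) to obtain an $\eps$-regular partition whose clusters cover at least $(1+\mu/2)\max\{n+\tau_2, 2t_1\}$ vertices.

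Next, I would run Stages 1--3 (Lemmas~\ref{lem:stage1}--\ref{lem:stage3}) essentially unchanged, tracking the extra slack. Each of these lemmas outputs either a monochromatic copy of the target tree, or a B/C/D-situation fed to the next stage. Since every refinement step loses only an $O(\zeta)$ fraction of vertices, the $(1+\mu)$-slack is preserved throughout, so the D-situation eventually reaching Stage 4 satisfies the strengthened inequality $|I_A|m + |I_B|m \geq (1 + \mu - O(\zeta))(n+\tau_2)$, rather than merely $(1 - \zeta)(n+\tau_2)$. None of Stages 1--3 produce extremality as an output; this only ever arises in Stage 4, so once the slack reaches Stage 4 it suffices to close the argument there.

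The decisive step is a modified version of Stage 4 (Lemma~\ref{lem:stage4}) that, under this strengthened hypothesis, outputs only an embedding. In Case I, we have $|A| = |I_A|m \geq (1+\mu/2)(n+\tau_2)/2 \geq (1+\mu/2)t_1$, which strictly exceeds the threshold $t_1 + 10\zeta n$ needed to apply Lemma~\ref{lemma:hlt} (\textbf{H\L T}) after ruling out red edges in $\red{H}[I_A]$ or $\red{H}[I_B]$ (which would immediately give \textbf{EMa}/\textbf{EMb}). Hence the Type~3 / Type~4 extremal branch is unreachable. In Case II, the same slack gives $|A'| \geq (1+\mu - O(\zeta))(t_1 + \tau_2) \geq (1+\mu/2)\nu$, strictly exceeding the threshold $(1+5\zeta)\nu$ for invoking Lemma~\ref{lemma:em2c} (\textbf{EMc}) to find a blue copy of $\cT$. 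So the Type~2 / Type~1 extremal branch is also unreachable. In every branch of the case analysis we thus conclude with a monochromatic copy of $T$ or $\cT$, proving the theorem.

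The main technical obstacle will be verifying that the slack propagates faithfully through Stages 1--3 without being absorbed by the $\zeta$-losses inherent in the stage transitions. This amounts to a line-by-line re-examination of Lemmas~\ref{lem:stage1}--\ref{lem:stage3}, replacing each inequality of the form $|X| \geq (1-\zeta) \cdot (\text{something})$ with $|X| \geq (1 + \mu - O(\zeta)) \cdot (\text{something})$, and checking that none of the branching arguments relies on the bound being tight. Since the $\zeta$-losses at each stage are additive and $\zeta \ll \mu$, the slack survives intact into Stage 4, where it is finally cashed in for the embedding. A useful sanity check is that when $\mu = 0$ the argument degenerates to the exact stability proof, so correctness reduces to verifying monotonicity of the argument in the slack parameter.
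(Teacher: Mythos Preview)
Your proposal is correct and follows exactly the approach the paper sketches in the concluding remarks: start with a host graph of size $(1+\mu)\max\{n+\tau_2,2t_1\}$, propagate the multiplicative slack through Stages 1--3 (none of which ever concludes extremality directly), and observe that in Stage~4 the slack forces the embedding branches (\textbf{H\L T} in Case~I, \textbf{EMc} in Case~II) and precludes the extremal ones. The paper does not write out the line-by-line verification either, so your identification of the slack-propagation bookkeeping as the main technical content is apt; note only that the intermediate losses are of order $\sqrt{\eta}$ and $\sqrt{\zeta'}$ rather than $\zeta$, but since all such constants sit below $d\ll\mu$ in the hierarchy this changes nothing.
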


Theorem~\ref{thm:counterex1} and Theorem~\ref{thm:counterex2} show that if one of the assumptions $\tau_2\geq t_2$ and $\nu\geq t_1$ is dropped from Theorem~\ref{thm:main}, then $R(T,\cT)$ can exceed $\underline{R}(T,\cT)$ by an arbitrary additive factor. Another natural question is whether the lower bound $\underline{R}(T,\cT)$ is still approximately tight if either of these assumptions is dropped. 
\begin{question}\label{question:2}
Does Theorem~\ref{thm:main:approx} hold, possibly with a stronger maximum degree condition, if one of the assumptions $\tau_2\geq t_2$ and $\nu\geq t_1$ is dropped?
\end{question}

Recall that when $T=\cT$ is a double star, then by~\cite{NSZ}, $R(T)=R(T,T)$ can exceed $\underline{R}(T,T)$ by a multiplicative factor. The following construction provides examples of pairs $(T,\cT)$ of distinct trees, such that $R(T,\cT)$ exceed $R(T,\cT)$ by a multiplicative factor. This is related to both questions above, though does not fully answer either. In regards to Question~\ref{question:1}, the example shows that if the condition $\tau_2\geq t_2$ is dropped from Theorem~\ref{thm:main}, then any linear maximum degree condition is not sufficient. Regarding Question~\ref{question:2}, it shows that Theorem~\ref{thm:main:approx} is false if the condition $\tau_2\geq t_2$ is dropped and the order of the quantifiers for $\mu$ and $c$ is switched.
\begin{theorem}\label{thm:smallrandom}
For every $c>0$, there exists $0<\mu\ll c$, such that for all sufficiently large $n$, there exist two trees $T$ and $\cT$ satisfying the following. 
\stepcounter{propcounter}
\begin{enumerate}[label = \emph{\textbf{\Alph{propcounter}\arabic{enumi}}}]
    \item\labelinthm{construct:3:1}  $\Delta(T)\leq cn$ and $\Delta(\cT)=cn$.
    \item\labelinthm{construct:3:2}  $T$ has bipartition class sizes $2n$ and $2n-\mu n$.
    \item\labelinthm{construct:3:3}  $\cT$ has bipartition class sizes $2n$ and $\mu n$.
    \item\labelinthm{construct:3:4}  $R(T,\cT)\geq4n+\mu n\geq\underline{R}(T,\cT)+\mu n$.
\end{enumerate}
\end{theorem}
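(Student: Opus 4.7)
The plan is to construct $T$, $\cT$, and a pseudorandom 2-colouring of $K_N$ with $N=4n+\mu n-1$ avoiding both coloured copies. The skeleton of the colouring will be construction~\ref{low:3} extended by a small random ``gadget'' on $\Theta(\mu n)$ vertices --- hence the label \emph{smallrandom}. Since Burr's conjecture can only fail due to large-degree phenomena, we force the high-degree structure of $\cT$ (a vertex of degree exactly $cn$) to clash with the random part, while keeping the deterministic part essentially construction~\ref{low:3}.

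For the trees, let $\cT$ be obtained as follows: take a single vertex $v^\star$ in the small bipartition class attached to $cn$ leaves in the big class, and attach the remaining $2n-cn$ big-class and $\mu n-1$ small-class vertices to one promoted leaf-neighbour of $v^\star$ as a caterpillar of maximum degree at most $cn$, using Lemma~\ref{lemma:maxdegbound} to keep $\Delta(\cT)=cn$. Let $T$ be a tree with bipartition $(2n,2n-\mu n)$ and $\Delta(T)\le cn$ containing a designated vertex $v^{\star\star}$ of degree exactly $cn$; again by Lemma~\ref{lemma:maxdegbound} such a $T$ exists with the rest of $T$ being a balanced caterpillar attached to $v^{\star\star}$.

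For the colouring, partition $V(K_N)=V_1\cup V_2\cup W$ with $|V_1|=|V_2|=2n-1$ and $|W|=\mu n+1$, and fix all deterministic edges as follows: colour the edges inside $V_1$ and inside $V_2$ blue, the edges between $V_1$ and $V_2$ red, and the edges inside $W$ blue. For each edge between $W$ and $V_1\cup V_2$, independently colour it red with probability $p$ and blue with probability $1-p$, where $p$ is tuned by $p(4n-2)\in(cn/2,\,cn-\mu n-2)$. Chernoff's bound (Lemma~\ref{lemma:chernoff}) then guarantees that with high probability every $w\in W$ has red degree to $V_1\cup V_2$ strictly less than $cn-\mu n$ and blue degree to $V_1\cup V_2$ strictly less than $cn-1$.

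Finally one verifies both constraints. For no blue $\cT$: the blue graph consists of the disjoint cliques $K_{V_1},K_{V_2},K_W$ plus a sparse random bipartite $W$--$(V_1\cup V_2)$ part, none of whose cliques is large enough to hold $\cT$; any blue copy of $\cT$ must therefore use $W$ as an independent cut and simultaneously place $v^\star$ in a location supplying $cn$ blue incident edges, which the random part is too sparse to allow (by the Chernoff upper bound on blue $W$-degrees), while placing $v^\star$ inside a $V_i$ forces the attached caterpillar tail to cross to the other clique and overloads the thin random bipartite bridge. For no red $T$: since $|V_1|=|V_2|=2n-1<|T_{\text{big}}|=2n$, $T$'s big class cannot fit inside a single $V_i$, so any red embedding must place at least one big-class vertex into $W$, and a case analysis akin to Lemma~\ref{lem:counterex1} shows that the $W$-vertex hosting $v^{\star\star}$ would require $cn$ red neighbours in $V_1\cup V_2$, which by our choice of $p$ does not exist. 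The main obstacle is calibrating $p$ so (a) and (b) hold simultaneously; this is exactly why the theorem requires $\mu\ll c$, as the window $p(4n-2)\in(cn/2,cn-\mu n-2)$ is nonempty only when $\mu$ is small relative to $c$.
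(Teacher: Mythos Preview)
Your construction does not work; there are two fatal problems.

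First, and most simply: you claim Chernoff gives that every $w\in W$ has red degree to $V_1\cup V_2$ less than $cn-\mu n$ \emph{and} blue degree to $V_1\cup V_2$ less than $cn-1$. But these two degrees sum to $|V_1\cup V_2|=4n-2$, so they cannot both be below $cn$ for small $c$. With your choice $p(4n-2)\in(cn/2,cn-\mu n-2)$ the blue $W$--$(V_1\cup V_2)$ part is \emph{dense} (density $1-p\approx 1$), not sparse, so the blue graph on $V_1\cup W$ is almost complete on $2n+\mu n=|\cT|$ vertices and you will not be able to rule out a blue copy of $\cT$ there.

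Second, your ``no red $T$'' argument assumes that the vertex of $T$ landing in $W$ must be the high-degree vertex $v^{\star\star}$. This is not forced: the only thing you established is that \emph{some} big-class vertex must go to $W$, and $T$ has at least $\mu n+1$ big-class leaves by Lemma~\ref{lemma:leaves:V1}. Pick such a leaf $\ell$, send it to any $w\in W$, send its parent to any red neighbour of $w$ in $V_2$ (which exists), and then greedily embed the rest of $T-\ell$, which has bipartition $(2n-1,2n-\mu n)$, into the complete bipartite red graph $K_{V_1,V_2}$. This produces a red copy of $T$, so your colouring fails.

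The paper's proof takes a completely different route: it uses a \emph{fully} random colouring $G\sim G(N,1-c/10)$, with no deterministic skeleton at all. The tree $T$ is engineered so that a set $W$ of only $10/c$ vertices satisfies $N_T(W)\cup W=V(T)$; one then shows by a union bound that in $G(N,1-c/10)$ no $(10/c)$-set has closed neighbourhood of size $|T|$, so $G$ contains no red $T$. For blue, one simply uses that $\Delta(G^c)<cn=\Delta(\cT)$, so $G^c$ cannot contain $\cT$. The key structural idea---a tiny dominating set in $T$ versus a global expansion obstruction in the random graph---is absent from your attempt.
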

\begin{proof}
Let $0<\mu\ll(c/10)^{10/c}$ and let $N=4n+\mu n$.

Let $T$ be a tree with bipartition class sizes $2n$ and $2n-\mu n$, such that it contains two adjacent vertices $x$ and $y$ with $d(x)+d(y)-2=10/c$, and all other vertices of $T$ are distributed as evenly as possible as neighbours of the $10/c$ vertices in $W=(N(x)\cup N(y))\setminus\{x,y\}$. In particular, $\Delta(T)\leq cn$. Let $\cT$ be a tree with bipartition class sizes $2n$ and $\mu n$, such that it contains a vertex $z$ with degree $\mu n$, and all other vertices of $\cT$ are neighbours of the $\mu n$ neighbours of $z$, distributed so that $\Delta(\cT)=cn$. Then,~\ref{construct:3:1}--\ref{construct:3:3} hold. 

Let $q=c/10$, $p=1-q$, and let $G\sim G(N,p)$. Then, by Lemma~\ref{lemma:chernoff}, with high probability $G$ has minimum degree at least $(4+\mu)n-cn/2$.

Let $U$ be any fixed subset of $V(G)$ of size $t=10/c$, and let $X_U$ be the number of vertices in $V(G)\setminus U$ adjacent to at least one vertex in $U$. Then, the expected value of $X_U$ is
\[\mathbb{E}[X_U]\leq(4+\mu)n(1-q^t)\leq(4-2\mu)n,\]
using that $\mu\ll(c/10)^{10/c}=q^t$. By Lemma~\ref{lemma:chernoff}, $\mathbb{P}(X_U\geq(4-\mu)n-t)\leq2\exp(-\mu^2n/1000)$. Therefore, by a union bound, the probability that there exists $U\subset V(G)$ of size $t$ with $|N(U)\cup U|=X_U+t\geq(4-\mu)n=|T|$ is at most 
\[2\binom{N}t\exp(-\mu^2n/1000)\leq 2(5n)^{10/c}\exp(-\mu^2n/1000)=o(1).\]
Hence, with high probability such $U$ does not exist. 

Fix a realisation of $G$ such that $\delta(G)\geq(4+\mu)n-cn/2$, and there is no $U\subset V(G)$ of size $10/c$ satisfying $|N(U)\cup U|\geq|T|$. Then, $G$ contains no copy of $T$ as $T$ contains a set $W$ of $10/c$ vertices satisfying $|N(W)\cup W|=|T|$, while $G^c$ contains no copy of $\cT$ as $\Delta(\cT)=cn>|G|-\delta(G)\geq\Delta(G^c)$. Therefore, $R(T,\cT)\geq 4n+\mu n$, and so~\ref{construct:3:4} holds.
\end{proof} 

Finally, note that the assumptions $n\geq\nu$ and $\nu\geq t_1$ in Theorem~\ref{thm:main} imply that $n\geq\nu\geq n/2$, so for all positive results in this paper, the sizes of $T$ and $\cT$ are close. What happens when $|T|$ is significantly larger than $|\cT|$? Are there families of pairs $(T,\cT)$ for which the lower bound (\ref{eq:generallower}) in Proposition~\ref{prop:generallower} is tight? One positive result in this direction is a special case of {\cite[Theorem 1.1]{MPY2}}, which establishes the Ramsey goodness of bounded degree trees versus general graphs. 
\begin{theorem}\label{thm:goodness}
For every $\Delta\geq1$, there exists a constant $C_\Delta\gg1$ such that the following holds. 

For every $\nu$-vertex tree $\cT$ with bipartition class sizes $\tau_1\geq\tau_2$, every $n\geq C_\Delta\nu$, and every $n$-vertex tree $T$ with $\Delta(T)\leq\Delta$, 
\[R(T,\cT)=\underline{R}(T,\cT)=n+\tau_2-1.\]
\end{theorem}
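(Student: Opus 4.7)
The lower bound $R(T,\cT)\geq n+\tau_2-1$ is immediate from Proposition~\ref{prop:generallower}, specifically from construction~\ref{low:1}. Under the hypothesis $n\geq C_\Delta\nu$ with $C_\Delta$ sufficiently large, the other three quantities appearing in $\underline{R}(T,\cT)$ are each at most $2\nu\leq n<n+\tau_2$, so $\underline{R}(T,\cT)=n+\tau_2-1$. For the upper bound, I fix a red/blue colouring of $K_N$ with $N=n+\tau_2-1$ and assume there is no blue copy of $\cT$, aiming to find a red copy of $T$. The plan is to exhibit a structural restriction on the blue graph and then use it to embed $T$ in red via the bounded-degree tree embedding tools from Section~\ref{sec:extremal}.

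For the structural step, I claim there is a set $S\subseteq V(K_N)$ with $|S|\leq\tau_2-1$ such that every $v\in V\setminus S$ has blue degree in $V\setminus S$ at most $D$, where $D=D(\Delta,\nu)$ is a suitable threshold depending only on $\Delta$ and $\nu$. The proof is by a greedy deletion: iteratively remove any vertex whose blue degree in the current residual graph exceeds $D$. If $\tau_2$ vertices $v_1,\ldots,v_{\tau_2}$ are ever removed, then each $v_i$ retains blue degree at least $D-\tau_2+1$ to $V\setminus\{v_1,\ldots,v_{\tau_2}\}$, and by choosing $D$ sufficiently large, a blue copy of $\cT$ can be extracted: pin the $\tau_2$-side of $\cT$ to $\{v_1,\ldots,v_{\tau_2}\}$ and embed the $\tau_1$-side into $V\setminus\{v_1,\ldots,v_{\tau_2}\}$ by a Hall-type matching argument via Lemma~\ref{lemma:hallmatching}. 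This would contradict the assumption, so $|S|\leq\tau_2-1$.

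Given such $S$, the red graph on $V\setminus S$ has at least $n$ vertices and minimum degree at least $|V\setminus S|-1-D\geq n-1-D$. Since $n\geq C_\Delta\nu\gg D$, this red subgraph is nearly complete. Applying Lemma~\ref{lemma:paths-leaf} splits $T$ into two cases: either $T$ contains many vertex-disjoint bare paths of length 4 and Lemma~\ref{lemma:bare-paths} yields a red copy of $T$, or $T$ contains many leaves and Lemma~\ref{lemma:leaves} does. Both embedding lemmas require only a mild minimum-degree condition and $\Delta(T)=O(n/\log n)$, which hold easily since $\Delta$ and $D$ are constants. The main obstacle lies in the $\cT$-extraction step: the two bipartition classes of $\cT$ play asymmetric roles in the embedding, with the $\tau_2$-side pinned to the hub vertices and the $\tau_1$-side needing to be matched subject to multiple simultaneous blue-neighborhood constraints. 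Discharging these constraints via Hall's theorem together with a suitable (possibly slowly growing) choice of the threshold $D$ as a function of $\nu$ and $\Delta$ is where the technical heart of the argument lies.
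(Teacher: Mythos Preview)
The paper does not give its own proof of this theorem; it is stated in Section~\ref{sec:conclude} as a special case of \cite[Theorem~1.1]{MPY2}, so there is no in-paper argument to compare against. I therefore assess your proposal on its own merits.

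Your lower-bound remark and your endgame are both sound: once a set $S$ with $|S|\le\tau_2-1$ is secured, embedding $T$ in the nearly complete red graph on $V\setminus S$ via Lemma~\ref{lemma:paths-leaf} together with Lemma~\ref{lemma:bare-paths} or Lemma~\ref{lemma:leaves} works, since $\Delta(T)\le\Delta$ is certainly $o(n/\log n)$. The genuine gap is the structural step, and it is not a matter of tuning the threshold. Any choice with $n\ge C_\Delta\nu\gg D$ forces $D=O_\Delta(\nu)$. Now fix $\cT$ not a star and take $n$ large enough that $\tau_2(D+1)<n-1$. Colour so that the blue graph consists of $\tau_2$ vertex-disjoint stars $K_{1,D+1}$ centred at $v_1,\dots,v_{\tau_2}$, with all other edges red. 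Your greedy deletion removes precisely $v_1,\dots,v_{\tau_2}$, yet the blue graph contains no copy of any tree that is not itself a star, so no blue $\cT$ exists. The pin-and-Hall scheme fails here because any vertex of the $\tau_1$-side of $\cT$ with at least two neighbours in the $\tau_2$-side would need a common blue neighbour of two distinct $v_i$, and there are none. Pushing $D$ up to order $n$ would force overlaps, but then the red graph on $V\setminus S$ is no longer nearly complete and your embedding step collapses.

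So the step you flag as ``the technical heart'' is not a discharging detail: pinning the small side and matching the large side via Lemma~\ref{lemma:hallmatching} is the wrong mechanism, since high blue degree at $\tau_2$ hubs places no constraint on their common neighbourhoods. The actual argument in~\cite{MPY2} proceeds by a more global analysis of the colouring and cannot be shortcut in this way.
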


\begin{question}
Decide for more families of pairs of trees $(T,\cT)$ whether $R(T,\cT)=\underline{R}(T,\cT)$.
\end{question}

\bibliography{bibliography}
\bibliographystyle{plain}
\end{document}